\documentclass[hidelinks, 12pt, oneside, reqno]{amsart}
\usepackage{dsliheader2}
\usepackage{geometry}
\usepackage{xy}
\usepackage{hyperref}
\title[Laplacian flow on complete asymptotically conical manifolds]{$G_2$-Laplacian flow on complete asymptotically conical manifolds}
\author[Khan]{Ilyas Khan}
\address{Department of Mathematics, Duke University, Durham, NC, USA}
\email{ikhan.maths@gmail.com}
\date{}
\begin{document}

\begin{abstract}
This paper proves the short-time existence of $G_2$-Laplacian flow for closed, asymptotically conical $G_2$-structures on a complete manifold, shows that this solution is unique among closed solutions, and gives a lower bound for the existence time depending on the initial geometry. 
\end{abstract}

\maketitle

\tableofcontents

\vspace{-.3in}

\section{Introduction}

The main result of this paper is the following theorem. 

\begin{thm}\label{theorem main theorem}
    Let $M$ be a complete 7-manifold, and let $\vp_0$ be a closed, asymptotically conical $G_2$-structure on $M$. There exists a solution $\vp(t)$ to the $G_2$-Laplacian flow on a time interval $[0,T)$ with $\vp(0) = \vp_0$. Furthermore, this solution is unique and the maximal existence time $T$ has an explicit lower bound depending only on the maximum of the curvature of the initial $G_2$-structure $\vp_0$.
\end{thm}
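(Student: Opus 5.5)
We follow the strategy of Bryant--Xu and Lotay--Wei for the compact case, combined with the asymptotically conical analysis used for Ricci flow on noncompact manifolds.

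\textbf{Step 1: gauge fixing.} Laplacian flow $\partial_t\vp=\Delta_\vp\vp$ on closed structures is only weakly parabolic, because of diffeomorphism invariance. We therefore use a DeTurck-type modification. Fix the initial structure $\vp_0$, with metric $g_0$, as a background, and consider
\[
\partial_t\vp=\Delta_\vp\vp+\mathcal{L}_{V(\vp)}\vp .
\]
Here $V(\vp)$ is built from the difference of the Levi-Civita connections of $g_\vp$ and $g_0$, as in Bryant--Xu. Restricted to closed forms, with $\vp=\vp_0+d\eta$, the linearization is strictly parabolic in the direction of closed forms. So the problem reduces to a strictly parabolic quasilinear system for a $2$-form potential, or equivalently for $\vp$ itself in the closed class.

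\textbf{Step 2: short-time existence on the complete manifold.} Because $\vp_0$ is asymptotically conical, $(M,g_0)$ has bounded geometry. Concretely, $|\nabla^k \mathrm{Rm}_{g_0}|$ and $|\nabla^k T_0|$ are bounded, where $T_0$ is the torsion, and the injectivity radius is positive. This lets us solve the gauged flow in one of two ways:
\begin{itemize}
\item by an exhaustion: solve Dirichlet problems on an increasing sequence of domains $\Omega_i$ with boundary data $\vp_0$, then pass to a limit using uniform local estimates;
\item or directly, by a fixed-point argument in weighted parabolic H\"older spaces adapted to the conical end. The decay of $\vp_0-\vp_C$ is then preserved for short time, so the solution stays asymptotically conical.
\end{itemize}
The uniform local estimates for the exhaustion come from Shi-type derivative estimates for Laplacian flow (Lotay--Wei). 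These depend only on $\Lambda=\sup|\mathrm{Rm}|+|\nabla T|$ at time $0$.

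Pulling back by the diffeomorphisms generated by $-V$ gives a genuine solution. These diffeomorphisms exist for all relevant times because $V$ is bounded on a complete manifold.

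\textbf{Step 3: existence-time bound.} Along the flow, the quantity $\Lambda(t)$ satisfies a differential inequality $\partial_t\Lambda\le C\Lambda^{3/2}$ in the barrier sense. Applying the maximum principle on the complete manifold is justified by the bounded-geometry curvature bounds, via an Omori--Yau or Karp--Li argument. Integrating the inequality shows the solution persists and keeps bounded curvature on $[0,c/\Lambda(0)^{1/2}]$. So $T\ge c\,\Lambda_0^{-1/2}$ for a universal constant $c$. For closed structures $|\nabla T|$ is controlled by $|\mathrm{Rm}|$ and $|T|^2$, so this bound depends only on the initial curvature.

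\textbf{Step 4: uniqueness.} Given two closed solutions with bounded curvature, first solve the harmonic-map-type heat flow for the diffeomorphisms on complete manifolds, following Chen--Zhu and Kotschwar. This converts both solutions into solutions of the gauged system. Then apply the energy estimate of Kotschwar, as adapted by Lotay--Wei. The energy combines $|g-\tilde g|^2$, $|\nabla\vp-\tilde\nabla\tilde\vp|^2$ and the corresponding connection differences, integrated against a weight with cutoffs. It satisfies $\mathcal{E}'\le C\mathcal{E}$ and $\mathcal{E}(0)=0$, so $\mathcal{E}\equiv0$.

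\textbf{Main obstacle.} We expect the hardest part to be the noncompact analysis in Steps 2 and 4, namely closing the estimates uniformly up to spatial infinity. For existence, the difficulty is obtaining local interior estimates on the exhaustion domains that are independent of $i$. For uniqueness, the difficulty is controlling the cutoff error terms using only bounded curvature, with no decay assumed for the competitor solution.
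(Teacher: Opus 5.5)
Your Steps 1 and 4 are broadly compatible with the paper, but Step 2, which is the actual existence argument, has a genuine gap. Step 3 also contains a scaling error and relies on a continuation criterion you never establish.

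\textbf{Step 2.} The gauge-fixed operator is elliptic only in the direction of closed forms, so there is no well-posed Dirichlet problem for $\varphi$ with boundary data $\varphi_0$ on $\Omega_i$. You have to solve for a potential, $\varphi=\varphi_0+d\beta$, and then two problems appear that your sketch does not address.
\begin{itemize}
\item A boundary condition on $\beta$ does not control $d\beta$, hence $\varphi$, on $\partial\Omega_i$: the normal derivatives of $\beta$ are free there.
\item Linearizing produces exact inhomogeneous terms whose potentials must be chosen. The operator that chooses them (essentially the inverse of the Hodge Laplacian with homogeneous boundary conditions on $\Omega_i$) can have norm tending to infinity as $i\to\infty$, because of small eigenvalues. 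If so, the existence times of the domain solutions shrink to zero and the limit exists on no positive time interval.
\end{itemize}
The Lotay--Wei Shi-type estimates cannot repair this, for three reasons.
\begin{itemize}
\item They assume a curvature bound on the whole interval $[0,1/K]$, so they do not produce an existence time.
\item They are interior estimates for genuine Laplacian flows, not for gauge-fixed solutions on domains with boundary.
\item They say nothing about the potential or about $\varphi-\varphi_0$ near $\partial\Omega_i$.
\end{itemize}
You also give no analogue of Shi's $C^0$ maximum-principle estimate for $g-\tilde g$, which is what makes the Ricci--DeTurck exhaustion work. Your alternative, a fixed point in weighted H\"older spaces, needs exactly the missing ingredients: a linear parabolic theory for the potential equation on the whole AC manifold, and a choice of potentials for exact forms with weighted bounds.

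For the same reason, bounded geometry alone is not what makes the argument work, and this is precisely where the paper uses the AC hypothesis. The paper proceeds as follows.
\begin{itemize}
\item On each $M_R$ it applies a quantitative inverse function theorem to a map on Dirichlet $2$-form potentials, together with a co-exact correction $\delta\omega$.
\item The target is projected $L^2_\lambda$-orthogonally away from an approximate kernel. This kernel is spanned by the singular vectors of the weighted Green's operator with large singular values, that is, the small singular values of $\Delta$.
\item Weighted Schauder estimates and barrier estimates then bound the right inverse uniformly in $R$.
\item After passing to an $H^k_{loc}$ limit, weighted Fredholm and Hodge theory for $d+\delta$ and $\Delta$ (Lockhart, Karigiannis--Lotay) shows that both the approximate-kernel error and the co-exact error vanish.
\end{itemize}

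\textbf{Steps 3 and 4.} Your inequality $\partial_t\Lambda\le C\Lambda^{3/2}$ is not scale-invariant. Under $\varphi\mapsto\lambda^3\varphi(\lambda^{-2}t)$, the quantity $\Lambda$ scales like $\lambda^{-2}$. The correct inequality (Lotay--Wei) is $\partial_t\Lambda\le C\Lambda^2$ at a maximum, which gives $T\ge 1/(C\Lambda_0)$. A bound $T\ge c\Lambda_0^{-1/2}$ with universal $c$ would, after rescaling, make every such flow immortal.

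Saying that the solution persists also requires a continuation criterion: restarting the flow at a later time with existence time controlled by curvature alone. The paper obtains this by proving preservation of the AC condition. It uses the Ecker--Huisken maximum principle with heat-equation barriers to show $|\nabla^k\mathrm{Rm}|\le C\rho^{-2-2k}$ along any flow with bounded curvature. This keeps the flow AC with controlled constants, so short-time existence can be reapplied. It also ensures that $\max\Lambda$ is attained, which legitimizes the doubling-time argument.

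In Step 4, your harmonic-map-type gauge step is unnecessary, and it is not justified for the Bryant--Xu vector field on a complete manifold. Kotschwar's energy method, as used by Lotay--Wei and in the paper, works directly on the ungauged flow viewed as a PDE--ODE system with weight $e^{-\eta}$. Drop the gauge step and your uniqueness argument matches the paper's.
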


In particular, this theorem generalizes existence and uniqueness results for the $G_2$-Laplacian flow proved by Bryant-Xu in \cite{BryantXu} and Lotay-Wei in \cite{LotayWeiLaplacianFlow} for $G_2$-structures on compact manifolds to asymptotically conical initial data on complete manifolds. However, the non-compact setting necessitates the development of a new analytic method inspired by ideas from gluing in order to establish existence.

\subsection{Background and motivation}

The central problem in $G_2$-geometry is to find Riemannian metrics $g$ on $7$-manifolds $M^7$ with holonomy contained in the exceptional Lie group $G_2$. This is equivalent to finding a torsion-free $G_2$-structure (in the sense of the torsion of a $G$-structure) on $M$. $G_2$-structures are represented by positive 3-forms $\vp$ and induce a Riemannian $g_\vp$, a Hodge star $\star_\vp$, and a co-differential $\de_\vp$. The torsion-free condition is equivalent to the $G_2$-structure $\vp$ being closed and co-closed, i.e. $d \vp = 0$ and $\de \vp = 0$. 

The known examples of torsion-free $G_2$-structures have either been constructed via ansatz (e.g. the non-compact examples of Bryant-Salamon in \cite{BryantSalamonExceptionalHolonomy} and Foscolo-Haskins-Nordstrom in \cite{FoscoloHaskinsNordstromALC}) or by singular perturbation methods (e.g. the compact examples of Joyce in \cite{JoyceG2I} and \cite{JoyceG2II} and Kovalev in \cite{KovalevTwistedConnectedSums}). However, this stable of examples does not give much information about the existence of torsion-free $G_2$-structures outside of very specialized settings. 

The Laplacian flow of closed $G_2$-structures is a proposed parabolic approach to the problem of finding torsion-free $G_2$-structures. The Laplacian flow evolves any positive and closed $G_2$-structure $\vp$ in the direction of its Hodge Laplacian, taken with respect to the metric that $\vp$ itself induces. 

\begin{equation}\label{equation laplacian flow intro}
    \left\{
    \begin{array}{ll}
      \dd_t \vp &=   \Delta_{\vp} \vp\\
      \vp(0) &= \vp_0, \;\; d\vp_0 = 0.
    \end{array}
  \right.
\end{equation}

This flow arises as the upward gradient flow for the Hitchin volume functional,
\[
\cV(\vp) := \frac17 \int_M \vp \wedge \star \vp, \;\; \vp \in [\vp_0]_+
\]
which measures the volume of $(M,g_\vp)$ for $\vp$ cohomologous to $\vp_0$ and satisfying a positivity condition (i.e. $\vp$ induces a non-degenerate Riemannian metric $g_\vp$). The critical points of this functional are strict maxima and correspond to torsion-free $G_2$-structures. 

Ideally, the Laplacian flow would seek these maxima and converge to them as $t \rightarrow \infty$. However, this is very far from being known outside a few special cases, especially because of the near-complete ignorance of the phenomenology of finite-time singularity formation. Even so, many basic propoerties of the flow in the compact case have been established, most notably in the proof of the flow's short-time existence on compact manifolds by Bryant and Xu in \cite{BryantXu} and the papers of Lotay and Wei (see \cite{LotayWeiLaplacianFlow}, \cite{LotayWeiStability}, \cite{LotayWeiRealAnalyticity}). 

The theory of the flow is less developed in the non-compact case. Non-compact flows appear naturally in the study of singularities: see, for example, the non-compact solitons of Fowdar in \cite{FowdarS1LaplacianFlow}, Haskins-Nordstrom in \cite{HaskinsNordstromLaplacianSolitons}, and Haskins-Juneman-Nordstrom in \cite{HaskinsJunemanNordstromExpanders}, as well as the space of non-compact shrinkers studied by Li-Ma-Zheng in \cite{LiMaZhengG2Solitons}. In addition, many important examples of $G_2$-manifolds are non-compact, such as the asymptotically conical (AC) Bryant-Salamon manifolds \cite{BryantSalamonExceptionalHolonomy} and the asymptotically locally conical manifolds (ALC) of Foscolo-Haskins-Nordstrom \cite{FoscoloHaskinsNordstromALC}, and it makes sense to study the Laplacian flow against these and similar backgrounds, as well as in the singular setting.

The first step in the study of the flow on non-compact manifolds is to prove that the flow exists, perhaps under a certain restriction on the initial geometry. Theorem 1.1 is a partial answer to this question of existence, along with the useful facts of uniqueness and preservation of the AC condition.

\subsection{Strategy of the proof}

In order to motivate the strategy of the proof, it is illustrative to begin by outlining a ``na\"ive" approach to solving the problem, and to modify or develop this approach as it encounters obstacles. 

\subsubsection{A na\"ive approach}

The main theorem, Theorem \ref{theorem main theorem}, bears a resemblance to W.-X. Shi's existence result for Ricci flows on complete manifolds with bounded curvature tensor in \cite{ShiDeforming}. It thus stands to reason that one might attempt to obtain analogous results to Shi's for the $G_2$-Laplacian flow by similar means. 

In a nutshell, Shi's proof consists of the following steps:
\begin{enumerate}
    \item Decompose the non-compact manifold $M$ into an exhaustive, nested sequence of compact domains $M_i$, where $i \rightarrow  \infty$,
    \item Solve the Ricci flow equation with homogeneous boundary conditions on each compact manifold with boundary $(M_i, \dd M_i)$,
    \item Obtain a priori estimates on these solutions, $g_i(x,t)$, independent of the domain $M_i$. 
    \item Obtain a solution on the full manifold $M$ by taking a subsequential loc-limit of these solutions as $i \rightarrow \infty$.
\end{enumerate}

The first obstacle to generalizing this argument to the Laplacian flow is that the Laplacian-DeTurck flow is parabolic \textit{only} in the direction of closed forms. Thus, the equation can only be solved on domains within a restricted class of forms and thus the standard parabolic existence theory used by Shi cannot be directly brought to bear. To obtain the solutions on the domains, we will instead construct an approximate solution, and use the implicit function theorem to find a true solution nearby. 

As in \cite{BryantXu}, the crucial step is to invert the linearized operator acting on exact forms. Let $\sg(t)$ be a time-dependent family of non-degenerate $G_2$-structures and let $\theta(t) = d\beta(t)$ be a family of exact 3-forms. Then the linearization of the operator at $\sg(t)$ has the following form.
\begin{align*}
    \dd_t \theta + \Delta_{\sg} \theta - d\Phi(\theta) &= \dd_t (d \beta) + \Delta_{\sg} (d\beta) - d\Phi(d\beta) \\
    &= d \big( \dd_t  \beta + \Delta_{\sg} \beta - \Phi(d\beta) \big),
\end{align*}
where $\Phi$ is a linear operator which is algebraic in the torsion of $\sg$. Given a family of exact forms $d\xi(t)$ with potentials $\xi(t)$, solving the linearized equation
\begin{equation}\label{equation linearized exact demo}
\dd_t \theta + \Delta_{\sg} \theta - d\Phi(\theta) = d \xi
\end{equation}
follows from solving 
\begin{equation}
\dd_t  \beta + \Delta_{\sg} \beta - \Phi(d\beta) = \xi.
\end{equation}
This latter equation can be solved on a domain with prescribed boundary values by using standard methods in the theory of parabolic equations. However, the first derivatives $\na_i \beta$ will appear in the boundary values of $\theta$. These derivatives of $\beta$ in turn depend on $\xi$. Thus, any estimates on $\theta$ obtained as a consequence of \eqref{equation linearized exact demo} depend on the choice of the potential, $\xi$. 

By necessity, then, one must first carefully choose a potential and solve the linearized equation on the level of potentials. 

\subsubsection{Solving the potential equation}\label{subsection solving potential}

This sketch points to the core difficulty in our approach, which is to find a way to choose potentials $\xi_i$ for a sequence of exact forms $d \xi_i$, defined on an increasing sequence of domains $M_i$ which exhaust the manifold $M$. In particular, the Sobolev norms of the $\xi_i$'s should be controlled by the Sobolev norms of the $d\xi_i$'s in a \textit{controlled} way, i.e. independently of $i \rightarrow \infty$.

It is at this point that we must strengthen the geometric assumptions on the non-compact ends of the manifold $M$, since a pointwise bound on the curvature is not enough to guarantee the existence of a sequence of potentials $\{\xi_i\}_i$ with derivatives bounded independently of $i$. More concretely, in our desired norms, we may see the following behavior:
\[
\|\xi_i\| \le C_i \|d \xi_i\|, 
\]
where $C_i \rightarrow \infty$ as $i \rightarrow \infty$. Filtered through the inverse function theorem, this failure of uniformity would ultimately lead to a sequence of ``domain solutions" on time-intervals $[0, \eps_i]$, with $\eps_i \rightarrow 0$, resulting in a limit which does not solve the equation on any positive time-interval.

The asymptotically conical (AC) condition obtains for us the geometric control that allows us to choose this sequence of potentials with good constants, at the price of solving the equation only up to an element of an ``approximate kernel".

\subsubsection{A sequence of almost solutions and global existence}
After solving the linearized equation up to these error terms and applying the implicit function theorem, one obtains a solution of potentials representing ``almost solutions" on the exhaustive sequence of domains $(M_i, \dd M_i)$. It remains to show that if we take the limit of these ``almost solutions" in the $H^k_{loc}$-topology, the error terms in the domain solutions will vanish in the limit. This can be shown in our set-up to be true by a series of arguments using the Fredholm theory of the Dirac operator $d + \de$ and the Hodge Laplacian $\De$ in weighted Sobolev spaces. This yields the basic short-time existence of the flow on the complete manifold $M$, and the further properties of this flow are proved by more elementary arguments which are described in the next section.

\subsection{Outline of the paper}

\subsubsection{Outline of \S \ref{section preliminaries}}

The ``Preliminaries" section sets up the problem, beginning by recalling known results but moving on to outline the details of the framework specific to this problem. The first part of the section reviews well-known material about $G_2$-geometry, the asymptotically conical condition, Laplacian flow and the Laplacian-DeTurck flow discovered by Bryant and Xu in \cite{BryantXu}. 

The remainder of the section is devoted to defining the non-linear map whose inverse will yield the desired solution on the truncated domains $M_R$. In order to define this map, it is necessary to introduce the various spaces it passes through. First, the weighted H\"older and Sobolev spaces are introduced for both complete manifolds (\S \ref{section weighted sobolev holder}) and manifolds with boundary (\S \ref{section function spaces domains}), along with the Sobolev embedding theorems. 

Next, in order to deal with the necessity of solving the Laplacian flow equation in the space of exact forms, the Hodge-Morrey decomposition on manifolds with boundary is introduced in \S \ref{section hodge morrey}. 

The last spaces to be introduced are the spaces of functions with vanishing boundary conditions in \S \ref{section boundary conditions}. It is necessary to restrict our range to these spaces so that the correct boundary conditions for the parabolic existence theory are satisfied, and the right-inverse to the linearized map is well-defined. 

Now, given the approximate kernel defined in Appendix \ref{section estimates potential}, we can define a map from spaces of exact and co-exact potentials into the $L^2_\la$-orthogonal complement of the approximate kernel, 
\[
\cP : H^{r,s}_{0,\la+3} \Omega^2_D(N \times I) \oplus H_{0,\la+1}^{r-1,s-2}\Om^4 (N \times I)  \rightarrow H_{0,\la}^{r-1,s-3}\sU^3(N \times I).
\]
See \eqref{equation third component mapping} for the detailed formula for this non-linear mapping.

Finally, \S \ref{section IFT setup} states the quantitative implicit function theorem, calculates the linearization of $\cP$, and establishes the $R$-independence of the Lipschitz constant $\cP - D\cP_0$. All that remains to be shown is the existence of a right-inverse to $D\cP_0$ which is bounded independent of $R$, which will be established in \S \ref{section short time existence IVT argument}. 

\subsubsection{Outline of \S \ref{section approximate solution}}

This section contains the construction of the approximate solution around which the Laplacian flow equation is linearized. The basic idea is the standard technique of finding a formal series expansion which solves the equation to sufficiently high order at the initial time, $t = 0$. Then, after the expression for this approximate solution is obtained, it is shown that it lies in the appropriate weighted spaces and yields a \textit{bona fide} $G_2$-structure. Finally, Corollary \ref{corollary nearby vanishing short time} shows that given some Sobolev space neighborhood of the inhomogeneous term, this neighborhood contains a family of exact $3$-forms vanishing uniformly in an interval around $t=0$. Lemma \ref{lemma approx extension}, with a view to proving long-time existence in \S \ref{section long time behavior}, establishes all the aforementioned properties for approximate solutions extending true solutions on $[0,T_0]$ at $t = T_0$. 

\subsubsection{Outline of \S \ref{section short time existence IVT argument}}

In this section, the linearization of the map $\cP$ at the approximate solution constructed in \S \ref{section approximate solution} is computed and a right-inverse to this linearization is constructed, and then bounded in norm independently of the radial parameter $R$. The first part, \S \ref{subsection existence outline}, outlines the global existence argument in broad strokes, leaving the details to the later sections \S \ref{subsection parabolic existence}-\S \ref{subsection existence everywhere}. Assuming the results of these later sections and the appendices, Theorem \ref{theorem existence of inverse on domains} executes the implicit function theorem argument and establishes the existence of ``almost solutions" on domains, for which their image under $\cP$ vanishes on an interval $[0,\ve]$ with $\ve$ independent of $R$. 

A well-defined right-inverse is constructed in \S \ref{subsection parabolic existence}. The right inverse is defined to be the composition of the choice of potential made in Appendix \ref{section estimates potential} and the Lax-Milgram solution to a linear parabolic equation on potentials with homogeneous boundary conditions. The uniqueness of the Dirichlet potential established in Appendix \ref{section estimates potential} and the uniqueness from the Lax-Milgram argument tells us that this right-inverse is a well-defined, injective linear map.  

In order to turn the Schauder estimates for the linearized equation into estimates on the operator norm of the right-inverse, apriori estimates on the $L^2$- and $C^0$-norms of the solutions to this equation are obtained in \S \ref{subsection a priori estimates}. These estimates are proved using standard parabolic maximum principle arguments and the weighted Sobolev embedding theorem.

To show that the operator norm of the right-inverse is uniformly bounded along the entire sequence of domains $M_{R_i}$ (where $R_i \rightarrow \infty$), \S \ref{subsection uniform boundedness right inverse} puts together the arguments of \S \ref{subsection existence outline}-\S \ref{subsection a priori estimates} and Appendices \ref{section linear parabolic appdx}-\ref{section estimates potential}. 

In \S \ref{subsection existence outline}-\ref{subsection uniform boundedness right inverse}, we have applied the implicit function theorem to solve the Laplacian flow equation on the domains $M_{R_i}$ up to the ``approximate kernel" and a co-exact correction term, and have obtained a sequence $(\beta_i, \omega_i) \in \Omega^2(M_{R_i}) \times \Omega^4(M_{R_i})$ such that $\vp_i := d\beta_i + \vp_0$ and
    \[
    \partial_t \vp_i + \Delta_{\vp_i}\vp_i - \cL_{V(\vp_i)} \vp_i = \delta \omega_i + \psi_i, \qquad t \in [0, \epsilon]
    \]
where $\psi_i$ is an approximate kernel element and $\epsilon >0$ does not depend on $i$. In \S \ref{subsection existence everywhere}, we take the limits 
\[
    \beta_i \xrightarrow{H^k_{loc}} \beta,\; \omega_i \xrightarrow{H^k_{loc}} \omega, \; \psi_i \xrightarrow{H^k_{loc}} \psi,
\]
as $i \rightarrow \infty$. It can be shown that $\psi$ is $L_{\lambda}^2$-orthogonal to $\mathrm{Im}(d + \delta)$. By Hodge theory for weighted spaces on AC manifolds (see \cite{KarigiannisLotay}, \cite{Lockhart1987}), within $\mathrm{Im}(d + \delta)$, the exact and co-exact forms are $L^2$-orthogonal. Thus, if $\vp := d\beta + \vp_0$
    \[
    \partial_t \vp + \Delta_{\vp}\vp - \cL_{V(\vp)} \vp - \delta \omega - \psi = 0, \qquad t \in [0, \epsilon],
    \]
    then $\vp$ is a solution to Laplacian-DeTurck flow,
    \[
    \partial_t \vp + \Delta_{\vp}\vp - \cL_{V(\vp)} \vp = 0, \qquad t \in [0, \epsilon]
    \]
where the existence time $\epsilon > 0$ depends \textit{only} on the geometry of the initial data $\vp_0$. This solution can be transformed into a solution to Laplacian flow after modification by diffeomorphisms. 

\subsubsection{Outline of \S \ref{section uniqueness}}

Since the solution obtained in \S \ref{section preliminaries} - \S \ref{section short time existence IVT argument} is merely the limit of a subsequence, it is not necessarily unique. Its uniqueness among closed Laplacian flows will be established via the energy methods of Kotschwar in \cite{kotschwarEnergy}. In \cite{LotayWeiLaplacianFlow}, Lotay and Wei define an energy quantity for Laplacian flow in the style of \cite{kotschwarEnergy} which was sufficient to establish forward uniqueness in the compact case under a bounded curvature condition. Given two solutions $\vp(t), \tvp(t)$ and an auxiliary function $\eta$, a slight modification of their energy,
\[
\cE (t) = \int_M \big( |\vp - \tvp|^2 + |g-\tg|^2 + |\na - \tna|^2 + |T - \widetilde{T}|^2 + |\na T - \tna \widetilde{T}|^2 + |\Rm - \widetilde{\Rm}|^2 \big) e^{-\eta} d\mu,
\]
can be applied in the non-compact setting. 

The quantities in the integrand form a virtually parabolic PDE-ODE system which can be shown to imply that 
\[
\cE'(t) \le C \cE (t).
\]
Thus, if $\vp(0) = \tvp(0)$, then $\cE(t) \equiv \cE(0) = 0$; that is, the solution is unique. Note that this uniqueness result holds in the presence of bounded curvature only, and does not depend on the AC condition. 

\subsubsection{Outline of \S \ref{section long time behavior}} 

The argument of \S \ref{section preliminaries} - \S \ref{section short time existence IVT argument} implies that there is an $\eps > 0$, which depends in a complex way on the geometry of the initial condition $\vp_0$, such that a solution to Laplacian flow exists for $t \in [0, \eps]$. However, the flow may continue to exist beyond $t = \eps$, which raises two related questions: 
\begin{itemize}
    \item[(a)] Must the flow continue to satisfy the AC condition?
    \item[(b)] Can the existence time be bounded from below in a more explicit manner?
\end{itemize}
In this section, both of these questions are answered affirmatively. 

To answer (a), we assume a curvature bound $|\Rm| \le K_0$ and a maximal time $T$ up to which the AC condition is preserved. Then, for arbitrary $T' < T$, the short-time existence argument of \S \ref{section preliminaries} - \S \ref{section short time existence IVT argument} can be reiterated with initial condition $\vp(T')$ to continue the flow (as an AC flow) for a short time interval $[T', T' + \eps']$. It can be proved that $\eps'$ can be bounded below depending \textit{only} on $K_0$ and $T$. Setting $T' := T - \tfrac{\eps'}{2}$ contradicts the maximality of $T$, and we conclude that as long as the curvature remains bounded, the flow remains AC.

The answer to (b) follows from the preservation of the AC condition. Since the curvature achieves an interior maximum, as long as the AC condition is preserved, the non-compact maximum principle can be used as in \cite[\S 4]{LotayWeiLaplacianFlow} to prove the doubling time estimate, Corollary \ref{corollary doubling time estimate}. This estimate gives an explicit lower bound for the existence time in terms of the curvature. 

\subsubsection{Outline of Appendix \ref{section linear parabolic appdx}}

Appendix \ref{section linear parabolic appdx} collects a number of PDE results that are used crucially throughout the paper. This collection is a mix of restatements of standard results in elliptic and parabolic PDE and estimates adapted to our particular setting, which require more work to establish. 

Of particular note are the Schauder estimates up to the boundary for elliptic and parabolic equations in weighted spaces. These estimates are proved on the interior using a standard scaling argument originally used by Bartnik in \cite{BartnikMass} and up to the boundary using new arguments, depending on the boundary conditions. There are two theorems of this variety: a) Proposition \ref{proposition weighted schauder estimates} proves apriori estimates up to the boundary for solutions to parabolic equations with homogeneous boundary conditions; and b) Proposition \ref{proposition lopatinski shapiro condition} proves apriori estimates for solutions to certain elliptic equations which satisfy the Lopatinski-Shapiro condition at the boundary. In particular, the latter proposition and its corollary are used to establish the regularity of the left- and right-eigenbases corresponding to the singular value decomposition of the inverse of the Hodge Laplacian. 

\subsubsection{Outline of Appendix \ref{section estimates potential}}

This appendix, which addresses the issue described in \S \ref{subsection solving potential} around the choice of a potential 2-form, is the technical heart of the paper.  To choose an ``estimable" potential for an exact form, one solves a second-order elliptic boundary value problem for the \textit{Dirichlet potential} $\phi_D$ of a form $\eta$ defined on $N$, a compact domain with boundary. 
\begin{align}\label{equation intro dirichlet BVP}
    \begin{aligned}
    \Delta \phi_D &= \eta \\
    \bt \phi_D = 0 \;\;\;&\text{ and }\;\;\; \bt \de \phi_D = 0
    \end{aligned}
    &&
    \begin{aligned}
        &\text{ on }N\\
     &\text{ on }\dd N. 
    \end{aligned}
\end{align}
If $d\xi$ is the exact part of $\eta$ in the Hodge-Morrey decomposition, the potential $\xi$ may be obtained by co-differentiating, i.e. $\xi := \de \phi_D$. Thus, apriori estimates for the solutions of \eqref{equation intro dirichlet BVP} become estimates on the potential $\xi$. 

On each domain $M_{R_i}$, the boundary value problem \eqref{equation intro dirichlet BVP} is solvable up to a finite dimensional kernel of harmonic fields (closed and co-closed forms) subject to a certain Dirichlet boundary condition. However, it does not suffice to solve the equation perpendicular to this kernel--as $R_i \rightarrow \infty$, the norm $\|\De^{-1}\|$ may grow without bound. Intuitively, the forms responsible for the blowup of $\|\De^{-1}\|$ live in a finite dimensional space and correspond to ``small eigenvalues" of $\De$. Inspired by a similar idea used in gluing constructions (see, for example \cite{KapouleasCMC1} and \cite{BreinerKapouleasCMC1}), we only invert $\De$ up to an ``approximate kernel" that collects these forms corresponding to small eigenvalues, in addition to the Dirichlet harmonic fields. 

This simple picture is greatly complicated by the fact that in the weighted Sobolev spaces, the Hodge Laplacian $\De$ is \textit{not} self-adjoint. Thus, one is forced to consider the spectrum of the operator $\De^*\De$, the Hodge bi-Laplacian, where $\De^*$ is the adjoint of $\De$ in the weighted spaces. These eigenvalues and eigenforms give a singular value decomposition for a Green's operator corresponding to $\De$, with singular values $\{\sg_i\}$ and left- and right-eigenbases of forms $\{\phi_i\}$ and $\{\psi_i\}$ such that 
\[
\De \psi_i = \sg_i^{-1} \phi_i. 
\]
The details of this singular value decomposition can be found in \S \ref{section SVD}. 

Now, one must determine which singular values are ``small" in order to collect eigenforms into the approximate kernel. This cutoff limit is chosen by comparing to the bottom of the spectrum of $\De^*\De$ on the non-compact AC manifold $(M,g_0)$, whose Fredholm theory for weighted spaces is described in \cite{KarigiannisLotay} and \cite{Lockhart1987}. This is similar to the way that in gluing problems, as the small parameter goes to zero, the spectra of the Jacobi operator on the various pieces approach that of the larger spaces of which they are a part (e.g. the way in which the catenoidal necks in \cite{BreinerKapouleasCMC1} ``approach" the catenoid as the parameter $\tau \rightarrow 0$).

In contrast to the case of gluing, however, we have little control over the approximate kernel. It will always be finite-dimensional, but this dimension is uncontrolled as $R_i \rightarrow \infty$. Furthermore, we do not have a geometric characterization of the forms in the kernel which can help us control their behavior. Thus, we cannot employ a ``geometric principle" \textit{\`{a} la} Kapouleas in \cite{KapouleasCMC1} to counteract the influence of the approximate kernel. Instead, \S \ref{section est approx ker} examines the limiting behavior of bounded sequences of forms in the approximate kernel to show that they interact nicely with the Hodge decomposition on the full non-compact manifold $(M,g_0)$. In particular, Corollary \ref{corollary small eigenvalues become harmonic} shows that the $H^k_{loc}$-limit of a $L^2_\la$-bounded sequence of forms in the approximate kernel will lie in the $L^2_\la$-orthogonal complement to the Dirac operator, $d + \de$. 

Proposition \ref{proposition boundedness of restricted forms} establishes the uniform boundedness of the Green's operator for the Dirichlet potential, restricted to the $L^2_\la$-orthogonal complement of the approximate kernel.  This fact, in addition to the limiting properties of the approximate kernel elements mentioned above, shows that our choice of gauge has the requisite properties for the Hodge-theoretic arguments of Proposition \ref{proposition solution on whole space up to coexact} and Theorem \ref{theorem solution is exact solution} to go through.

\subsection{Open questions and further directions}

The results of this paper and the techniques employed to prove them raise a number of interesting questions and possibilities for further inquiry. 

Many elements of the method of proof are not specific to the $G_2$-setting, and could conceivably be applied to a number of geometric problems. The central issue is that on each domain, the demand that the solution be exact introduces a gauge indeterminacy, which in turn necessitates a judicious gauge selection. While the well-known Hodge theory illuminates the case of the exterior derivative $d$, it is conceivable that its role can be taken by any operator with a similar Fredholm theory on weighted spaces. Thus, while a more obvious next step might be to investigate other flows of exact or co-exact forms, the technique might be profitably adapted to geometric PDEs with similar gauge indeterminacies on an AC (or similar) background.

Similar problems for other flows (e.g. \cite{LMCFwithsings} and \cite{SuACLMCF} for the Lagrangian mean curvature flow) have been dispensed with by using heat kernel methods to obtain crucial estimates on whole non-compact background at once. To what extent can the result of Theorem \ref{theorem main theorem} be proved using similar methods? In particular, is there an underlying equivalence between the two approaches? What are the advantages and disadvantages of each?

While it is not proved explicitly in the paper, it is possible to see the continuity of the solutions obtained in Theorem \ref{theorem main theorem} under perturbation of the initial condition in the weighted spaces. However, it would be interesting to investigate the continuity properties of these solutions under larger perturbations of the initial conditions: for example, under the deformation of the link of the asymptotic cone to $\vp_0$. 

The dynamic stability of torsion-free $G_2$ structures on compact manifolds along the Laplacian flow was proved by Lotay and Wei in \cite{LotayWeiStability}. It is natural to investigate if the same property holds for torsion-free $G_2$ structures on non-compact backgrounds, and in particular for AC $G_2$-structures, which class includes the important examples of Bryant and Salamon in \cite{BryantSalamonExceptionalHolonomy}. 

Finally, a natural next step is to try to prove the analogues of Theorem \ref{theorem main theorem} for other asymptotic geometries (e.g. asymptotically locally conical, asymptotically cylindrical, and conically singular) by using largely the same strategy and techniques. The present proof for AC fundamentally relies on the existence of a well-developed Fredholm theory for the relevant operators, $d + \de$ and $\De$, on weighted spaces, and not necessarily on asymptotic conicality in particular. It would be particularly interesting to study the ``mirror image" of the AC setting and to see if the techniques developed in this paper could be used to define a Laplacian flow of $G_2$-structures with conical singularities.  

\subsection*{Acknowledgments}

The author was supported by the National Science Foundation through Mathematical Sciences Postdoctoral Research Fellowship DMS-2303187. The author would also like to thank Mark Stern, Jason Lotay, Sigurd Angenent, Mark Haskins and Robert Bryant for valuable discussions.  

\subsubsection*{AI Statement} Literature search was sped up by AI, particularly the AI summary given by Google Search. The creation BibTeX entries was partly automated. Style suggestions for this acknowledgment section were received from AI. In addition, an unsuccessful approach to a particular proposition was eliminated by using AI to estimate difficult asymptotic integrals. 

Outside of the minor uses stated above, AI was not involved in the writing of this paper on the level of ideas, computations, or even prose. For better or worse, it is a product of the human mind. 

\section{Preliminaries}\label{section preliminaries}

\subsection{$G_2$-structures}

Suppose $M$ is a smooth $7$-manifold, possibly with boundary. If $M$ is orientable and spinnable, then $M$ admits a $G_2$-structure, which is a reduction of the structure group of the frame bundle to the group $G_2\subset GL(7)$. This $G_2$-structure is equivalent to a choice of ``non-degenerate" $3$-form $\vp \in \Omega^3(M)$, called a $G_2$-form. Associated to a $G_2$-form, there must be a family of isomorphisms $\io: T_x M \to \bR^7$, smoothly varying in $x \in M$, such that $\io^* \vp_0 = \vp_x$, where $\vp_0$ is the ``standard" $3$-form on $\bR^7$. The standard $3$-form is defined as follows: associate $\bR^7$ to the imaginary octonions $\Im(\bO)$ and let
\[
\vp_0(u,v,w) := \langle uv, w\rangle,
\]
where $\langle \cdot, \cdot \rangle$ is the Euclidean inner product and $uv$ is multiplication of $u$ and $v$ in the octonions.

A $G_2$-structure $\vp$ on $M$ induces a metric $g_{\vp}$, which is defined by the formula
\[
g_{\vp}(u,v) \Vol_{\vp} = \tfrac{1}{6}\io_u \vp \wedge \io_v \vp \wedge \vp,
\]
which holds for any vector fields $u,v$ on $M$. Furthermore, a Hodge star $\star_{\vp}$ and thus a co-differential $\de := (-1)^k \star_\vp d \star_\vp$ can be defined via the metric $g_\vp$ and the orientation on $M$. The torsion of a $G_2$-structure is given by a two-tensor $T_{ij}$ satisfying
\begin{equation}\label{equation nabla phi and torsion}\na_i \vp_{jkl} = T_{i}^{\;m}(\star_{\vp} \vp)_{mjkl}.\end{equation}

Henceforth, every $G_2$-structure in this paper will be assumed to be closed, i.e. the $G_2$-form satisfies $d\vp = 0$. If the $G_2$-structure is closed, then $T_{ij}$ is a two-form which satisfies the relation
\begin{equation}\label{equation R = -|T|^2}
    R = -|T|^2,
\end{equation}
where $R$ is the scalar curvature of the metric $g_{\vp}$. 


The following lemma from \cite{HKP} records scaling and pull-back properties of geometric quantities associated to a $G_2$-structure $\vp$.

\begin{lem}\cite[Lemma 2.1]{HKP}\label{lemma scaling of metric}
If $\vp$ is a $G_2$-structure on $M$, $\Psi:M \rightarrow M$ is a diffeomorphism, and $\lambda > 0$ is a constant, consider the pulled-back and rescaled 3-form $\lambda \Psi^* \vp$, which is also a $G_2$-structure on $M$. Then, the corresponding metric $g_{\lambda \Psi^* \vp}$ is given by $\lambda^\frac{2}{3}\Psi^* g_{\vp}$. Moreover, 
the following identities are satisfied:
\begin{enumerate}
\item \textit{(Hodge Laplacian)}
\begin{equation}\label{equation scaling hodge laplacian}
    \Delta_{\la \Psi^* \vp}\la \Psi^* \vp = \la^{\frac{1}{3}}  \Psi^* (\Delta_{\vp} \vp)
\end{equation}
\item \textit{(Hodge Dual)}
\begin{equation}
    \star_{\la \Psi^* \vp} (\la \Psi^* \vp) = \la^{\frac{4}{3}} \Psi^* (\star_\vp \vp)
\end{equation}
\item \textit{(Torsion)}
\begin{equation}
    T_{\la \Psi^* \vp} = \la^{\frac{1}{3}} \Psi^* T_{\vp}
\end{equation}
where $T$ denotes the torsion tensor regarded as a $2$-tensor. 
\end{enumerate}
\end{lem}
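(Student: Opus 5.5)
The plan is to split the statement into two independent pieces: naturality under the diffeomorphism $\Psi$, and homogeneity under the constant rescaling $\la$. All quantities here — the metric, the Hodge star, the Hodge Laplacian, and the torsion — are built from $\vp$ by natural, diffeomorphism-equivariant operations. Hence $g_{\Psi^*\vp} = \Psi^* g_\vp$, $\star_{\Psi^*\vp}\Psi^* = \Psi^*\star_\vp$, $\Delta_{\Psi^*\vp}\Psi^* = \Psi^*\Delta_\vp$, and $T_{\Psi^*\vp} = \Psi^* T_\vp$. For this reason it suffices to treat the case $\Psi = \mathrm{id}$ and track the powers of $\la$.

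For the metric, I would use the defining identity
\[
g_{\vp}(u,v)\,\Vol_{\vp} = \tfrac16\, \io_u\vp\wedge\io_v\vp\wedge\vp .
\]
Replacing $\vp$ by $\la\vp$ multiplies the right side by $\la^3$. Make the ansatz $g_{\la\vp} = c\, g_\vp$. Then $\Vol_{\la\vp} = c^{7/2}\Vol_\vp$, so the left side scales by $c^{9/2}$. Setting $c^{9/2} = \la^3$ gives $c = \la^{2/3}$. Because the metric is uniquely determined by the identity, this ansatz is justified; equivalently, the frame $\io$ rescales by $\la^{1/3}$, which sends $\vp_0$ to $\la\vp_0$.

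For the Hodge star on $k$-forms in dimension $7$, the scaling $g\mapsto c g$ gives $\star_{cg} = c^{(7-2k)/2}\star_g$. With $k=3$ and $c=\la^{2/3}$ this yields $\star_{\la\vp} = \la^{1/3}\star_\vp$ on $3$-forms. Therefore $\star_{\la\vp}(\la\vp) = \la^{4/3}\star_\vp\vp$.

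For the Hodge Laplacian on $k$-forms, $\Delta_{cg} = c^{-1}\Delta_g$, since $\de$ involves one inverse metric and $d$ is metric-independent. Consequently
\[
\Delta_{\la\vp}(\la\vp) = \la\cdot\la^{-2/3}\,\Delta_\vp\vp = \la^{1/3}\Delta_\vp\vp .
\]

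For the torsion, use \eqref{equation nabla phi and torsion}. The Levi-Civita connection is invariant under constant rescaling of the metric, so $\na(\la\vp) = \la\na\vp$. The right side involves $T_i^{\;m}(\star\vp)_{mjkl}$. Here $\star_{\la\vp}(\la\vp) = \la^{4/3}\star_\vp\vp$, and raising the index $m$ with $g_{\la\vp}$ contributes a factor $\la^{-2/3}$. Writing $T_{\la\vp} = a T_\vp$ as a $2$-tensor, matching coefficients gives
\[
\la = a\,\la^{-2/3}\la^{4/3},
\]
so $a = \la^{1/3}$.

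The only mild obstacle is index bookkeeping. One must be careful to say whether $T$ is regarded with both indices down, since the statement specifies "regarded as a $2$-tensor", and to remember that $\star$ scaling depends on the form degree. Beyond that, each step is a direct computation.
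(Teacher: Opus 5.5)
Your proof is correct and complete. The paper gives no argument of its own and simply cites \cite[Lemma 2.1]{HKP}; your route (reduce to $\Psi=\mathrm{id}$ by naturality, then track the powers of $\lambda$ through $g$, $\star$, $\delta$ and the torsion identity) is the standard direct verification.

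The one step worth making explicit is why the ansatz $T_{\lambda\varphi}=a\,T_\varphi$ is legitimate. It holds because $T$ is uniquely recovered from $\nabla\varphi$ via $24\,T_{in}=\nabla_i\varphi_{jkl}(\star\varphi)_n{}^{jkl}$. This formula also gives the factor $\lambda\cdot\lambda^{4/3}\cdot\lambda^{-2}=\lambda^{1/3}$ directly.
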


\subsection{Laplacian flow}

A time-dependent family $\{\vp_t\}$ of closed $G_2$-structures parametrized by $t\in [0,T)$ is said to satisfy Laplacian flow if 
\begin{equation}\label{equation laplacian flow}
     \frac{\partial \vp}{\partial t} = \Delta_{\vp} \vp,
\end{equation}
where $\Delta_{\vp} \vp$ is the Hodge Laplacian of $\vp$.

\subsubsection{Evolution equations}

The evolution of the metric $g_t$ associated to closed 3-forms $\vp_t$ flowing by Laplacian flow is given by the following equation.
\begin{equation}\label{equation evolution metrics}
         \frac{\dd g_{ij}}{\dd t} = -2 \Ric_{ij} - \frac{2}{3}|T|^2 g_{ij}- 4 T_i^{\;  l}T_{lj} := \eta_{ij},
\end{equation}
where $T_{ij}$ is the torsion of the $G_2$-structure. The evolution of the volume element is thus given by
\begin{equation}\label{equation evolution volume}
         \frac{\dd }{\dd t}d\mu =  - \frac{2}{3}R d\mu. 
\end{equation}
We record the evolution of the Christoffel symbols and the curvature tensor, as expressed in \cite{LotayWeiLaplacianFlow}.
\begin{align}
    &\frac{\dd }{\dd t}\Ga^k_{ij} = \frac12 g^{kl}(\nabla_i \eta_{jl} + \nabla_j \eta_{il} - \nabla_l \eta_{ij}),
    \label{equation evolution christoffel symbols} \\
    &\frac{\dd }{\dd t} T = \Delta T + \Rm * T + \Rm * T * (\star \vp) + \nabla T * T * \vp + T * T*T,
   \label{equation evolution torsion}\\
   &\frac{\dd }{\dd t} \Rm = \Delta \Rm + \Rm * \Rm + \Rm * T * T + \nabla^2 T * T + \nabla T * \nabla T.
   \label{equation evolution curvature}
\end{align}
Note that in the last two equations, we write the right-hand side schematically, where $*$ indicates a metric contraction of tensors. In practice, the particular contraction does not particularly matter, and the schematic form holds all the information we will eventually need.

The following quick consequence of the metric evolution equation \eqref{equation evolution metrics} will be useful later on. 

\begin{cor}
    Given a flow $\vp(t)$ evolving by \eqref{equation laplacian flow} for $t \in [0,T]$ for which the associated flow of metrics $g(t)$ has curvature tensor bounded in norm by $K_0 > 0$, there exists a constant $c(K_0) > 0$ such that 
    \begin{equation}\label{equation compare metrics bdd K}
    e^{-c(K_0)t} g(0) \le g(t) \le e^{c(K_0)t} g(0).
    \end{equation}
\end{cor}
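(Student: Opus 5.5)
The plan is to bound the velocity $\eta_{ij} = \dd_t g_{ij}$ from \eqref{equation evolution metrics} pointwise by a constant multiple of $g$, using only the curvature bound $|\Rm| \le K_0$, and then integrate in time. The bound $|\Ric| \le C K_0$ follows directly from $|\Rm|\le K_0$. For the torsion terms I will use the closed-structure identity \eqref{equation R = -|T|^2}: since $R = -|T|^2$ and $|R| \le C|\Rm| \le CK_0$, we get $|T|^2 \le C K_0$. Consequently both $\tfrac23 |T|^2 g_{ij}$ and $4T_i^{\;l}T_{lj}$ have norm at most $C K_0$, where for the latter I use $|T_i^{\;l}T_{lj}| \le |T|^2$ (Cauchy–Schwarz). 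Altogether $|\eta|_{g(t)} \le c(K_0)$ for a constant $c(K_0)$ that is linear in $K_0$ and depends only on dimensional constants.

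Next, I fix a point $x\in M$ and a nonzero tangent vector $v \in T_xM$, and consider $f(t) := g(t)(v,v) > 0$. Then
\[
|f'(t)| = |\eta(t)(v,v)| \le |\eta|_{g(t)}\, g(t)(v,v) \le c(K_0) f(t),
\]
where the middle inequality holds because the operator norm of a symmetric 2-tensor with respect to $g(t)$ is dominated by its full $g(t)$-norm. Hence $|\tfrac{d}{dt}\log f| \le c(K_0)$, and integrating from $0$ to $t$ gives $e^{-c(K_0)t} f(0) \le f(t) \le e^{c(K_0)t} f(0)$. Since $x$ and $v$ are arbitrary, this is \eqref{equation compare metrics bdd K}.

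There is no serious obstacle. The one point that needs care is that the norm bound on $\eta$ is taken with respect to the evolving metric $g(t)$, not $g(0)$. This is exactly why I work with $\log f$, which requires no a priori comparison between $g(t)$ and $g(0)$. The argument is also pointwise, so completeness and non-compactness of $M$ play no role. The only geometric input beyond the curvature hypothesis is closedness of $\vp$, which is what makes \eqref{equation R = -|T|^2} available to control the torsion terms by curvature.
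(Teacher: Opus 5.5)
Your proof is correct and follows the paper's argument. The paper likewise uses $|T|^2=-R$ to get $|\eta|_{g(t)}\le CK_0$ and then cites Shi's Gr\"onwall argument, which you have written out explicitly by working with $\log g(t)(v,v)$.
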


\begin{proof}
    First note that the identity $|T|^2 = -R$ implies that $\eta$ (defined in \eqref{equation evolution metrics}) can be bounded by
    \[
    |\eta|_{g(t)} \le C|\Rm|_{g(t)} \le CK_0.
    \]
    Then, the argument of \cite[p. 225]{ShiDeforming} using Gr\"onwall's inequality can be applied to prove \eqref{equation compare metrics bdd K}.
\end{proof}

\subsubsection{Laplacian-DeTurck flow}

Let $\vp$ and $\tvp$ be two closed, cohomologous $G_2$-structures. We will define the Laplacian-DeTurck vector field $V(\vp)$ against the background $\tvp$ as in \cite{BryantXu} and \cite{LotayWeiStability}. Let $\nabla$ and $\tilde \nabla$ be the Levi-Civita connections associated to $g_{\vp} (:= g)$ and $g_{\tvp} (:= \tg)$ respectively. Consider $S := \nabla - \tilde \nabla \in \Ga (TM \otimes S^2T^*M)$. Let
\begin{equation}\label{equation def deturck}
V(\vp) := - 5V_1(\vp) - V_2(\vp),    
\end{equation}
where 
\[
V_1(\vp) = \frac17 g^{pq}S^i_{pq}e_i, \;\;\; V_2(\vp) = 2A(g^{kj}S^i_{ik}e_j + 5V_1(\vp)),
\]
and $A$ is a representation theoretic constant calculated in \cite{BryantXu}. Note that when $\vp = \tvp$, $S$ vanishes and thus $V(\tvp) = 0$. By the work of \cite{BryantXu} and borrowing the notation of \cite{LotayWeiStability}, the linearization of the operator $\cA_{\tvp}(\vp) = \Delta_\vp \vp + \cL_{V(\vp)} \vp$ at $\hat \vp$ applied to a closed variation $\theta = \dd_s|_{s=0} \vp_s$, $\vp_0 = \hat \vp$ is
\[
D_\vp \cA_{\tvp}(\tvp) \theta = -\De_{\hat \vp} \theta + d\Phi (\theta),
\]
where $\Phi(\theta)$ is an algebraic linear operator on $\theta$ with coefficients depending on the torsion of $\hat \vp$. The sign on the Hodge Laplacian in this linearization reveals that the operator $\Delta_\vp \vp + \mathcal{L}_{V(\vp)}\vp$ is elliptic (in the direction of closed forms). Therefore, we can consider the Cauchy problem for the \textit{Laplacian-DeTurck flow}, a parabolic flow of 3-forms on $M$.
\begin{equation}\label{equation laplacian deturck boundary}
    \left\{
    \begin{array}{ll}
      \dd_t \vp &=   \Delta_{\vp} \vp + \mathcal{L}_{V(\vp)}\vp\\
      \vp(0) &= \vp_0
    \end{array}
  \right.
\end{equation}
where $t \in [0,T)$ for some $T>0$ and where $M$ may have non-empty boundary $\dd M$. It can be noted that Cartan's formula implies that $\mathcal{L}_{V(\vp)}\vp = d(V\lrcorner \vp)$. This fact, combined with the closedness of $\vp$ implies that the right-hand side of the first equation in \eqref{equation laplacian deturck boundary} is an exact form.

\subsection{Asymptotically Conical $G_2$-structures}

In this paper, we will consider flows of asymptotically conical (AC) $G_2$-structures. In this section, we recall basic facts about AC $G_2$-structures following the exposition of \cite{HKP}, where proofs of the following statements may be found.

Let $(\Sigma^6, g_\Sg)$ be a closed Riemannian $6$-manifold, and for $a \ge 0$, let $\cC^\Sg_a := (a, \infty) \times \Sg$ denote the product of smooth manifolds. A \textit{closed $G_2$-cone} over $\Sg$ is the $7$-manifold $(\cC^\Sg_0, \vp_C)$ equipped with a closed $G_2$-structure $\vp_C$, such that the associated metric is $g_{C} = dr^2 + r^2g_\Sg$ and such that $\rho_{\la}^* \vp_C = \la^3 \vp_C$, where $\rho_{\lambda}: \cC^\Sg_0 \rightarrow \cC^\Sg_0$ is the \textit{dilation map} given by $\rho_\lambda(r, \sigma) := (\lambda r, \sg)$ for $\lambda > 0$.
\begin{defn}\label{definition asymptotically conical}
Let $V$ be an end of $M$. We say that $(M, \vp)$ is \emph{asymptotic to the $G_2$-cone $(\cC^\Sg_0, \vp_C)$ along $V$} if, for some $r_0 >0$, there is a diffeomorphism $\Psi : \cC^\Sg_{r_0} \rightarrow V$ such that $\la^{-3} \rho_\la^* \Psi^* \vp \rightarrow \vp_C$ as $\la \rightarrow \infty$ in $C^3_{\text{loc}}(\cC^\Sg_{r_0}, g_C)$.
\end{defn}

\begin{lem}[\mbox{\cite[Lemma 3.1, Corollary 3.2, Corollary 3.3]{HKP}}]\label{lemma decay estimates on AC G2 forms}
Let $(M, g)$ be a Riemannian manifold, $V$ an end of $M$, and $\Psi : \cC^\Sg_a \rightarrow V$ a diffeomorphism for some $a > 0$. For any nonnegative integer $k$ we say that the property $(AC_k)$ holds (i.e. that $\vp$ converges to $\vp_C$ to order $k$) if
\begin{equation}\label{equation ac condition order k}
    \lim_{\la \rightarrow \infty} \la^{-3} \rho_\la^* \Psi^* \vp = \vp_C \;\; \text{ in } \;\;C^k_{loc}(\cC^\Sg_0, g_C). \tag{$AC_k$} 
\end{equation}
Then, for $b > a$,
\begin{itemize}
    \item[(a)]\eqref{equation ac condition order k} implies that
    \[\lim_{b \rightarrow \infty} b^l || \nabla^{l}_{g_C} (\Psi^*\vp - \vp_C)||_{C^0(\cC^\Sg_b, g_C)} = 0
    \]
    for each $l =0,1,2,\ldots, k$.
    \item[(b)] \eqref{equation ac condition order k} implies that 
    \[ \lim_{\la \rightarrow \infty} \la^{-2} \rho_{\la}^{*} \Psi^* g = g_{C} \;\; \text{ in } \;\;C^k_{loc}(\cC^\Sg_0, g_C),\]
    which holds if and only if
    \[\lim_{b \rightarrow \infty} b^l || \nabla^{l}_{g_C} (\Psi^*g - g_C)||_{C^0(\cC^\Sg_b, g_C)} = 0
    \]
    for each $l =0,1,2,\ldots, k$.
    \item[(c)] If ($AC_0$) holds, then the metrics $\Psi^*g$ and $g_C$ are uniformly equivalent on $\overline{\cC^\Sg_b}$ for any $b>a$, and, for each $\eps >0$, there exists $b > a$ such that, for $(r, \sg) \in \cC^\Sg_b$, 
    \begin{equation}
        (1- \eps)|r-b| \le \bar{r}_b(r, \sg) \le (1+ \eps) |r-b|,
    \end{equation}
    where $\bar{r}_b(x) := d_{\Psi^*g}(x, \partial \cC^\Sg_b)$.
    \item[(d)] If ($AC_2$) holds, then for any $b >a$, there exists a constant $K = K(b, g_{\Sigma}) > 0$ such that
    \begin{equation}
        \sup_{x \in \cC^\Sg_b} (\bar{r}_b^2(x) + 1) |\Rm(\Psi^*g)|_{\Psi^*g}(x) \le K.
    \end{equation}
\end{itemize}
\end{lem}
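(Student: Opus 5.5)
The plan is to obtain all four parts from a single rescaling principle. The hypothesis $(AC_k)$ says that $\vp_\la := \la^{-3}\rho_\la^*\Psi^*\vp$ converges to $\vp_C$ in $C^k$ on compact subsets of the cone. We fix the annulus $A := [1,2]\times\Sg$ and translate $C^k(A,g_C)$ closeness of $\vp_\la$ to $\vp_C$ into weighted decay on $\{r\in[\la,2\la]\}$. Since $\rho_\la^* g_C = \la^2 g_C$ and $\rho_\la^*\vp_C = \la^3\vp_C$, the dilation is a homothety of the cone. For a $3$-form $\al$ one then has
\[
|\na^l_{g_C}\al|_{g_C}(\la x) = \la^{-3-l}\,|\na^l_{g_C}(\rho_\la^*\al)|_{g_C}(x),
\]
the weight $-3$ coming from the $3$-form and $-l$ from the derivatives. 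Applying this with $\al = \Psi^*\vp-\vp_C$ gives
\[
\la^{l}\,\sup_{r\in[\la,2\la]}|\na^l_{g_C}(\Psi^*\vp-\vp_C)| = \sup_{A}|\na^l_{g_C}(\vp_\la-\vp_C)| \to 0 .
\]
Covering $\cC^\Sg_b$ by dyadic annuli $[2^j b,2^{j+1}b]$, and using $b\le r$ on each, gives part (a). Here I read $b^l$ as the weight $r^l$, equivalently via the sup over annuli; this is the intended meaning.

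For part (b), the metric is a smooth algebraic function of the positive $3$-form, $g = \mathcal{G}(\vp)$, and $\mathcal{G}$ is equivariant. Hence $\la^{-2}\rho_\la^*\Psi^* g = \mathcal{G}(\vp_\la)$, because $\mathcal{G}(c\vp)=c^{2/3}\mathcal{G}(\vp)$ by Lemma \ref{lemma scaling of metric}. Smooth dependence of $\mathcal{G}$ on a compact neighbourhood of $\vp_C(A)$ in the open set of positive forms then gives $C^k$ convergence of $\mathcal{G}(\vp_\la)$ to $g_C$ on $A$ (chain rule). The equivalence with the decay statement is the same annulus computation as in part (a), now with weight $-2$ for a symmetric $2$-tensor. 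Conversely, decay on annuli pulls back to $C^k(A)$ convergence for each compact set after rescaling.

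For part (c), $(AC_0)$ together with part (b) gives $|\Psi^*g-g_C|_{g_C}\le \eps(b)\to0$ on $\cC^\Sg_b$, so $(1-\eps)g_C\le\Psi^*g\le(1+\eps)g_C$. This yields uniform equivalence. The distance to $\{r=b\}$ in $g_C$ is exactly $r-b$: radial lines realize it, and $|dr|_{g_C}=1$ gives the lower bound. Comparing lengths of curves in the two metrics then bounds $\bar r_b$ between $(1\mp\eps)^{1/2}(r-b)$. One must check that a $\Psi^*g$-minimizing curve stays in $\cC^\Sg_b$. Since we minimize to the boundary of that region, we can truncate at the first hitting time, so this is harmless.

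For part (d), the curvature of $\mathcal{G}(\vp_\la)$ converges in $C^0(A)$ to $\Rm(g_C)$ by $(AC_2)$ and part (b). By scaling, $|\Rm(\la^{-2}\rho_\la^*g)| = \la^2|\Rm(g)|\circ\rho_\la$. Hence $r^2|\Rm(\Psi^*g)|\le C$ on $r\ge b_0$ for a $C$ depending on $\sup_A|\Rm_{g_C}|$, which is controlled by $g_\Sg$. Part (c) gives $\bar r_b\le 2r$ for large $r$. On the compact region $b\le r\le b_0$ the quantity is bounded by continuity.

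The main obstacle is bookkeeping rather than any single hard step. Specifically, one has to track the scaling weights of forms and tensors correctly and justify the smooth-in-$C^k$ dependence of $g$ and $\Rm$ on $\vp$ uniformly on a compact neighbourhood. I would handle this by noting that the positive forms form an open orbit $GL(7)/G_2$ and restricting to a compact subset.
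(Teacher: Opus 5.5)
Your proposal is correct and is essentially the standard rescaling argument behind the cited result; the paper itself gives no proof and defers to the cited source. In that argument, the homothety $\rho_\lambda$ of $(\mathcal{C}^\Sigma_0,g_C)$ converts $C^k$ convergence on a fixed annulus into weighted decay, the metric and curvature are handled through the algebraic, scaling-equivariant dependence $g=g_\varphi$, and (c), (d) follow from comparing curve lengths together with compactness of $[b,b_0]\times\Sigma$.

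One step in (c) needs tidying. Uniform equivalence on $\overline{\mathcal{C}^\Sigma_b}$ for an \emph{arbitrary} $b>a$ does not follow from $|\Psi^*g-g_C|_{g_C}\le\epsilon$ alone, since that bound only holds for $r\ge b_0$. You also need the compactness argument on $[b,b_0]\times\Sigma$, which you already use in (d). Note too that your $K$ in (d) necessarily depends on the rate of convergence, not just on $(b,g_\Sigma)$.
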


It will be necessary to apply the Fredholm theory on non-compact manifolds developed by Lockhart and McOwen in \cite{LockhartMcOwen1985} and \cite{Lockhart1987}. Thus, it must be shown that asymptotically conical $G_2$-structures defined in the manner of Definition \ref{definition asymptotically conical} give rise to metrics that are ``admissible" in the sense of Lockhart in \cite{Lockhart1987}. 

A slight complication is that all admissible metrics in \cite{Lockhart1987} are smooth and satisfy \eqref{equation ac condition order k} for all $k \in \bN$, and therefore initial data of lower regularity will not be admissible. However, given an AC $G_2$-structure that satisfies \eqref{equation ac condition order k} for $k > 2$, we can mollify the initial condition and obtain an admissible $G_2$-structure. As the mollifier approaches the delta function, the resulting flows will converge in the local H\"older/Sobolev topology to a flow starting at the original initial condition. 

Before admissibility, the asymptotic translation invariance of a metric must first be defined. 

\begin{defn}\label{definition asymptotically translation invariant}\cite[Definition 2.1]{Lockhart1987}
Suppose $h_\infty$ is a translation invariant metric with covariant derivative $\na_{h_\infty}$. A metric $h$ is asymptotic to $h_\infty$ if for each $k \in \bN$
\begin{equation}\label{equation asymp trans invariance}
    \lim_{z \rightarrow \infty} \sup_{\sg \in \Sg} \big\| \na_{h_\infty}^k h (\sg, z) -  \na_{h_\infty}^k h_\infty (\sg) \big\|_{h_\infty} = 0.
\end{equation}
\end{defn}

Note that metrics that are asymptotic to cylindrical metrics are asymptotically translation invariant. Cylindrical $G_2$-structures are of the form 
\[
dz \wedge \om + \Om, 
\]
where $r$ is the radial variable and $\om$ and $\Om$ are a 2-form and a 3-form on $\Sg$, respectively, and give rise to metrics of the form
\[
dz^2 + g_{\Sg}. 
\]
Conical $G_2$-structures are of the form 
\[
\vp_C :=  dr \wedge r^2 \om + r^3 \Om, 
\]
with corresponding metrics
\[
g_{C} = dr^2 + r^2 g_{\Sg}. 
\]
The conformally changed $G_2$-structure $r^{-3} \vp_C$ becomes the cylindrical structure
\begin{align*}
    r^{-3} \vp_C &= r^{-1} dr \wedge \om + \Om \\
    &= dz \wedge \om + \Om,
\end{align*}
after the change of variables $z = \log r$. 
\begin{lem}
    If $\vp$ is asymptotic to $\vp_C$ and satisfies \eqref{equation ac condition order k} for all $k \in \bN$, and we denote by $h$ and $h_C$ the metrics associated to $r^{-3}\vp$ and $r^{-3}\vp_C$, then $h$ is asymptotic to the translation invariant metric $h_C$ in the sense of Definition \ref{definition asymptotically translation invariant}. 
\end{lem}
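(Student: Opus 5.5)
The plan is to convert the scale-invariant convergence in \eqref{equation ac condition order k} into uniform decay of derivatives on the cylinder. Pull everything back by $\Psi$ and work on the end $\cC^\Sg_{r_0}$. Use the coordinate $z = \log r$, which identifies $\cC^\Sg_{r_0}$ with $(\log r_0,\infty)\times\Sg$. The dilation $\rho_\la$ then becomes the translation $z \mapsto z + \log \la$.

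\textbf{Step 1 (metrics rather than forms).} By Lemma \ref{lemma scaling of metric}, scaling a $G_2$-structure by the constant $\la^{-3}$ scales its metric by $\la^{-2}$. The same formula holds pointwise for the non-constant factor $r^{-3}$, because $g_\vp$ depends algebraically and pointwise on $\vp$, so $g_{f\vp} = f^{2/3} g_\vp$. Hence $h = r^{-2}\Psi^*g$ and $h_C = r^{-2} g_C = dz^2 + g_\Sg$. Since $h_C$ is a product metric, it is translation invariant in $z$.

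\textbf{Step 2 (decay of the difference).} By Lemma \ref{lemma decay estimates on AC G2 forms}(b), applied for every $k$, we have
\[
r^l\,|\na^l_{g_C}(\Psi^*g - g_C)|_{g_C} \to 0 \quad \text{as } r\to\infty, \text{ for each } l.
\]
Note that $\na_{g_C}$ and $\na_{h_C}$ differ by a tensor built from $dr/r$. Indeed, $h_C$ is a conformal rescaling of $g_C$ by $r^{-2}$, and the change of Christoffel symbols is $S = -(dr/r)\otimes \mathrm{id} - \cdots + g_C\otimes \frac{\na r}{r}$. Measured in $h_C$, this tensor has bounded norm: $|dr/r|_{h_C} = |dz|_{h_C} = 1$.

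Also, a $(0,2+l)$-tensor $A$ satisfies $|A|_{h_C} = r^{2+l}|A|_{g_C}$. Taking $A = \na^l(\Psi^*g - g_C)$ and dividing by $r^2$ gives
\[
|\na^l_{g_C}(h-h_C)|_{h_C} = r^{l}\,|\na^l_{g_C}(\Psi^*g-g_C)|_{g_C} \to 0 .
\]
I also have to handle the derivatives of the conformal factor $r^{-2}$. These produce terms of the form $(dr/r)^{\otimes j}\otimes \na^{l-j}(\Psi^*g-g_C)\cdot r^{-2}$, which are likewise $o(1)$ in $h_C$-norm.

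\textbf{Step 3 (switching connections).} Expand $\na_{h_C}^k = (\na_{g_C} + S)^k$. Here $S$ and all its $\na_{h_C}$-derivatives are bounded in $h_C$. This holds because $S$ is built from $dz$, $h_C$ and $\pa_z$, all of which are $z$-translation invariant, so its $h_C$-norms are constant in $z$. The expansion therefore expresses $\na_{h_C}^k(h-h_C)$ as a finite sum of contractions of bounded tensors with $\na_{g_C}^j(h-h_C)$ for $j\le k$. Each such term tends to $0$ uniformly in $\sg$ as $z\to\infty$.

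Finally, $\na^k_{h_C} h_C = 0$ for $k\ge1$, and the $k=0$ term is handled by Step 2 directly. This gives exactly \eqref{equation asymp trans invariance}.

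The main obstacle is the bookkeeping in Steps 2–3. The goal there is to verify that every power of $r$ cancels, so that only scale-invariant quantities $r^l\na^l$ appear. These are precisely the quantities controlled by Lemma \ref{lemma decay estimates on AC G2 forms}. The cleanest route is to observe that the pullback of $h$ under the translation $z\mapsto z+\log\la$ equals $r^{-2}\la^{-2}\rho_\la^*\Psi^*g$. By \eqref{equation ac condition order k} and Lemma \ref{lemma decay estimates on AC G2 forms}(b), this converges in $C^k_{loc}$ to $r^{-2}g_C = h_C$. I will then work on the fixed compact slab $\{0\le z\le1\}\times\Sg$, where the norms of $h_C$ and $g_C$ are uniformly equivalent. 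On that slab, $C^k$ convergence translates directly into \eqref{equation asymp trans invariance}, and this avoids the explicit $S$-expansion.
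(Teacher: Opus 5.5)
Your proof is correct and is essentially the ``direct calculation'' that the paper invokes without giving details. Your steps supply that calculation: $g_{f\vp}=f^{2/3}g_\vp$ gives $h=r^{-2}\Psi^*g$; $\rho_\la$ acts as an $h_C$-isometric translation in $z=\log r$; so the $C^k_{loc}$ convergence $\la^{-2}\rho_\la^*\Psi^*g\to g_C$ on a fixed slab transports to uniform decay of $\na^k_{h_C}(h-h_C)$ along the end. There is one harmless slip in Step 2: derivatives of $r^{-2}$ also produce Hessian-of-$r$ terms, not only powers of $dr/r$, but these are still $r^{-2}$ times $h_C$-bounded tensors, so the conclusion is unaffected.
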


\begin{proof}
    The equivalence of the condition \eqref{equation ac condition order k} and asymptotic translation invariant property \eqref{equation asymp trans invariance} follows from direct calculation.
\end{proof}

Take the conformally changed $G_2$-structure 
\[
r^{-3} \vp_C = dz \wedge \om + \Om,
\]
and consider the mollification
\[
(\psi_C)_\eps := dz \wedge (\phi_\eps * \om) + \phi_\eps * \Om,
\]
where $\phi (z,v) = \eta(z) \nu(v)$, $\eta, \sg$ are standard bumps on $\bR$ and $\bR^6$ respectively, and $\phi_\eps(z,v) = \tfrac{1}{\eps^7}\eta(\eps^{-1}z) \nu(\eps^{-1}\sg)$.

\begin{lem}\label{lemma infinite ACness of mollified}
    If $\vp$ is asymptotic to $\vp_C$ and to order 3 only and $\psi_\eps$ is the mollification $\phi_\eps * (r^{-3}\vp)$, let $h_\eps$ and $(h_C)_\eps$ denote the metrics associated to $\psi_\eps$ and $(\psi_C)_\eps$ respectively. Then, $h_\eps$ is asymptotic to the cylindrical metric $(h_C)_\eps$ to infinite order in the sense of \eqref{equation asymp trans invariance}.
\end{lem}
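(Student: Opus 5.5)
We work in cylindrical coordinates $z=\log r$ on the end, where $\psi := r^{-3}\Psi^*\vp$ and $\psi_C := r^{-3}\vp_C = dz\wedge\om+\Om$. The plan has two parts: first show that $\psi_\eps$ converges to $(\psi_C)_\eps$ in every $C^k$ norm as $z\to\infty$, and then pass this statement through the smooth nonlinear map from positive $3$-forms to metrics.

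\textbf{Step 1: the difference at order $3$.} By Lemma \ref{lemma decay estimates on AC G2 forms}(a), the hypothesis $(AC_3)$ gives $b^l\|\na^l_{g_C}(\Psi^*\vp-\vp_C)\|_{C^0(\cC^\Sg_b)}\to 0$ for $l\le 3$. Under the conformal change $r^{-3}$ and $z=\log r$, the weight $r^l$ attached to $\na^l_{g_C}$ is exactly what converts $g_C$-derivatives into $h_C$-derivatives of cylindrical forms, since $h_C = r^{-2}g_C$ and a $3$-form rescales by $r^{-3}$. It follows that $\psi-\psi_C\to 0$ in $C^3(h_C)$ on $\{z>Z\}$ as $Z\to\infty$.

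\textbf{Step 2: mollification gains all derivatives.} The convolution $\phi_\eps *$ is taken in the translation-invariant cylindrical coordinates, with $\eps$ fixed. Derivatives therefore fall on the kernel:
\[
\na^k_{h_C}\big(\phi_\eps*(\psi-\psi_C)\big) = (\na^k\phi_\eps)*(\psi-\psi_C),
\]
up to lower-order terms coming from the Christoffel symbols of $h_C$. These symbols are bounded and independent of $z$, because $h_C$ is translation invariant and $\Sg$ is compact.

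To make sense of the convolution in the $\Sg$ directions, we mollify in coordinate charts of a finite atlas of $\Sg$, glued with a partition of unity. Equivalently, we can use a fixed smoothing operator on $\Sg$ that commutes with $z$-translation. Since $\|\na^k\phi_\eps\|_{L^1}\le C_{k,\eps}$, we get
\[
\sup_{z>Z+\eps}\big|\na^k_{h_C}\big(\psi_\eps-(\psi_C)_\eps\big)\big| \le C_{k,\eps}\sup_{z>Z}|\psi-\psi_C| \longrightarrow 0
\]
for every $k$.

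Here is the step I expect to be the main obstacle: ensuring that the mollification in the $\Sg$ directions is well defined and commutes with $z$-translation. The identification $\phi_\eps(z,v)$ implicitly treats $\Sg$ locally as $\bR^6$, so the chart-and-partition construction above must be checked to respect this. A second point is that $(\psi_C)_\eps$ is still cylindrical, which holds because it is independent of $z$.

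\textbf{Step 3: from forms to metrics.} Smoothing can destroy positivity, so we first check it is preserved. On $\{z>Z\}$, the form $\psi_C$ is uniformly positive and the set of positive $3$-forms is open. Moreover, $(\psi_C)_\eps\to\psi_C$ as $\eps\to0$, uniformly because $\Sg$ is compact. Hence for $\eps$ small, both $(\psi_C)_\eps$ and $\psi_\eps$ lie in a fixed compact subset $K$ of the positive cone, at least for large $z$.

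The map $\vp\mapsto g_\vp$ is smooth (real-analytic) and pointwise algebraic. We write
\[
h_\eps - (h_C)_\eps = \int_0^1 Dg\big|_{(\psi_C)_\eps+s(\psi_\eps-(\psi_C)_\eps)}\big(\psi_\eps-(\psi_C)_\eps\big)\,ds.
\]
Differentiating $k$ times by the chain rule produces products of derivatives of $g$ on $K$, which are bounded, with derivatives of $\psi_\eps$ and $(\psi_C)_\eps$, which are bounded by Step 2. Each term contains at least one factor of $\na^j(\psi_\eps-(\psi_C)_\eps)$ with $j\le k$. By Step 2 every such factor tends to zero, and this gives \eqref{equation asymp trans invariance} for $h_\eps$ relative to $(h_C)_\eps$ for all $k$.
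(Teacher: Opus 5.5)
Your proposal is correct and rests on the same mechanism as the paper's proof. After mollification every derivative falls on the kernel $\phi_\eps$, so only $C^0$ decay of the difference is needed (Step 1 is stronger than required), and the Christoffel symbols of the translation-invariant background are bounded independently of $z$.

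The difference is where you apply the convolution identity. The paper writes $\partial_m(h_\eps-(h_C)_\eps)_{ij}=\int (h-h_C)_{ij}(y)\,\partial_m\phi_\eps(x-y)\,dy$, which treats $h_\eps$ as the mollified metric $\phi_\eps * h$. By definition, however, $h_\eps$ is the metric induced by the mollified form $\psi_\eps$, and $\vp\mapsto g_\vp$ is nonlinear. You instead apply the identity to the $3$-forms, where it genuinely holds. Your Step 3 then transfers the estimate to metrics using smoothness of $\vp\mapsto g_\vp$ on a compact subset of the positive cone, after checking that $\psi_\eps$ and $(\psi_C)_\eps$ remain positive for small $\eps$ and large $z$. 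This supplies exactly the step the paper leaves implicit.

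Your chart and partition-of-unity construction also makes precise the paper's informal kernel $\phi(z,v)=\eta(z)\nu(v)$, which treats $\Sigma$ as if it were $\bR^6$. The same holds for your alternative of a $z$-translation-invariant smoothing operator on $\Sigma$. In either version, the extra terms from derivatives of cutoffs and transition maps are bounded and $z$-independent, so the estimate is unaffected.
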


\begin{proof}
    Consider the covariant derivative
    \begin{align*}
        &(\na_{(h_C)_\eps})_{\dd_m} (h_\eps - (h_C)_\eps)_{ij} \\
        &\qquad = \dd_m (h_\eps - (h_C)_\eps)_{ij} - \Ga_{mi}^p(h_\eps - (h_C)_\eps)_{pj} - \Ga_{mj}^p(h_\eps - (h_C)_\eps)_{ip}
    \end{align*}
Note that the Christoffel symbols are those of the smoothed background cylinder, $(h_C)_\eps$. The last two summands can be approximated by 
\begin{align*}
    C\max_{\sg \in \Sg} \|\Ga\|_{(h_C)_\eps} \Big( \max_{y \in (z-\eps, z+ \eps)} \|h - h_C\|_{h_C} \Big), 
\end{align*}
which vanishes as $z \rightarrow \infty$ by the basic AC/ACyl condition.  The first summand can be estimated as follows.
\begin{align*}
        &\|\dd_m (h_\eps - (h_C)_\eps)_{ij}\|_{(h_C)_\eps} \\
        &\qquad = \bigg\| \int_M (h-h_C)_{ij}(y)(\dd_m)_x \phi_\eps(x-y) dy \bigg\|_{(h_C)_\eps} \\
        &\qquad \le C\|h - h_C\|_{h_C}, 
\end{align*}
which vanishes as $z \rightarrow \infty$ by the basic AC/ACyl condition. For higher derivatives, the exact same arguments may be applied, except that they will involve higher derivatives of the test function $\dd^k \phi_\eps$ and the Christoffel symbols $\dd^k \Ga$. 
\end{proof}

Finally, we check that these asymptotically cylindrical metrics satisfy Lockhart's definition of admissibility.

\begin{defn}\label{definition lockhart admissible}\cite[Definition 2.3]{Lockhart1987}
Let $g =e^{2\rho}h$, with $\rho \in C^{\infty}(M)$. The metric $g$ is admissible if there is a $C^\infty$, $\bR^+$-invariant 1-form $\theta$ on $(0,\infty) \times \Sg$ with the property that for all $k \in \bN$
\begin{equation}\label{equation admissible}
    \lim_{z\rightarrow \infty} \sup_{\sg \in \Sg} \|\na_{h}^{k+1} \rho - \na_{h}^k \theta \|_h = 0.
\end{equation}
\end{defn}

\begin{lem}
If $\vp$ is an AC $G_2$-structure in the sense of Definition \ref{definition asymptotically conical} and satisfies \eqref{equation ac condition order k} for all $k \in \bN$, then its associated metric $g_\vp$ is admissible.
\end{lem}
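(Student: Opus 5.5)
The plan is to write $g_\vp$ as a conformal multiple of an asymptotically cylindrical metric and check Definition \ref{definition lockhart admissible} directly. On the end $V \cong \cC^\Sg_{r_0}$ we use the coordinate $z = \log r$, so the end becomes $(\log r_0, \infty) \times \Sg$. We then pick a smooth positive function $r$ on $M$ that agrees with $\Psi_* r$ on the end, and set $\rho := \log r$ there, extended smoothly over the compact part. We put $h := e^{-2\rho} g_\vp = r^{-2} g_\vp$. By the scaling in Lemma \ref{lemma scaling of metric}, with $\la = r^{-3}$ treated as a function, the metric of $r^{-3}\vp$ is pointwise $r^{-2}g_\vp$; this is purely algebraic, since the map $\vp \mapsto g_\vp$ is homogeneous of degree $2/3$. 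Hence $h$ is exactly the metric associated to $r^{-3}\vp$, and by the preceding lemma $h$ is asymptotic to the translation-invariant cylindrical metric $h_C = dz^2 + g_\Sg$ in the sense of Definition \ref{definition asymptotically translation invariant}.

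Next we choose $\theta := dz$. This is a smooth 1-form on $(0,\infty)\times\Sg$, and it is invariant under translation in $z$. On the end, $\rho = \log r = z$ identically, so $d\rho = dz = \theta$. The quantity to control in \eqref{equation admissible} is therefore
\[
\na_h^{k+1}\rho - \na_h^k\theta = \na_h^k(d\rho - \theta) = 0
\]
for every $k$, once $z$ exceeds $\log r_0$. The limit in \eqref{equation admissible} is then trivially zero.

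The only real care needed is in matching conventions, not in any estimate. First, Lockhart's $\na_h^{k+1}\rho$ should be read as $\na_h^k(d\rho)$, so that exact equality $d\rho = \theta$ settles every $k$ simultaneously. Second, the radial coordinate $r$ must be the one pulled back by $\Psi$, so that $\rho$ is exactly $z$ rather than only asymptotically $z$. If one preferred a defining function $\rho$ that is only asymptotic to $z$, one would instead need $\na_h^k(d\rho - dz) \to 0$. That follows from the asymptotic translation invariance of $h$, which bounds the Christoffel symbols of $h$ relative to $h_C$, together with the decay of $\rho - z$ in all derivatives.

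The main point to check is therefore that the preceding lemma applies to $h = r^{-2}g_\vp$. This requires the full infinite-order hypothesis \eqref{equation ac condition order k} for all $k$, which we have assumed. Given that, admissibility reduces to the exact identity $d\rho = \theta$ on the end.
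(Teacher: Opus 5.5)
Your proposal is correct and follows essentially the same route as the paper: write $g_\vp = e^{2z}h$ with $h$ the metric induced by $r^{-3}\vp$, use the preceding lemma to get that $h$ is asymptotically translation invariant, and take $\rho = z$, $\theta = dz$, so that the admissibility expression vanishes identically on the end. Your choice $\theta = dz$ is the right reading of the paper's $\theta = \partial_z$, since Definition~\ref{definition lockhart admissible} asks for a 1-form, and your homogeneity justification of $g_{r^{-3}\vp} = r^{-2}g_\vp$ fills in a step the paper leaves implicit.
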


\begin{proof}
    If $\vp$ is AC and satisfies \eqref{equation ac condition order k} for all $k \in \bN$, then there is $\psi := r^{-3}\vp$, whose associated metric $h_\psi$ is asymptotically to a cylinder $dz^2 + g_\Sg$ (and therefore asymptotically translation invariant). With $z = \log r$, the following relation holds. 
    \[
    g_\vp = e^{2z}h_\psi. 
    \]
    Setting $\rho = z$ and $\theta = \dd_z$ yields $\theta$ and $\rho$ that satisfy the criteria \eqref{equation admissible} for admissibility.
\end{proof}

\begin{prop}\label{proposition uniform C2 convergence of approximations}
   If $\vp$ is an AC $G_2$-structure and has asymptotic cone $\vp_C$ and satisfies \eqref{equation ac condition order k} for any $k > 2$, then the curvature of $\vp_\eps$ is uniformly bounded above as $\eps \rightarrow 0$. 
\end{prop}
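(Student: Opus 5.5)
The plan is to show that the $G_2$-structures $\vp_\eps$ converge to $\vp$ in a uniform $C^2$ sense in suitable coordinates, and then to read off the curvature bound from the fact that $\Rm$ is an algebraic expression in the metric and at most two of its derivatives. Here $\vp_\eps$ is $r^3\psi_\eps$ on the end and an ordinary mollification of $\vp$ (via a fixed partition of unity) on the compact core. I split $M$ into a compact piece $K = M \setminus \Psi(\cC^\Sg_{b})$ and the end $\Psi(\cC^\Sg_{b})$, with $b$ large and fixed, and treat the two regions separately, arranging the partition of unity so that both constructions agree near $\dd K$.

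\emph{Compact part.} Since $\vp$ satisfies \eqref{equation ac condition order k} with $k>2$, it is at least $C^3$, hence $C^2$ with uniformly continuous second derivatives on a neighbourhood of $K$. In each chart, standard properties of convolution give $\phi_\eps * \vp \to \vp$ in $C^2$, and $\|\phi_\eps*\vp\|_{C^2}\le \|\vp\|_{C^2}$. Positivity of a $3$-form is an open condition, and $g_\vp$ depends smoothly and algebraically on $\vp$. Therefore, for $\eps$ small, $\vp_\eps$ is a $G_2$-structure on $K$ and $g_{\vp_\eps}\to g_\vp$ in $C^2(K)$. The Christoffel symbols and $\Rm$ are polynomial in $g^{-1}$, $\dd g$ and $\dd^2 g$, so $|\Rm(g_{\vp_\eps})|$ converges uniformly to $|\Rm(g_\vp)|$ on $K$ and is in particular bounded independently of $\eps$.

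\emph{End.} Pass to the cylindrical picture $z=\log r$, in which $\psi = r^{-3}\Psi^*\vp$ is asymptotically translation invariant. The $(AC_k)$ condition with $k\ge 3$, via Lemma \ref{lemma decay estimates on AC G2 forms}(a),(b), says exactly that $\psi - \psi_C$ and its first $k$ derivatives with respect to the cylindrical metric tend to $0$ as $z\to\infty$. Hence $\psi$ has uniformly bounded $C^2$ norm on $[\log b,\infty)\times\Sg$, in charts of uniform size. Convolution with $\phi_\eps$ (which is translation invariant in $z$, and done in a fixed finite atlas of $\Sg$) does not increase $C^2$ norms and moves $\psi$ by at most $C\eps\|\psi\|_{C^1}$ in $C^0$. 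So for $\eps$ small, $\psi_\eps$ stays uniformly positive, and the metrics $h_\eps$ have uniformly bounded $C^2$ norms and are uniformly equivalent to $dz^2+g_\Sg$; this is the same estimate as in the proof of Lemma \ref{lemma infinite ACness of mollified}, but tracked only to second order. Now $g_{\vp_\eps} = e^{2z}h_\eps$ (Lemma \ref{lemma scaling of metric} with $\la = r^3$ at each point, or simply the definition of $g_\vp$), so the conformal change formula gives
\[
|\Rm(g_{\vp_\eps})|_{g_{\vp_\eps}} \le e^{-2z}\, C\big( |\Rm(h_\eps)|_{h_\eps} + |\na_{h_\eps} dz|_{h_\eps} + |dz|^2_{h_\eps}\big) \le C e^{-2z},
\]
with $C$ depending only on the uniform $C^2$ bounds for $h_\eps$, hence independent of $\eps$.

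Combining the two regions gives $\sup_M |\Rm(g_{\vp_\eps})| \le C$ uniformly as $\eps\to 0$, and in fact quadratic decay on the end. I expect the main obstacle to be the bookkeeping on the end: the mollification must be done in charts of \emph{uniform} size in the cylindrical metric, and the constants coming from the partition of unity on $\Sg$ and from the transition region near $\dd K$ must be shown to be independent of both $z$ and $\eps$. Translation invariance of the cylindrical model is what makes this possible, which is why I would work in $h$ rather than directly in the conical metric $g$.
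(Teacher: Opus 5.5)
Your argument is correct, and it uses the same decomposition as the paper. On the compact core you and the paper argue identically: $C^2_{loc}$ convergence $\vp_\eps\to\vp$, plus the fact that $\Rm$ is a polynomial in $g^{-1}$, $\partial g$ and $\partial^2 g$.

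The two routes differ on the end.

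\begin{itemize}
\item \textbf{The paper's route.} It argues by a two-step perturbation. Convolution is controlled by second derivatives, so $\vp_\eps$ stays $C^2$-close to the mollified cone $(\vp_C)_\eps$, at a rate governed by the rate of convergence $\vp\to\vp_C$. Separately, $(\vp_C)_\eps\to\vp_C$ in $C^2$, because this reduces to the compact link. Hence for $r>R_0$ and small $\eps$, the curvature of $\vp_\eps$ is controlled by that of the cone.
\item \textbf{Your route.} You never introduce $(\psi_C)_\eps$. You use the AC condition only to get uniform $C^2$ bounds and uniform positivity for $\psi=r^{-3}\Psi^*\vp$ on the half-cylinder. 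You observe that convolution in a translation-invariant atlas preserves these bounds. You then undo the conformal factor through $g_{\vp_\eps}=e^{2z}h_\eps$ and the conformal change formula.
\end{itemize}

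Your version needs only boundedness rather than closeness. It also makes the $r^{-2}$ decay, and its independence of $\eps$, explicit. The paper's version gives somewhat stronger information: on the end, $\Rm_{\vp_\eps}$ is uniformly close to $\Rm_{\vp_C}$, i.e.\ the $\vp_\eps$ are uniformly asymptotic to the same cone, in the spirit of Lemma \ref{lemma infinite ACness of mollified}.

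One aside that does not affect this statement. Gluing chart-wise mollifications with a partition of unity does not preserve $d\vp=0$. This applies to your partition of unity on $\Sg$, and to any globalization of the paper's $\phi_\eps*(r^{-3}\vp)$. It matters for how the proposition is used in Corollary \ref{corollary existence for lower regularity}, and the paper's proof does not address it either.
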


\begin{proof}
    Consider the approach of $\vp_\eps$ to $(\vp_C)_\eps$ in $C^2$. By integration by parts, we can bound $D^2(\vp * \phi_\eps)$ by the second derivatives of $\vp$.   Thus, on the AC end, the rate of convergence in $C^2$ of $\vp_\eps$ to $(\vp_C)_\eps$ as $r \rightarrow \infty$ is ultimately bounded by the rate of convergence of $\vp \rightarrow \vp_C$ in $C^2$. Fix a large $R_0 > 0$, and observe that the curvature of $\vp_\eps$ is uniformly close to that of $(\vp_C)_\eps$, independent of $\eps$. 
    
    The convergence of $(\vp_C)_\eps$ to $\vp_C$ reduces to convergence of these tensors on the compact link, which is uniform in $C^2$. Thus, for sufficiently small $\eps$, the curvature of $(\vp_C)_\eps$ is uniformly close to that of $\vp_C$. Combining the two estimates indicates that there are constants $R_0,\eps_0 >0$ such that for $\eps < \eps_0$ and $r > R_0$, the Riemannian curvature of the $G_2$-structure $\vp_\eps$ can be bounded by that of $\vp_C$. 

    On $M \setminus B_{R_0}$, the $C^2_{loc}$-convergence of $\vp_\eps$ to $\vp$ ensures that we can choose $\eps_0 >0$ such that for $\eps < \eps_0$, 
    \[
    \|\Rm_{\vp_\eps} - \Rm_{\vp}\|_{\vp_C} < \mu, 
    \]
    where $\mu > 0$ is some constant. Thus, the curvature $\Rm_{\vp_\eps}$ is uniformly bounded on $M$ for sufficiently small $\eps$. 
\end{proof}

The following corollary is key to proving Theorem \ref{theorem main theorem} for initial $G_2$-structures of lower regularity. Given the existence and uniqueness results for smooth AC $G_2$-structures which converge to infinite order, it shows that a flow from lower regularity initial data can be extracted from a sequence of smooth approximations.

\begin{cor}\label{corollary existence for lower regularity}
    Suppose Theorem \ref{theorem main theorem} holds for smooth AC initial data satisfying \eqref{equation ac condition order k} for all $k \in \bN$. If $\vp$ is an AC $G_2$-structure and has asymptotic cone $\vp_C$ and satisfies \eqref{equation ac condition order k} for some $k > 2$, then there is a unique Laplacian flow with initial condition $\vp$ whose existence time is bounded below depending on the upper bound of the curvature of $\vp$. 
\end{cor}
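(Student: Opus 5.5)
The plan is to approximate $\vp$ by the smooth mollified structures $\vp_\eps$ of Lemma \ref{lemma infinite ACness of mollified}, run the flows given by the assumed smooth case of Theorem \ref{theorem main theorem}, and extract a limit. First, I would check that for small $\eps$ each $\vp_\eps$ is a closed, positive $G_2$-structure which is AC to infinite order. Closedness holds because mollification commutes with $d$ in the cylindrical coordinates on the end; on the compact part I would glue with a cutoff and correct by an exact term, keeping the structure closed. Positivity is an open condition, so it follows from $C^0$-closeness of $\vp_\eps$ to $\vp$. By the preceding lemmas, the metric $g_{\vp_\eps}$ is then admissible. By Proposition \ref{proposition uniform C2 convergence of approximations} there is $K_0$, with $K_0$ as close as we like to $\sup|\Rm_\vp|$, such that $\sup_M|\Rm_{\vp_\eps}|\le K_0$ for all $\eps<\eps_0$.

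Next, the smooth case of Theorem \ref{theorem main theorem} gives Laplacian flows $\vp_\eps(t)$ on $[0,T)$, where $T=T(K_0)$ comes from the explicit doubling-time estimate and is therefore independent of $\eps$. On $[0,T/2]$ we get $|\Rm_{\vp_\eps(t)}|\le 2K_0$. The corollary above then gives uniform equivalence $e^{-ct}g_\eps(0)\le g_\eps(t)\le e^{ct}g_\eps(0)$, and the $g_\eps(0)$ are themselves uniformly equivalent to $g_\vp$.

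The main obstacle is obtaining local higher-order estimates uniform in $\eps$ from only the curvature bound. I would apply the Shi-type derivative estimates for Laplacian flow of Lotay--Wei \cite[\S 4]{LotayWeiLaplacianFlow}, localized with cutoff functions. These give
\[
|\na^k \Rm|+|\na^{k+1}T|\le C(k,K_0)\,t^{-k/2}
\]
on compact sets. Near $t=0$ they degenerate, so I would use a local version in which the bounds at $t=0$ are controlled by the $C^{k}$ norms of $\vp_\eps(0)$; these converge to those of $\vp$ up to the available regularity. Together with $\dd_t\vp=\Delta_\vp\vp$, this yields $C^{k}_{loc}$ bounds on $\vp_\eps(t)$ that are uniform in $\eps$, with interior-in-time bounds of all orders. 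Arzel\`a--Ascoli and a diagonal argument then give a subsequence converging in $C^\infty_{loc}(M\times(0,T/2])$ and in $C^{1}_{loc}$ up to $t=0$. The limit $\vp(t)$ is closed and solves the Laplacian flow with $\vp(0)=\vp$. It has curvature bounded by $2K_0$, so its existence time depends only on the curvature bound of $\vp$.

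For uniqueness, I would use the energy argument of \S \ref{section uniqueness}, which needs only bounded curvature and not the AC condition, applied to two closed solutions with the same initial data. The regularity at $t=0$ required by that energy, namely $\na T$ and $\Rm$ being continuous up to $t=0$, should follow from the $(AC_k)$ assumption with $k>2$. Finally, I would check that the limit remains AC. This should follow from the uniform-in-$\eps$ control on the ends inherited through Proposition \ref{proposition uniform C2 convergence of approximations}, but it is not needed for the stated conclusion.
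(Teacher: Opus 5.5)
Your proposal is correct and follows essentially the same route as the paper. You mollify, use Proposition~\ref{proposition uniform C2 convergence of approximations} and the doubling-time estimate to get an $\varepsilon$-independent existence time and curvature bound, extract a locally convergent subsequence, and apply the energy argument of \S\ref{section uniqueness}; the only difference is that you unpack the paper's appeal to the Lotay--Wei compactness theorem into localized Shi-type estimates plus Arzel\`a--Ascoli, together with an initial-data-controlled Shi estimate near $t=0$ that the paper leaves implicit. One correction: for $t>0$ you should claim convergence in $C^{2,\beta}_{\mathrm{loc}}$ in fixed coordinates (as the paper does), or smooth convergence only modulo diffeomorphisms, rather than $C^\infty_{\mathrm{loc}}$, because covariant Shi bounds do not control coordinate derivatives beyond those inherited from $t=0$ (for example, pulling back a smooth torsion-free structure by a $C^4$ diffeomorphism gives a static $C^3$ solution), though this does not affect the conclusion.
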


\begin{proof}
    Since we assume Theorem \ref{theorem main theorem} the curvature of $\vp_\eps$ converges uniformly to that of $\vp$, given a sequence $\eps_i \rightarrow 0$, the maximal existence time for the flows $\vp_{\eps_i}$ is bounded below. The compactness of these flows (\cite[Theorem 7.2]{LotayWeiLaplacianFlow}) implies that there is a subsequence converging locally in $C^{2,\al}$ to a flow starting at $\vp$ and satisfying \eqref{equation laplacian flow}. The arguments of \S 5-6 can be re-applied to this flow to prove the uniqueness and long-time existence. 
\end{proof}

\begin{rmk}\label{remark assumption of smoothness}
Henceforth, all initial AC $G_2$-structures $\vp_0$ will be assumed to be smooth and satisfy \eqref{equation ac condition order k} for all $k \in \bN$. If Theorem \ref{theorem main theorem} is proved for initial data of this kind, Corollary \ref{corollary existence for lower regularity} will immediately imply that it holds for $G_2$-structures that are simply AC in the sense of Definition \ref{definition asymptotically conical}. 
\end{rmk}

\begin{rmk}\label{remark lockhart vs KL}
Throughout the paper, certain results on Fredholm theory for operators on non-compact manifolds will be cited as formulated in Karigiannis-Lotay \cite{KarigiannisLotay}, although this paper ostensibly requires conditions on the rate of asymptotic convergence and the torsion of the conical $G_2$-structure which are not satisfied in our setting. However, the results we cite are ultimately consequences of the results of Lockhart \cite{Lockhart1987} which are applicable to any noncompact manifolds with admissible metrics. The reason for citing the versions in \cite{KarigiannisLotay} is for clarity of exposition. 
\end{rmk}

\subsection{Function spaces}

To establish the existence of a solution to the Laplacian-DeTurck flow \eqref{equation laplacian deturck boundary}, the appropriate function spaces must be chosen to serve as the setting of the problem. The strategy of the proof will be to find a sequence of solutions (or rather, ``almost-solutions") of \eqref{equation laplacian deturck boundary} on an exhaustive sequence of domains, $M_1 \subset M_2 \subset \cdots \Subset M$, that converge to a global solution on $M$. Therefore, we must find a family of function spaces in which the sequence of domain solutions $(M_i, \dd M_i, \vp_i)$ lie and a function space in which the global solution, their limit, exists.

The first step in choosing the correct spaces is to determine the best map that will ultimately provide a solution to \eqref{equation laplacian deturck boundary}. First, let $\vp_0$ be a fixed AC $G_2$-structure on the noncompact manifold $M$. Then, let $\beta_0$ be a time-dependent $2$-form such that $\beta_0(0) =0$, and consider the mapping
\begin{align}\label{equation the rough mapping}
    &\beta \mapsto d (\beta + \beta_0)  \mapsto \dd_t (d (\beta + \beta_0)) - \Delta_{d (\beta + \beta_0)+\vp_0} (d (\beta + \beta_0) + \vp_0) \nonumber \\
    & \qquad \qquad \qquad \qquad \qquad \qquad \qquad \qquad + \cL_{V(d (\beta + \beta_0) + \vp_0)} (d (\beta + \beta_0) + \vp_0) \\
    &\Ga(N \times [0,T], \La^2) \rightarrow \Ga(N \times [0,T], \La^3)
    \rightarrow \Ga(N \times [0,T], \La^3), \nonumber
\end{align}
where $N$ is a placeholder which can indicate either the complete manifold $M$ or any of the subdomains $M_i$. We are deliberately imprecise about the domains and codomains. In particular, we make no assumptions about regularity, but rather only wish to indicate that the map goes from 2-forms to 3-forms, and so on. 

Having chosen this map, the next step is to restrict its domain and codomain to spaces of forms that satisfy good properties with respect to the asymptotically conical structure. These are the weighted H\"older and Sobolev spaces. Since we have two types of Cauchy problem, that on a compact domain and that on the whole manifold, we will discuss these two cases separately, beginning with the latter. 

\subsubsection{Weighted H\"older and Sobolev spaces} \label{section weighted sobolev holder}
In this section, we assume that we are measuring all norms with respect to a given asymptotically conical metric (usually the metric $g_0$ associated to the initial condition $\vp_0$). Before defining the H\"older and Sobolev spaces, we define an auxiliary function $\rho : M \rightarrow \bR$, which is equal to $1$ on a core compact region of $M$, and eventually coincides with radius function on the cylinder $\Sg \times (b,\infty)$ underlying the non-compact end $V$. Where $\rho$ is not equal to the radial coordinate function, we may assume it has reasonable behavior (no zeros, bounded derivatives, and so on). 

The weighted H\"older and Sobolev spaces, roughly speaking, will contain functions that have $O(\rho^\la)$ decay/growth, where $\la \in \R$ and $\rho \rightarrow \infty$.

Consider the following weighted H\"older norms, defined as in e.g. \cite[\S 7]{LMCFwithsings}. 

\begin{defn}\label{definition weighted parabolic holder norms}
    Let $\rho : M \rightarrow [1, \infty) \subset \R$ be a smooth function that coincides with the radius function on the end of $M$, $V \cong \Sigma \times (b, \infty)$. Let $\la \in \R$. If $I\subset \R$ is a time interval and $\sg$ is a time-dependent tensor on $M$ defined on $I$, then
    \begin{align}\label{equation weighted holder definition}
        \|\sg\|_{C^{l,m,\al}_{\la}} &= \sum_{i,j} \bigg\{ \sup_{(t,x) \in I \times M} |\rho(x)^{-\la + 2i + j}\dd_t^i \nabla^j \sg(t,x)| + \sup_{t \in I} [\dd_t^i \nabla^j \sg(t, \cdot)]_{\al, \la -2i -j}\nonumber \\
        &\qquad \qquad + \sup_{x \in M} [\partial_t^i \nabla^j \sg(\cdot, x)]_{\al/2, \la - 2i -j}\bigg\},
    \end{align}
    where $i = 1,\ldots, l$, $j = 1, \ldots, m$ such that $2i + j \le m$. The weighted H\"older seminorms are defined for a rate $\la \in \R$ by
    \begin{align*}
        [\ups(t,\cdot)]_{\al, \la} &= \sup_{x \ne y, d(x,y) < \mathrm{inj}(g)} \bigg[ \min(\rho(x),\rho(y))^{-\la} \frac{|\ups(t,x) - \ups(t,y)|_g}{d(x,y)^\al} \bigg] \\
         [\ups(\cdot,x)]_{\al/2, \la} &= \sup_{t_1,t_2 \in I} \bigg[ \rho(x)^{-\la} \frac{|\ups(t_1,x) - \ups(t_2,x)|_g}{|t_1-t_2|^{\al/2}} \bigg]
    \end{align*}
\end{defn}

\begin{rmk}
    In the case that $\sg$ is time-independent the $C^{l,m,\al}_{\la}$-norms reduce to the standard $C^{m,\al}_{\la}$-norms given by
    \begin{align}\label{equation weighted static holder definition}
        \|\sg\|_{C^{m,\al/2}_{\la}} &= \sum_{j=1}^m \bigg\{ \sup_{x \in M} |\rho(x)^{-\la +  j} \nabla^j \sg(x)| + \sup_{x \in M} [\nabla^j \sg(x)]_{\al, \la -j}\bigg\}.
    \end{align}
\end{rmk}

\begin{defn}\label{definition weighted holder space of forms}
The weighted H\"older space of forms
\[
C^{l,m,\al}_{\la}\Omega^k(M \times I) 
\]
is the completion of the smooth, compactly supported, time-depedent $k$-forms $\Omega^k_c(M \times I)$ in the $C^{l,m,\al}_\la$-norm. 
\end{defn}

We will also need the corresponding weighted Sobolev norms, also defined as in \cite[\S 7]{LMCFwithsings}.

\begin{defn}\label{definition weighted parabolic sobolev norms}
    Let $\rho$ and $\la$ be as in Definition \ref{definition weighted holder space of forms}. If $I\subset \R$ is a time interval and $\sg$ is a time-dependent tensor on $M$ defined on $I$, then
    \begin{align}\label{equation weighted parabolic sobolev definition}
        \|\sg\|_{W^{r,s,p}_{\la}} &= \Bigg( \sum_{i,j} \int_I \int_M |\rho^{-\la + 2i + j}\dd_t^i \nabla^j \sg(t,\cdot)|^p \rho^{-7} dV_g dt \Bigg)^\frac{1}{p}.
    \end{align}
    where $i = 1, \ldots, r$, $j = 1, \ldots, s$ and $2i + j \le s$. 
\end{defn}

\begin{rmk}
    If we assume that $\sigma$ is time-independent and that $I$ is a unit-length interval, then this definition reduces to the following static weighted Sobolev norm.
\begin{align}\label{equation weighted static sobolev definition}
        \|\sg\|_{W^{s,p}_{\la}} &= \Bigg( \sum_{j=0}^s \int_M |\rho^{-\la + j} \nabla^j \sg|^p \rho^{-7} dV_g \Bigg)^\frac{1}{p}.
    \end{align}
\end{rmk}

\begin{defn}\label{definition weighted sobolev space of forms}
The weighted Sobolev space of forms
\[
W^{r,s,p}_{\la}\Omega^k(M \times I) 
\]
is the completion of the smooth, compactly supported, time-depedent $k$-forms $\Omega^k_c(M \times I)$ in the $W^{r,s,p}_\la$-norm. When $p = 2$, we will use the notation
\[
W^{r,s,2}_{\la}\Omega^k(M \times I) := H^{r,s}_{\la}\Omega^k(M \times I).
\] 
\end{defn}

\begin{rmk}
    We will often discuss weighted spaces of static (as opposed to time-dependent) forms. In this case, these spaces will be considered subspaces of corresponding spaces of time-dependent forms with the same weighted norms. They will be denoted with the natural and conventional notation $C^{m,\al}_\la\Omega^k(M), W^{s,p}_{\la}\Omega^k(M), H^{s}_{\la}\Omega^k(M)$, and so on.
\end{rmk}

\subsubsection{Function spaces on domains}\label{section function spaces domains}

Recall that we wish to solve \eqref{equation laplacian deturck boundary} on a sequence of domains. These domains will be the truncations 
\[
M_R := \rho^{-1}([1, R)), \;R \in [1,\infty).
\]  
Note that we will consider the endpoint $M_\infty$ to be equal to the original manifold $M$. In order to measure ``the same norms" along both the sequence of solutions on the domains $M_R$ and in the limit, we will consider the H\"older and Sobolev spaces of time-dependent $k$-forms on the $M_R$ measured with respect to the weighted H\"older and Sobolev norms \eqref{equation weighted holder definition} and \eqref{equation weighted parabolic sobolev definition}, respectively. To be more precise, we set 
\begin{align}\label{equation weighted holder for domains}
    C^{l,m,\al}_{\la}\Omega^k(M_R \times I) &:= \bigcap_{j=0}^l \Big( C^{j,\al/2}(I; C^{l-2j}_{\la - 2j} \Omega^k(M_R)) \cup C^{j}(I; C^{l-2j,\al}_{\la - 2j} \Omega^k(M_R))\Big),
\end{align}
where the $k$-forms on $M_R$ are the sections of the bundle $\La^k T^*M_R$. In particular, $T^*M_R$ is defined on the boundary $\dd M_R$ using a collar neighborhood. The space of weighted Sobolev $k$-forms is defined similarly. 
\begin{align}\label{equation weighted sobolev for domains}
    W^{r,s,p}_{\la}\Omega^k(M_R \times I) &:= \bigcap_{j=0}^r W^{j,p}(I; W^{s-2j}_{\la - 2j} \Omega^k(M_R)).
\end{align}

\begin{rmk}\label{remark weighted unweighted equivalence}
Since the domains $M_R$ are compact in $M$ and thus $\rho: M \rightarrow \R$ is bounded above and below away from 0 on $M_R$, the weighted spaces (e.g. $C^{l,m,\al}_{\la}\Omega^k(M_R \times I)$) are equivalent to their unweighted counterparts (e.g. $C^{l,m,\al}\Omega^k(M_R \times I)$), but not uniformly so. Thus, we will use the weighted notation when we wish to consider these functions against the weighted norm, and the unweighted notation when it is necessary to consider them in the unweighted spaces. 
\end{rmk}

There is a Sobolev embedding theorem for weighted spaces that will be useful.

\begin{thm}\label{theorem weighted sobolev embedding theorem}
    Let $(M,\vp)$ be a smooth, complete 7-manifold equipped with an AC $G_2$-structure as in Definition \ref{definition asymptotically conical} with radius function $\rho: M \rightarrow \R$ as in Definition \ref{definition weighted parabolic sobolev norms}. Let $s, l \in \bR$, $p,q \in [1,\infty)$, $\al \in (0,1)$, and $\la, \de \in \R$. Let $M_R := \rho^{-1}((0,R])$ for any $R \in (1, \infty]$.   If $s - \tfrac{7}{p} \ge l + \al$ and $\la \le \de$. If $\sigma \in W^{s,p}_{\la}$, then
    \begin{equation*}
        \|\sg\|_{C^{l,\al}_{\de}\Om^k(M_R)} \le c \|\sg\|_{W^{s,p}_{\la}\Om^k(M_R)}. 
    \end{equation*}
    for a constant $c >0$ independent of $R$.
\end{thm}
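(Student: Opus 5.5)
The plan is a Bartnik-style scaling argument: reduce the weighted embedding to the ordinary Sobolev embedding on balls of unit size in rescaled coordinates, with constants uniform over all scales and all $R$. First cover $M$ by a compact core $K=\rho^{-1}([1,2b])$ and dyadic annuli $A_j=\rho^{-1}([2^{j},2^{j+1}])$ on the end $V\cong \cC^\Sg_b$. On $K$, $\rho$ is bounded above and below, so the claim there is the classical embedding $W^{s,p}\hookrightarrow C^{l,\al}$ on a compact manifold with boundary (using a collar/extension near $\dd M_R$ when $\dd M_R$ meets $K$). The weight $\rho^{-7}$ and factors $\rho^{-\la+j}$ only change constants.

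On each annulus, pull back by the dilation $\rho_{2^j}$ and rescale. Lemma \ref{lemma decay estimates on AC G2 forms}(b) (applied with the infinite-order AC assumption of Remark \ref{remark assumption of smoothness}) gives that $2^{-2j}\rho_{2^j}^*\Psi^*g \to g_C$ in $C^k$ on the fixed model annulus $A=\Sg\times[1/2,4]$. Hence the rescaled metrics $\hat g_j$ are uniformly equivalent to $g_C$ with uniformly bounded derivatives. The classical embedding on $A$ (or on a slightly enlarged $\hat A\supset \Sg\times[1,2]$ for interior points) therefore holds with one constant $c_0$ for all $j$.

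Next, track the weights. For a $k$-form $\sg$ on $A_j$, set $\hat\sg=\rho_{2^j}^*\sg$ measured in $\hat g_j$. Then $|\hat\nabla^i\hat\sg|_{\hat g_j}=2^{ji}|\nabla^i\sg|_g$, up to a fixed power $2^{jk}$ coming from the form degree, which appears identically on both sides and cancels. The volume rescales as $dV_g=2^{7j}d\hat V$, and $\rho\sim 2^j$ on $A_j$. These give
\begin{align*}
\sum_{i\le s}\int_{A_j}|\rho^{-\la+i}\nabla^i\sg|^p\rho^{-7}dV_g &\sim 2^{-j\la p}\|\hat\sg\|^p_{W^{s,p}(\hat A)},\\
\sup_{A_j}\Big(\sum_{i\le l}|\rho^{-\de+i}\nabla^i\sg| + [\nabla^l\sg]_{\al,\de-l}\Big) &\sim 2^{-j\de}\|\hat\sg\|_{C^{l,\al}(A)}.
\end{align*}
The point of the weight $\rho^{-7}$ in Definition \ref{definition weighted parabolic sobolev norms} is exactly that it cancels the $2^{7j}$ from the volume. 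The Hölder seminorm condition $d(x,y)<\mathrm{inj}$ localizes pairs to neighbouring annuli, which we handle by using the overlapping enlarged annuli. Combining these, we get
\[
2^{-j\de}\|\hat\sg\|_{C^{l,\al}}\le c_0 2^{-j\de}\|\hat\sg\|_{W^{s,p}(\hat A)} = c_0 2^{j(\la-\de)}\,2^{-j\la}\|\hat\sg\|_{W^{s,p}}\le c\|\sg\|_{W^{s,p}_\la(\hat A_j)},
\]
since $\la\le\de$ and $j\ge0$. Taking the supremum over $j$ gives the bound, because each local Sobolev norm is dominated by the global one.

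The main obstacle is uniformity in $R$ near the boundary $\dd M_R=\rho^{-1}(R)$, which cuts through some annulus $A_j$. After rescaling by $2^{j}\sim R$, the boundary becomes the fixed hypersurface $\Sg\times\{R/2^j\}$ with $R/2^j\in[1,2]$. These form a compact family of smooth hypersurfaces in the rescaled model, with uniformly bounded second fundamental form. So the embedding constant on $\{r\le R/2^j\}\cap\hat A$ is uniform, which I would verify via a uniform Stein-type extension operator across the level sets of $r$. Finally, the $p$-sum versus supremum mismatch is harmless, because the $C^{l,\al}_\de$ norm is itself a supremum.
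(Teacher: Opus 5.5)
Your proposal is correct and matches the paper's proof, which simply invokes Bartnik's scaling argument with the annuli $\rho^{-1}([R,2R])$: this is your dyadic rescaling to a fixed model annulus, with uniform metric control supplied by Lemma \ref{lemma decay estimates on AC G2 forms}(b). Your rescaled extension across $\Sigma\times\{R/2^j\}$, used for uniformity in $R$, is the same device the paper uses elsewhere (the operator $E_R$ of Construction \ref{construction extension to collar neighborhood}). One harmless quibble: with the paper's H\"older weight $\rho^{-(\de-l)}$, the seminorm term scales like $2^{-j(\de+\al)}$ rather than $2^{-j\de}$, so your ``$\sim$'' there should be ``$\lesssim$'', which only helps.
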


\begin{proof}
    The proof is standard modification of the scaling argument used in Bartnik \cite[Theorem 1.2]{BartnikMass} in Euclidean space. In the asymptotically conical case, the annulus $A_R$ is replaced by the set $\rho^{-1}([R, 2R])$. See also \cite[Theorem 2.9]{JoyceDesing1}.
\end{proof}

The above Sobolev embedding theorem can be used to prove a parabolic Sobolev embedding theorem for the time-dependent Sobolev and H\"older spaces. The following proposition corresponds to \cite[Proposition 7.2]{LMCFwithsings}. 

\begin{prop}\label{proposition parabolic weighted Sobolev embedding} Let $(M,\vp)$ be a smooth, complete 7-manifold equipped with an AC $G_2$-structure as in Definition \ref{definition asymptotically conical} with radius function $\rho: M \rightarrow \R$ as in Definition \ref{definition weighted parabolic sobolev norms}. Let $I \subset \R$ be an open and bounded interval, $s \in \bN$ with $s \ge 2$, $\la \in \R$, and $p \in (1, \infty)$. Let $\ve > 0$ and let $p > 2/\ve$ and $sp > 9$.  If  $\sg \in W^{1,s,p}_\la $, then
    \[
    \|\sg\|_{C^{0,0}_{\la + \ve}\Om^k(M_R \times I)} \le c\|\sg\|_{W^{1,s,p}_{\la}\Om^k(M_R \times I)}.
    \]
for a constant $c >0$ independent of $R$.
\end{prop}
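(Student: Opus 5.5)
The plan is to reduce the parabolic embedding to the static weighted embedding, Theorem \ref{theorem weighted sobolev embedding theorem}, combined with a one-dimensional Sobolev embedding in time, following the argument of \cite[Proposition 7.2]{LMCFwithsings}. I will use the decomposition \eqref{equation weighted sobolev for domains}: an element $\sg \in W^{1,s,p}_\la\Om^k(M_R\times I)$ lies in
\[
L^p(I; W^{s,p}_{\la}\Om^k(M_R)) \cap W^{1,p}(I; W^{s-2,p}_{\la-2}\Om^k(M_R)).
\]
The goal is to show that $\sg \in C^0(I; C^0_{\la+\ve}\Om^k(M_R))$, with norm controlled by a constant independent of $R$.

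\textbf{Step 1 (interpolation in time).} I would view $\sg$ as a map $I \to X_0 := W^{s,p}_\la$ of class $L^p$ whose derivative $\dd_t\sg$ lies in $L^p(I; X_1)$, with $X_1 := W^{s-2,p}_{\la-2}$. The standard Lions--Magenes trace and interpolation argument places $\sg$ in $C^0(I;[X_0,X_1]_{\theta})$ for $\theta$ close to the trace exponent $1/p$. More concretely, I would use the fractional embedding $W^{\theta',p}(I;[X_0,X_1]_{\theta}) \hookrightarrow C^0(I;\cdot)$, valid once $\theta' p>1$. The interpolation space is a weighted Sobolev space of order $s - 2\theta$ and weight $\la - 2\theta$.

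\textbf{Step 2 (choice of parameters).} The hypothesis $p > 2/\ve$ lets me take $\theta$ with $1/p < \theta \le \ve/2$, so the weight loss is $2\theta \le \ve$. Since $\la - 2\theta \le \la + \ve$, I lose nothing at the level of weights. The differentiability condition needed in Theorem \ref{theorem weighted sobolev embedding theorem} is $(s-2\theta) - 7/p > 0$. Because $2\theta > 2/p$, this is roughly $s - 9/p > 0$, i.e.\ $sp > 9$, which is exactly the hypothesis. Applying Theorem \ref{theorem weighted sobolev embedding theorem} pointwise in $t$, with $l=0$ and $\de = \la+\ve$, gives a sup-in-time bound on $\rho^{-\la-\ve}|\sg|$.

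\textbf{Step 3 (uniformity in $R$) — the expected obstacle.} The constant must be independent of $R$, and interpolation of weighted spaces on the domains $M_R$ is delicate. To avoid abstract interpolation on varying domains, I would localize instead. Cover $M_R$ by the dyadic annuli $\rho^{-1}([2^j,2^{j+1}])$, together with the compact core. On each annulus, rescale the metric by $2^{-2j}$ and time by $2^{-2j}$, so that the annulus becomes a fixed model region (close to a conical annulus by the AC condition) and the weights become constants.

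On each model region, the unweighted anisotropic embedding $W^{1,s,p} \hookrightarrow C^0$ holds with a uniform constant when $sp > 9$. This is because the parabolic dimension is $7+2 = 9$. Regions truncated by $\dd M_R$ are handled with a uniform extension operator, since $\dd M_R$ is a rescaled copy of a fixed link. The one subtlety is that the rescaled time interval $2^{-2j}I$ is short, and short intervals worsen the constant. The extra $\rho^{\ve}$ in the weight is what absorbs this loss, and that is exactly where $p > 2/\ve$ is used. Taking the supremum over annuli then yields the estimate with $c$ independent of $R$.
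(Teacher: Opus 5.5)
Your Step~3 is a complete proof on its own, and it takes a genuinely different route from the paper.

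The paper never localizes. It first applies the extension operator $E_R$ of Lemma~\ref{lemma extension of functions}, whose norm is independent of $R$, to move $\sigma$ onto the fixed manifold $M$. It then quotes Amann's trace embedding into $C^0(I;(W^{s,p}_{\lambda},W^{s-2,p}_{\lambda+2})_{1/p,p})$. Next it embeds that interpolation space into $W^{l,p}_{\lambda+\varepsilon}$ for $l<s-2/p$. Finally it applies the static weighted embedding on $M$ and restricts back to $M_R$. So uniformity in $R$ is settled once, by the extension. The hypothesis $\varepsilon>2/p$ in effect pays for the weight shift $\lambda\mapsto\lambda+2/p$ that comes from interpolating toward the larger space $W^{s-2,p}_{\lambda+2}$.

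Your Bartnik-style localization instead reduces everything to the unweighted anisotropic embedding (parabolic dimension $9$) on a single model cylinder. It needs an extension only on the outermost annulus, and that extension is exactly the rescaled reflection of Construction~\ref{construction extension to collar neighborhood}. It avoids interpolation theory for weighted spaces entirely. The price is that you must track how the constant depends on the length of the rescaled time interval, which you do.

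There are two corrections to the rest of the proposal.

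First, Step~1 cannot hold with a constant independent of $R$, so discard it rather than combine it with Step~3. In the AC convention $W^{s,p}_{\lambda}\not\subset W^{s-2,p}_{\lambda-2}$, so on a bounded interval the trace does not gain the weight $\lambda-2\theta$. For a counterexample, take the time-independent form $\sigma=\rho^{\lambda}\chi(\rho/2^{j})\,\omega$, with $\chi$ a fixed bump, $|\omega|=1$ and $|\nabla^m\omega|\le C\rho^{-m}$. Its $W^{1,s,p}_{\lambda}$-norm is bounded independently of $j$, while $\sup\rho^{-\lambda+2\theta}|\sigma|\sim 2^{2j\theta}$. The paper's use of $W^{s-2,p}_{\lambda+2}$ in place of $W^{s-2,p}_{\lambda-2}$ is what makes its couple nested.

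Second, your accounting of $\varepsilon$ in Step~3 is conservative. Write $A_r=\rho^{-1}([r,2r])$ and $\hat A$ for the model annulus. Because $dt$ carries no weight in the norm, the substitution $t=r^2\tau$ gives $\|\hat\sigma\|_{W^{1,s,p}(\hat A\times r^{-2}I)}\approx r^{-2/p}\|\sigma\|_{W^{1,s,p}_{\lambda}(A_r\times I)}$. That factor exactly cancels the $(r^{-2}|I|)^{-1/p}$ loss from the short interval. Your argument therefore yields the estimate even with $\varepsilon=0$, and the hypothesis $p>2/\varepsilon$ is not needed on your route.
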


\begin{rmk}\label{remark consequences of sobolev embedding}
    Note that since $p = 2$ in the rest of the paper, in practice $\ve$ will be slightly greater than 1. Furthermore, it will be necessary to have $s \ge 5$ weak derivatives, so that $sp > 9$. 
\end{rmk}

\begin{proof} 
    For each $M_R$, we use the extension operator $E_R$ defined in Construction \ref{construction extension to collar neighborhood} to define an embedding of 
    \[
    W^{1,s,p}_\la \Om^k(M_R \times I) \hookrightarrow W^{1,s,p}_\la \Om^k(M \times I),
    \]
    which is bounded independently of the parameter $R$, by Lemma \ref{lemma extension of functions}. 
    
    Now, the proof follows the same lines as that of \cite[Proposition 7.2]{LMCFwithsings}. By \cite[Proposition 2.4]{LMCFwithsings} and originally \cite[Ch. III, Thm. 4.10.2]{Amann}, 
    \[
    W^{1,s,p} \Om^k (M \times I) \hookrightarrow C^0(I; \Om^k(W^{s,p}_\la (M), W^{s-2,p}_{\la + 2} \Om^k(M))_{1/p,p}),
    \]
    is a continuous embedding. The space $(W^{s,p}_\la \Om^k (M), W^{s-2,p}_{\la + 2} \Om^k (M))_{1/p,p}$ is the real interpolation space (see \cite[Ch. 1, \S 2.4]{Amann}). This space can be continuously embedded into $W^{l,p}_{\la + \ve}\Om^k (M)$ for $l < s - 2/p$, by the argument of \cite[Lemma 5.4]{CoriascoDiffOps} applied to asymptotically conical rather than conically singular manifolds. 
    
    By the weighted Sobolev embedding theorem \cite[Theorem 2.9]{JoyceDesing1}, 
    \[
    C^0(I;W^{l,p}_{\la + \ve}\Om^k (M)) \hookrightarrow C^0(I;C^{0}_{\la + \ve}\Om^k (M))
    \]
    is a continuous embedding with uniform constant. However, the restriction
    \[
    C^0(I;C^{0}_{\la + \ve}\Om^k (M)) \hookrightarrow C^0(I;C^{0}_{\la + \ve}\Om^k (M_R))
    \]
    is a continuous embedding with operator norm independent of $R$. Composing these embeddings, we obtain for a time-dependent form $\phi \in W^{1,s,p}_\la \Om^k(M_R \times I)$
    \[
    \|\phi\|_{C^{0,0}_{\la + \ve}\Om^k(M_R \times I)} = \big\|E_R(\phi)\big|_{M_R}\big\|_{C^{0,0}_{\la + \ve}\Om^k(M_R \times I)} \le C \|\phi\|_{W^{1,s,p}_\la \Om^k(M_R \times I)},
    \]
    where $C>0$ does not depend on the parameter $R > R_0$.
\end{proof}

\subsubsection{Spaces of exact, co-exact, and harmonic forms on manifolds with boundary}\label{section hodge morrey}

Since we are interested in a flow of exact forms, we must first define the appropriate spaces of exact forms on a manifold with boundary for our family of boundary value problems.  To this end, we will use the Hodge-Morrey decomposition of square-integrable $k$-forms on a compact manifold with boundary $(N, \dd N)$. 

To obtain this decomposition, the tangential and normal parts of a $k$-form at the boundary $\dd N$ must first be defined.

\begin{defn}\label{definition tangential and normal components}
    Let $\omega$ be a $k$-form on a manifold $N$ with boundary $\dd N$. If $X$ is a vector field on $TN|_{\dd N}$, let $X = X^\perp + X^\parallel$ be the decomposition of $X$ into parts perpendicular and parallel to $T(\dd N)$. The tangential part of $\omega$ is defined by
    \[
    \bt \omega (X_1, \ldots, X_k) = \omega(X_1^\parallel, \ldots, X_k^\parallel) \;\;\forall X_i \in \Ga(TN|_{\dd N}).
    \]
    The normal part of $\omega \in \Om^k(N)$ is defined by
    \[
    \bn \omega = \omega|_{\dd N} - \bt \omega.
    \]
\end{defn}
Let $g$ be a metric on $N$ and $\de$ the co-differential corresponding to $g$. The tangential and normal projections have the following key commutative property.
\begin{prop}\cite[Proposition 1.2.6]{HodgeDecompSchwarz}
The exterior derivative commutes with tangential projection, and the co-differential with the normal projection. More precisely, for $\om \in \Om^k(N)$ and with the natural identifications,
\begin{equation}\label{equation commutativity of tangential with ext}
\bt(d\om) = d(\bt \om) \qquad \text{ and } \qquad \bn(\de \om) = \de (\bn \om).
\end{equation}
\end{prop}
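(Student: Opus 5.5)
The plan is to reduce both identities to the pull-back under the boundary inclusion, together with Hodge duality. Let $\iota: \dd N \hookrightarrow N$ be the inclusion. Under the natural identification of tangential forms along $\dd N$ with forms on $\dd N$, $\bt\om$ corresponds exactly to $\iota^*\om$. The reason is that $\iota^*\om$ only sees tangential vectors, and $\bt\om$ is defined by evaluating $\om$ on the tangential projections $X_i^\parallel$. Consequently $\bt\om$ depends only on $\iota^*\om$, and conversely.

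\textbf{Tangential identity.} I first prove $\bt(d\om) = d(\bt\om)$. The pull-back by a smooth map commutes with the exterior derivative, so
\[
\iota^*(d\om) = d(\iota^*\om).
\]
Translating back through the identification gives $\bt(d\om) = d(\bt\om)$. Here the $d$ on the right is the exterior derivative of $\dd N$. If one prefers to view $\bt\om$ as a form on the collar neighborhood, I would extend it trivially along the normal geodesic flow and check the identity in adapted coordinates $(x^1,\dots,x^6, x^7=r)$ near the boundary. In those coordinates, dropping all terms containing $dr$ commutes with $d$ on the boundary. The reason is that $d$ of a term containing $dr$ still contains $dr$, while $d$ of a term without $dr$ produces a $dr$-component plus the same tangential derivative as on $\dd N$.

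\textbf{Normal identity.} I then derive $\bn(\de\om) = \de(\bn\om)$ by Hodge duality. The key pointwise fact is that the Hodge star exchanges tangential and normal parts along $\dd N$:
\[
\star\,\bn\om = \bt\star\om, \qquad \star\,\bt\om = \bn\star\om.
\]
This is a linear-algebra check in an orthonormal frame $e_1,\dots,e_6,\nu$ at a boundary point. A monomial containing $\nu^\flat$ is sent by $\star$ to one that does not, and vice versa. Using $\de = \pm\star d\star$, I compute
\[
\bn(\de\om) = \pm\,\bn\star d\star\om = \pm\star\bt(d\star\om) = \pm\star d(\bt\star\om) = \pm\star d\star(\bn\om) = \de(\bn\om),
\]
with the same sign throughout since the degrees match. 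The middle step uses the tangential identity. Interpreting the last step "with natural identifications" means applying the first identity in the collar sense, so $d$ acts on the extended form.

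\textbf{Main obstacle.} The only delicate point is bookkeeping: making precise which space $d(\bt\om)$ and $\de(\bn\om)$ live in. This is what "natural identifications" in the statement refers to. I would handle it by working throughout in the collar with boundary-adapted coordinates, as in Schwarz's treatment, and by checking that the constructions are independent of the choice of extension.
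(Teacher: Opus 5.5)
Your proof is correct and follows the standard route for this result; the paper itself gives no proof and only cites Schwarz. The route is to identify $\bt \om$ with $\iota^*\om$, use $\iota^* d = d\,\iota^*$, and transfer to $\de$ via the pointwise duality $\star \bn = \bt \star$. One caution: keep every $d$ in your chain as the boundary exterior derivative acting on tangential forms, because in a collar the identities hold only after re-projecting, as $\bt d \om = \bt d\, \bt \om$ and $\bn \de \om = \bn \de\, \bn \om$. For example, for $\om = r\,dr$ on a flat collar one has $\bn \de \om = 0$ and $\bn \om = 0$ on $\dd N$, yet $\de(r\,dr) = -1$, so the ``collar sense'' reading you propose for the last step, and the extension-independence check, would fail without that extra projection.
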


For the convenience of the reader, we record Green's theorem for manifolds with boundary (see \cite[Proposition 2.1.2]{HodgeDecompSchwarz}). 
\begin{prop}\label{proposition greens theorem}
Let $(N,\dd N)$ be a compact manifold with boundary and let $\om \in H^1\Om^{k-1}(N)$ and $\eta \in H^1\Om^{k}(N)$. Then
\[
\langle d \om, \eta \rangle_{L^2(N)} = \langle \om, \de \eta \rangle_{L^2(N)} + \int_{\dd N} \bt \om \wedge \star \bn \eta.
\]
\end{prop}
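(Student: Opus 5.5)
The plan is to prove Green's formula first for smooth forms by Stokes' theorem and then to extend it to $H^1$ forms by density and continuity of traces.

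For $\om \in \Om^{k-1}(N)$ and $\eta \in \Om^k(N)$ smooth up to the boundary, we use the Leibniz rule
\[
d(\om \wedge \star \eta) = d\om \wedge \star \eta + (-1)^{k-1} \om \wedge d\star \eta .
\]
The first term is $\langle d\om, \eta\rangle \, d\mu$. With the sign convention $\de = (-1)^k \star d \star$ adopted in the paper (up to the dimension-dependent sign, which in dimension $7$ reduces to a parity check), the second term equals $-\langle \om, \de \eta\rangle \, d\mu$. We integrate over $N$ and apply Stokes' theorem to get
\[
\langle d\om, \eta\rangle_{L^2(N)} - \langle \om, \de\eta\rangle_{L^2(N)} = \int_{\dd N} \iota^*(\om \wedge \star \eta),
\]
where $\iota : \dd N \hookrightarrow N$ is the inclusion.

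Next we identify the boundary integrand. The pullback $\iota^*$ only sees $\bt$ of each factor, so $\iota^*(\om \wedge \star\eta) = \bt\om \wedge \bt(\star\eta)$. Since the Hodge star interchanges tangential and normal parts, $\bt(\star \eta) = \star \bn \eta$. Together these give the integrand $\bt\om \wedge \star\bn\eta$, with the natural identifications of Definition \ref{definition tangential and normal components}. This proves the formula for smooth forms.

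For the extension, both sides are continuous in the $H^1 \times H^1$ topology. The interior pairings are bounded by $\|\om\|_{H^1}\|\eta\|_{H^1}$. The boundary term is controlled by the trace theorem $H^1(N) \to H^{1/2}(\dd N) \subset L^2(\dd N)$, together with the fact that $\bt$ and $\bn$ are bounded pointwise projections. Smooth forms are dense in $H^1\Om^*(N)$ on a compact manifold with boundary, so the identity passes to the limit.

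The only delicate point is bookkeeping of signs. This means checking that the sign of $\de$ in the paper's convention matches the Leibniz sign, and that $\iota^*\star\eta = \star\bn\eta$ holds with the orientation of $\dd N$ induced by the outward normal. We would verify both in a local orthonormal frame $e_1 = \nu, e_2, \dots, e_7$ at the boundary. Alternatively, we can simply invoke \cite[Proposition 2.1.2]{HodgeDecompSchwarz}, where exactly this computation is carried out.
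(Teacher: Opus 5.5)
Your argument is correct. The Leibniz/Stokes computation works because the paper's convention $\de = (-1)^k\star d\star$ is the $L^2$-adjoint of $d$ (this holds since $N$ is $7$-dimensional). Together with $\iota^* = \bt$ and $\bt\star = \star\bn$, it yields exactly the stated boundary term, and the passage to $H^1$ forms by density and the trace theorem is sound. The paper gives no proof of its own and simply quotes \cite[Proposition 2.1.2]{HodgeDecompSchwarz}, whose proof is this same Stokes-plus-density argument, so your route coincides with the paper's.
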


With these concepts defined, we can now introduce the Hodge-Morrey decomposition. The components of this decomposition will be the following spaces of $L^2$-integrable exact forms, co-exact forms, and harmonic fields. 
\begin{align*}
    \sE^k(N) &:= \{d\al \;|\; \al \in H^1\Omega^{k-1}(N), \; \bt \al = 0\} \\
    \sC^k(N) &:= \{\de \beta \;|\; \beta \in H^1\Omega^{k+1}(N), \; \bn \beta = 0\} \\
    \sH^k(N) &:= \{\la \in H^1 \Omega^{k}(N) \; |\; d\la = 0 \textrm{ and } \de \la =0\}
\end{align*}
The Hodge-Morrey decomposition \cite[Theorem 2.4.2]{HodgeDecompSchwarz} states that for any $s \in \bN$ and $p \ge 2$, there is an $L^2$-orthogonal decomposition 
\begin{align}\label{equation hodge morrey decomp}
    W^{s,p}\Omega^k (N) = W^{s,p}\sE^k(N) \oplus W^{s,p}\sC^k(N) \oplus W^{s,p}\sH^k(N), 
\end{align}
where $W^{s,p}\sE^k(N) = \sE^k(N) \cap W^{s,p}\Omega^k (N)$, and so on. There is a further $L^2$-orthogonal decomposition of the harmonic fields given in \cite[Corollary 2.4.9]{HodgeDecompSchwarz}, which states that
\begin{align}\label{equation hodge morrey further decomp}
    W^{s,p}\Omega^k (N) = W^{s,p}\sE^k(N) \oplus W^{s,p}\sC^k(N) \oplus W^{s,p}\sH_{co}^k(N) \oplus W^{s,p}\sH_{D}^k(N), 
\end{align}
where $\sH_{co}$ are the co-exact harmonic fields and $\sH_{D}$ are the harmonic fields $\beta$ satisfying the \textit{Dirichlet boundary condition}, $\bt \beta = 0$. The decomposition \eqref{equation hodge morrey further decomp} implies the \textit{Helmholtz decomposition} which states that each $k$-form $\om$ can be split into co-exact and closed parts, $\om_{co}$ and $\om_{cl}$ such that
\begin{multline}\label{equation helmholtz decomp}
\om_{cl} \in W^{s,p}\sE^k(N) \oplus W^{s,p}\sH_{D}^k(N) \text{ and }\bt \om_{cl} = 0,\\ \om_{co} \in W^{s,p}\sC^k(N) \oplus W^{s,p}\sH_{co}^k(N).
\end{multline}

\begin{rmk}\label{remark mirror helmholtz decomp}
    There is an analogous Helmholtz decomposition of $k$-forms $\om$ into the sum $\om_{cc} + \om_{ex}$ of a co-closed part $\om_{cc}$ satisfying the \textit{Neumann boundary condition} $\bn \om_{cc} = 0$, and an exact part $\om_{ex}$.
\end{rmk}

Recall that the first map in the composition \eqref{equation the rough mapping} is the exterior derivative: So, in order for the image $d\beta$ to be exact in the sense of the the Hodge-Morrey decomposition, the $2$-form $\beta$ must be in $H^1\Omega^{2}(N)$ and must also satisfy the \textit{Dirichlet boundary condition}, $\bt \beta = 0$. These Dirichlet $2$-forms, following Schwarz's notation in \cite[\S 2.2]{HodgeDecompSchwarz}, will be denoted $H^1\Omega^2_D(N)$.

The time-dependent versions of these spaces are defined as in \eqref{equation weighted sobolev for domains}, with the co-differential and Hodge-Morrey decomposition taken on each time-slice with respect to a \textit{fixed} background metric on $N$.
\begin{align}
    W^{r,s,p} \Omega_D^2(N \times I) &:= \bigcap_{j=0}^r W^{j,p}(I; W^{s-2j,p} \Omega_D^2(N)), \label{equation time dependent dirichlet forms}\\
    W^{r,s,p} \sE^3(N \times I) &:= \bigcap_{j=0}^r W^{j,p}(I; W^{s-2j,p} \sE^3(N)), \label{equation time dependent exact forms}
\end{align}
where $r$ is the number of time derivatives and $s$ is the number of spatial derivatives. The weighted spaces are simply the same spaces of forms, but considered with the weighted norms. In this paper, we will restrict our view to the setting where $p=2$, for which we will use the standard notation $W^{r,s,2} = H^{r,s}$. 

\subsubsection{Incorporating boundary conditions}\label{section boundary conditions}

It will be necessary to impose boundary conditions on various time-dependent functions, and this section establishes the correct spaces in which these boundary conditions can be realized. 

\begin{lem}\label{lemma smoothness of time derivatives}
Assume that $r \ge 1$ and $s \ge 2r$. If $\eta \in H^{r,s} \Omega^k (N \times I)$ and $0 \le j \le r-1$, then
\begin{itemize}
    \item[(i)] $\dd_t^j \eta \in C(I; H^{(s-1)-2j}\Omega^k(N))$.
    \item[(ii)] The following estimate holds:
    \[
    \max_{t \in I} \|\dd_t^j \eta (t)\|_{H^{(s-1)-2j}} \le C \|\eta\|_{H^{r,s}},
    \]
    where $C > 0$ depends on $|I|$, $s$ and the geometry of $N$.
\end{itemize}
\end{lem}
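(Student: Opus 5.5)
The plan is to reduce the statement to the classical one-dimensional embedding $H^1(I;X)\hookrightarrow C(I;X)$ for a Hilbert space $X$, applied with $X = H^{(s-1)-2j}\Omega^k(N)$. By \eqref{equation weighted sobolev for domains} with $p=2$, the hypothesis $\eta \in H^{r,s}\Omega^k(N\times I)$ means $\eta \in H^{i}(I; H^{s-2i}\Omega^k(N))$ for each $0\le i\le r$. Fix $0 \le j \le r-1$. Taking $i=j+1\le r$, we get $\dd_t^j \eta \in H^1(I; H^{s-2j-2}\Omega^k(N))$, and taking $i=j$, we get $\dd_t^j\eta \in L^2(I; H^{s-2j}\Omega^k(N))$. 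The assumption $s\ge 2r$ guarantees that all Sobolev indices involved are nonnegative.

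The key step is the Lions--Magenes intermediate-derivative (trace) theorem. For a Hilbert triple $V=H^{s-2j}\Omega^k(N)\hookrightarrow H=H^{s-2j-1}\Omega^k(N)\hookrightarrow V'$, identified appropriately as $H^{s-2j-2}\Omega^k(N)$, every $u\in L^2(I;V)$ with $\dd_t u \in L^2(I;V')$ lies in $C(I;[V,V']_{1/2}) = C(I;H)$. It also satisfies
\[
\max_{t\in I}\|u(t)\|_H \le C\big(\|u\|_{L^2(I;V)} + \|\dd_t u\|_{L^2(I;V')}\big).
\]
Apply this with $u=\dd_t^j\eta$. Then $[H^{s-2j},H^{s-2j-2}]_{1/2}=H^{s-2j-1}$, which is the space appearing in (i), and the right-hand side is bounded by $\|\eta\|_{H^{r,s}}$, which gives (ii).

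The main obstacle is justifying the interpolation identity on the compact manifold with boundary $N$, without any boundary conditions imposed. I would handle this by extending $N$ to a slightly larger manifold, or a double, using a bounded extension operator such as Construction \ref{construction extension to collar neighborhood}. There the scale $H^m$ is realized as domains of powers of $(1+\Delta)^{1/2}$, so complex interpolation gives the midpoint space. Restricting back to $N$ then yields the identity. If one prefers to avoid this, there is a more elementary route. Prove the estimate for $u$ smooth in $t$ via
\[
\tfrac{d}{dt}\|u\|_H^2 = 2\langle \dd_t u, u\rangle_{H} \le 2\|\dd_t u\|_{V'}\|u\|_{V},
\]
using the $H^{m-1}$ pairing with the extended forms. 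Integrating and averaging in $t$ gives the sup bound, and a density argument in $t$ yields continuity. The constant depends on $|I|$, through the averaging step, and on the geometry of $N$, through the extension and interpolation constants, and on $s$, as claimed.
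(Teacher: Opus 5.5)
Your proposal is correct and is essentially the paper's argument. The paper makes the same reduction to $\partial_t^j\eta\in L^2(I;H^{s-2j}\Omega^k(N))$ and $\partial_t^{j+1}\eta\in L^2(I;H^{s-2j-2}\Omega^k(N))$, then invokes the embedding of \cite[\S 5.9.2, Theorem 4]{EvansPDE}, noting that it carries over to forms on $N$. The proof of that theorem is exactly your ``elementary route'' (extend past $\partial N$, bound $\tfrac{d}{dt}\|u\|^2_{H^{m-1}}$ by $\|\partial_t u\|_{H^{m-2}}\|u\|_{H^m}$, integrate and average in $t$), so your Lions--Magenes interpolation version is a more abstract packaging of the same fact.
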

\begin{proof}
Observe that for any $0 \le j \le r-1$, by definition $\dd_t^j\eta \in L^2(I; H^{s-2j} \Omega^k(N))$ and $\dd_t^{j+1} \eta \in L^2(I; H^{s-2j-2} \Omega^k(N))$. All arguments in the proof of the elementary theorem \cite[\S 5.9.2, Theorem 4]{EvansPDE} carry over to the case of a vector bundle over a compact manifold with smooth boundary, and therefore (i) and (ii) hold.
\end{proof}

The next definition establishes an important space of forms vanishing at the initial time to high order.

\begin{defn}\label{definition exact 3 forms vanishing on boundary}Let 
$\al \in H^{r,s}_{\la} \Om^3 (N \times I)$, where $r \ge 1$ and $s \ge 2r$. Consider the composition map
\[
\al \mapsto (\al(0),\dd_t \al(0), \dd_t^2 \al (0), \ldots, \dd_t^{r-1}\al(0) ). 
\]
\sloppy By Lemma \ref{lemma smoothness of time derivatives}, this is a well-defined and bounded map from $H^{r,s}_\la \Om^3 (N \times I)$ to $\bigoplus_{j=0}^{r-1} H^{(s-1)-2j}\Omega^3(N)$, and its kernel is thus a closed subspace of $H^{r,s}_\la \Om^3 (N \times I)$. This kernel is the subspace of 3-forms in $H^{r,s}_\la \Om^3 (N \times I)$ which vanish to order $r-1$ at $t=0$ and is denoted by 
\[
H_{0,\la}^{r,s} \Om^3 (N \times I)
\]
to be the kernel of this map, i.e. the subspace of 3-forms in $H^{r,s}_\la \Om^3 (N \times I)$ whose potentials vanish to order $r-1$ at $t=0$. Let $H^{r,s}_\la \sU^3 (N \times I)$ be the closed subspace of $H^{r,s}_\la \Om^3 (N \times I)$ defined in Proposition \ref{proposition boundedness of restricted forms}. Let
\[
H_{0,\la}^{r,s} \sU^3 (N \times I)
\]
be the intersection $H^{r,s}_{0,\la} \Om^3 (N \times I) \cap H^{r,s}_\la \sU^3 (N \times I)$. Note that this definition coincides with that given in Proposition \ref{proposition boundedness of restricted forms}. In particular, this is the subspace of 3-forms in $H^{r,s}_\la \sU^3 (N \times I)$ which vanish to order $r-1$ at $t=0$. 
\end{defn}

\begin{rmk}
Note that in particular the elements of $H_{0,\la}^{r,s} \Om^3 (N \times I)$ automatically satisfy the compatibility conditions $g_1,\ldots, g_r \in H^1_0$ at $t=0$ (defined in Theorem \ref{theorem higher regularity linear par pde}) for the parabolic operator $\dd_t - \sL$ .
\end{rmk}

We also restrict the space of $2$-forms $\beta$ in $H^{r,s}\Omega_D^2 (N \times I)$ under consideration. 
\begin{defn}\label{defn dirichlet 2 forms with vanishing boundary condition}
Let $r \ge 2$, $s \ge 2r$, and let
    \[
    H_{0,\la}^{r,s}\Omega_D^2 (N \times I)
    \]
    be the space of $H^{r,s}$-regular Dirichlet $2$-forms $\beta$ measured against decay rate $\la$ which
    \begin{itemize}
        \item vanish on $N \times \{0\}$ to order $r-1$ in $t$;
        \item satisfy $\bt \beta = 0$ on $\dd N \times I$ as $H^1$-forms defined on every time slice $N  \times \{t\}$.
    \end{itemize}
\end{defn}

\subsubsection{Readjusting the mapping}\label{subsubsection readjustment}

In this section, we revisit the mapping \eqref{equation the rough mapping} and make some final modifications. First, add in the assumption that the first $r$ terms of the Taylor expansion in $t$ around $t=0$ of the approximate solution $\beta_0 \in C^{r,s}(M \times I)$ match the the first $r$ terms of a formal series solution at $t = 0$ of the homogeneous equation
\[
\dd_t (d\beta_0) - \Delta_{d\beta_0 + \vp_0} (d\beta_0 + \vp_0) + \cL_{V(d\beta_0 + \vp_0)} (d\beta_0 + \vp_0) = 0.
\]
Under this assumption, the image of $\beta_0$ under the (nonlinear) Laplacian-DeTurck operator,
\begin{equation}\label{equation image approx solution}
\eta_0 := \dd_t (d\beta_0) - \Delta_{d\beta_0 + \vp_0} (d\beta_0 + \vp_0) + \cL_{V(d\beta_0 + \vp_0)} (d\beta_0 + \vp_0),
\end{equation}
vanishes to order $r$ at $t = 0$. The existence of such an approximate solution will be established in Section \ref{section approximate solution}. 

Let $N$ stand for a subdomain $M_R$ of $M$, where $R \in [1,\infty)$. The first component of our new mapping will be 
\begin{equation}\label{equation first component mapping}
\beta \in H^{r,s}_{0,\la+3} \Omega^2_D(N \times I) \mapsto d(\beta + \beta_0) \in H^{r,s-1}_{\la + 2}\Omega^3(N \times I) .
\end{equation}
Here we may assume that $s$ is selected so that $s-3 \ge 2r$. Note that although $d\beta$ is exact in the sense of the Hodge-Morrey decomposition \eqref{equation hodge morrey decomp}, the sum $d(\beta + \beta_0)$ is not and must be considered simply an exact form, perhaps with a harmonic component. The next stage in the mapping is 
\begin{align}
    &d(\beta + \beta_0) \in H_{\la +2}^{r,s-1}\Omega^3(N \times I) \nonumber\\ 
    & \qquad  \mapsto \dd_t (d(\beta + \beta_0)) - \Delta_{d(\beta + \beta_0) + \vp_0} (d(\beta + \beta_0) + \vp_0) \label{equation second component mapping} \\
    & \hspace{5cm}+ \cL_{V(d(\beta + \beta_0) + \vp_0)} (d(\beta + \beta_0) + \vp_0) \nonumber \\
    &\hspace{7cm} \in H_{0,\la}^{r-1,s-3}\Omega^3(N \times I). \nonumber
\end{align}
Note that the vanishing of $\beta$ to order $r-1$ at $t=0$ implies that the image of the map approaches $\eta_0$ to order $r-2$ in $t$. Since $\eta_0$ vanishes to order $r$, the image of the map is in $H_{0,\la}^{r-1,s-3}\Omega^3(N \times I)$.

The final steps in the mapping are \textbf{(a)} the addition of a co-exact (\textit{not} in the sense of Hodge-Morrey) correction term $\de \omega$ and \textbf{(b)} the $L^2_{\la}$-orthogonal projection $\Pi_{\sU}$ of the resulting expression onto a finite-codimensional subspace $H_{0,\la}^{r-1,s-3}\sU^3(N \times I)$ of the space $H_{0,\la}^{r-1,s-3}\Om^3(N \times I)$. The subspace $\sU$ is defined precisely in Proposition \ref{proposition boundedness of restricted forms} and can be considered the complement of an ``approximate kernel". The adjustments \textbf{(a)} and \textbf{(b)} are performed by the following mapping.
\begin{align}
    &\Big( \dd_t (d(\beta + \beta_0)) - \Delta_{d(\beta + \beta_0) + \vp_0} (d(\beta + \beta_0) + \vp_0) + \cL_{V(d(\beta + \beta_0) + \vp_0)} (d(\beta + \beta_0) + \vp_0), \om\Big) \nonumber \\
    & \qquad \in H_{0,\la}^{r-1,s-3}\Omega^3(N \times I) \oplus H_{0,\la + 1}^{r-1,s-2}\Om^4 (N \times I)  \nonumber \\
    &\mapsto \Pi_{\sU} \Big(\dd_t (d(\beta + \beta_0)) - \Delta_{d(\beta + \beta_0) + \vp_0} (d(\beta + \beta_0) + \vp_0) \label{equation third component mapping} \\ 
    &\hspace{5cm} + \cL_{V(d(\beta + \beta_0) + \vp_0)} (d(\beta + \beta_0) + \vp_0) + \de \om \Big) \nonumber \\
    & \hspace{6.9cm} \in H_{0,\la}^{r-1,s-3}\sU^3(N \times I) \subset H_{0,\la}^{r-1,s-3}\Om^3(N \times I). \nonumber 
\end{align}

\begin{rmk}\label{remark uniform boundedness of projection to U}
By Proposition \ref{proposition uniform boundedness of projection} and Lemma \ref{lemma projection to harmonic dirichlet fields}, the $L^2_{\la}$-orthogonal projection $\Pi_{\sU}$ of $H^{r-1,s-3}_{0,\la}\Om^3(N \times I)$ to the subspace $H^{r-1,s-3}_{0,\la}\sU^3(N \times I)$ is a well-defined, continuous linear map, that is bounded uniformly with respect to $R$, when $N$ is set to be $M_R$. 
\end{rmk}

The final composition of all of the above mappings \eqref{equation first component mapping}-\eqref{equation third component mapping} is a non-linear map which we will call
\begin{equation}\label{equation fix notation mapping}
\cP : H^{r,s}_{0,\la+3} \Omega^2_D(N \times I) \oplus H_{0,\la+1}^{r-1,s-2}\Om^4 (N \times I)  \rightarrow H_{0,\la}^{r-1,s-3}\sU^3(N \times I).
\end{equation}

\subsubsection{Setting up the inverse function theorem argument.}\label{section IFT setup}

The goal is to show that $\cP$ is locally surjective in a neighborhood of $\Pi_{\sU} (\eta_0)$ (see \eqref{equation image approx solution} for the definition of $\eta_0$) in the subspace $H_{0,\la}^{r-1,s-3}\sU^3(N \times I)$, and that the size of this neighborhood in the weighted norms is uniformly bounded below for the entire exhaustion sequence, $N = M_{R_i} \in \{M_{R_i}\}_{i=1}^\infty$. This will be achieved by the quantitative inverse function theorem for Banach spaces. 

\begin{thm}\label{theorem quantitative inverse function theorem}
Let $E_1, E_2$ be Banach spaces and $F: E_1 \rightarrow E_2$ a differentiable map. Suppose that $DF_0$ is surjective, with right inverse $P$ and let $\ka_0 > 0$ be such that the map $F - DF_0$ is Lipschitz with constant $(2\| P \|)^{-1}$ on the ball $B_{\ka_0}(0) \subset E_1$. Let $\ka := \ka_0(2\|P\|)^{-1}$. 

Then, for all $y \in B_{\ka}(F(0)) \subset E_2$, there is an $x \in B_{\ka_0}(0) \subset E_1$ such that $F(x) = y$.
\end{thm}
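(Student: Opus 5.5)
We prove the statement by a Picard-type iteration that inverts the nonlinear equation through the right inverse $P$. Write $N := F - DF_0$; by hypothesis $N$ is Lipschitz on $B_{\ka_0}(0)$ with constant $(2\|P\|)^{-1}$. Fix $y \in B_{\ka}(F(0))$. Since $DF_0 P = \mathrm{id}_{E_2}$, the equation $F(x) = y$ will hold as soon as we find $x$ with $x = P\big(y - N(x)\big)$, because applying $DF_0$ to this identity gives $DF_0 x = y - N(x)$, that is, $F(x) = DF_0 x + N(x) = y$. Note that $N(0) = F(0)$ because $DF_0$ is linear. So we seek a fixed point of the map
\[
T(x) := P\big(y - N(x)\big) = P\big(y - F(0)\big) - P\big(N(x) - N(0)\big).
\]

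\textbf{Contraction.} For $x_1, x_2 \in B_{\ka_0}(0)$,
\[
\|T(x_1) - T(x_2)\| \le \|P\| \cdot (2\|P\|)^{-1}\|x_1 - x_2\| = \tfrac12 \|x_1 - x_2\|,
\]
so $T$ is a contraction on the ball.

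\textbf{Self-map.} For $x \in B_{\ka_0}(0)$,
\[
\|T(x)\| \le \|P\|\,\|y - F(0)\| + \tfrac12 \|x\| < \|P\|\ka + \tfrac12\ka_0 = \tfrac12\ka_0 + \tfrac12\ka_0 = \ka_0 ,
\]
using $\ka = \ka_0(2\|P\|)^{-1}$. Hence $T$ maps the open ball $B_{\ka_0}(0)$ into itself.

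\textbf{Completeness.} The open ball is not complete, so the iteration needs a little care. Run the iteration $x_0 = 0$, $x_{n+1} = T(x_n)$. By the contraction estimate, $\|x_{n+1} - x_n\| \le 2^{-n}\|x_1\|$, so
\[
\|x_n\| \le \sum_{k \ge 0} 2^{-k}\|x_1\| = 2\|x_1\| \le 2\|P\|\,\|y - F(0)\| .
\]
Set $q := 2\|P\|\,\|y - F(0)\|$; then $q < 2\|P\|\ka = \ka_0$. Thus every iterate stays in the closed ball $\overline{B}_{q}(0)$, which is a complete subset of $B_{\ka_0}(0)$ on which the Lipschitz bound holds. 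The sequence $(x_n)$ is Cauchy, so it converges to some $x$ with $\|x\| \le q < \ka_0$.

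\textbf{Conclusion.} The Lipschitz bound makes $N$ continuous on the ball, so $T$ is continuous there, and passing to the limit in $x_{n+1} = T(x_n)$ gives $T(x) = x$. By the first paragraph, $F(x) = y$.

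The only point that needs genuine care is this last one: keeping the iterates strictly inside the region where the Lipschitz hypothesis holds, and in a complete set, while working with an open ball. Passing to the closed ball of radius $q < \ka_0$ handles both. Differentiability of $F$ is never used beyond the linearity of $DF_0$.
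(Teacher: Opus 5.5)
Your proof is correct. The fixed-point reformulation $x = P\bigl(y-N(x)\bigr)$, the contraction constant $\tfrac12$, and the confinement of the iterates to the closed ball of radius $2\|P\|\,\|y-F(0)\|<\kappa_0$ (which deals with the incompleteness of the open ball) are all sound. The paper states this theorem without proof as a standard tool, and your argument is the standard contraction-mapping proof it implicitly relies on.

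Your argument also gives the bound $\|x\|\le 2\|P\|\,\|y-F(0)\|$ and uniqueness of the fixed point of $T$ in $B_{\kappa_0}(0)$. That uniqueness is what makes the ``local right inverse $\mathcal{K}$'' invoked in the proof of Theorem~\ref{theorem existence of inverse on domains} a well-defined map.
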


\subsubsection{Properties of the mapping}

In order to use Theorem \ref{theorem quantitative inverse function theorem}, we calculate the linearization of the map $\cP$. 

\begin{lem}
    Consider a neighborhood $B$ of $0 \in H^{r,s}_{0,\la+3} \Omega^2_D(N \times I)$ such that if $\ga \in B$, then $d(\ga + \beta_0) + \vp_0$ is an AC $G_2$-structure. The linearization of $\cP$ at $(\ga, \nu)$, for any $\ga \in B$ and $\nu \in H_{0,\la+1}^{r-1,s-2}\Om^4 (N \times I)$, is
    \[
    D\cP_{(\ga,\nu)} : H^{r,s}_{0,\la+3} \Omega^2_D(N \times I) \oplus H_{0,\la+1}^{r-1,s-2}\Om^4 (N \times I) \rightarrow H_{0,\la}^{r-1,s-3}\sU^3(N \times I),
    \]
    which is given by the formula
    \[
    D\cP_{(\ga,\nu)}(\chi,\om) = \Pi_{\sU} \big(\dd_t (d\chi) + \Delta_{d(\ga + \beta_0) + \vp_0} (d\chi) - d\Phi (d\chi) + \de \om \big),
    \]
    where we evaluate in the direction of $(\chi, \om) \in H^{r,s}_{0,\la+3} \Omega^2_D(N \times I) \oplus H_{0,\la+1}^{r-1,s-2}\Om^4 (N \times I) $.
\end{lem}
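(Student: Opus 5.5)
The plan is to compute the Gâteaux derivative of $\cP$ at $(\ga,\nu)$ one piece at a time, using that $\cP$ is the composition \eqref{equation first component mapping}--\eqref{equation third component mapping}. The last step is the bounded linear projection $\Pi_{\sU}$ (Remark \ref{remark uniform boundedness of projection to U}) applied to a sum, so it passes through the derivative. The first step $\beta\mapsto d(\beta+\beta_0)$ is affine with linear part $\chi\mapsto d\chi$. The term $\de\om$ is linear in $\om$ and independent of $\beta$, so its derivative in the direction $(\chi,\om)$ is just $\de\om$. In the same way, $\dd_t(d(\beta+\beta_0))$ is affine in $\beta$ and contributes $\dd_t(d\chi)$. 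Everything therefore reduces to differentiating the nonlinear spatial operator
\[
\vp\mapsto -\Delta_{\vp}\vp+\cL_{V(\vp)}\vp=-\cA_{\vp_0}(\vp)+2\cL_{V(\vp)}\vp
\]
at $\hat\vp:=d(\ga+\beta_0)+\vp_0$ in the closed (indeed exact) direction $\theta=d\chi$. Here the DeTurck vector field is taken against a fixed closed background, as in \eqref{equation def deturck}; only the sign convention needs to match \eqref{equation second component mapping}.

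For this operator I invoke the Bryant--Xu computation recorded above. For a closed variation $\theta$, the linearization of $\vp\mapsto\Delta_\vp\vp+\cL_{V(\vp)}\vp$ at $\hat\vp$ is $-\Delta_{\hat\vp}\theta+d\Phi(\theta)$, where $\Phi$ is algebraic in $\theta$ with coefficients depending on the torsion of $\hat\vp$ and on $S=\na_{\hat\vp}-\na_{\vp_0}$. With the sign convention of \eqref{equation second component mapping}, the spatial part therefore contributes $\Delta_{\hat\vp}(d\chi)-d\Phi(d\chi)$, where I absorb into $\Phi$ any sign and the lower-order terms coming from the Lie derivative. The key structural point is Cartan's formula. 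Since $\hat\vp$ and $d\chi$ are closed, $\cL_{V}\vp=d(V\lrcorner\vp)$, so the derivative of the DeTurck term is $d\big(V'(\theta)\lrcorner\hat\vp+V(\hat\vp)\lrcorner\theta\big)$. This is exact, and the second-order part of $V'(\theta)\lrcorner\hat\vp$ is exactly what Bryant--Xu use to cancel the non-elliptic part of $D(\Delta_\vp\vp)$. What remains is a first-order, algebraic-in-$\theta$ potential, which is the $\Phi(\theta)$ term. Adding the pieces gives the stated formula.

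The main obstacle is justifying that these pointwise formal computations give a genuine Fréchet (or at least Gâteaux) derivative between the weighted spaces in \eqref{equation fix notation mapping}. I would handle this by noting that the operator is a smooth function of $(x,\hat\vp,\na\hat\vp,\na^2\hat\vp)$ on the open set of positive $3$-forms. The neighborhood $B$ keeps $\hat\vp$ in a fixed compact subset of positive forms, because of the weighted Sobolev embedding (Theorem \ref{theorem weighted sobolev embedding theorem}, Proposition \ref{proposition parabolic weighted Sobolev embedding}) and the choice of $s$ with $s-3\ge 2r$. Then Taylor's theorem with integral remainder, together with the algebra property of $H^{r-1,s-3}$ for $s$ large, shows that the remainder is $o(\|\chi\|)$.

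Finally, I need the weights to be consistent. The weight $\rho^{\la+3}$ on $\beta$ gives $d\chi\in H_{\la+2}$, and the second-order operator lowers the weight by two, to $\la$. The coefficients of $\Phi$ scale like the torsion, which is $O(\rho^{-1})$ by Lemma \ref{lemma decay estimates on AC G2 forms} and Lemma \ref{lemma scaling of metric}, so $d\Phi(d\chi)$ also lands in $H_\la$. The vanishing of $\chi$ to order $r-1$ at $t=0$ passes to the output, so the image lies in $H^{r-1,s-3}_{0,\la}$.
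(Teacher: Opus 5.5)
Your route is the paper's: the chain rule through the composition, with $\beta\mapsto d(\beta+\beta_0)$ affine, $\om\mapsto\de\om$ and $\Pi_{\sU}$ linear, and \cite[Lemma 2.4]{BryantXu} for the nonlinear middle map. The gap is in how you reconcile the signs.

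You read \eqref{equation second component mapping} literally, write its spatial part as $-\cA_{\vp_0}(\vp)+2\cL_{V(\vp)}\vp$, and then say the derivative of the extra $2\cL_{V(\vp)}\vp$ consists of ``lower-order terms'' that can be absorbed into $\Phi$. That step fails. By Cartan's formula this derivative is $2\,d\big(DV_{\hat\vp}(\theta)\lrcorner\hat\vp+V(\hat\vp)\lrcorner\theta\big)$. The piece $2\,d(V(\hat\vp)\lrcorner\theta)$ can indeed be absorbed into $d\Phi(\theta)$. The piece $2\,d\big(DV_{\hat\vp}(\theta)\lrcorner\hat\vp\big)$ cannot. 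Since $V$ is built from $S=\na_{g_\vp}-\tilde\na$, i.e.\ from first derivatives of $\vp$, the variation $DV_{\hat\vp}(\theta)$ is first order in $\theta$. So this term is second order in $\theta$, whereas $d\Phi(\theta)$ with $\Phi$ algebraic is only first order.

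This is exactly the term you yourself say cancels the non-elliptic part of $D(\Delta_\vp\vp)$. With the relative sign $-\Delta_\vp\vp+\cL_{V(\vp)}\vp$ it does not cancel that part. Instead it appears with a factor $2$ on top of $\Delta_{\hat\vp}\theta-d\Phi(\theta)$. The stated formula is therefore false for the operator exactly as written in \eqref{equation second component mapping}.

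The fix is to treat the $+\cL_{V(\vp)}\vp$ in \eqref{equation the rough mapping} and \eqref{equation second component mapping} as a sign slip. The flow \eqref{equation laplacian deturck boundary} is $\dd_t\vp=\Delta_\vp\vp+\cL_{V(\vp)}\vp$, and Lemma \ref{lemma approximate solution} and Corollary \ref{corollary asymptotics of eta} both use $\dd_t\theta-\Delta_\vp\vp-\cL_{V(\vp)}\vp$. For the operator $\dd_t\vp-\cA_{\vp_0}(\vp)$, the Bryant--Xu linearization gives $\dd_t\theta+\Delta_{\hat\vp}\theta-d\Phi(\theta)$ directly, with nothing to absorb. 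Your Cartan-formula account of the cancellation is then correct, and this is what the paper's proof uses.

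Your extra material on Fr\'echet differentiability via Taylor's theorem and on the consistency of the weights goes beyond the paper's one-paragraph argument, and is fine in outline.
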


\begin{proof}
    The first map in the composition of $\cP$ is the affine map, $\beta \mapsto d(\beta + \beta_0)$, whose Fr\'echet derivative is simply the linear map $\chi \mapsto d\chi$. The third map is the linear projection $\Pi_{\sU}$ onto a direct summand of $H_{0,\la}^{r-1,s-3}\Omega^3(N \times I)$, and its Fr\'echet derivative is simply itself, $\Pi_{\sU}$. By \cite[Lemma 2.4]{BryantXu}, the linearization of the Laplacian-DeTurck operator on a closed 3-form $\theta$ is
    \[
    \dd_t \theta + \Delta_{d(\ga + \beta_0) + \vp_0} \theta - d\Phi (\theta),
    \]
    where, on each time-slice, $\Phi$ depends on the torsion of the $G_2$-structure $d(\ga + \beta_0) + \vp_0$. Finally, the map $\nu \rightarrow \de \nu$ is linear and its derivative is $\om \mapsto \de \om$. Putting these all together gives the lemma. 
\end{proof}

To satisfy the hypotheses of the quantitative inverse function theorem, the Lipschitz norm of the map $\cP - D\cP_0$ must be controlled. The following lemma establishes the continuity of $D\cP$ at $0$, whose modulus is uniformly bounded with respect to the radial cut-off parameter $R$. 

\begin{lem}\label{lemma lipschitz bounds}
    Let $M_R := \rho^{-1}([0, R])$, where $R \in [1, \infty]$ and $r,s > C$, some fixed parameter. For any $\eps >0$, there exists for all $R$, a $\kappa >0$ such that if a pair $(\ga, \nu)$ is in the ball $B_\kappa(0)$ around $0$ in $H^{r,s}_{0, \la+3} \Omega^2_D (M_R \times I) \oplus H_{0,\la+1}^{r-1,s-2}\Om^4 (N \times I)$, then the operator norm
    \[
    \|D\cP_{(\ga, \nu)} - D\cP_0\| < \eps.
    \]
    In particular, the Lipschitz constant $L$ of $\cP - D\cP_0$ is less than $\eps$ in $B$.
\end{lem}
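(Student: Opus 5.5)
The plan is to reduce the estimate to pointwise bounds on the coefficients of the linearized Laplacian-DeTurck operator, and to make those bounds uniform in $R$ using the $R$-independent weighted Sobolev embeddings. Write
\[
\big(D\cP_{(\ga,\nu)} - D\cP_0\big)(\chi,\om) = \Pi_{\sU}\Big( \big(\Delta_{\vp_\ga} - \Delta_{\vp_{00}}\big)(d\chi) - d\big(\Phi_{\vp_\ga} - \Phi_{\vp_{00}}\big)(d\chi)\Big),
\]
where $\vp_\ga := d(\ga+\beta_0)+\vp_0$ and $\vp_{00} := d\beta_0 + \vp_0$. Two terms drop out of the difference. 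The $\de\om$ term is linear, so it cancels. The term $\dd_t(d\chi)$ does not depend on the base point, so it cancels as well. By Remark \ref{remark uniform boundedness of projection to U}, $\Pi_{\sU}$ is bounded uniformly in $R$. It therefore suffices to bound the operator
\[
d\chi \mapsto (\Delta_{\vp_\ga} - \Delta_{\vp_{00}})(d\chi) - d(\Phi_{\vp_\ga}-\Phi_{\vp_{00}})(d\chi)
\]
from $H^{r,s-1}_{\la+2}$ to $H^{r-1,s-3}_{0,\la}$ by $C\|\ga\|_{H^{r,s}_{\la+3}}$, with $C$ independent of $R$.

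\textbf{Structure of the operators.} In local coordinates, $\Delta_{\vp}$ is a second-order operator whose coefficients are smooth algebraic functions of $\vp$, $\na\vp$ and $\na^2\vp$. Concretely, $\Delta_\vp = g^{-1}*\na^2 + \Gamma*\na + \dd\Gamma + \Gamma*\Gamma$, and $g$ is an algebraic (smooth, nonlinear) function of $\vp$. The operator $\Phi_\vp$ is algebraic with coefficients depending on the torsion, i.e.\ on $\na\vp$. The coefficient differences are therefore controlled by the mean value theorem:
\[
|g_{\vp_\ga} - g_{\vp_{00}}| \le C|d\ga|, \qquad |\Gamma_{\vp_\ga}-\Gamma_{\vp_{00}}| \le C(|\na d\ga| + |d\ga||\na \vp_{00}|),
\]
and similarly for higher derivatives and for $T$. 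The constant $C$ depends only on a $C^0$ bound keeping $\vp_\ga$ in a fixed compact set of positive forms. The weights are consistent with this: $d\ga$ has rate $\la+2$, and we take $\la < -2$ so that $\rho^{\la+2}$ is bounded. Each derivative lowers the rate by one, matching the weights of $\vp_0$, whose $j$-th derivative is $O(\rho^{-j})$ because of the AC structure.

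\textbf{Uniform constants.} The key tool is the weighted Sobolev embedding (Theorem \ref{theorem weighted sobolev embedding theorem} and Proposition \ref{proposition parabolic weighted Sobolev embedding}), whose constants are independent of $R$. Since $s$ is large, it gives
\[
\sup_{t} \sum_{j\le s-6} \big|\rho^{-\la-2+j}\,\na^j d\ga\big| \le c\,\|\ga\|_{H^{r,s}_{\la+3}}.
\]
The loss of up to $\ve$ in weight is absorbed because the background rates have room. The difference of operators applied to $d\chi$ is then a sum of products of the form (coefficient difference) $\times\, \na^{\le 2} d\chi$ and their time derivatives. We estimate these in $H^{r-1,s-3}_{0,\la}$ by Leibniz's rule. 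For each product, the factor carrying the most derivatives is put in weighted $L^2$, and the other factor in weighted $L^\infty$, using the embedding. This is the standard weighted multiplication estimate, and it holds with $R$-independent constants because the weights multiply compatibly: rate $(\la+2)+(\cdot)$ paired with $\rho^{-7}dV$. Combining everything gives $\|D\cP_{(\ga,\nu)}-D\cP_0\| \le C\|\ga\|$, so it suffices to choose $\kappa = \eps/C$. The Lipschitz statement for $\cP - D\cP_0$ then follows from the mean value inequality on the convex ball $B_\kappa$.

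\textbf{Main obstacle.} The main obstacle is the product estimate for the highest-order terms. There, time derivatives $\dd_t^{r-1}$ combined with $s-3$ spatial derivatives can fall entirely on the coefficient difference, that is, on $\na^{s-1} d\ga$ or $\dd_t^{r-1}\na^{2} d\ga$. In that case the other factor, $d\chi$, must be put in $L^\infty$. We also need to make sure the regularity index $s$ (and the hypothesis $r,s > C$) is large enough that every term in the Leibniz expansion has at least one factor in the range of the $R$-independent embedding. I would first try to handle this with an $R$-uniform parabolic Gagliardo–Nirenberg/Moser-type inequality obtained from the extension operator $E_R$ (as in the proof of Proposition \ref{proposition parabolic weighted Sobolev embedding}). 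This reduces the estimate to the complete AC manifold $M$, where the scaling argument on the annuli $\rho^{-1}([R,2R])$ gives uniform constants.
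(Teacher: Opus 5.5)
Your proposal is correct and follows the paper's own route. You cancel the base-point-independent terms $\dd_t(d\chi)$ and $\de\om$. You then control the coefficient differences of the Hodge Laplacian (including its Weitzenb\"ock curvature term) and of $d\Phi$ through the algebraic dependence of $g$, $\Rm$, $T$ on $\vp$ and the $R$-independent weighted Sobolev embedding, and use the $R$-uniform bound on $\Pi_{\sU}$ to choose $\kappa$ independently of $R$. The paper simply asserts the resulting estimate for $r,s$ large; your Leibniz/product estimates and weight bookkeeping fill in that step and do not constitute a different argument.
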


\begin{proof}
Let $g := g_{d\beta_0 + \vp_0}$ and $\hat g := g_{d(\ga + \beta_0) + \vp_0}$, and $\cR_g, \cR_{\hat g}$ are the corresponding curvature terms from the Weitzenb\"ock identity. Consider the difference of the two linearizations. 
\begin{align*}
    D\cP_{(\ga, \nu)}(\beta,\om) -  D\cP_0(\beta,\om)&= \Pi_{\sU} \big( \dd_t (d\beta) + \Delta_{d(\ga + \beta_0) + \vp_0} (d\beta) - d\Phi_\ga (d\beta) + \de \om  \\
    &\qquad \qquad - (\dd_t (d\beta) + \Delta_{d\beta_0 + \vp_0} (d\beta) - d\Phi_0 (d\beta) + \de \om )  \big) \\
    &=  \Pi_{\sU} \big( (\Delta_{\hat g} - \Delta_{g}) (d\beta) + (\cR_{\hat g} - \cR_{g})(d\beta) - d(\Phi_\ga - \Phi_0) (d\beta) \big).  
\end{align*}
Note that we are fixing the co-differential $\de$ to be with respect to the initial metric $g_0(0)$. (N.B. This fixing of $\de$ is for the purpose of fixing a Hodge-Morrey decomposition which does not depend on the evolution of the metric in time.) If $r,s$ are sufficiently large, the Sobolev embedding theorem (Theorem \ref{theorem weighted sobolev embedding theorem}) and the formulas for metric, curvature, and torsion in terms of a given $G_2$-structure (see, for instance, \cite[\S 2]{LotayWeiLaplacianFlow}) imply that given $\eps > 0$, there exists a $\ka >0$ such that if 
\[
(\beta,\om) \in B_\ka(0) \subset H^{r,s}_{0, \la+3} \Omega^2_D (M_R \times I) \oplus H_{0,\la+1}^{r-1,s-2}\Om^4 (N \times I), 
\]
then 
\[
\|(\Delta_{\hat g} - \Delta_{g}) (d\beta) + (\cR_{\hat g} - \cR_{g})(d\beta) - d(\Phi_\ga - \Phi_0) (d\beta) \|_{H^{r-1,s-3}_\la} < \eps \|\beta\|_{H^{r,s}_{\la+3}}.
\]
The uniform boundedness of $\Pi_{\sU}$ proved in Proposition \ref{proposition uniform boundedness of projection} and Lemma \ref{lemma projection to harmonic dirichlet fields} implies that we may adjust $\ka$ such that 
\[
\|(D\cP_{(\ga,\nu)} - D\cP_0) (\beta) \|_{H^{r-1,s-3}_\la} < \eps \big(\|\beta\|_{H^{r,s}_{\la+3}} + \|\om\|_{H^{r-1,s-2}_{\la+1}}\big).
\]
Observing that the choice of $\ka$ was independent of the choice of $R$ completes the proof of the lemma.
\end{proof}

\begin{rmk}
In order to apply the quantitative inverse function theorem (Theorem \ref{theorem quantitative inverse function theorem}), it remains to prove the surjectivity of the linearizations $D \cP_0$ and the uniform boundedness of the right inverses for $D \cP_0$ on the domains $M_R$. These properties will be established in Section \ref{section short time existence IVT argument}.
\end{rmk}

\section{Constructing an approximate solution to Laplacian-DeTurck flow}\label{section approximate solution}

In this section, an approximate solution $\theta_0 = d\beta_0$ to the Laplacian-DeTurck flow is constructed which satisfies all the necessary properties outlined in \S \ref{subsubsection readjustment}.

\begin{lem}\label{lemma approximate solution}
    Given $r \in \bN$, $s \ge 2r$, and an initial closed, AC 3-form $\vp_0$ of sufficiently high regularity, there is a smooth, exact 3-form $\theta$ defined on a time interval $I = [0,T]$ such that for any rate $\la > -2$,
    \[
    \theta \in H^{r+1,s+2}_{\la}\Omega^3(M \times I) 
    \]
    and, for $\vp := \theta + \vp_0$, the expression
    \[
    \dd_t \theta - \Delta_{\vp} \vp - \cL_{V(\vp)}\vp,
    \]
    vanishes to order $r$ in $t$ at the initial time-slice $\{0\} \times M$.  
\end{lem}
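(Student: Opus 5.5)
We construct $\theta$ as a truncated formal power series in $t$ with spatial coefficients, cut off in time. Write $Q(\vp) := \Delta_{\vp}\vp + \cL_{V(\vp)}\vp$, with the DeTurck background $\tvp = \vp_0$. Set
\[
\theta(t) := \chi(t)\sum_{k=1}^{r+1} \frac{t^k}{k!}\,\theta_k,
\]
where $\chi$ is a smooth cutoff equal to $1$ near $t=0$ and supported in $[0,T)$. The coefficients $\theta_k$ are determined recursively by formally differentiating the equation $\dd_t\theta = Q(\vp_0+\theta)$ at $t=0$. First, $\theta_1 := Q(\vp_0) = \Delta_{\vp_0}\vp_0 = d\tau_0$, since $V(\vp_0)=0$; here $\tau_0 := \de_{\vp_0}\vp_0$ is a $2$-form algebraic in the torsion. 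Next, $\theta_{k+1} := \dd_t^k\big|_{t=0} Q(\vp_0+\theta(t))$. By the chain rule (Fa\`a di Bruno), this is a universal polynomial expression in $\theta_1,\dots,\theta_k$ and their spatial derivatives, whose coefficients are nonlinear functions of $\vp_0$, $g_{\vp_0}$ and their derivatives. Each $\theta_k$ involves roughly $2k$ derivatives of $\vp_0$, which is why $\vp_0$ must have sufficiently high regularity. By construction, $\dd_t^j\big(\dd_t\theta - Q(\vp_0+\theta)\big)\big|_{t=0}=0$ for $j\le r$.

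Exactness will be checked term by term. $Q(\vp)$ is exact whenever $\vp$ is closed: $\Delta_\vp\vp = d\de_\vp\vp$ and $\cL_V\vp = d(V\lrcorner\vp)$. Hence each $\theta_{k+1}$, being a $t$-derivative at $0$ of the family of exact forms $d\big(\de_{\vp(t)}\vp(t) + V(\vp(t))\lrcorner\vp(t)\big)$, equals $d\beta_{k+1}$ with
\[
\beta_{k+1} = \dd_t^k\big|_{t=0}\big(\de_{\vp}\vp + V(\vp)\lrcorner\vp\big).
\]
So we set $\beta_0(t) := \chi(t)\sum_k \tfrac{t^k}{k!}\beta_k$ and $\theta = d\beta_0$, which also supplies the potential required in \S\ref{subsubsection readjustment}. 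Smoothness follows from the assumption in Remark \ref{remark assumption of smoothness}.

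The main obstacle is the weighted membership $\theta\in H^{r+1,s+2}_\la\Omega^3(M\times I)$ for every $\la>-2$. This reduces to decay of the coefficients. We aim to show, by induction, that $|\nabla^j\theta_k|_{g_0} = O(\rho^{1-2k-j})$, i.e. $\theta_k\in C^\infty_{1-2k}$, and similarly $\beta_k\in C^\infty_{2-2k}$. The inductive input comes from the AC condition and Lemma \ref{lemma decay estimates on AC G2 forms}: $\vp_0 - \vp_C$ decays with all derivatives relative to $\vp_C$, and the cone quantities scale homogeneously. Under the dilation $\rho_\la$ with $\vp\mapsto\la^3\vp$, Lemma \ref{lemma scaling of metric} shows that $Q$ lowers homogeneity by $2$. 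Since $\vp_C$ is homogeneous of degree $3$ (that is, $|\vp_C|$ is $O(1)$ with rate $0$ in $g_C$-norm), we get $T_{\vp_0} = O(\rho^{-1})$, $\nabla^jT = O(\rho^{-1-j})$, and $\theta_1 = d\tau_0 = O(\rho^{-2})$. Each further $t$-derivative costs two more powers of $\rho$ by the same scaling count: every term in $\theta_{k+1}$ is a contraction whose total weight is $-2$ less than that of $\theta_k$. Interpreting $-2$ in the convention of Definition \ref{definition weighted parabolic sobolev norms}, where $\dd_t^i\nabla^j$ carries weight $\rho^{2i+j}$, the form $\theta$ then has pointwise rate about $-2$ with time derivatives compatible. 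Integrating against $\rho^{-2\la-7}\,dV$ with $dV\sim\rho^6 d\rho$, the weighted $L^2$ integrals converge precisely when $\la>-2$; this gives the claimed range.

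We expect the care to lie in bookkeeping the homogeneity of the DeTurck term. $V(\vp)$ is built from $S=\nabla_\vp-\nabla_{\vp_0}$, which vanishes at $t=0$, so its $t$-derivatives involve $\nabla(\theta_k)$ and the count above still applies. Finally, the interval $T$ is chosen small enough, together with the cutoff $\chi$, that $\vp_0+\theta(t)$ remains positive (a $G_2$-structure). This is possible uniformly on $M$ because $\theta(t)$ is uniformly small in $C^0_{g_0}$ for small $t$, by the decay bounds above.
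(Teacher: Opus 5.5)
Your construction is the paper's own. You use the truncated Taylor series $\Theta(t)=\sum_{k\le r+1}\frac{t^k}{k!}\theta_k$ with $\theta_{k+1}=\dd_t^k|_{t=0}\big(\Delta_\vp\vp+\cL_{V(\vp)}\vp\big)$. Exactness follows because $\dd_t$ commutes with $d$, and your explicit potentials $\beta_k$ are a useful addition. Decay of the coefficients comes from the conical scaling, which the paper extracts from the evolution equations. The gap is the last step, the membership $\theta\in H^{r+1,s+2}_\la\Omega^3(M\times I)$ for every $\la>-2$, which rests on your claim that time derivatives are ``compatible''. They are not.

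Your own homogeneity count gives $\na^j\theta_k=O(\rho^{-2k-j})$. (Your stated induction hypothesis $O(\rho^{1-2k-j})$ is one power weaker than your own $\theta_1=O(\rho^{-2})$.) So for $i\ge 1$ the leading term of $\dd_t^i\na^j\Theta$ is $\na^j\theta_i=O(\rho^{-2i-j})$. The extra factor $\rho^{-2}$ that lets $\Theta$ itself count as rate $-2$ comes only from $\Theta(0)=0$ and $t\le T$, and a single time derivative removes it. In Definition \ref{definition weighted parabolic sobolev norms}, the $(i,j)$ term with $i\ge 1$ is therefore bounded only by a multiple of $\int^\infty\rho^{-2\la-1}\,d\rho$, which is finite only for $\la>0$. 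The range $\la>-2$ is right for the $i=0$ terms alone.

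Your time cutoff is not harmless either. The slowest-decaying part of $\dd_t^i(\chi\Theta)$ is $\dd_t^i(t\chi)\,\theta_1=O(\rho^{-2})$. This does not vanish identically for $i\ge 2$, and when $\theta_1$ has exact rate $-2$ it forces $\la>2i-2$. Drop $\chi$ and just shrink $T$, as the paper's proof does.

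No sharper estimate rescues the claimed range in the generality of Definition \ref{definition asymptotically conical}. The paper's proof makes the same leap: it derives $\dd_t^k(\cdot)|_{t=0}=O(\rho^{-2-2k})$ and then asserts membership for all $\la>-2$ without examining the time-derivative terms. Indeed $\dd_t\Theta=\theta_1+O(t\rho^{-4})$ with $\theta_1=\Delta_{\vp_0}\vp_0=d\tau_0$. For a closed $G_2$-structure, the $\Omega^3_1$-component of $d\tau$ is $\tfrac17|\tau|^2\vp$. So if the closed cone $\vp_C$ has torsion, then $|\theta_1|\ge c\rho^{-2}$ on a conical sector of the end, and the $(1,0)$ term of the norm diverges for $-2<\la\le 0$.

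What your argument (and the paper's) actually proves is membership for $\la>0$. Reaching every $\la>-2$ needs an extra hypothesis. For instance, suppose $\vp_C$ is torsion-free and $\vp_0-\vp_C=O(\rho^{\nu})$, with the corresponding derivative bounds, for some $\nu\le -2$. Then the coefficients built from $\vp_C$ vanish identically, the same count gives $\theta_k=O(\rho^{\nu-2k})$, and membership holds for all $\la>\nu$.
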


\begin{proof}
    We begin by considering the formal series of coefficients of the following Taylor expansion in $t$.
    \begin{align*}
        \theta^{(0)}(x) &= 0 \\
        \theta^{(1)}(x) &= \Delta_{\vp_0} \vp_0 \\
        \theta^{(2)}(x) &= \dd_t (\Delta_{\vp} \vp + \cL_{V(\vp)})|_{t=0} = F_2(\nabla^4 \vp_0, \nabla^3 \vp_0, \nabla^2 \vp_0, \nabla \vp_0, \vp_0) \\
        \theta^{(3)}(x) &= \hspace{1cm} \cdots \\
        &\vdots
    \end{align*}
    Note that $\partial_t$ commutes with $d$, and thus each $\theta^{(j)}$ is an exact form. In order to estimate the $\theta^{(j)}$'s, we expand the expression $\Delta_\vp \vp + \cL_{V(\vp)}\vp$. First, expand 
    \begin{align*}
        \cL_{V(\vp)} \vp (X,Y,Z) &= (\nabla_{V(\vp)}\vp)(X,Y,Z) - \vp(\nabla_X V(\vp), Y, Z) \\
        &\qquad - \vp(X, \nabla_Y V(\vp), Z) - \vp(X, Y, \nabla_Z V(\vp))
    \end{align*}
    with arbitrary unit vector fields $X,Y,Z$. Notice that $V(\vp)$ is given by a contraction of the Christoffel symbols (up to a constant). Appealing to \eqref{equation evolution christoffel symbols}, observe that
    \begin{align*}
        \dd_t \big(\cL_{V(\vp)} \vp (X,Y,Z)\big)|_{t=0} &\sim \dd_t \Ga * \nabla \vp_0 +  \nabla \dd_t \Ga \\
        &\sim \nabla \eta * T + \nabla^2 \eta,
    \end{align*}
    where $\eta$ is given by \eqref{equation evolution metrics}. All geometric terms (e.g. $\eta$, $\Rm$, $T$, and $\Ga$) are taken with respect to the initial form $\vp_0$. By Lemma \ref{lemma decay estimates on AC G2 forms}, this term is of order $O(\rho^{-4})$ on each end. Further derivatives can be found by applying the evolution equations \eqref{equation evolution metrics}, \eqref{equation evolution christoffel symbols}, \eqref{equation evolution torsion}, and \eqref{equation evolution curvature}, followed by Lemma \ref{lemma decay estimates on AC G2 forms}--this yields
    \begin{align*}
        \dd^k_t \big(\cL_{V(\vp)} \vp (X,Y,Z)\big)|_{t=0} &= O(\rho^{-2 - 2k}),
    \end{align*}
    on each end. To compute the time derivatives of the Hodge Laplacian, use the Weitzenb\"ock formula to expand
    \begin{align*}
    \dd_t\big(\Delta_{\vp} \vp\big) |_{t=0} &= \dd_t\big(\Delta_g \vp + \cR_g \vp \big) |_{t=0} \\
    &\sim \eta * \nabla^2 \vp_0 + g_0 * \nabla^4 \vp_0 + \dd_t \Rm * \vp_0 + \Rm * \nabla^2 \vp_0 + \Rm * \Rm * \vp_0.
    \end{align*}
    The evolution equations and Lemma \ref{lemma decay estimates on AC G2 forms} tell us that this term is $O(\rho^{-4})$ on each end. By similar logic, we have 
    \begin{align*}
        \dd^k_t \big(\Delta_{\vp}\vp\big)|_{t=0} &= O(\rho^{-2 - 2k}),
    \end{align*}
    on each end.

    Now we construct the approximate solution.  Let $r \in \bN$ be as in the hypotheses of the lemma.
    Let 
    \begin{equation}\label{equation approximate solution first pass}
    \theta (x,t) := \sum_{k=1}^{r+1} \frac{t^k}{k!}\theta^{(k)}(x).
    \end{equation}
    The construction of the $\theta^{(k)}$'s implies that $\dd_t \theta - \Delta_{\vp} \vp - \cL_{V(\vp)}\vp$ vanishes to order $r+1$ at the initial time-slice $\{0\} \times M$. Here, we assume that the regularity of $\vp_0$ is high enough that each $\theta^{(k)}$ is $(s+2)$-times differentiable in space. Thus, for any $\la > -2$, 
    \[
    \theta \in H^{r+1, s+2}_{\la} \Om^3(M \times I). 
    \]
    The commutativity of spatial and time derivatives and the exactness of the $\theta^{(k)}$'s ensure that $ \theta(x,t)$ is exact. 
\end{proof}

\begin{cor}\label{corollary equivalence of approx soln metrics}
    A time $T' > 0$ can be selected so that the $G_2$-structure $\vp(t) = \theta(t) + \vp_0$ is non-degenerate. This time $T'$ depends only on the norm (measured with respect to $g_0$) of the curvature of $\vp_0$ and that of its covariant derivatives up to order $s+2$.
\end{cor}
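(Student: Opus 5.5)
Positivity of a $3$-form is an open pointwise condition. Concretely, there is a universal constant $c_0>0$ such that if $\vp_0$ is a $G_2$-structure and $\theta$ is any $3$-form with $|\theta|_{g_0} < c_0$ at a point, then $\vp_0 + \theta$ is positive at that point. Moreover, its induced metric is uniformly equivalent to $g_0$ there, say $\tfrac12 g_0 \le g_{\vp_0+\theta} \le 2 g_0$. This holds because the orbit of the standard form under $GL(7)$ is open. After choosing a $g_0$-orthonormal $G_2$-frame, the question becomes one about a neighborhood of the fixed form $\vp_{\mathrm{std}}$ on $\bR^7$, so $c_0$ does not depend on the point. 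The plan is therefore to choose $T'$ so that $\sup_{M\times[0,T']} |\theta(x,t)|_{g_0} < c_0$.

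To get this bound, I use the explicit form \eqref{equation approximate solution first pass}, $\theta(x,t) = \sum_{k=1}^{r+1} \tfrac{t^k}{k!}\theta^{(k)}(x)$, which gives
\[
|\theta(x,t)|_{g_0} \le \sum_{k=1}^{r+1} \frac{t^k}{k!} \sup_M |\theta^{(k)}|_{g_0}.
\]
The main step is to bound each $\sup_M|\theta^{(k)}|_{g_0}$ by a constant $C_k$. This constant should depend only on bounds for $|\nabla^j_{g_0}\Rm_{g_0}|$, $j \le s+2$, together with the torsion and its derivatives. For this I reuse the recursive description from the proof of Lemma \ref{lemma approximate solution}. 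There, $\theta^{(k)}$ is obtained by differentiating $\Delta_\vp\vp + \cL_{V(\vp)}\vp$ in $t$ at $t=0$ and substituting the evolution equations \eqref{equation evolution metrics}--\eqref{equation evolution curvature}. Each $\theta^{(k)}$ is then a universal polynomial contraction, with respect to $g_0$ and $\vp_0$, of $T$, $\Rm$ and their covariant derivatives up to an order bounded in terms of $k \le r+1$. That order stays within $s+2$, since $s \ge 2r$. Since $|\vp_0|_{g_0}$ and $|\star\vp_0|_{g_0}$ are constants, $C_k$ depends only on these curvature bounds.

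The torsion also has to be controlled by curvature. For closed structures, the Bianchi-type identity of Lotay--Wei expresses $\nabla T$ in terms of $\Rm$ and $T*T$, and \eqref{equation R = -|T|^2} gives $|T|^2 \le C|\Rm|$. Iterating, $\nabla^j T$ is bounded by $\nabla^{j-1}\Rm$ and lower-order terms, so only curvature bounds remain. These bounds are finite: on the compact core by compactness, and on the end because Lemma \ref{lemma decay estimates on AC G2 forms}, under the standing smoothness assumption of Remark \ref{remark assumption of smoothness}, gives decay of $\nabla^j\Rm$.

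Finally, I set $C := \max_k C_k$ and choose $T' \le \min(T,1)$ with $C(e^{T'}-1) < c_0$. Then $|\theta|_{g_0}<c_0$ on $M\times[0,T']$, so $\vp(t)$ is non-degenerate, and $T'$ depends only on the stated curvature norms. The step that needs the most care is the bookkeeping showing that $\theta^{(k)}$ involves no more than $s+2$ derivatives of the geometry, and that the dependence is purely on curvature quantities. The pointwise openness argument is routine.
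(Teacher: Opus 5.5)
Your proof is correct and follows essentially the same route as the paper. You bound each coefficient $\theta^{(k)}$ pointwise by $|\nabla^j \Rm_{g_0}|_{g_0}$, controlling torsion via $|T|^2=-R$ and the Lotay--Wei identity for $\nabla T$, and then take $t$ small so that $\tfrac12 g_0\le g(t)\le 2g_0$ and $\vp_0+\theta(t)$ stays positive. The difference is minor: the paper expands the tensor $B_{ij}$ with $g_{ij}=6^{-2/9}\det(B)^{1/9}B_{ij}$ on a finite cover by coordinate patches, whereas your $GL(7)$-openness argument in a $g_0$-orthonormal $G_2$-frame gives a universal threshold $c_0$ and makes uniformity in $x$ immediate.
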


\begin{proof}
    Let $R_0 > 0$ be a large radius. Cover the compact ``core" $M_{R_0}$ of $M$ by a finite number of coordinate neighborhoods $U^a$ with orthonormal frames $\{e_i^a\}_i$ (with respect to $g_0$). Cover the link $\Sg$ of the asymptotic cone $C$ by a finite number of coordinate neighborhoods $V^b$, and cover the end $M \setminus M_{R_0}$ by coordinate neighborhoods of the form $V^b \times \R$ with orthonormal frames $\{e_i^b\}_i$ (with respect to $g_0$). 

    We assume a curvature bound of the form
    \begin{equation}\label{equation curvature bound initial condition}
        |\na^k \Rm_{g_0}|_{g_0} \le C_k \qquad \text{for }k = 0,1,\ldots, s+2. 
    \end{equation}
    
    By the construction in Lemma \ref{lemma approximate solution} and the identities in \cite[\S 2]{LotayWeiLaplacianFlow}, 
    \begin{align*}
    \big|\theta^{(k)}\big|_{g_0} &\le F\big(|\na^{k+2} \Rm|_{g_0}, \ldots,|\Rm|_{g_0}\big) \le C_{r,s}\sum_{k=0}^{r+3} C_k^2  \qquad \text{ for }k = 1, \ldots, r+1. 
    \end{align*}
    In order to verify the condition of non-degeneracy, we consider the tensor (c.f. \cite[\S 2.1]{KarigiannisFlows})
    \[
    B_{ij} = \big((e_i \lrcorner \vp) \wedge (e_j \lrcorner \vp) \wedge \vp \big)\big(e_1,e_2, \ldots, e_7),
    \]
    for an orthonormal frame $\{e_i\}_i$ (with respect to $g_0$) in a coordinate neighborhood $U$. If $\vp = \vp_0$, then 
    \[
    B_{ij} = -6\de_{ij}.
    \]
    If $\vp(t) = \vp_0 + \theta(t)$, then
    \begin{align*}
        B_{ij} &= -6 \de_{ij} + \bigg(t^k \sum_{k = 1}^{r+1} \vp_0 * \vp_0 * \theta^{(k)} + t^{k+l} \sum_{k,l = 1}^{r+1} \vp_0 * \theta^{(l)} * \theta^{(k)} \\ &\qquad  \qquad  \qquad +  t^{k+l+m} \sum_{k,l,m = 1}^{r+1} \theta^{(m)} * \theta^{(l)} * \theta^{(k)}\bigg) \Big( e_i \odot e_j, e_1 \wedge \cdots \wedge e_7 \Big).
    \end{align*}
    If $t$ is made sufficiently small compared to the $C_k$'s, and since $g_{ij} = 6^{-2/9}\det(B)^{1/9}B_{ij}$, 
    \[
    \tfrac12 (g_0)_{ij}\le g_{ij}(t) \le 2 (g_0)_{ij}.
    \]
    By taking the minimum of such $t$'s on each coordinate neighborhood $U^a$ and $V^b \times \R$, we obtain $T' > 0$ depending only on the upper bounds \eqref{equation curvature bound initial condition} such that $\vp$ is nondegenerate on $[0,T']$. 
\end{proof}

\begin{rmk}
    To reduce notation, we continue to use the constant $T$ in our domain. However, we assume that either $T$ has been a) reduced to $T'$ (from Corollary \ref{corollary equivalence of approx soln metrics}) or b) $\theta$ has been smoothly cut off before $T'$ so that the metrics induced by the $G_2$-structure $\theta + \vp_0$ are non-degenerate.
\end{rmk}

\begin{cor}\label{corollary asymptotics of eta}
    Given $\theta$ as constructed in Lemma \ref{lemma approximate solution}, let 
    \[
    \eta_0 := \dd_t \theta - \Delta_{\vp} \vp - \cL_{V(\vp)}\vp,
    \]
    where $\vp = \theta + \vp_0$. Then, 
    \[
    \eta_0 \in H^{r,s}_{0,\la-2}\Om^3(M \times [0,T]),
    \]
    for any $\la > -2$. 
\end{cor}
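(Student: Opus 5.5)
The plan is to establish two things about $\eta_0$: the membership $\eta_0 \in H^{r,s}_{\la-2}\Om^3(M\times[0,T])$, which is a decay-and-regularity statement, and the vanishing of $\eta_0$ to order $r-1$ at $t=0$, which is what the subscript $0$ requires by Definition \ref{definition exact 3 forms vanishing on boundary}. The vanishing is immediate from Lemma \ref{lemma approximate solution}. There $\eta_0$ vanishes to order $r$ at $\{0\}\times M$, so $\dd_t^j\eta_0(0)=0$ for $0\le j\le r-1$. Once membership in $H^{r,s}_{\la-2}$ is shown, Lemma \ref{lemma smoothness of time derivatives} makes these traces well defined, and $\eta_0$ lies in the kernel of the trace map.

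For membership, I will write $\vp=\vp_0+\theta$ with $\theta=\sum_{k=1}^{r+1}\tfrac{t^k}{k!}\theta^{(k)}$ and expand $\eta_0$ schematically.
\begin{itemize}
\item The term $\dd_t\theta$ is a polynomial in $t$ whose coefficients are $\theta^{(k)}$.
\item The term $\Delta_\vp\vp+\cL_{V(\vp)}\vp$ is a smooth function of $\vp$ and its first four derivatives; equivalently, of the metric $g_\vp$, the torsion $T_\vp$, $\Rm_\vp$ and their derivatives. The background for $V$ is $\vp_0$.
\end{itemize}
By Corollary \ref{corollary equivalence of approx soln metrics}, $g_\vp$ is uniformly equivalent to $g_0$ on $[0,T]$, so every norm may be measured with respect to $g_0$.

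The key step is a pointwise weighted decay estimate. I claim that for every $i,j$ with $2i+j\le s$,
\[
|\nabla^j_{g_0}\dd_t^i\eta_0|_{g_0}=O(\rho^{-2-2i-j}).
\]
To prove it, I will use the estimates from the proof of Lemma \ref{lemma approximate solution}:
\begin{itemize}
\item $\theta^{(k)}=O(\rho^{-2k})$;
\item more precisely, $\nabla^j\theta^{(k)}=O(\rho^{-2k-j})$, which follows from Lemma \ref{lemma decay estimates on AC G2 forms} together with the scale invariance of the quantities built from $\vp_0$.
\end{itemize}
Under the dilation $\rho_\la$, each term of $\Delta_\vp\vp+\cL_{V(\vp)}\vp$ and of $\theta^{(k)}$ scales homogeneously by Lemma \ref{lemma scaling of metric}. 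As a consequence, each time derivative gains $\rho^{-2}$ and each spatial derivative gains $\rho^{-1}$. In particular $\eta_0=O(\rho^{-2})$ at worst, coming from $\Delta_{\vp_0}\vp_0 = O(\rho^{-2})$. The cone metric itself contributes nothing at leading order: on the cone the conical structure is torsion-free only asymptotically, and only the deviation terms and torsion decay.

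With the pointwise estimate in hand, the weighted norm is controlled by
\[
\int_0^T\!\!\int_M \rho^{2(-(\la-2)+2i+j)}\,\rho^{-4-4i-2j}\,\rho^{-7}\,dV_{g_0}\,dt=\int_0^T\!\!\int_M\rho^{-2\la-7}\,dV\,dt .
\]
Since $dV\sim\rho^6\,d\rho$, the integral converges exactly when $-2\la-7+6<-1$, that is, when $\la>0$. This is the borderline issue and the main obstacle. For $-2<\la\le 0$ the crude estimate $\eta_0=O(\rho^{-2})$ is insufficient, and I need extra decay. I will obtain it as follows.
\begin{itemize}
\item $\Delta_{\vp_0}\vp_0$ involves $\nabla T$ and $T*T$. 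The $C^k_{loc}$ convergence of $\la^{-3}\rho_\la^*\vp_0$ to the closed cone shows that these terms are $o(\rho^{-2})$ with all derivatives. This requires the cone to be torsion-free; if it is not, the statement must be read with the rate adjusted.
\item By Remark \ref{remark assumption of smoothness}, the initial data converge to infinite order. A quantitative rate then gives $O(\rho^{-2-\nu})$ for some $\nu>0$, which covers $\la>-2$ after shifting. This is the step I would check most carefully.
\end{itemize}
Finally, the higher-order time-derivative terms are handled by the same scaling count, and the polynomial dependence on $t$ on the bounded interval $[0,T]$ introduces only constants.
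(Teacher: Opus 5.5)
Your overall plan (a pointwise weighted decay bound, then weighted integration) has the right shape, and your integrability computation is correct: a bound $|\nabla^j\partial_t^i\eta_0|=O(\rho^{-2-2i-j})$ only gives membership for $\lambda>0$. The gap is in how you propose to cover $-2<\lambda\le 0$, and neither of your remedies works.
\begin{itemize}
\item The cone in Definition~\ref{definition asymptotically conical} is only assumed closed. By the homogeneity $\rho_\lambda^*\varphi_C=\lambda^3\varphi_C$, its torsion is exactly of order $\rho^{-1}$, so $\Delta_{\varphi_C}\varphi_C$ is exactly of order $\rho^{-2}$, not $o(\rho^{-2})$.
\item Infinite-order convergence (Remark~\ref{remark assumption of smoothness}) gives no polynomial rate.
\item Even a bound $O(\rho^{-2-\nu})$ would only give $\lambda>-\nu$. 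No ``shift'' reaches $\lambda>-2$ unless $\nu\ge 2$.
\end{itemize}
Your claim that $\eta_0=O(\rho^{-2})$ ``coming from $\Delta_{\varphi_0}\varphi_0$'' is itself wrong. The statement is true as written and does not need to be weakened.

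The missing idea is a cancellation. The $O(\rho^{-2})$ term you single out never appears in $\eta_0$: since $\theta^{(1)}=\Delta_{\varphi_0}\varphi_0$ and $V(\varphi_0)=0$, it is cancelled exactly by the leading term of $\partial_t\theta$. The paper uses this by rewriting
\[
\eta_0=\partial_t\Big(\sum_{m=2}^{r+1}\tfrac{t^m}{m!}\theta^{(m)}\Big)-(\Delta_\varphi\varphi-\Delta_{\varphi_0}\varphi_0)-\mathcal{L}_{V(\varphi)}\theta-d\big((V(\varphi)-V(\varphi_0))\lrcorner\varphi_0\big),
\]
where every term is $O(\rho^{-4})$.

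For the derivatives, combine two facts you already have. First, $\eta_0$ vanishes at $t=0$: by construction $\partial_t^i\eta_0(0)=0$ for $0\le i\le r$. Second, your scaling count says each $\partial_t$ gains $\rho^{-2}$ and each $\nabla$ gains $\rho^{-1}$. Together they give
\[
\nabla^j\partial_t^i\eta_0(x,t)=\int_0^t\nabla^j\partial_t^{i+1}\eta_0(x,\tau)\,d\tau=O\big(T\rho^{-4-2i-j}\big).
\]
The weighted integrand then becomes $\rho^{-2\lambda-11}\,dV\sim\rho^{-2\lambda-5}\,d\rho$, which is integrable exactly when $\lambda>-2$. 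No torsion-freeness of the cone and no convergence rate are needed.

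Your computation also shows that the paper's own displayed bounds $O(\rho^{-2-2l-k})$ for $(l,k)\neq(0,0)$ are too weak as stated. This one-step Taylor argument is what repairs them.
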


\begin{proof}Define $P(\theta) = \dd_t \theta - \Delta_{\vp} \vp - \cL_{V(\vp)}\vp$. Compute
    \begin{align*}
        \eta_0(x,t) &= \dd_t \bigg(\sum_{m=1}^{r+1} \frac{t^m}{m!}\theta^{(m)}(x)\bigg) - \Delta_{\vp} \vp - \cL_{V(\vp)}\vp \\
        &= \dd_t \bigg(\sum_{m=2}^{r+1} \frac{t^m}{m!}\theta^{(m)}(x)\bigg) - (\Delta_{\vp} \vp - \Delta_{\vp_0}\vp_0) - \cL_{V(\vp)}\theta - d((V(\vp) - V(\vp_0))\lrcorner \vp_0) \\
        &= O\big(\theta^{(2)}(x)\big)  + O(\theta * \Rm) + O(\nabla \theta * \Ga) + O(\nabla^2\theta) \\
        &= O(\rho^{-4}),
    \end{align*}
    since $\theta \in H^{1,2}_{\la}(M \times I)$. By the same kind of computation, observe that for $l \le r$,
    \begin{align*}
        \dd_t^l \eta_0 (x,t) &= \dd_t^l \bigg(\sum_{m=1}^{r+1} \frac{t^m}{m!}\theta^{(m)}(x)\bigg) - \dd_t^l(\Delta_{\vp} \vp - \cL_{V(\vp)}\vp )\\
        &= O\big(\theta^{(1+l)}(x_0)\big)  + O\bigg(\sum_{j+k=l} \dd_t^j \theta * \dd_t^k \Rm\bigg) \\
        &\qquad + O\bigg(\sum_{j+k=l} \nabla \dd_t^j \theta * \dd_t^k\Ga\bigg) + O\big(\nabla^2\dd_t^l\theta\big) \\
        &= O(\rho^{-2 - 2l}).
    \end{align*}
    Finally, if $2l + k \le s$, 
    \begin{align*}
        \nabla^k \dd_t^l \eta_0(x,t)
        &= O\big(\nabla^k \theta^{(1+l)}(x_0)\big) + O\bigg(\sum_{p+q =k}\sum_{i+j=l} \nabla^p \dd_t^i \theta * \nabla^q \dd_t^j \Rm\bigg) \\
        &\qquad + O\bigg(\sum_{p+q =k}\sum_{i+j=l} \nabla^p \dd_t^i \theta * \nabla^q \dd_t^j\Ga\bigg) + O\big(\nabla^{2+k}\dd_t^l\theta\big) \\
        &= O(\rho^{-2 - 2l-k}).
    \end{align*}
    This completes the proof of the corollary.
\end{proof}

\begin{cor}\label{corollary nearby vanishing short time}
    Given $\eps >0$ and $\theta$ as constructed in Lemma \ref{lemma approximate solution} and $\eta_0$ as defined in Corollary \ref{corollary asymptotics of eta}, there is a small $\ka > 0$ and an exact form $\eta_\ka$ that is $\eps$-close to $\eta_0$ in $H^{r-1,s-2}_{0,\la-2}\Om^3(M \times [0,T])$, for $\la > -2$, and that vanishes uniformly on the interval $[0,\ka] \subset [0,T] = I$.
\end{cor}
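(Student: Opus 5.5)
Since $\eta_0$ vanishes to order $r$ at $t=0$ (Corollary \ref{corollary asymptotics of eta}), the natural construction is to shift time: pull $\eta_0$ back along a time reparametrization that is constant equal to $0$ on $[0,\ka]$, and compare with $\eta_0$ in the parabolic Sobolev norm. Concretely, I would fix a smooth nondecreasing function $\chi_\ka:[0,T]\to[0,T]$ with $\chi_\ka(t)=0$ for $t\le\ka$, $\chi_\ka(t)=t$ for $t\ge 3\ka$, and $|\chi_\ka'|\le C$, $|\chi_\ka^{(j)}|\le C\ka^{1-j}$. Then I would set $\eta_\ka(x,t):=\eta_0(x,\chi_\ka(t))$. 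Since $\eta_0(\cdot,0)=0$, this vanishes identically on $[0,\ka]$. It is exact on each time slice because $\eta_0$ is: $\theta$ is exact and the Laplacian and Lie-derivative terms are exact by Cartan's formula and closedness. Moreover $\eta_\ka$ vanishes to all orders at $t=0$, so it lies in the space $H^{r-1,s-2}_{0,\la-2}$. A simpler alternative is to multiply by a cutoff, $\eta_\ka:=\zeta(t/\ka)\eta_0$ with $\zeta=0$ on $[0,1]$ and $\zeta=1$ on $[2,\infty)$. This also preserves exactness, and it is the version I would actually use, since the estimates are cleaner.

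The estimate to prove is $\|\eta_0-\eta_\ka\|_{H^{r-1,s-2}_{\la-2}}\to0$ as $\ka\to0$. The difference is $(1-\zeta(t/\ka))\eta_0$, which is supported in $[0,2\ka]$. A typical term is $\dd_t^i\na^j$ with $i\le r-1$ and $2i+j\le s-2$. By Leibniz, such a term is a sum of $\ka^{-a}\zeta^{(a)}(t/\ka)\,\dd_t^{i-a}\na^j\eta_0$. The terms with $a=0$ are bounded by the $H^{r,s}_{\la-2}$-norm of $\eta_0$ restricted to $M\times[0,2\ka]$, which tends to $0$ by dominated convergence, since that norm is finite (Corollary \ref{corollary asymptotics of eta}). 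For the terms with $a\ge1$, I would use the vanishing of $\eta_0$ to order $r$ at $t=0$ via Taylor's formula with integral remainder in $t$, valued in the Banach space $H^{s-2i'}_{\la-2-2i'}(M)$:
\[
\dd_t^{i-a}\na^j\eta_0(t)=\frac{1}{(a-1)!}\int_0^t (t-\tau)^{a-1}\,\dd_t^{i}\na^j\eta_0(\tau)\,d\tau .
\]
This is legitimate since $\dd_t^{m}\eta_0(0)=0$ for $m<r$ and $i\le r-1$. A Hardy-type inequality then gives
\[
\int_0^{2\ka}\ka^{-2a}\|\dd_t^{i-a}\na^j\eta_0(t)\|^2\,dt\le C\int_0^{2\ka}\|\dd_t^{i}\na^j\eta_0\|^2\,dt,
\]
which again tends to $0$ as $\ka\to0$. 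The weights in $\rho$ are untouched, because the cutoff depends only on $t$. This is also why the rate shift is harmless: the time derivatives pick up no spatial weight mismatch.

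The main obstacle is the boundary-in-time terms where the cutoff derivatives fall. These need the order-$r$ vanishing together with the one-derivative loss in regularity, from $(r,s)$ down to $(r-1,s-2)$. The loss is what guarantees that the top-order time derivative $\dd_t^{r-1}$ is still paired with an integrable $\dd_t^{r-1}\eta_0$ whose restriction to $[0,2\ka]$ has small norm. I would check carefully that the Hardy inequality is applied only up to order $i\le r-1$, where the vanishing hypotheses hold. Finally, I would choose $\ka$ so small that the total is below $\eps$.
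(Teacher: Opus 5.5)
Your argument is correct, but the cutoff you settle on is a different construction from the paper's. The paper uses a crude form of your first idea. It extends $\eta_0$ by zero to negative times, which the order-$r$ vanishing allows, and takes the time translate $\eta_\kappa(x,t)=\eta_0(x,t-\kappa)$. It then bounds $\partial_t^l\nabla^k\eta_0(t_0-\kappa)-\partial_t^l\nabla^k\eta_0(t_0)$ by $\int_{t_0-\kappa}^{t_0}\|\partial_t^{l+1}\nabla^k\eta_0\|\,d\tau$. That step consumes one extra time derivative, with its stronger weight, and this is exactly where the drop from $H^{r,s}_{\lambda-2}$ to $H^{r-1,s-2}_{\lambda-2}$ is used. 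In return it gives an explicit bound of order $\kappa$ times the $H^{r,s}_{\lambda-2}$-norm of $\eta_0$. Lemma~\ref{lemma approx extension} and Lemma~\ref{lemma approximate solution for AC preservation} later rely on this to bound $\kappa$ below using the curvature constants alone.

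Your cutoff $\zeta(t/\kappa)\eta_0$ modifies $\eta_0$ only on $[0,2\kappa]$. It uses the vanishing at $t=0$ directly, through the Taylor remainder and the Cauchy--Schwarz (Hardy) bound, to absorb the factors $\kappa^{-a}$ from the cutoff. The same step also handles the weight mismatch, since $\kappa^{-a}\partial_t^{i-a}\nabla^j\eta_0$ must be measured with the weight of the $(i,j)$ term.

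One correction to your last paragraph: your route needs no loss of regularity for mere convergence. The Hardy step controls those terms by $\partial_t^i\nabla^j\eta_0$ at the same order, so $\eta_0\in H^{r-1,s-2}_{0,\lambda-2}$ would already suffice qualitatively. The extra regularity helps you only if you want the quantitative version that the later lemmas need. For that, replace your dominated-convergence step by the bound $\sup_t\|\partial_t^i\nabla^j\eta_0(t)\|\lesssim\|\eta_0\|_{H^{r,s}_{\lambda-2}}$ for $i\le r-1$, as in Lemma~\ref{lemma smoothness of time derivatives}. This gives $\|\eta_0-\eta_\kappa\|_{H^{r-1,s-2}_{\lambda-2}}\le C\kappa^{1/2}\|\eta_0\|_{H^{r,s}_{\lambda-2}}$, which serves the same purpose as the paper's estimate.
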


\begin{proof}
    Since $\eta_0$ vanishes to order $r$ at the initial time slice, $\{0\} \times M$, we can extend $\eta$ by zero on $(-1,0)$ to be defined in $H^{r,s-2}_\la \Om^3(M \times (-1,T])$. We consider the shifted forms 
    \begin{align*}
        \eta_{\ka}(x,t) := \eta_0(x, t-\ka),\;\;\; \eta_{\ka} \in H^{r,s-2}_\la \Omega^3 (M \times [0,T]).
    \end{align*}
    Consider the $(k+2l)$-order derivatives $\nabla^k\dd_t^l \eta_0$, where $k + 2l \le s-2$. By \cite[Theorem 2, \S 5.9.2]{EvansPDE}, for every $t_0 \in [0,T]$,
    \[
    \dd_t^l \nabla^k \eta_0(t_0 - \ka, \cdot) - \dd_t^l \nabla^k \eta_0(t_0, \cdot) = \int_{t_0 - \ka}^{t_0}  \dd_t^{l+1} \nabla^k \eta_0(\tau, \cdot) d\tau,
    \]
    in $L^2$. Taking norms, we can estimate
    \begin{align*}
        \big\| \dd_t^l \nabla^k \eta_0(t_0 - \ka) - \dd_t^l \nabla^k \eta_0(t_0)\big\|_{L^2_{\la-k-2l}} &\le \int_{t_0 - \ka}^{t_0} \big \| \dd_t^{l+1} \nabla^k \eta_0(\tau)\big\|_{L^2_{\la -k -2l}} d\tau.
    \end{align*}
    Since $\|\dd_t^{l+1} \nabla^k \eta_0(\tau)\|_{L^2_{\la-k-2l}}$ is finite for each $\tau$ (i.e. less than $\|\dd_t^{l+1} \nabla^k \eta_0(\tau)\|_{L^2_{\la-k-2(l+1)}}$ which depends only on $\vp_0$) we may, by the definition of the weighted parabolic Sobolev norms in \eqref{equation weighted parabolic sobolev definition}, choose $\ka >0$ small enough that the right-hand side is less than $\eps$.

    Considering all $k + 2l \le s-2$ and $l \le r-1$, repeating this process, and taking the minimum of the resulting collection of $\ka$'s yields a $\ka > 0$ such that $\eta_\ka$ satisfies all the desired properties in the statement of the corollary.
\end{proof}

The next lemma constructs approximate extensions to flows beginning at $\vp_0$ and solving \eqref{equation laplacian deturck boundary} on some time interval $[0,T]$. These approximate extensions will be used in the proof of long-time existence in \S \ref{section long time behavior}. Note that all norms in all Sobolev spaces in the following are taken with respect to the metric $g_0$.

\begin{lem}\label{lemma approx extension}
     Let $\vp(t)$ solve \eqref{equation laplacian deturck boundary} with initial data $\vp_0$ for $t \in [0,T]$ such that $\vp(t) - \vp_0 \in H^{r+1,s+2}_{\la}\Omega^3(M \times [0,T])$ for $\la > -2$, $r \in \bN$, $s \ge 2r$. Suppose further that for each $t \in [0,T]$, the curvatures associated to $\vp(t)$ are uniformly bounded in the norm induced by $g_0$, i.e.
    \begin{equation}\label{equation long time curvature bounds}
    |(\na^{g(t)})^k \vp(t)|_{g_0} \le C_k \rho^{-k},  
    \end{equation}
    for $k \in \bN_{\ge 0}$. Then there exists $T' > T$ and $\theta(t) \in H^{r+1,s+2}_{\la}\Omega^3(M \times [0,T'])$ such that $\theta(t) = \vp(t) - \vp_0$ on $[0,T]$, the expression 
    \[
    \eta_0 := \dd_t \theta - \De_\vp \vp - \cL_{V(\vp)}\vp
    \]
    vanishes to order $r$ for $t \in [0,T]$ and is contained in the space $H^{r,s}_{0,\la-2}\Om^3(M \times [0,T'])$, where $|T' - T|$ is bounded below depending  only on the constants $C_k$ in \eqref{equation long time curvature bounds}. In addition, the geometric data up to order $l$ associated to the $G_2$-structure $\theta(t) + \vp_0$ is uniformly bounded in terms of the $C_k$'s, for $k \le 3l + r + s + 3$. 
    
    Finally, given some $\eps > 0$, there is a small $0 < \ka < 1$, depending only on the constants $C_k$ in the curvature bounds \eqref{equation long time curvature bounds}, and an exact form $\eta_\ka$ that is $\eps$-close to $\eta_0$ in $H^{r-1,s-2}_{\la-2}\Om^3(M \times [0,T'])$ for $\la > -2$, and that vanishes uniformly on the interval $[0,T + \ka] \subset [0,T']$.
\end{lem}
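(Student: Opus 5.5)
The plan is to repeat the construction of Lemma \ref{lemma approximate solution} and Corollaries \ref{corollary equivalence of approx soln metrics}--\ref{corollary nearby vanishing short time}, with the initial slice $t=0$ replaced by $t=T$ and the initial form replaced by $\vp(T)$. Write $\vp_T := \vp(T)$.

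\textbf{Step 1: Taylor coefficients at $t=T$.} Define the coefficients recursively by
\[
\theta_T^{(0)} := \vp_T - \vp_0, \qquad \theta_T^{(1)} := \Delta_{\vp_T}\vp_T + \cL_{V(\vp_T)}\vp_T,
\]
and, for $k\ge 2$, let $\theta_T^{(k)} := \dd_t^{k-1}\big(\Delta_\vp\vp + \cL_{V(\vp)}\vp\big)|_{t=T}$. These are computed formally through the evolution equations \eqref{equation evolution metrics}--\eqref{equation evolution curvature}, exactly as in Lemma \ref{lemma approximate solution}. Because $\vp$ is a smooth solution on $[0,T]$, these quantities agree with the one-sided time derivatives $\dd_t^k \vp(T^-)$. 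Each one is exact, since $\dd_t$ commutes with $d$ and the Lie derivative term is $d(V\lrcorner\vp)$. Each one is also a universal polynomial in $\vp_T$, its inverse metric, and $\nabla^{j}\vp_T$ for $j\le 2k+2$. The hypothesis \eqref{equation long time curvature bounds} then gives $|\nabla^m\theta_T^{(k)}|_{g_0}\le C\rho^{-2-2k-m}$, with $C$ depending only on $C_j$ for $j\le 2k+m+2$.

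\textbf{Step 2: the extension.} Set $\theta(t) := \vp(t)-\vp_0$ for $t\le T$, and for $t\in[T,T']$ set
\[
\theta(t) := \sum_{k=0}^{r+1}\frac{(t-T)^k}{k!}\,\theta_T^{(k)}.
\]
The two pieces match to order $r+1$ at $t=T$, so $\theta\in H^{r+1,s+2}_\la\Omega^3(M\times[0,T'])$. On $[0,T]$ the expression $\eta_0$ is identically zero, and at $t=T^+$ it vanishes to order $r$ by construction. Hence $\eta_0\in H^{r,s}_{0,\la-2}$, and the weighted decay follows from the same bookkeeping used in Corollary \ref{corollary asymptotics of eta}, with Lemma \ref{lemma decay estimates on AC G2 forms} replaced by the bounds of Step 1. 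The length $T'-T$ is fixed by rerunning the nondegeneracy argument of Corollary \ref{corollary equivalence of approx soln metrics}: the tensor $B_{ij}$ computed from $\vp_T + \sum_{k\ge1}(t-T)^k\theta_T^{(k)}/k!$ is a small perturbation of the one for $\vp_T$, which is uniformly equivalent to $g_0$ by \eqref{equation long time curvature bounds} with $k=0$. So $T'-T$ depends only on the $C_k$. The uniform bounds on geometric data of order $l$ follow because curvature, torsion and their derivatives up to order $l$ are polynomial in $\nabla^{\le l+2}\theta$. Counting derivatives through the $\theta_T^{(k)}$ with $k\le r+1$ and up to $s+2$ spatial derivatives gives the stated threshold $3l+r+s+3$; this count is routine but tedious.

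\textbf{Step 3: the shifted form $\eta_\ka$.} Since $\eta_0$ already vanishes on $[0,T]$ and to order $r$ at $T$, I would define $\eta_\ka(t) := \eta_0(t-\ka)$, extending $\eta_0$ by zero for negative times. Then $\eta_\ka$ vanishes on $[0,T+\ka]$ and is exact. The estimate $\|\eta_\ka-\eta_0\|<\eps$ follows as in Corollary \ref{corollary nearby vanishing short time}, by integrating $\dd_t^{l+1}\nabla^k\eta_0$ over intervals of length $\ka$. To make $\ka$ depend only on the $C_k$, I would bound the integrand by a pointwise bound $O(\rho^{-2-2(l+1)-k})$, whose constant comes from Step 1 and depends only on the $C_k$. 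A weighted $L^2$ bound follows because $\rho^{-4-2l-k}$ paired with the weight $\rho^{-\la+k+2l}\rho^{-7/2}$ is square-integrable for $\la>-2$. This gives $\|\eta_\ka-\eta_0\|\le C(C_k)\ka^{1/2}$.

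\textbf{Main obstacle.} The main obstacle is keeping this quantitative: the constants must come only from the $C_k$, and the decay in $\rho$ must be uniform, since the weighted norms need it. The delicate point is that \eqref{equation long time curvature bounds} bounds $\nabla^{g(t)}$ derivatives in the $g_0$ norm. I would first convert these into $g_0$-derivative bounds using the uniform equivalence of $g(T)$ and $g_0$ together with the bound on $\nabla^{g(T)}-\nabla^{g_0}$ coming from the $k=1$ case, and then proceed as above.
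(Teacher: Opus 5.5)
Your overall route is the paper's: Taylor-expand at $t=T$, glue to $\varphi(t)-\varphi_0$ on $[0,T]$, shrink $[T,T']$ via the nondegeneracy argument of Corollary~\ref{corollary equivalence of approx soln metrics}, and produce $\eta_\kappa$ by the time shift of Corollary~\ref{corollary nearby vanishing short time}. The gap is in the step you yourself single out as the delicate one: the conversion you propose does not follow from \eqref{equation long time curvature bounds}.

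The $k=0$ bound $|\varphi_T|_{g_0}\le C_0$ is only an upper bound, and it gives no inequality $g(T)\ge c\,g_0$. For example, pointwise, $A^*\varphi_{\mathrm{std}}$ with $A=\mathrm{diag}(\epsilon,1,\dots,1)$ has $g_0$-norm bounded independently of $\epsilon$, yet its metric has an eigenvalue $\epsilon^2$.

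The $k\ge 1$ bounds control only the intrinsic quantities $(\nabla^{g(T)})^k\varphi_T$, i.e.\ the torsion of $\varphi_T$ and its covariant derivatives. They carry no information about $S=\nabla^{g(T)}-\nabla^{g_0}$. Where $\varphi_0$ is flat, $\varphi_T=F^*\varphi_0$ has all these quantities equal to zero, while $S$ involves $D^2F$, which is unconstrained.

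This matters for three reasons:
\begin{itemize}
\item The DeTurck field \eqref{equation def deturck} is built from $S$. So already $\theta^{(1)}_T$ contains $\mathcal{L}_{V(\varphi_T)}\varphi_T$, hence $S$ and $\nabla^{g_0}S$, and higher coefficients contain further $g_0$-derivatives of $S$.
\item Your polynomial expressions need $g(T)^{-1}$ bounded in $g_0$-norm.
\item The perturbation argument for $B_{ij}$ needs a quantitative nondegeneracy bound for $B(\varphi_T)$ in $g_0$-frames.
\end{itemize}
So your claims that the constants in Step~1, the length $T'-T$, and $\kappa$ depend only on the $C_k$ are unsupported.

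Closing this requires input beyond \eqref{equation long time curvature bounds} at the single time $T$. Two possible sources:
\begin{itemize}
\item Control of the $g_0$-derivatives of $\varphi(T)-\varphi_0$, obtained from the hypothesis $\varphi-\varphi_0\in H^{r+1,s+2}_\lambda$ through the weighted Sobolev embedding (Theorem~\ref{theorem weighted sobolev embedding theorem}).
\item In the setting where the lemma is actually used (a Laplacian flow with bounded curvature, Lemma~\ref{lemma approximate solution for AC preservation}), integrating \eqref{equation evolution metrics} and \eqref{equation evolution christoffel symbols} over $[0,T]$ using the bounds at all times. This is how \eqref{equation compare metrics bdd K} arises.
\end{itemize}
Either way the constants also depend on $T$ or on $\|\varphi-\varphi_0\|_{H^{r+1,s+2}_\lambda}$. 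The paper's own proof does not address this point at all; it simply asserts that everything is controlled by the $C_k$. You were right to flag it, but the mechanism you give does not work.

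A smaller slip: your Step~1 decay is too strong by a factor of $\rho^{-2}$. As in Lemma~\ref{lemma approximate solution}, $\nabla^m\theta^{(k)}_T$ behaves like $\rho^{-2k-m}$; for instance $\theta^{(1)}_T$ is second order in $\varphi_T$, so it is only $O(\rho^{-2})$. Also, $\theta^{(0)}_T=\varphi_T-\varphi_0$ is not controlled by the $C_j$ at all. The bound $O(\rho^{-4-2l-k})$ you use for $\partial_t^{l+1}\nabla^k\eta_0$ in Step~3 is nevertheless the right one.
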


\begin{proof}
    The construction of the extended solution begins precisely as in Lemma \ref{lemma approximate solution}, except we take the formal Taylor series expansion at $\vp(T)$ instead of $\vp_0$. Set $\theta(t) = \vp(t) - \vp_0$ on $[0,T]$ and define $\theta(t)$ as in \eqref{equation approximate solution first pass} on $[T,T + 1]$. Note that the two definitions of $\theta(t)$ match at $t=T$ up to order $r$. 
    
    Since $\theta(t)$ may grow rather large from $T$ to $T+1$, the domain must be restricted from $[0,T+1]$ to $[0, T']$, where $T' \in (T, T+1)$ is uniformly bounded below depending only on the constants $C_k$ from \eqref{equation long time curvature bounds} up to a certain order $k$. First, $T'$ can be chosen as in Corollary \ref{corollary equivalence of approx soln metrics} so that $\vp_0 + \theta(t)$ only stretches the metric $g(T)$ by a bounded amount.  
    
    We would furthermore like the Christoffel symbols, curvatures, and their higher-order derivatives (up to some high order) to be uniformly bounded for the approximate solution. Thus, we can take $l$ space derivatives and $p$ time derivatives of \eqref{equation approximate solution first pass}, and see that we can reduce $t$ to make $|\dd_t^p \na^l \theta(t)|_{g_0}$ arbitrarily small, depending only on the $C_k$'s up to large but finite order (i.e. order $p + (s + 2) + 2l + (r +1)$). 

    The vanishing form $\eta_\ka$ can be obtained in precisely the same way as in Corollary \ref{corollary nearby vanishing short time}, where $\ka$ will roughly be the largest $\ka$ such that 
    \[
    \sum_{k = 1}^q \ka C_k \le \eps. 
    \]
    Thus, we have $\eta_\ka$ in $H^{r-1,s-2}_{\la-2}\Om^3(M \times [0,T'])$ and there is a constant $K(C_k,r,s) > 0$ such that 
    \[
    0 < T + K(C_k,r,s) < T + \ka < T'. 
    \]
    This completes the proof of the lemma.
\end{proof}

\section{Short-time existence of Laplacian flow}\label{section short time existence IVT argument}

\subsection{Outline of the proof of existence}\label{subsection existence outline}

The first step to finding a short-time solution to the Laplacian-DeTurck flow is to use the inverse function theorem to find ``almost" solutions (solutions up to ``small eigenforms" of the Hodge Laplacian) to \eqref{equation laplacian deturck boundary}, the Laplacian-DeTurck flow on the domains $M_R$. This is achieved by first using the existence theory for linear parabolic equations to prove the existence of a right inverse $\cT_R$ for $D\cP_0$, and then using the maximum principle and $L^2$-Schauder estimates for the linearized equation to bound the norm $\|\cT_R\|$ of the right inverse independently of $R$. 

The linear operator we will consider while calculating the right inverse $\cT_R$ will be the following operator which acts on 2-forms. 
\begin{align}\label{equation linearization operator on domains}
    \sL_R \beta &= \dd_t \beta + \Delta_{d\beta_0 + \vp_0} \beta - \Phi(d\beta), \qquad \beta \in \Omega^2(M_R). 
\end{align}
Since the output of the Laplacian-DeTurck operator is an exact form, we can follow the bottom path of the following commutative diagram.
\[
\xymatrix@+1pc{
H_{0,\la+2}^{r,s-1}\sE^3(M_R \times I) \ar[r(1.4)]^-{\dd_t + \De - d(\Phi(\cdot))}  && H_{0,\la}^{r-1,s-3}\Om^3(M_R \times I)  \\
H^{r,s}_{0,\la+3} \Omega^2_D(M_R \times I) \ar[u]^d \ar[r(1.4)]^-{\sL_R} && H^{r-1,s-2}_{0,\la+1} \Omega^2 (M_R \times I) \ar[u]^d
}
\]
Recall the nonlinear operator
\[
\cP : H^{r,s}_{0,\la+3} \Omega^2_D(M_R \times I) \oplus H_{0,\la+1}^{r-1,s-2}\Om^4 (M_R \times I)  \rightarrow H_{0,\la}^{r-1,s-3}\sU^3(M_R \times I),
\]
defined in \eqref{equation fix notation mapping}. The linearization of $\cP$ at 0 is $D\cP_0$, which can be expressed in terms of $\sL_R$ as follows.
\[
H^{r,s}_{0,\la+3} \Omega^2_D(M_R \times I) \oplus H^{r-1,s-2}_{0,\la+1} \Omega^4(M_R \times I) \rightarrow H_{0,\la}^{r-1,s-3}\sU^3(N \times I)
\]
\begin{equation}\label{equation linearization}
  (\eta, \om) \mapsto \Pi_{\sU}\big( d (\sL_R \eta) + \de \om \big)  
\end{equation}

Thus, to find the right inverse, first notice that by Proposition \ref{proposition boundedness of restricted forms}, for a given $\nu \in H_{0,\la}^{r-1,s-3}\sU^3(M_R \times I)$, we can find a primitive 2-form $\xi \in H_{0,\la+1}^{r-1,s-2}\Omega_D^2(M_R \times I)$ and a primitive 4-form $\om \in H_{0,\la+1}^{r-1,s-2}\Omega^4(M_R \times I)$ such that 
\[
\nu = d\xi + \de \om, \; d\xi \in H_{0,\la}^{r-1,s-3}\sE^3(M_R \times I), \;  \de \om \in H_{0,\la}^{r-1,s-3}(\sC^3 \oplus \sH^3_{co})(M_R \times I),
\]
and which satisfy the bounds
\[
\|\xi\|_{H^{r-1,s-2}_{\la+1}}, \; \|\om\|_{H^{r-1,s-2}_{\la+1}} \le C \|\nu\|_{H^{r-1,s-3}_{\la}},
\]
where $C > 0$ is independent of $R$. Given a potential $\xi$ for the Hodge-Morrey exact part with these properties, existence theory for linear parabolic equations (\S \ref{subsection existence parabolic pde}) yields a 2-form $\beta$ with homogeneous boundary conditions such that $\sL_R \beta = \xi$. The homogeneous boundary conditions on $\beta$ and the compatibility conditions on $\xi$ allow us to conclude that the 2-form $\beta$ has the desired regularity. The details of this existence argument are contained in \S \ref{subsection existence of solution to linearization}.

The right inverse $\cT_R$ of $D\cP_0$ is taken to be
\[
\cT_R(\nu) = (\beta, \om).
\]
To bound $\cT_R$ uniformly in $R$, the weighted Schauder estimates (see \S \ref{subsection weighted schauder}) are combined with a priori estimates (see \S \ref{subsection a priori estimates}) from a barrier argument to prove a bound of the form 
\[
\|\beta\|_{H^{r,s}_{\la+3}} \le C \|\xi\|_{H^{r-1,s-2}_{\la+1}},
\]
where $C>0$ is independent of $R$. This estimate and the bounds on $\om$ from Proposition \ref{proposition boundedness of restricted forms} is sufficient to conclude that we can apply Theorem \ref{theorem quantitative inverse function theorem} to $\cP$ defined on each $M_R$ in a uniform neighborhood of the restricted form $\eta_0|_{M_R}$ (see Corollary \ref{corollary asymptotics of eta} for the definition of $\eta_0$) projected to $H^{r-1, s-3}_{0,\la}\sU^3 (M_R \times I)$. The details of the argument establishing the uniform boundedness of the right inverse are contained in \S \ref{subsection uniform boundedness right inverse}. 

\begin{thm}\label{theorem existence of inverse on domains}
    Let $\beta_0$ be a potential for the approximate solution defined in Lemma \ref{lemma approximate solution} and $\eta_\ka$ be the family of 3-forms defined in Corollary \ref{corollary nearby vanishing short time}. Given $R > R_0$, let $\beta_{0,R}$, $\eta_{0,R}$, and $\eta_{\ka,R}$ denote the restriction of $\beta_0$, $\eta_0$, and $\eta_\ka$ to $M_R \times I$, respectively.
    
    There exists $\ve_0 > 0$ such that given $0 < \ve < \ve_0$, there exists a $\ka > 0$ such that for all $R > R_0$, there is a pair 
    \[
    (\beta_{R}, \om_R) \in B_{\ve}\big((0,0)\big) \subset H^{r,s}_{0,\la+3}\Om^2_D(M_R \times I) \oplus H^{r-1,s-2}_{0,\la+1}\Om^4(M_R \times I)
    \]
    solving
    \[
    \cP(\beta_{R},\om_R) = \Pi_{\sU}(\eta_{\ka,R}),
    \]
    where $\Pi_{\sU}(\eta_{\ka,R})$ vanishes for times $t \in [0,\ka]$.
\end{thm}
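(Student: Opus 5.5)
We apply the quantitative inverse function theorem (Theorem \ref{theorem quantitative inverse function theorem}) to the map $\cP$ of \eqref{equation fix notation mapping} on each domain $N = M_R$, with $E_1 = H^{r,s}_{0,\la+3}\Om^2_D(M_R\times I)\oplus H^{r-1,s-2}_{0,\la+1}\Om^4(M_R\times I)$ and $E_2 = H^{r-1,s-3}_{0,\la}\sU^3(M_R\times I)$. The point is to choose every constant independently of $R$, so that one pair $(\ve,\ka)$ works along the whole exhaustion.

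\textbf{Step 1: the right inverse.} We take as given the results deferred to \S\ref{subsection parabolic existence}--\S\ref{subsection uniform boundedness right inverse} and the appendices. For $\nu\in E_2$, Proposition \ref{proposition boundedness of restricted forms} gives $\nu = d\xi+\de\om$ with $R$-independent bounds on $\xi$ and $\om$. The linear parabolic theory then solves $\sL_R\beta=\xi$ with homogeneous Dirichlet data, and the weighted Schauder and a priori estimates give $\|\beta\|_{H^{r,s}_{\la+3}}\le C\|\xi\|_{H^{r-1,s-2}_{\la+1}}$. Since $\Pi_{\sU}$ fixes $\nu\in E_2$, the map $\cT_R(\nu):=(\beta,\om)$ is a right inverse of $D\cP_0$ by \eqref{equation linearization}, and $\|\cT_R\|\le C_0$ with $C_0$ independent of $R$.

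\textbf{Step 2: the constants.} Apply Lemma \ref{lemma lipschitz bounds} with $\eps=(2C_0)^{-1}$. This gives $\ve_0>0$, independent of $R$, such that $\cP-D\cP_0$ is $(2C_0)^{-1}$-Lipschitz on $B_{\ve_0}(0)$, and hence on $B_\ve(0)$ for every $\ve<\ve_0$. Theorem \ref{theorem quantitative inverse function theorem} then says that every $y\in E_2$ with $\|y-\cP(0,0)\|<\ve(2C_0)^{-1}$ has a preimage in $B_\ve((0,0))$.

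Now $\cP(0,0)=\Pi_{\sU}(\eta_{0,R})$, the projection of the restriction of $\eta_0$ from Corollary \ref{corollary asymptotics of eta}; this needs the sign conventions in \eqref{equation image approx solution} and Corollary \ref{corollary asymptotics of eta} to agree. Apply Corollary \ref{corollary nearby vanishing short time} with tolerance $\ve(2C_0C_\Pi)^{-1}$, where $C_\Pi$ is the $R$-uniform bound on $\Pi_{\sU}$ from Remark \ref{remark uniform boundedness of projection to U}. This yields $\ka>0$ and $\eta_\ka$ vanishing on $[0,\ka]$.

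\textbf{Step 3: the target is in range.} Restriction to $M_R$ does not increase the weighted norms, which are integrals of a positive density. Hence
\[
\|\Pi_{\sU}(\eta_{\ka,R})-\Pi_{\sU}(\eta_{0,R})\|\le C_\Pi\|\eta_\ka-\eta_0\|<\ve(2C_0)^{-1},
\]
with everything independent of $R$. Step 2 then produces $(\beta_R,\om_R)\in B_\ve((0,0))$ with $\cP(\beta_R,\om_R)=\Pi_{\sU}(\eta_{\ka,R})$. It remains to check that $\Pi_{\sU}(\eta_{\ka,R})$ lies in $E_2$, meaning it vanishes to order $r-2$ at $t=0$, and that it vanishes on $[0,\ka]$.
\begin{itemize}
\item Vanishing of $\eta_{\ka,R}$ on $[0,\ka]$ is immediate from the shift construction.
\item For $\Pi_{\sU}$ I would use that it acts slice-by-slice in $t$, because the $L^2_\la$-orthogonal projection is built from a time-independent approximate kernel with respect to the fixed metric $g_0(0)$. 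Then $\Pi_{\sU}$ of a form vanishing on a time interval also vanishes there.
\end{itemize}

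\textbf{Main obstacle.} The real content is the $R$-uniform bound on $\|\cT_R\|$ in Step 1 together with the uniformity in Lemma \ref{lemma lipschitz bounds}; here both are taken from later results. Within this proof the delicate point is the slice-wise nature of $\Pi_{\sU}$. If $\sU$ were defined using time-mixing norms, I would instead define the approximate kernel as a time-independent finite-dimensional space of static forms. Its orthogonal complement in the parabolic space is then characterised slice-wise, and vanishing on $[0,\ka]$ is preserved.
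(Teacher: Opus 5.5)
Your proposal is correct and follows the paper's proof. You apply Theorem~\ref{theorem quantitative inverse function theorem} on each $M_R$ with the $R$-independent Lipschitz bound of Lemma~\ref{lemma lipschitz bounds} and the right-inverse bound of Proposition~\ref{proposition uniform boundedness of right inverse}, then choose $\ka$ via Corollary~\ref{corollary nearby vanishing short time} and the uniform bound on $\Pi_{\sU}$ so that $\Pi_{\sU}(\eta_{\ka,R})$ lands in the $R$-uniform neighbourhood of $\Pi_{\sU}(\eta_{0,R})=\cP(0,0)$. Your extra checks fill in details the paper leaves implicit: centering at $(0,0)$ rather than the paper's $\cP(\beta_{0,R},0)$, the sign convention for $\eta_0$, and the slice-wise action of $\Pi_{\sU}$, which gives both membership in the target space and vanishing on $[0,\ka]$.
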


\begin{proof}
The Lipschitz bounds from Lemma \ref{lemma lipschitz bounds} and the uniform boundedness of the right inverse $\cT_R$ from Proposition \ref{proposition uniform boundedness of right inverse} allow us to apply the quantitative inverse function theorem (Theorem \ref{theorem quantitative inverse function theorem}) on each domain $M_R$ centered at $\Pi_{\sU}(\eta_{0,R}) = \cP(\beta_{0,R},0)$ with identical constants for all $R > R_0$. 

Given $\ve >0$, the quantitative inverse function theorem says that on $B_{\ve}(\beta_{0,R})$, $\cP$ is surjective onto a small neighborhood of $\Pi_{\sU}(\eta_{0,R})$, with diameter depending only on $\ve$, not $R$. By the uniform boundedness of $\Pi_{\sU}$ given in Proposition \ref{proposition uniform boundedness of projection}, $\ka>0$ can be chosen small enough so that $\Pi_{\sU}(\eta_{\ka,R})$ is in a sufficiently small $H^{r-1,s-3}_{\la}$-neighborhood of $\Pi_{\sU}(\eta_{0,R})$ for all $R>0$.

Let $\cK$ be the local right inverse of $\cP$ given by Theorem \ref{theorem quantitative inverse function theorem}. Setting $(\beta_{R},\om_{R}) := \cK\big(\Pi_{\sU}(\eta_{\ka,R})\big)$ proves the theorem.
\end{proof}

Let $R_i \rightarrow \infty$ be a sequence of radii and let $M_{R_i}$ be a corresponding sequence of domains. Taking $\beta_{R_i}$ from the inverse images $(\beta_{R_i},\om_{R_i})$ given by Theorem \ref{theorem existence of inverse on domains} yields a locally bounded sequence of flows $\vp_i = d(\beta_{R_i} + \beta_0|_{R_i}) + \vp_0|_{R_i}$ on the exhaustive sequence of domains $(M_{R_i}, \dd M_{R_i})$, where $R_i \rightarrow \infty$ and such that for some $\ka > 0$
\[
\Pi_{\sU}\big(\dd_t \vp_i - \De_{\vp_i}\vp_i - \cL_{V(\vp_i)}\vp_i\big|_t  + \ka_{\vp_0} \om_{R_i}\big) = 0 \text{ for } t \in [0, \ka] \text{ and } \vp_i(0) = \vp_0|_{M_{R_i}}.
\]
Let $\vp := \lim_{i\rightarrow \infty} \vp_i$, where the nature of the limit is to be specified in \S \ref{subsection existence everywhere}. Theorem \ref{theorem solution is exact solution} tells us that
\[
\dd_t \vp - \De_{\vp}\vp - \cL_{V(\vp )}\vp = 0, \;\;t \in [0, \ka],
\]
i.e. that $\vp(t)$ is a solution to the closed Laplacian flow on $M \times [0,\ka]$. The details of the proof of global existence are contained in \S \ref{subsection existence everywhere}.  

\subsection{Existence of a solution to the linearization}\label{subsection existence of solution to linearization}\label{subsection parabolic existence}

Given a domain $M_R$ with $R< \infty$ and a 3-form $\nu = d\xi + \de \om$ in $H^{r-1, s-3}_{0,\la}\sU^3(M_R \times I)$, we aim to find a 2-form in $H^{r,s} \Omega^2(M_R \times I)$ mapping to $d \xi$ which satisfies the Dirichlet boundary condition. Proposition \ref{proposition boundedness of restricted forms} tells us that $d\xi$, the exact part of $\nu$ in $H^{s-3}_{0,\la}\sE^3(M_R)$, has a primitive $\xi \in H_{0,\la+1}^{r-1,s-2}\Omega_D^2(M_R \times I)$ satisfying the following estimate,
\begin{align}\label{equation time dependent potential sobolev estimate}
\|\xi\|_{H^{r-1, s-2}_{\la+1}} \le C \|d\xi\|_{H^{r-1, s-3}_{\la}},
\end{align}
where $C$ is independent of the domain $M_R$. In particular, $\xi$ satisfies the compatibility conditions in Theorem \ref{theorem higher regularity linear par pde}.

\begin{prop}\label{proposition unique solution sufficient regularity}
Given the primitive $\xi$ in \eqref{equation time dependent potential sobolev estimate}, there exists a unique 2-form $\beta$ which solves the linearized equation
\begin{align}
    \dd_t \beta + \Delta_{d\beta_0 + \vp_0} \beta - \Phi(d\beta) &= \xi.  \label{equation linearized LF potential}
\end{align}
and such that $\beta \in \overset{\bullet}{H^{1,1}}\Om^2 (M_R \times I) \cap H^{r,s}(M_R \times I)$.
\end{prop}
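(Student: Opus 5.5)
We solve \eqref{equation linearized LF potential} as a linear parabolic initial--boundary value problem on the compact manifold with boundary $M_R \times I$. On the compact domain $M_R$ the weighted norms are equivalent to unweighted ones (Remark \ref{remark weighted unweighted equivalence}), so weights play no role here and we work entirely in the unweighted spaces. The boundary conditions are the natural ones for a Dirichlet potential, $\bt \beta = 0$ and $\bt \de \beta = 0$ on $\dd M_R \times I$, together with $\beta(0) = 0$. The operator $\Delta_{d\beta_0 + \vp_0}$ is the Hodge Laplacian of the time-dependent metric $g(t) = g_{d\beta_0+\vp_0}$. By the Weitzenb\"ock formula it equals $-\na^*\na$ plus a zeroth-order curvature term, and $\Phi(d\beta)$ is first order in $\beta$ with coefficients algebraic in the torsion of $d\beta_0+\vp_0$. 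All these coefficients are smooth and bounded on $M_R \times I$ by Corollary \ref{corollary equivalence of approx soln metrics}. Hence \eqref{equation linearized LF potential} is a uniformly parabolic system with diagonal principal symbol. The sign convention $\dd_t + \Delta$ with $\Delta = d\de + \de d \geq 0$ is the correct (forward) heat direction.

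\textbf{Step 1: weak existence and uniqueness.} Work with the Hilbert space $H^1_D\Om^2(M_R)$ of $H^1$ forms with $\bt\beta = 0$, and with the bilinear form
\[
B_t(\beta,\ga) = \langle d\beta, d\ga\rangle_{L^2} + \langle \de\beta, \de\ga\rangle_{L^2} - \langle \Phi(d\beta),\ga\rangle_{L^2},
\]
computed with respect to $g(t)$. Green's formula (Proposition \ref{proposition greens theorem}) shows that this is the weak form of the relative boundary problem, with $\bt\de\beta = 0$ arising as the natural condition. Gaffney's inequality on $H^1_D$ gives
\[
\|d\beta\|^2 + \|\de\beta\|^2 + \|\beta\|^2 \geq c\|\beta\|_{H^1}^2 .
\]
The $\Phi$ term is a first-order perturbation, so it is absorbed using Cauchy--Schwarz. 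This yields a G\aa rding inequality
\[
B_t(\beta,\beta) \geq c\|\beta\|_{H^1}^2 - C\|\beta\|_{L^2}^2,
\]
uniform in $t$. The time dependence of the $L^2$ inner product through $g(t)$ is handled in one of two ways: either by absorbing $\dd_t d\mu$ into lower-order terms, or by using the fixed background $g_0(0)$ for the pairing and treating the difference as a bounded perturbation. With the G\aa rding inequality in hand, the Galerkin/Lions--Lax--Milgram theory for parabolic equations gives a unique weak solution
\[
\beta \in L^2(I;H^1_D) \cap H^1(I;(H^1_D)^*), \qquad \beta(0)=0 .
\]
Uniqueness follows from the energy estimate and Gr\"onwall's inequality: a solution with $\xi = 0$ satisfies
\[
\tfrac{d}{dt}\|\beta\|^2 \leq C\|\beta\|^2, \qquad \beta(0) = 0,
\]
so it vanishes. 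This places $\beta$ in $\overset{\bullet}{H^{1,1}}$.

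\textbf{Step 2: higher regularity.} We invoke Theorem \ref{theorem higher regularity linear par pde}. The boundary conditions $(\bt\beta, \bt\de\beta)$ satisfy the Lopatinski--Shapiro complementing condition for the Hodge Laplacian; this is classical for the relative boundary problem, and see Proposition \ref{proposition lopatinski shapiro condition}. The compatibility conditions at $t=0$ require that $\xi$ and its time derivatives up to order $r-2$ vanish at $t=0$ in $H^1_0$. These hold because $\xi \in H^{r-1,s-2}_{0,\la+1}\Om^2_D$ vanishes to order $r-2$ at $t = 0$ and satisfies $\bt\xi = 0$, as noted after \eqref{equation time dependent potential sobolev estimate}. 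Differentiating the equation in $t$ and using elliptic regularity on each slice, the standard bootstrap in the style of Evans \S 7.1.3 then upgrades $\beta$ to
\[
\beta \in H^{r,s}(M_R\times I), \qquad \|\beta\|_{H^{r,s}} \leq C_R\|\xi\|_{H^{r-1,s-2}} .
\]
The constant $C_R$ may depend on $R$ at this stage; uniform control in $R$ is deferred to later sections.

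\textbf{Main obstacle.} The hard step is verifying the hypotheses of the higher-regularity theorem for this system. Two points need care. First, the coefficients depend on time through $g(t)$, so the boundary operator $\bt\de_{g(t)}$ varies in time. I would handle this by either freezing $\de$ to the background metric as in the paper's convention, or by treating the variation as a lower-order perturbation. Second, the compatibility conditions involve the boundary traces of $\dd_t^j\beta(0)$. Since these all vanish, each condition reduces to $\dd_t^j\xi(0)=0$ to the required order, which has already been arranged.
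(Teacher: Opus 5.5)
You are solving a different boundary value problem from the one in the statement. You impose the relative conditions $\bt\beta=0$ and $\bt\de\beta=0$ on $\dd M_R\times I$, with $\bt\de\beta=0$ entering as the natural condition of $B_t$ on $H^1_D$. But $\overset{\bullet}{H^{1,1}}\Om^2(M_R\times I)$ is the closure of smooth forms vanishing on the whole parabolic boundary $(\dd M_R\times I)\cup(M_R\times\{0\})$. Membership therefore forces $\beta=0$ on $\dd M_R$, including the normal part $\bn\beta$. Under your conditions $\bn\beta$ only obeys a Neumann-type condition. For example, take $\beta=f\,dx^n\wedge dx^1$ on a Euclidean half-space $\{x^n\ge 0\}$: then $\bt\beta=0$ holds automatically, and $\bt\de\beta=0$ reduces to $\dd_n f=0$, so $f$ need not vanish on the boundary. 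Consequently the sentence ``This places $\beta$ in $\overset{\bullet}{H^{1,1}}$'' is false. Your existence and uniqueness hold in $L^2(I;H^1_D)$, not in the class the statement asserts, and your solution is in general a different form from the one the statement refers to.

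The same choice undermines your Step 2. Theorem \ref{theorem higher regularity linear par pde} is the regularity theorem for the homogeneous Dirichlet problem (Evans, with compatibility data in $H^1_0$), and it says nothing about the boundary operator $(\bt,\bt\de_{g(t)})$. You would need a parabolic theory for general complementing boundary conditions with a time-dependent boundary operator, which is exactly the ``main obstacle'' you leave open. Proposition \ref{proposition lopatinski shapiro condition} is a static estimate for $\De^*\De$ and does not supply this.

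The repair is to impose $\beta=0$ on all of $\dd M_R\times I$, as the paper's proof does. By the Weitzenb\"ock identity the equation becomes $\dd_t\beta-\De_g\beta+\cR(\beta)-\Phi(d\beta)=\xi$, with $\De_g$ the rough Laplacian. This is of the form \eqref{equation linear parabolic pde}, and its principal part satisfies the Legendre--Hadamard condition. Theorem \ref{theorem existence of linear weak solution} then gives the unique weak solution in $\overset{\bullet}{H^{1,1}}$.

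Equivalently, your Step 1 goes through verbatim with $H^1_D$ replaced by $H^1_0$. No Gaffney inequality is needed there: for $\beta\in H^1_0$ the Weitzenb\"ock identity integrates with no boundary terms, so $\|d\beta\|^2+\|\de\beta\|^2$ differs from $\|\na\beta\|^2$ only by a zeroth-order term.

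Theorem \ref{theorem higher regularity linear par pde} then applies directly. Its compatibility data $g_1,\dots,g_{r-1}$ vanish because $\xi$ vanishes to order $r-2$ at $t=0$, and no condition on $\bt\xi$ is needed.

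Full vanishing still gives $\bt\beta=0$, so $d\beta$ remains Hodge--Morrey exact. It is also what the later estimates use. The maximum-principle bound in Corollary \ref{corollary apriori estimate on L2 norm of solution} requires $|\beta|^2=0$ on $\dd M_R\times[0,T]$, which a solution of your relative problem does not satisfy. Likewise, the boundary estimate in Proposition \ref{proposition weighted schauder estimates} is set up for the homogeneous Dirichlet problem.
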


\begin{proof}
The Weitzenb\"ock identity gives the following equivalent equation,
\begin{align*}
    \dd_t \beta - \Delta_g \beta + \cR(\beta) - \Phi(d\beta) &= \xi,  
\end{align*}
where $\De_g$ denotes the Laplace-Beltrami operator. In local coordinates, this equation is of the form \eqref{equation linear parabolic pde}. Thus, the $L^2$-regularity of $\xi$ and Theorem \ref{theorem existence of linear weak solution} implies the existence of a unique weak solution in $\overset{\bullet}{H^{1,1}}\Om^2(M_R \times I)$, the space of $H^{1,1}$-regular forms with homogeneous conditions on the parabolic boundary.

Observe that by definition the functions $g_1, \ldots, g_{r-1}$ corresponding to $\xi$ in the hypotheses of Theorem \ref{theorem higher regularity linear par pde} vanish uniformly. The regularity assumptions on $d\xi$ and the estimate \eqref{equation time dependent potential sobolev estimate} ensure that $\xi$ satisfies the regularity condition in the first hypothesis of Theorem \ref{theorem higher regularity linear par pde}. Thus, the theorem implies that
\[
\frac{d^k\beta}{dt^k} \in L^2(I, H^{s-2k}\Om^2(M_R)) \qquad \text{for } k = 0, \ldots, r,
\]
i.e. $\beta$ is contained in $H^{r,s}\Om^2(M_R \times I)$ and has regularity up to the boundary (where it vanishes).
\end{proof}

\subsection{A priori estimates on the linearized equation} \label{subsection a priori estimates}
In this section, we prove a priori estimates on the weighted $L^2$- and $C^{0}$-norms of solutions to the linearized equation
\begin{align}\label{equation repeat of potential equation}
    \dd_t \beta + \Delta_{d\beta_0 + \vp_0} \beta - \Phi(d\beta) &= \xi, 
\end{align}
where $\beta$ and $\xi$ are time-dependent 2-forms of sufficient regularity. 

\begin{lem}\label{lemma diff ineq for evol of norm}
    If $\beta, \xi \in \Omega^2(M)$ satisfy \eqref{equation repeat of potential equation}, then $|\beta|^2$ satisfies 
    \begin{equation}\label{equation evolution norm beta}
        \dd_t |\beta|^2 \le \Delta_{g_0(t)} |\beta|^2 + C(1 + |\Rm|)|\beta|^2 + |\xi|^2,
    \end{equation}
    where $g_0(t)$ is the metric associated to $d\beta_0 + \vp_0$, $\De_{g_0(t)}$ is the Laplace-Beltrami operator and $C>0$ is a universal constant. Note that the coefficients $|\beta|^2$ in \eqref{equation evolution norm beta} are uniformly bounded, depending only on the approximate solution $d\beta_0 + \vp_0$ constructed in \S \ref{section approximate solution}.
\end{lem}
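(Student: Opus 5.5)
The plan is a direct Bochner-type computation on the squared norm. Write $g = g_0(t)$ for the metric of the approximate solution $d\beta_0 + \vp_0$ and $\De_g$ for its rough (Laplace--Beltrami) Laplacian. By the Weitzenb\"ock identity, as in the proof of Proposition \ref{proposition unique solution sufficient regularity}, equation \eqref{equation repeat of potential equation} becomes
\[
\dd_t \beta = \De_g \beta - \cR(\beta) + \Phi(d\beta) + \xi ,
\]
where $\cR$ is linear in $\Rm_g$. The pointwise norm $|\beta|^2_{g(t)}$ also depends on $t$ through the metric. Hence
\[
\dd_t |\beta|^2 = 2\langle \dd_t \beta, \beta\rangle + (\dd_t g^{-1}) * \beta * \beta .
\]
Since $\dd_t g$ for the approximate solution is a polynomial in $\theta^{(k)}$ and $\vp_0$, it is bounded by a constant determined by $\beta_0$ (Corollary \ref{corollary equivalence of approx soln metrics}). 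Alternatively, near $t=0$ one may write it via $\eta$ in \eqref{equation evolution metrics}, where it is $O(|\Rm|)$ because $|T|^2 = -R$. In either case the last term is at most $C(1+|\Rm|)|\beta|^2$.

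Next I would use the identity $\De_g|\beta|^2 = 2\langle \De_g\beta,\beta\rangle + 2|\na\beta|^2$. This gives
\[
\dd_t|\beta|^2 = \De_g|\beta|^2 - 2|\na\beta|^2 - 2\langle\cR(\beta),\beta\rangle + 2\langle\Phi(d\beta),\beta\rangle + 2\langle\xi,\beta\rangle + (\dd_t g^{-1})*\beta*\beta .
\]
The curvature term is bounded by $C|\Rm||\beta|^2$, and $2\langle\xi,\beta\rangle \le |\xi|^2 + |\beta|^2$.

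The main obstacle is the first-order term $\langle\Phi(d\beta),\beta\rangle$. This term carries a derivative of $\beta$ and has no sign. Since $\Phi$ is algebraic and linear with coefficients given by the torsion $T$ of $d\beta_0+\vp_0$, we have $|\Phi(d\beta)| \le C|T||\na\beta|$. Young's inequality then gives
\[
2|\langle\Phi(d\beta),\beta\rangle| \le |\na\beta|^2 + C|T|^2|\beta|^2 ,
\]
and the $|\na\beta|^2$ part is absorbed by the good term $-2|\na\beta|^2$. Using $|T|^2 = -R \le C|\Rm|$ from \eqref{equation R = -|T|^2}, which holds because the approximate solution is closed, the remainder is $\le C|\Rm||\beta|^2$. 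Collecting terms and discarding the leftover $-|\na\beta|^2$ yields \eqref{equation evolution norm beta}.

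Finally, I would note that $|\Rm|$ and $|T|$ for $d\beta_0 + \vp_0$ are uniformly bounded on $M\times I$ by the construction in \S\ref{section approximate solution} (Corollary \ref{corollary equivalence of approx soln metrics} together with the AC decay of Lemma \ref{lemma decay estimates on AC G2 forms}). This justifies the remark that the coefficients multiplying $|\beta|^2$ are uniformly bounded in terms of the approximate solution alone.
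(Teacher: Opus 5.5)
Your proposal is correct and is essentially the paper's argument: differentiate $|\beta|^2_{g_0(t)}$ including the metric variation (bounded by $C|\beta|^2$ with $C$ depending only on the fixed family $d\beta_0+\vp_0$), use $\De_{g_0(t)}|\beta|^2 = 2\langle\De_{g_0(t)}\beta,\beta\rangle + 2|\na\beta|^2$, and absorb the first-order $\Phi$-term into $-2|\na\beta|^2$ via Young's inequality, with $|\Phi|^2 = O(|T|^2) = O(|\Rm|)$ coming from $R=-|T|^2$. The one thing to drop is your alternative bound on $\dd_t g_0$ via $\eta$ from \eqref{equation evolution metrics}: it holds exactly only at $t=0$, because $d\beta_0+\vp_0$ is not an exact Laplacian flow, but your primary bound already suffices.
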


\begin{proof} On each time-slice, take all inner products and covariant derivatives with respect to the metric $g_0(t)$ induced by $d\beta_0 + \vp_0$ and its Levi-Civita connection. In the rest of the proof, $\Delta$ will denote the Laplace-Beltrami operator associated to $g_0(t)$ (as opposed to the Hodge Laplacian) and $\cR$ will denote the curvature term from the Weitzenb\"ock formula.

Compute
    \begin{align*}
        \dd_t |\beta|^2 &= 2\langle \beta, \dd_t \beta \rangle + \dd_t g_0 (\beta, \beta)\\
        &= 2\langle \beta, \Delta \beta + \cR \beta + \Phi (d\beta) + \xi \rangle +  \dd_t g_0 (\beta, \beta)\\
        &= \Delta |\beta|^2 - 2|\nabla \beta|^2 + 2 \langle \beta, \cR \beta \rangle + 2\langle \beta, \Phi (d \beta)\rangle  + 2 \langle \beta, \xi \rangle + \dd_t g_0 (\beta, \beta)\\
        &= \Delta |\beta|^2 - 2|\nabla \beta|^2 + 2 \langle \beta, \cR \beta \rangle  + 2\langle \beta, \Phi * \nabla \beta \rangle + 2 \langle \beta, \xi \rangle+ \dd_t g_0 (\beta, \beta),
    \end{align*}
    where $\Phi$ is a linear operator on the appropriate space with coefficients bounded by the torsion of $\vp_0$ and its derivatives. Since $g_0(t)$ is uniformly non-degenerate, there is a constant $C>0$ depending only on the fixed family $d\beta_0(t) + \vp_0$ such that $|\dd_t g (\beta, \beta)| \le C|\beta|_g^2$.
    Apply Cauchy-Schwarz and Young's inequalities to obtain
    \begin{align*}
        \dd_t |\beta|^2  &\le \Delta |\beta|^2 - 2|\nabla \beta|^2 + 2 |\beta| |\cR \beta| + 2|\beta| |\Phi * \nabla \beta| + 2|\beta| |\xi| + C|\beta|^2 \\
        &\le \Delta |\beta|^2 - 2|\nabla \beta|^2 + 2 |\cR| |\beta|^2 + 2|\beta| |\Phi| |\nabla \beta| + 2|\beta| |\xi| + C|\beta|^2 \\
        &\le \Delta |\beta|^2 - 2|\nabla \beta|^2 +  C|\Rm||\beta|^2 + C|\Phi|^2|\beta|^2 + |\nabla \beta|^2 + C|\beta|^2 + |\xi|^2\\
        &\le \Delta |\beta|^2 + C(1 +|\Rm|)|\beta|^2 +|\xi|^2.
    \end{align*}
    Note that $|\Phi|^2 = O(|T|^2) = O(|\Rm|)$.
\end{proof}

\begin{lem}\label{lemma barrier for norm}
    Define $u: M \times [0,T] \rightarrow \R$ by
    \[
    u(x,t) := e^{Nt}\rho(x)^{-k}
    \]
    for $k \in \bR_{\ge 0}$ and $N>0$ sufficiently large, and where the smooth function $\rho: M \rightarrow \R$ coincides with the radius function on the diffeomorphism-fixed ends $V_i$ and is such that $\rho \ge 1$. Then
    \begin{equation}\label{equation barrier inequality}
    \dd_t u - \Delta u - C(1 + |\Rm|)u \ge \tfrac{N}{2}u.
    \end{equation}
\end{lem}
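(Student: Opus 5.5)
The plan is a direct computation: evaluate each term of $\dd_t u - \Delta u - C(1+|\Rm|)u$ for $u = e^{Nt}\rho^{-k}$, show that every term other than $\dd_t u = Nu$ is bounded by a fixed constant multiple of $u$, and then take $N$ large. Here $\Delta$ is the Laplace--Beltrami operator of $g_0(t)$, the metric induced by the approximate solution $d\beta_0 + \vp_0$, as in Lemma \ref{lemma diff ineq for evol of norm}. The time derivative contributes exactly $Nu$. The remaining terms are $-e^{Nt}\Delta_{g_0(t)}(\rho^{-k})$ and $-C(1+|\Rm_{g_0(t)}|)u$. So it suffices to find a constant $C_1$, depending only on $k$ and the fixed family $g_0(t)$ and not on $N$, with
\[
|\Delta_{g_0(t)} \rho^{-k}| + C(1+|\Rm_{g_0(t)}|)\rho^{-k} \le C_1 \rho^{-k}
\]
on $M\times[0,T]$. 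Taking $N \ge 2C_1$ then gives \eqref{equation barrier inequality}.

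For the Laplacian term, I expand using the chain rule:
\[
\Delta \rho^{-k} = -k\rho^{-k-1}\Delta\rho + k(k+1)\rho^{-k-2}|\nabla\rho|^2,
\]
so it is enough to know that $|\nabla \rho|_{g_0(t)}$ and $\rho\,|\Delta_{g_0(t)}\rho|$ are bounded. On the compact core these are bounded by smoothness of $\rho$ and of $g_0(t)$. On the end, first take $t=0$. By Lemma \ref{lemma decay estimates on AC G2 forms}(b), $g_0$ is $C^2$-close to the cone metric $dr^2 + r^2 g_\Sg$ with decaying weighted errors. For the exact cone, $|\nabla r| = 1$ and $\Delta r = 6/r$. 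The AC error terms change these by $o(1)$ and $o(r^{-1})$ respectively. This gives $|\Delta\rho^{-k}| \le C(k)\rho^{-k-2} \le C(k)\rho^{-k}$, using $\rho \ge 1$.

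Next I pass from $g_0(0)$ to $g_0(t)$. By Corollary \ref{corollary equivalence of approx soln metrics}, the metrics are uniformly equivalent. Lemma \ref{lemma approximate solution} and Corollary \ref{corollary asymptotics of eta} show that $\theta = d\beta_0$ and its first two spatial derivatives decay like $\rho^{-2-j}$ with $j$ derivatives. The Christoffel symbols of $g_0(t)$ therefore differ from those of $g_0(0)$ by $O(\rho^{-3})$. This preserves the bounds on $|\nabla\rho|$ and $\rho|\Delta\rho|$ uniformly in $t \in [0,T]$.

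For the curvature term, Lemma \ref{lemma decay estimates on AC G2 forms}(d) gives $|\Rm_{g_0}| \le K$. The same decay of $\theta$ up to two derivatives keeps $|\Rm_{g_0(t)}|$ uniformly bounded on $[0,T]$, as already noted at the end of Lemma \ref{lemma diff ineq for evol of norm}. Hence $C(1+|\Rm|)u \le C(1+K')u$.

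The main obstacle is bookkeeping rather than conceptual. One has to make sure that the uniform bounds on $|\nabla\rho|$, $\rho\Delta\rho$ and $|\Rm|$ hold for the time-dependent metrics $g_0(t)$ and not just at $t=0$. This relies on the approximate solution having enough spatial regularity and decay, with $s$ large, together with the uniform non-degeneracy from Corollary \ref{corollary equivalence of approx soln metrics}. If needed, I would state these bounds as a small auxiliary claim about the fixed family $d\beta_0 + \vp_0$ before running the computation.
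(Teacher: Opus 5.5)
Your proposal is correct and follows essentially the same route as the paper: isolate $\partial_t u = Nu$, bound $|\Delta \rho^{-k}|$ and $C(1+|\Rm|)\rho^{-k}$ by a fixed multiple of $\rho^{-k}$, then take $N$ large. On the compact core this bound comes from smoothness, and on the end from comparison with the cone Laplacian, where both computations give $k(k-5)\rho^{-k-2}$. Your extra step tracking the time dependence of $g_0(t)$ through the decay of $\theta = d\beta_0$ is a harmless elaboration that the paper leaves implicit. Since $\rho \ge 1$, mere uniform bounds on $|\nabla\rho|$, $|\Delta_{g_0(t)}\rho|$ and $|\Rm_{g_0(t)}|$ would already suffice.
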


\begin{proof}
    Let 
    \[
    L :=\Delta + C(1  + |\Rm|) 
    \]
    such that \eqref{equation barrier inequality} can be written as
    \[
    (\dd_t - L)u \ge 0.
    \]
    Fix a sufficiently large radius $R_0 > 0$ and separate $M$ into two domains, $M_{R_0} := \rho^{-1}((0,R_0))$ and $V := M \setminus M_{R_0}$. First, take $N > 0$ to be such that
    \begin{align*}
        \frac{N}{2} > \max_{M_{R_0} \times [0,T]} |L(\rho^{-k})| \rho^k
    \end{align*}
    so that on $M_{R_0}$, for any $t \in [0,T]$
    \begin{align*}
        \dd_t u - Lu &= Ne^{Nt}\rho^{-k} - L(e^{Nt}\rho^{-k})\\
        &\ge Ne^{Nt}\rho^{-k} - \big(\max_{M_{R_0} \times [0,T]} |L(\rho^{-k})| \rho^{k}\big) e^{Nt} \rho^{-k} \\
        & \ge \frac{N}{2}e^{Nt}\rho^{-k},
    \end{align*}
for $N$ sufficiently large. Now we must show that the inequality holds on the ends. We may select $R_0 > 0$ large enough such that the metric $g$ on the end $V$ differs from its asymptotic cone metric $g_{C}$ by a small amount. We let $ S := \Delta_{g} - \Delta_{g_{C}}$ be a second order operator with small coefficients. Then,
\begin{align*}
    \dd_t u - Lu &= \dd_t u - \Delta_{g_{C}} u - S u -  C(1 + |T| + |\Rm|) u \\
    &\ge Ne^{Nt}\rho^{-k} - e^{Nt}\Big(\dd^2_\rho \rho^{-k} + 6\rho^{-1}\dd_\rho \rho^{-k} + \rho^{-2} \Delta_{\Sigma} \rho^{-k}\Big) - C\rho^{-k}e^{N_t}, 
\end{align*}
where $C > 0$ is a large constant bounding the contributions from the zero-order term and the small coefficients of $S$. Continuing the computation, we find
\begin{align*}
    & \ge Ne^{Nt}\rho^{-k} - Ce^{Nt} \rho^{-k-2} - C\rho^{-k}e^{Nt} \\
    & \ge \frac{N}{2}e^{Nt}\rho^{-k},
\end{align*}
for $N > 0$ sufficiently large. Taking the maximum of the $N$'s computed on the domains $M_{R_0}$ and $V$ completes the proof. 
\end{proof}

\begin{cor}\label{corollary apriori estimate on L2 norm of solution}
    If $\beta, \xi \in \Omega^2([0,T] \times M_R)$ satisfy \eqref{equation evolution norm beta} and $\beta$ vanishes on the parabolic boundary $(M \times \{0\}) \cup (\dd M_R \times [0,T])$, there exists a constant $C>0$ depending on $T$ and $(M,\vp_0)$ but \emph{not} depending on $R$, such that for any rate $\la \in \R$ and $\eps \in (0,1)$ we have
    \begin{align*}
    \|\beta\|_{L^2_{\la}} &\le C\|\xi\|_{C^{0,0}_{\la - \eps}}.
    \end{align*}
\end{cor}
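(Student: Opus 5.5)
The plan is a comparison/maximum principle argument using the barrier of Lemma \ref{lemma barrier for norm}, followed by integration against the weight to convert a pointwise bound into a weighted $L^2$ bound.

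\textbf{Pointwise bound.} Set $A := \|\xi\|_{C^{0,0}_{\la-\eps}}$, so that $|\xi|(x,t) \le A\rho(x)^{\la-\eps}$ and hence $|\xi|^2 \le A^2 \rho^{2\la - 2\eps}$.

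We would like a barrier $w$ for $|\beta|^2$. Take $k := -(2\la - 2\eps)$, allowing $k$ of either sign; the computation in Lemma \ref{lemma barrier for norm} only used $\dd_\rho^2\rho^{-k}$, $\rho^{-1}\dd_\rho \rho^{-k} = O(\rho^{-k-2})$ and the bounded zeroth-order terms, so it works for any real $k$, with $N$ depending on $k$, $T$ and the geometry of the approximate solution but not on $R$. Then $u = e^{Nt}\rho^{-k}$ satisfies
\[
(\dd_t - \De - C(1+|\Rm|))u \ge \tfrac N2 u \ge \tfrac N2 \rho^{2\la - 2\eps}.
\]
Put $w := \tfrac{2A^2}{N} u$. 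By \eqref{equation evolution norm beta}, $f := |\beta|^2 - w$ satisfies
\[
(\dd_t - \De - C(1+|\Rm|)) f \le |\xi|^2 - A^2\rho^{2\la-2\eps} \le 0.
\]
Moreover $f \le 0$ on the parabolic boundary, since $\beta$ vanishes there and $w > 0$. The zeroth-order coefficient is bounded (curvature of the approximate solution is bounded by Corollary \ref{corollary equivalence of approx soln metrics}), so after multiplying by $e^{-C't}$ the standard weak maximum principle on the compact domain $M_R \times [0,T]$ gives $f \le 0$. Hence
\[
|\beta|^2 \le C A^2 \rho^{2\la - 2\eps}
\]
with $C$ independent of $R$.

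\textbf{Integration.} By Definition \ref{definition weighted parabolic sobolev norms},
\[
\|\beta\|_{L^2_\la}^2 = \int_I \int_{M_R} \rho^{-2\la}|\beta|^2 \rho^{-7} \, dV \, dt \le C A^2 \int_I\int_{M} \rho^{-2\eps - 7}\, dV\, dt.
\]
Since $dV \sim \rho^6 \, d\rho \, d\sigma$ on the end, the last integral is finite, equal to $C T \eps^{-1}$ roughly, and independent of $R$. This yields $\|\beta\|_{L^2_\la} \le C\|\xi\|_{C^{0,0}_{\la-\eps}}$.

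\textbf{Main obstacle.} The delicate point is justifying uniformity in $R$ of the barrier constant $N$, and the use of \eqref{equation evolution norm beta}, since $|\beta|^2$ is only as regular as Proposition \ref{proposition unique solution sufficient regularity} gives. I would first verify that the metric $g_0(t)$ of the approximate solution stays uniformly close to the cone on $\rho \ge R_0$, so the estimate of $\De_{g_0(t)}\rho^{-k}$ is uniform. Then I would apply the maximum principle to smooth approximations, or in weak form via the $H^{1,1}$ energy identity, if pointwise differentiation is not justified.
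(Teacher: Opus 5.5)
Your proposal is correct and follows essentially the same route as the paper: the barrier $e^{Nt}\rho^{2\la-2\eps}$ from Lemma \ref{lemma barrier for norm} with $k=-2\la+2\eps$, the weak maximum principle on $M_R\times[0,T]$ to get $|\beta|^2\le Ce^{Nt}\rho^{2\la-2\eps}\|\xi\|^2_{C^{0,0}_{\la-\eps}}$, and then integration against $\rho^{-2\la-7}dV$ using that $\int_M\rho^{-7-2\eps}dV<\infty$. You also fill in two points the paper passes over silently: the barrier computation is valid for $k<0$, which is needed when $\la>\eps$ since the lemma is stated only for $k\ge 0$; and the positive zeroth-order coefficient must be handled, e.g.\ via the factor $e^{-C't}$, before the weak maximum principle can be applied.
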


\begin{proof}Set $F := \|\xi\|^2_{C^{0,0}_{\la-\eps}}$. By Lemma \ref{lemma diff ineq for evol of norm} and Lemma \ref{lemma barrier for norm}, if we set $k = - 2\la + 2\eps$ on each end, then
\begin{align*}
    (\dd_t - L)(Fu - |\beta|^2) & \ge F\big( \tfrac{N}{2}e^{Nt}\rho^{2\la - 2\eps} \big) - |\xi|^2 \\
    &\ge \|\xi\|^2_{C^{0,0}_{\la -\eps}} \rho^{2\la - 2\eps} - |\xi|^2 \\
    &\ge \big(\rho^{-2\la + 2\eps}|\xi|^2 \big)\rho^{2\la - 2\eps} - |\xi|^2 \\
    & \ge 0,
\end{align*}
on $M_R$ by the definition of the $C^{l,k,\al}_{\la}$ weighted norms. Furthermore, restricted to the parabolic boundary
\begin{align*}
    Fu - |\beta|^2 &\ge Fu \\
    &\ge 0,
\end{align*}
since $|\beta|^2 = 0$ on $ (M_R \times \{0\}) \cup (\dd M_R \times [0,T])$. Thus, the weak maximum principle (e.g. \cite[Theorem 8 (ii), \S 7.1.4]{EvansPDE}) implies that
\[
Fu - |\beta|^2 \ge 0
\]
on $M_R \times [0,T]$. Rearranging this, we find that for each $(x,t) \in M_R \times [0,T]$,
\begin{align*}
    |\beta|^2(x,t) &\le e^{Nt}\rho^{2\la - 2\eps}\|\xi\|^2_{C^{0,0}_{\la-\eps}} \\
    \implies \rho^{-2\la}|\beta|^2(x,t) &\le e^{Nt}\rho^{-2\eps}\|\xi\|^2_{C^{0,0}_{\la-\eps}}.
\end{align*}
Integrate both sides of the inequality in space-time against the weighted volume $\rho^{-7}dV_{g_0(t)}$.
\begin{align*}
    \|\beta\|^2_{L^2_{\la}(M_R \times [0,T])} = \int_0^T \int_{M_R} \rho^{-2\la}|\beta|^2(x,t)\rho^{-7}dV_{g_0}dt &\le \int_0^T\int_{M_R} e^{Nt}\|\xi\|^2_{C^{0,0}_{\la-\eps}} \rho^{-7-2\eps}dV_{g_0}dt \\
    &\le e^{NT}C_{M,\eps}\|\xi\|^2_{C^{0,0}_{\la-\eps}},
\end{align*}
where $C_{M,\eps}>0$ is a fixed constant depending on $\eps > 0$ and on the approximate solution $d\beta_0(t) + \vp_0$ (which ultimately depends on the geometric data of the initial condition $\vp_0$). 
\end{proof}

\begin{rmk}
Note that we use the $C^{0,0}_{\la - \eps}$-norms, although $\xi$ is typically given as bounded in the $H^{r,s}_{\la-2}$-norms. However, the parabolic Sobolev embedding, Proposition \ref{proposition parabolic weighted Sobolev embedding}, takes a penalty of $\rho^{1+\ka}$ growth, so these Sobolev norms only bound the $C^{0,0}_{\la - \eps}$-norms. Fortunately, this is still enough to prove the desired a priori estimates.
\end{rmk}

\subsection{Uniform boundedness of the right inverse}\label{subsection uniform boundedness right inverse}

Next, the weighted Schauder estimates proved in Appendix \ref{subsection weighted schauder} will be deployed to obtain a uniform bound on $\|\cT_R\|$ for all $R > R_0$, the operator norm of the right inverse $\cT_R$ to the linearization $D\cP_0$, where $\cP$ is defined on the appropriate spaces corresponding to the domain $M_R$. Since we already have the estimate \eqref{equation time dependent potential sobolev estimate}, we focus our attention on the linear PDE for the primitive 2-forms, 
\begin{equation}\label{equation riemannian potential linear equation}
\dd_t \beta - \Delta_g \beta + \cR(\beta) - \Phi(d\beta) = \xi.
\end{equation}
In order to apply the Schauder estimates, the coefficients of this equation must be shown to satisfy the hypotheses of Proposition \ref{proposition weighted schauder estimates}.

\begin{lem}\label{lemma coefficients for weighted space}
    Label the order two, one and zero coefficients of \eqref{equation riemannian potential linear equation} by tensors $A$, $B$, and $C$ respectively, as in \eqref{equation linear parabolic pde}.  Then $A$, $B$, and $C$ satisfy \eqref{equation conditions on coeffs}.
\end{lem}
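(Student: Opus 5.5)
We will read off the coefficients of \eqref{equation riemannian potential linear equation} in the local frames used in the proof of Corollary \ref{corollary equivalence of approx soln metrics}: coordinate charts $U^a$ covering the core $M_{R_0}$, and charts of the form $V^b \times (R_0,\infty)$ on the end. In these frames the Laplace--Beltrami operator of $g = g_0(t)$, the metric of $d\beta_0 + \vp_0$, contributes the second-order coefficient $A = g^{ij}(t)$. It also contributes a first-order coefficient built from $g^{-1} * \Ga$, and a zero-order coefficient built from $\dd\Ga$, $\Ga*\Ga$ and the Weitzenb\"ock term $\cR$, which is linear in $\Rm$. The term $\Phi(d\beta)$ adds $T * \na\beta$ to $B$ and $T*\Ga$ to $C$. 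So $A \sim g^{-1}$, $B \sim \Ga + T$ and $C \sim \na\Ga + \Ga*\Ga + \Rm + T*\Ga$, all evaluated along the approximate solution. On the end, distances scale like $\rho$ (the cone is dilation invariant), and we will check \eqref{equation conditions on coeffs} on the annuli $\rho^{-1}([R,2R])$ in rescaled form, as in Bartnik's scaling argument. Concretely, the requirement should be: $A$ is uniformly elliptic with bounded weighted H\"older norm of weight $0$; $B$ is of weight $-1$; $C$ is of weight $-2$.

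\textbf{Step 1 (ellipticity and $A$).} Corollary \ref{corollary equivalence of approx soln metrics} gives $\tfrac12 g_0 \le g(t) \le 2g_0$ on $[0,T']$, uniformly on $M$. Next, Lemma \ref{lemma decay estimates on AC G2 forms}(b) says that $g_0$ is $C^k$-close to $g_C$ with $\rho^l|\na^l_{g_C}(g_0 - g_C)|\to 0$. Together these give uniform ellipticity, and show that $\rho^{j}\na^j g^{-1}$ and the time derivatives are bounded.

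\textbf{Step 2 (the perturbation $\theta$).} The corrections to $g_0$ come from $\theta = d\beta_0 = \sum_k t^k\theta^{(k)}/k!$. By the estimates in the proof of Lemma \ref{lemma approximate solution}, $|\na^j\theta^{(k)}| = O(\rho^{-2k-j})$, and these are in fact $O(\rho^{-2-j})$ for all $k\ge1$. Since $g$ depends smoothly and algebraically on $\vp$ (via $B_{ij}$), we get $\rho^{j}|\na^j(g(t)-g_0)| \lesssim \rho^{-2}$. It follows that the Christoffel symbols, torsion and curvature of $g(t)$ have the cone scaling $|\na^j\Ga|\lesssim\rho^{-1-j}$, $|\na^j T| \lesssim \rho^{-1-j}$ and $|\na^j\Rm|\lesssim \rho^{-2-j}$. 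Here we use Lemma \ref{lemma decay estimates on AC G2 forms}(d) and $|T|^2 = -R$. Two cautions:
\begin{enumerate}
\item[(a)] $\Ga$ is taken in the cone frame on the end, where it decays like $\rho^{-1}$.
\item[(b)] $T$ itself is $O(\rho^{-1})$ because the torsion of an AC closed structure approaches that of the cone, which scales with weight $-1$.
\end{enumerate}
The time derivatives follow in the same way from \eqref{equation evolution christoffel symbols}--\eqref{equation evolution curvature}, each $\dd_t$ costing $\rho^{-2}$. This matches the parabolic weighting $-2i-j$ in \eqref{equation weighted holder definition}, so $B\in C^{l,m,\al}_{-1}$ and $C\in C^{l,m,\al}_{-2}$.

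\textbf{Step 3 (H\"older seminorms and the core).} The H\"older seminorms follow from the derivative bounds by the mean value theorem, since the weighted seminorm only compares points with $d(x,y)<\mathrm{inj}$. On the compact core all quantities are smooth, hence bounded. Every constant depends only on $\vp_0$ (through finitely many $C_k$) and $T'$, and not on $R$, because the restriction to $M_R$ only shrinks suprema.

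The main obstacle is Step 2, specifically making sure the nonlinear dependence of $g$, $\Ga$ and $\Rm$ on $\vp_0+\theta$ preserves the sharp conical weights at every derivative order, rather than just boundedness. We would handle this by rescaling each annulus $\rho^{-1}([R,2R])$ by $R^{-1}$ (so $\vp\mapsto R^{-3}\vp$, using Lemma \ref{lemma scaling of metric}). On the rescaled annuli everything becomes uniformly $C^k$-close to the cone on a fixed annulus. The unweighted smooth dependence then gives uniform bounds, and scaling back produces exactly the weights $0,-1,-2$.
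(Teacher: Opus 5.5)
Your argument is sound, but it proves a different condition from \eqref{equation conditions on coeffs} as written. That display asks for $|B| = O(\rho^{-2})$ and $|g^{ij}G^a_b - A^{aij}_b| = o(\rho^{-2})$. Your Step 2 gives $B \sim \Ga + T = O(\rho^{-1})$ and $g(t)^{-1}-g_0^{-1} = O(\rho^{-2})$, and you then replace the target by the scale-invariant weights $0,-1,-2$ with only the phrase ``the requirement should be''. The replacement is the right one, but you must state and justify it. The literal bound on $B$ is false in general: $\Phi(d\beta)$ has the form $T * \na\beta$, and $|T|$ is of exact order $\rho^{-1}$ whenever the asymptotic cone is not torsion-free. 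The paper's own proof has the same issue: it shows only $\Phi = O(\rho^{-1})$ and $g_0(t)-g_0(0) = o(\rho^{-3/2})$ before asserting \eqref{equation conditions on coeffs}. What the rescaling in Proposition \ref{proposition weighted schauder estimates} actually uses is the following. On the unit annuli the leading coefficient should be uniformly elliptic, indeed $o(1)$-close to $g_C$, which your Step 2 also gives although your stated requirement for $A$ omits it. The rescaled lower-order coefficients $sB$ and $s^2C$ should be bounded, because each original spatial derivative equals $s^{-1}$ times a derivative of $U^s$. This amounts to $\rho|B| + \rho^2|C| = O(1)$, which is exactly what you prove. Say this explicitly.

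As for the route, the paper writes the operator relative to the fixed Levi-Civita connection of $g_0(0)$. The Laplace--Beltrami part then enters only through $\De_{g_0(t)}-\De_{g_0(0)}$, which the paper bounds via weighted-Sobolev membership of $d\beta_0$ and Sobolev embedding, a lossy step giving $o(\rho^{-3/2})$. It handles $\Phi$ and $\cR$ through their algebraic dependence on $T$ and $\Rm$.

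You instead read off coefficients in charts, so the cone's connection coefficients enter $B$ and $C$ directly; this is harmless at weights $-1$ and $-2$. You control $g(t)-g_0$ from the explicit decay $\theta^{(k)} = O(\rho^{-2k})$ of the Taylor coefficients. You then rescale to unit annuli to carry the conical weights through the nonlinear dependence of $g$, $\Ga$, $\Rm$ on $\vp_0+\theta$. This is sharper ($O(\rho^{-2})$) and gives all derivative orders, time derivatives and H\"older seminorms uniformly in $R$, which the paper only indicates.

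Two adjustments are needed:
\begin{enumerate}
\item Because your $B$ contains Christoffel symbols, your coefficients are chart-dependent rather than tensors as in \eqref{equation linear parabolic pde}. Either use $\na^{g_0}$ as the fixed connection, so that $B$ becomes the tensor $\Ga(g(t))-\Ga(g_0)$ plus the torsion term, or run the Schauder argument in your charts on the rescaled annuli.
\item Equations \eqref{equation evolution torsion}--\eqref{equation evolution curvature} hold only for genuine Laplacian flow, which $\vp_0 + d\beta_0$ is not. For the time derivatives, differentiate the polynomial $\sum_k t^k\theta^{(k)}/k!$ directly and use the chain rule; this gives the same factor $\rho^{-2}$ per $\dd_t$.
\end{enumerate}
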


\begin{proof}
The term $\Phi(d\beta)$ is of the form $\Phi * \na \beta$, where $*$ denotes a tensor contraction, and thus contributes to the term $B * \na U$ (in the notation of Appendix \ref{section linear parabolic appdx}). By Bryant-Xu \cite{BryantXu}, $\Phi$ is algebraic in the torsion, and thus is $O(\rho^{-1})$ on an AC manifold with radial parameter $\rho$. 

The term $\cR(\beta)$ from the Weitzenb\"ock identity contributes to the zero-order coefficient $C$. It is a curvature-type term and thus has asymptotic decay $O(\rho^{-2})$.  

The Laplace-Beltrami operator is taken with respect to the time-dependent metric $g_0(t)$, corresponding to the approximate solution $\vp_0 + d\beta_0(t)$. However, each of the $d\beta_0(t)$'s is in a weighted Sobolev space and thus has zero-order decay of order $o(\rho^{-\frac32})$, first order decay of order $o(\rho^{-\frac52})$, and so on. Thus, $g_0(t)$ differs from $g_0(0)$ by a term of order $o(\rho^{-\frac32})$, and the difference of the corresponding Christoffel symbols decays of order $o(\rho^{-\frac52})$. 

Therefore, the difference of the Laplace-Beltrami operators $\De_{g_0(t)}-\De_{g_0(0)}$ yields coefficients contributing $A$, $B$, and $C$, which have sufficient decay to satisfy \eqref{equation conditions on coeffs}.  This completes the proof of the lemma.
\end{proof}

\begin{cor}\label{corollary coefficients for weighted space time derivs}
    If $\beta$ and $\xi$ have regularity in time of order $k$ and $k-1$ respectively the $L^2$-time derivative $\dd^k_t \beta$ satisfies an equation of the form \eqref{equation linear parabolic pde} with coefficients $A$, $B$ and $C$ satisfying \eqref{equation conditions on coeffs}, and with inhomogeneous term satisfying the hypotheses of Proposition \ref{proposition weighted schauder estimates}. 
\end{cor}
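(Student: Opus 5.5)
The plan is to differentiate the equation \eqref{equation riemannian potential linear equation} in time $k$ times, distributing the time derivatives over the time-dependent coefficients with the Leibniz rule. Write the equation as $\dd_t \beta - A * \na^2\beta + B * \na \beta + C * \beta = \xi$, where $A$, $B$, $C$ are the coefficient tensors of Lemma \ref{lemma coefficients for weighted space}. Applying $\dd_t^k$ gives
\[
\dd_t(\dd_t^k\beta) - A * \na^2 (\dd_t^k \beta) + B * \na(\dd_t^k\beta) + C * \dd_t^k\beta = \xi_k,
\]
with
\[
\xi_k := \dd_t^k \xi + \sum_{j=1}^{k} \binom{k}{j}\Big( \dd_t^j A * \na^2 \dd_t^{k-j}\beta - \dd_t^j B * \na \dd_t^{k-j}\beta - \dd_t^j C * \dd_t^{k-j}\beta\Big).
\]
So the principal part is unchanged, and the coefficients $A$, $B$, $C$ still satisfy \eqref{equation conditions on coeffs} by Lemma \ref{lemma coefficients for weighted space}. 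To make the computation rigorous at the $L^2$ level, I would first argue with difference quotients in $t$, using that $\beta \in H^{r,s}$ by Proposition \ref{proposition unique solution sufficient regularity}, and then pass to the limit.

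The remaining task is to check that $\xi_k$ satisfies the hypotheses of Proposition \ref{proposition weighted schauder estimates}. First, I would record the decay of time derivatives of the coefficients. These coefficients are built algebraically from $g_0(t)$, its Christoffel symbols, $\Rm$ and the torsion of $\vp_0 + d\beta_0(t)$. By Lemma \ref{lemma approximate solution} and Corollary \ref{corollary asymptotics of eta}, each $\dd_t$ costs a factor $\rho^{-2}$. This gives $\dd_t^j A = O(\rho^{-2j})$, $\dd_t^j B = O(\rho^{-1-2j})$ and $\dd_t^j C = O(\rho^{-2-2j})$, together with the analogous bounds for their spatial derivatives. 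These follow from the evolution equations \eqref{equation evolution metrics}--\eqref{equation evolution curvature} and Lemma \ref{lemma decay estimates on AC G2 forms}, exactly as in the proof of Lemma \ref{lemma approximate solution}.

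Next, each error term pairs such a coefficient with $\na^m \dd_t^{k-j}\beta$ for $m \le 2$. In the weighted norms, $\na^m\dd_t^{k-j}\beta$ sits at rate $\la+3-m-2(k-j)$. Multiplying by the coefficient decay restores exactly the rate $\la + 1 - 2k$ required of the inhomogeneous term for $\dd_t^k\beta$, so the weights balance. I would prove the statement by induction on $k$: the inductive hypothesis controls $\dd_t^{k-j}\beta$ for $j \ge 1$ in the norms needed, and the regularity of $\xi$ of order $k-1$ handles $\dd_t^k\xi$.

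The main obstacle I expect is the compatibility at the parabolic boundary. I need $\dd_t^k\beta$ to vanish at $t=0$ and to satisfy the Dirichlet condition on $\dd M_R$, since Proposition \ref{proposition weighted schauder estimates} requires homogeneous boundary data. On $\dd M_R \times I$, tangential vanishing of $\beta$ is preserved by $\dd_t$ because the boundary is time-independent. At $t=0$, the vanishing of $\xi$ to order $r-1$ (Definition \ref{definition exact 3 forms vanishing on boundary}), fed recursively through the differentiated equation, shows that $\dd_t^k\beta(0)=0$ and $\xi_k(0)=0$ for $k \le r-1$.
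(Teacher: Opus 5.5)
Your proposal follows the paper's argument. You differentiate the potential equation in $t$, keep the principal part acting on $\partial_t^k\beta$, and move every term in which a time derivative lands on $g_0(t)$, $\Gamma$, the Weitzenb\"ock term or $\Phi$ into the inhomogeneity, using the evolution equations to show each $\partial_t$ gains $\rho^{-2}$. You then induct on $k$, and your explicit rate bookkeeping makes precise what the paper only asserts schematically.

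One index caution, which the statement shares and the paper's proof does not address, concerns your regularity and corner argument. First, $\partial_t^k\xi$ needs $k$ time derivatives of $\xi$, not $k-1$. Second, $\xi\in H^{r-1,s-2}_{0,\lambda+1}$ vanishes at $t=0$ only to order $r-2$. So your corner argument gives $\xi_k(0)=0$ only for $k\le r-2$, though your conclusion $\partial_t^k\beta(0)=0$ for $k\le r-1$ is correct.
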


\begin{proof}
The corollary is proved by induction on $k$. Differentiate \eqref{equation riemannian potential linear equation} with respect to $t$ and analyze the resulting equation. 
\begin{align*}
    \dd_t \xi &= \dd_t\big(\dd_t \beta - \Delta_g \beta + \cR(\beta) - \Phi(d\beta)\big)  \\
    &= \dd_t (\dd_t \beta) - \dd_t (\Delta_g \beta) + (\dd_t\cR)(\beta) - \cR(\dd_t \beta) - (\dd_t \Phi)(\beta) - \Phi(\dd_t \beta)
\end{align*}
Notice that $ \Delta_g \beta $ is schematically of the form 
\[
g^{-1} * (\dd^2 \beta + \Ga * \dd \beta + (\dd \Ga + \Ga * \Ga) \beta). 
\]
and $\dd_t (\De_g \beta)$ is of the form
\begin{align*}
    &(\dd_t g^{-1}) * (\dd^2 \beta + \Ga * \dd \beta + (\dd \Ga + \Ga * \Ga) \beta) \\
    &\;\;+ g^{-1} * (\dd^2 (\dd_t \beta) + \dd_t \Ga * \dd \beta + \Ga * \dd (\dd_t \beta) +  \dd_t(\dd \Ga + \Ga * \Ga) \beta + (\dd \Ga + \Ga * \Ga) (\dd_t \beta))
\end{align*}
By the evolution equations \eqref{equation evolution metrics}, \eqref{equation evolution christoffel symbols}, and \eqref{equation evolution curvature}, the terms 
\begin{align*}
    &(\dd_t g^{-1}) * (\dd^2 \beta + \Ga * \dd \beta + (\dd \Ga + \Ga * \Ga) \beta) + g^{-1} * (\dd_t \Ga * \dd \beta +  \dd_t(\dd \Ga + \Ga * \Ga) \beta )
\end{align*}
are in the weighted Sobolev space with two fewer derivatives and weight $\la -4$, which is sufficient to be grouped into the inhomogeneous $F$ term in the context of Proposition \ref{proposition weighted schauder estimates}. The remaining term is the Laplace-Beltrami operator with respect to the time-dependent $g_0$, which has been discussed in the proof of Lemma \ref{lemma coefficients for weighted space}. This proves the base case $k =1$. 

Assume that the corollary is true for time derivatives up to order $k-1$, and in particular that all coefficients are either in weighted Sobolev spaces or are tensors in terms of $g,\Ga,\Rm$ or their derivatives in space and time. Repeat the calculation for the base case, for the Laplace-Beltrami operator as follows.
\begin{align*}
    &(\dd_t g^{-1}) * (\dd^2 (\dd_t^{k-1}\beta) + \Ga * \dd (\dd_t^{k-1}\beta) + (\dd \Ga + \Ga * \Ga) (\dd_t^{k-1}\beta)) \\
    &\;\;+ g^{-1} * \Big(\dd^2 (\dd_t^k \beta) + \dd_t \Ga * \dd (\dd_t^{k-1}\beta)+ \Ga * \dd (\dd_t^k \beta) \\
    &\qquad +  \dd_t(\dd \Ga + \Ga * \Ga) (\dd_t^{k-1}\beta) + (\dd \Ga + \Ga * \Ga) (\dd_t^k \beta)\Big)
\end{align*}
Exactly the same reasoning as before tells us that this can be split up into the Laplace-Beltrami operator applied to $\dd_t^k \beta$ plus a piece that can be absorbed into the inhomogeneous term. Similar reasoning can be applied to every other term to either into an inhomogeneous term with decay at least $\rho^{\la - 2k}$ or a linear coefficient of $\dd_t^k \beta$ in a PDE of the form \eqref{equation linear parabolic pde} which satisfies \eqref{equation conditions on coeffs}. This proves the induction step and concludes the proof of the corollary.
\end{proof}

\begin{prop}\label{proposition uniform boundedness of right inverse}
    The norm of the right inverse $\cT_R$ defined by the choices made in Proposition \ref{proposition unique solution sufficient regularity} and Proposition \ref{proposition boundedness of restricted forms} is independent of $R$.
\end{prop}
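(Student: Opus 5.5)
The plan is to bound the two components of $\cT_R(\nu) = (\beta,\om)$ separately, with constants independent of $R$. The co-exact part is immediate. Proposition \ref{proposition boundedness of restricted forms} gives $\|\om\|_{H^{r-1,s-2}_{\la+1}} \le C\|\nu\|_{H^{r-1,s-3}_{\la}}$ with $C$ independent of $R$. The same proposition, through \eqref{equation time dependent potential sobolev estimate}, gives $\|\xi\|_{H^{r-1,s-2}_{\la+1}} \le C\|\nu\|_{H^{r-1,s-3}_{\la}}$. It therefore suffices to prove
\[
\|\beta\|_{H^{r,s}_{\la+3}} \le C\|\xi\|_{H^{r-1,s-2}_{\la+1}},
\]
where $\beta$ is the unique solution of \eqref{equation riemannian potential linear equation} from Proposition \ref{proposition unique solution sufficient regularity}, vanishing on the parabolic boundary, and $C$ does not depend on $R$.

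The estimate on $\beta$ comes from combining the weighted Schauder estimate with the a priori bounds. Lemma \ref{lemma coefficients for weighted space} shows the coefficients of \eqref{equation riemannian potential linear equation} satisfy \eqref{equation conditions on coeffs}, so Proposition \ref{proposition weighted schauder estimates} applies. Because $\beta$ satisfies homogeneous boundary conditions, that proposition gives an estimate up to $\dd M_R$:
\[
\|\beta\|_{H^{1,2}_{\la+3}} \le C\big(\|\xi\|_{L^2_{\la+1}} + \|\beta\|_{L^2_{\la+3}}\big),
\]
with $C$ depending only on the coefficient bounds, which are uniform in $R$. To close the estimate I would bound the lower-order term using Corollary \ref{corollary apriori estimate on L2 norm of solution}:
\[
\|\beta\|_{L^2_{\la+3}} \le C\|\xi\|_{C^{0,0}_{\la+3-\eps}}.
\]
Then the parabolic Sobolev embedding, Proposition \ref{proposition parabolic weighted Sobolev embedding}, with the $\rho^{1+\ve}$ penalty of Remark \ref{remark consequences of sobolev embedding}, gives
\[
\|\xi\|_{C^{0,0}_{\la+3-\eps}} \le C\|\xi\|_{H^{1,s-2}_{\la+1}} \le C\|\xi\|_{H^{r-1,s-2}_{\la+1}}.
\]
This works provided $\la+1+1+\ve \le \la+3-\eps$, i.e.\ $\ve+\eps\le 1$. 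There is a gap of two in the weight here, and I would spend it on the embedding penalty. If the numerology is tight, I would use the freedom to choose $\eps$ small and $s$ large.

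The higher derivatives are handled inductively. For spatial derivatives, I would apply the Schauder estimate of Proposition \ref{proposition weighted schauder estimates} iteratively, commuting $\na$ through the equation and absorbing the commutator terms into the inhomogeneity; this raises $s$ one step at a time. For time derivatives, Corollary \ref{corollary coefficients for weighted space time derivs} shows that $\dd_t^k\beta$ solves an equation of the same form. Its coefficients satisfy \eqref{equation conditions on coeffs}, and its inhomogeneous term is controlled by $\dd_t^{k-1}\xi$ together with lower-order time derivatives of $\beta$, which are already estimated. Since $\xi \in H^{r-1,s-2}_{0,\la+1}$ vanishes to order $r-2$ at $t=0$, the compatibility conditions hold and $\dd_t^k\beta$ also vanishes on the parabolic boundary. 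So the same Schauder-plus-maximum-principle argument applies at each stage, with constants independent of $R$. Summing over $k\le r$ and $2k+j\le s$ gives the claimed bound, and hence $\|\cT_R\|\le C$ uniformly in $R$.

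I expect the main obstacle to be the boundary part of the Schauder estimate for the time-differentiated equations. One must check that the constants there do not degenerate as $\dd M_R$ moves to infinity. The geometry of $\dd M_R$, rescaled by $R$, converges to $\Sg$, and I would argue uniformity by the same Bartnik scaling used in Proposition \ref{proposition weighted schauder estimates}. A secondary issue is bookkeeping the weight shifts so that the inhomogeneous terms produced at each induction step land in spaces bounded by $\|\xi\|_{H^{r-1,s-2}_{\la+1}}$.
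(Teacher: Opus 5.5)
Your proposal is essentially the paper's proof. You control $\omega$ and $\xi$ by Proposition \ref{proposition boundedness of restricted forms}, and $\beta$ by the weighted Schauder estimate (Proposition \ref{proposition weighted schauder estimates}, applied to $\beta$ and $\partial_t^k\beta$ via Lemma \ref{lemma coefficients for weighted space} and Corollary \ref{corollary coefficients for weighted space time derivs}), closing the estimate with the maximum-principle bound of Corollary \ref{corollary apriori estimate on L2 norm of solution} and the parabolic Sobolev embedding, as the paper does.

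Two of your sub-steps are unnecessary and should be dropped:
\begin{itemize}
\item Commuting $\nabla$ through the equation would destroy the Dirichlet condition on $\partial M_R$. You do not need it, because Proposition \ref{proposition weighted schauder estimates} already gives all $s$ spatial derivatives at once.
\item The maximum principle is needed only for $\beta$ itself. The $L^2$ term for $\partial_t^k\beta$ is already supplied by the $H^{1,\cdot}$ bound on $\partial_t^{k-1}\beta$ from the previous step.
\end{itemize}
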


\begin{proof}
    The Schauder estimates in Proposition \ref{proposition weighted schauder estimates} applied to $\beta$ and $\dd_t^k\beta$ via Lemma \ref{lemma coefficients for weighted space} and Corollary \ref{corollary coefficients for weighted space time derivs}, the a priori estimates on the weighted $L^2$-norm of $\beta$ in Corollary \ref{corollary apriori estimate on L2 norm of solution}, and Proposition \ref{proposition boundedness of restricted forms} give an estimate for the norm of the right inverse $\cT_R$ that is independent of $R$. Let $\nu = d\xi + \de \om$ be a form in $H_{0,\la}^{r-1,s-3} \sU^3(M_R \times [0,T])$. By Proposition \ref{proposition boundedness of restricted forms}, we may uniquely select $\xi$ satisfying the bound \eqref{equation time dependent potential sobolev estimate} and which satisfies the compatibility conditions of Theorem \ref{theorem higher regularity linear par pde}.  Solving the linearized PDE as in Proposition \ref{proposition unique solution sufficient regularity}, we obtain a unique solution $\beta$ in $H^{r,s}_{0,\la +3} \Om^2_D (M_R \times I)$ whose $H^{r,s}_{\la + 3}$-norm is uniformly bounded by $\|\nu\|_{H^{r-1,s-3}_\la(M_R)}$, independent of $R$. Furthermore, taking the exterior derivative of the Dirichlet potential of $\nu$ as in Proposition \ref{proposition boundedness of restricted forms} to obtain a unique $\om$ with $H^{r-1,s-2}_{\la + 1}$-norm uniformly bounded by uniformly bounded by $\|\nu\|_{H^{r-1,s-3}_\la(M_R)}$, independent of $R$. Thus, we have uniquely defined the right inverse $\cT_R$,
    \[
    \cT_R(\nu) := (\beta, \om) \in H^{r,s}_{0,\la+3}\Om^2_D(M_R \times I) \oplus H^{r-1,s-2}_{0,\la+1}\Om^4(M_R \times I), 
    \]
    such that 
    \[
    D\cP_0(\beta, \om) = \nu,
    \]
    and
    \[
    \|\beta\|_{H^{r,s}_{\la+3}(M_R \times I)} + \|\om\|_{H^{r-1,s-2}_{\la+1} (M_R \times I)} \le C \|\nu\|_{H^{r-1,s-3}_\la(M_R \times I)},
    \]
    where $C > 0$ does not depend on $R$.
\end{proof}

\subsection{Existence on the whole space}\label{subsection existence everywhere}

The preceding sections and the appendices give all the results needed to prove Theorem \ref{theorem existence of inverse on domains}, which establishes the existence of sequences of time-dependent 2-forms $\beta_i(t)$ and 4-forms $\om_i(t)$ on $M_{R_i}$, where $R_i \rightarrow \infty$ such that 
\begin{enumerate}
    \item[(a)] the families $\vp_i(t) := d(\beta_i(t) +\beta_0(t)) + \vp_0$ are families of $G_2$-structures on the domains $(M_{R_i}, \dd M_{R_i})$;
    \item[(b)] there exists a uniform constant $\eps > 0$ such that 
            \[
                \cP(\beta_i(t),\om_i(t)) \equiv 0, \qquad \text{for } t \in [0, \eps].
            \]
\end{enumerate}

Without projecting to $H^{r-1,s-3}_{\la}\sU^3(M_{R_i})$, define
\begin{align*}
    \cQ(\beta_i(t)) &:= \dd_t(d(\beta_i(t) + \beta_0(t)) + \vp_0) + \De_{\vp_i}\vp_i + d(V(\vp_i)\lrcorner \vp_i) + \de_{\vp_0} \om_i\\
    &= d(\dd_t (\beta_i + \beta_0) + \de_{\vp_i}\vp_i  + V(\vp_i) \lrcorner \vp_i) + \de_{\vp_0} \om_i.
\end{align*}
In particular, $\cQ(\beta_i)$ is the sum of an exact form and a co-exact form, with potentials bounded in terms of $\beta_i$ and $\om_i$, the initial data $\vp_0$, and their derivatives. The constants in the implicit function theorem argument are independent of $i$ and thus on every compact set $K$, the $\beta_i$'s and $\om_i$'s restricted to $K$ comprise a sequence uniformly bounded in $H^{r,s}\Om^2(K \times [0, \eps])$ and $H^{r-1,s-2}\Om^4(K \times [0, \eps])$, respectively. By Rellich-Kondrachov, a subsequence of the $(\beta_i,\om_i)$'s has its limit in $H^{r-1,s-2}\Om^2(K \times [0, \eps]) \oplus H^{r-2,s-4}\Om^4(K \times [0, \eps])$. This implies that (up to a subsequence)
\[
d \big(\dd_t \beta_i + \de_{\vp_i}\vp_i  + V(\vp_i) \lrcorner \vp_i\big) + \de_{\vp_0} \om_i
\]
has a $H^{r-3,l-6}$-regular limit. Taking a diagonal subsequence, obtain $\beta$ as the $H^{r-1,s-2}_{loc}$-limit of the $\beta_i$'s and $\om$ as the $H^{r-2,s-4}_{loc}$-limit of the $\om_i$'s. 
\begin{prop}\label{proposition solution on whole space up to coexact}
Let $\vp = d(\beta + \beta_0) + \vp_0$ and let
\[
\cQ(\beta,\om) = d(\dd_t (\beta + \beta_0) + \de_\vp \vp  + V(\vp) \lrcorner \vp) + \de_{\vp_0} \om.
\]
Then,
\[
\cQ(\beta(t),\om(t)) = 0,
\]
for $t \in [0, \eps]$. 
\end{prop}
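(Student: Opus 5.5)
The plan is to pass to the limit in the identity satisfied on each domain, using that on $[0,\eps]$ the projected equation says $\cQ(\beta_i,\om_i)$ lies in the finite-dimensional approximate kernel, and then to show that the limit of these approximate-kernel elements vanishes. Theorem \ref{theorem existence of inverse on domains} gives $\Pi_{\sU}\cQ(\beta_i,\om_i)=0$ on $[0,\eps]$. Hence
\[
\cQ(\beta_i,\om_i) = \psi_i := (1-\Pi_{\sU})\cQ(\beta_i,\om_i), \qquad t\in[0,\eps],
\]
where $\psi_i$ lies in the $L^2_\la$-orthogonal complement of $\sU$ on $M_{R_i}$, i.e.\ in the span of the approximate kernel and the Dirichlet harmonic fields. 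The first step is to obtain a uniform bound $\|\psi_i\|_{L^2_\la}\le C$. This follows because $\Pi_{\sU}$ is uniformly bounded (Remark \ref{remark uniform boundedness of projection to U}) and $\cQ(\beta_i,\om_i)$ is bounded in $H^{r-1,s-3}_\la$ uniformly in $i$. The latter holds since $(\beta_i,\om_i)$ lie in a fixed ball $B_\ve$ and $\cQ$ is smooth in its arguments with $R$-independent constants, as in Lemma \ref{lemma lipschitz bounds}. I would then use elliptic regularity for approximate-kernel elements (the Lopatinski--Shapiro estimates of Appendix \ref{section linear parabolic appdx}) to upgrade this to local $H^k$ bounds, and extract a further diagonal subsequence with $\psi_i\to\psi$ in $H^k_{loc}$. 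Because $\cQ$ is continuous under the $H_{loc}$-convergence already established for $(\beta_i,\om_i)$, the limit satisfies $\cQ(\beta,\om)=\psi$ on $M\times[0,\eps]$.

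The second step is to identify $\psi$. By Corollary \ref{corollary small eigenvalues become harmonic}, the $H^k_{loc}$-limit of an $L^2_\la$-bounded sequence from the approximate kernel is $L^2_\la$-orthogonal to $\mathrm{Im}(d+\de)$ on $(M,g_0)$. The Dirichlet harmonic field component needs separate treatment. I would argue it is controlled by Lemma \ref{lemma projection to harmonic dirichlet fields} and that its limit is likewise orthogonal to $\mathrm{Im}(d+\de)$: for compactly supported test forms $d\alpha$ or $\de\gamma$, pair against $\psi_i$ and integrate by parts, using the Dirichlet boundary condition, so that the pairing vanishes for $i$ large enough that the support lies inside $M_{R_i}$. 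On the other hand, $\psi=\cQ(\beta,\om)=d\mu+\de_{\vp_0}\om$ with $\mu=\dd_t(\beta+\beta_0)+\de_\vp\vp+V(\vp)\lrcorner\vp$. Provided $\mu$ and $\om$ lie in weighted spaces of the right rate, $\psi$ then lies in the closure of $\mathrm{Im}(d+\de)$ in $L^2_\la$. Since $\psi$ is both in this closure and orthogonal to it, $\psi=0$, and therefore $\cQ(\beta,\om)=0$ on $[0,\eps]$.

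The main obstacle is this last membership claim. The limits $\beta,\om$ are obtained only in $H_{loc}$, so I must show they retain global weighted bounds. My first attempt would be Fatou's lemma applied to the uniform $H^{r,s}_{\la+3}$ and $H^{r-1,s-2}_{\la+1}$ bounds on the $\beta_i,\om_i$: integrating over $M_R$ for fixed $R$ and letting $R\to\infty$ gives $\beta\in H^{r-1,s-2}_{\la+3}(M)$, $\om\in H^{r-2,s-4}_{\la+1}(M)$. The remaining delicate point is that the duality pairing in the weighted space is with respect to $\rho^{-2\la-7}dV$, so orthogonality to $\mathrm{Im}(d+\de)$ must be checked against the weighted adjoint. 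I would handle this using the Fredholm/Hodge theory for $d+\de$ on admissible AC metrics in \cite{Lockhart1987,KarigiannisLotay}, choosing $\la$ away from the indicial roots so that $\mathrm{Im}(d+\de)$ is closed. I expect this weight bookkeeping and the closedness of the range to be the hardest part of the argument.
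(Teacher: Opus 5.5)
Your proposal is correct and follows essentially the paper's own argument. On $[0,\eps]$ the projected equation puts each time-slice of $\cQ(\beta_i,\om_i)$ in the $L^2_\la$-orthogonal complement of $U_{R_i}$. The weighted bounds then pass to the limit, which places $\cQ(\beta,\om)$ in $\mathrm{Im}(d+\de_{\vp_0})$; you use Fatou here, where the paper uses Lemma \ref{lemma extension of functions} together with weak lower semicontinuity. Finally, Corollary \ref{corollary small eigenvalues become harmonic} forces $\cQ(\beta,\om)=0$.

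The differences are cosmetic. Your separate treatment of the Dirichlet harmonic field component is already built into that corollary via Claim \ref{claim adjoint harmonic field}. The closedness of the range, which you flag as the hardest point, is supplied in its proof by \cite[Lemma 4.30]{KarigiannisLotay}.
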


\begin{proof}
Notice that in the $H^{r-3,s-6}_{loc}$ topology,
\[
\cQ(\beta_i,\om_i) \rightarrow \cQ(\beta,\om) = d(\dd_t (\beta + \beta_0) + \de\vp  + V(\vp) \lrcorner \vp) + \de_{\vp_0} \om,
\]
where $\vp = d\beta + d\beta_0 + \vp_0$. For sufficiently large $r,s$, the $\om_i$'s and the $\beta_i$'s are uniformly bounded above in $C([0,\eps], H^1_{\la}\Om^\bullet(M_{R_i}))$. By the extension Lemma \ref{lemma extension of functions} and the weak lower semi-continuity of the $H^1_{\la}(M)$-norm, the loc-limits 
\[
\om, \; \dd_t (\beta + \beta_0) + \de\vp  + V(\vp) \lrcorner \vp
\]
are in $H^1_\la \Om^\bullet(M)$. Thus, 
\[
\cQ(\beta, \om) \in \im(d + \de_{\vp_0}) \subset L^2_{\la}\Om^3(M).
\]
Since the time-slices of the $\cQ(\beta_i,\om_i)$ are contained in the $L^2_\la$-orthogonal complement of the ``upper spectrum" $U_R$ (since the projection $\Pi_{\sU}$ is simply $L^2_{\la}$-orthogonal projection to $U_R$ on each time-slice), Corollary \ref{corollary small eigenvalues become harmonic} implies that $\cQ(\beta(t),\om(t))$ vanishes on each time-slice for $t \in [0,\eps]$. 
\end{proof}

\begin{thm}\label{theorem solution is exact solution}
    The time-dependent family of forms $\vp(t) = d(\beta(t) + \beta_0(t)) + \vp_0$ is a solution to Laplacian flow on $M \times [0,\eps]$ with initial condition $\vp_0$.
\end{thm}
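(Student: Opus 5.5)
Starting from Proposition \ref{proposition solution on whole space up to coexact}, I know that on $[0,\eps]$
\[
d\eta + \de_{\vp_0}\om = 0, \qquad \eta := \dd_t(\beta+\beta_0) + \de_\vp\vp + V(\vp)\lrcorner\vp ,
\]
where both $\eta$ and $\om$ lie in $H^1_\la$ on each time slice. The plan has three steps. First, show that the exact part $d\eta$ and the co-exact part $\de_{\vp_0}\om$ vanish separately. Second, note that $d\eta = 0$ is exactly the Laplacian-DeTurck flow. Third, remove the DeTurck term by pulling back with diffeomorphisms.

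\textbf{Step 1: the two parts vanish separately.} I will use the weighted Hodge theory on the AC manifold $(M,g_0)$, namely Lockhart's results as formulated in \cite{KarigiannisLotay} (see Remark \ref{remark lockhart vs KL}). At a generic rate $\la$ in the relevant range, the images $dH^1_{\la+1}$ and $\de_{\vp_0}H^1_{\la+1}$ inside $\mathrm{Im}(d+\de)\subset L^2_\la\Om^3(M)$ are $L^2$-orthogonal.

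Concretely, I would pair $d\eta$ against $d\eta + \de_{\vp_0}\om = 0$. The cross term $\langle d\eta, \de_{\vp_0}\om\rangle_{L^2}$ vanishes by integration by parts, since $dd=0$. The boundary term at infinity vanishes because of the decay of $\eta$ and $\om$. Here I need $\la$ below the threshold at which $\rho^{2\la+1}\rho^6$-type boundary integrals over $\{\rho = R\}$ tend to zero. The standard route is to take a cutoff $\chi_R$ and show the error terms are $O(R^{2\la+\dots})\to 0$.

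This gives $\|d\eta\|_{L^2}^2 = 0$, and hence $d\eta = 0$ and $\de_{\vp_0}\om = 0$. If the weights prevent a direct $L^2$ integration by parts, I would instead invoke the orthogonality statement from \cite{Lockhart1987} directly on $\mathrm{Im}(d+\de)$, as indicated in the outline.

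\textbf{Step 2: the Laplacian-DeTurck equation.} Using $d\vp = 0$ and Cartan's formula $\cL_{V}\vp = d(V\lrcorner\vp)$, the identity $d\eta = 0$ reads
\[
\dd_t\vp = -d\de_\vp\vp - \cL_{V(\vp)}\vp = \De_\vp\vp - \cL_{V(\vp)}\vp
\]
(modulo the paper's sign convention for $V$). This is the Laplacian-DeTurck flow. Several further facts are needed:
\begin{itemize}
\item Initial condition: $\vp(0) = \vp_0$, because $\beta_i(0) = 0$ and $\beta_0(0) = 0$ pass to the limit.
\item Non-degeneracy: $\vp$ remains a $G_2$-structure, by Corollary \ref{corollary equivalence of approx soln metrics} and the smallness of $d\beta$ from Theorem \ref{theorem existence of inverse on domains} combined with Sobolev embedding.
\item Regularity: $\vp$ is smooth by interior parabolic regularity of the now strictly parabolic equation, bootstrapped from the $H^{r-1,s-2}_{loc}$ regularity of $\beta$.
\end{itemize}

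\textbf{Step 3: removing the DeTurck term.} Solve the ODE $\dd_t\Psi_t = V(\vp(t))\circ\Psi_t$ with $\Psi_0 = \mathrm{id}$ (or with the opposite sign, to match the convention). This ODE is solvable globally on $M$ because $V$ is bounded: $V$ is a contraction of $S = \na - \tna$, which decays on the AC end, so the flow is complete. Then $\Psi_t^*\vp(t)$ is closed and satisfies $\dd_t = \De\,$ by diffeomorphism invariance of $\De_\vp\vp$ (Lemma \ref{lemma scaling of metric} with $\la = 1$).

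\textbf{Main obstacle.} I expect Step 1 to be the hardest part: verifying that the limit potentials lie in weighted spaces at rates where the exact/co-exact orthogonality holds with no boundary flux. In particular, I must check that the chosen rate $\la+1$ for $\om$ and $\eta$ sits in the range where the boundary terms vanish, or else restrict $\la$ accordingly.
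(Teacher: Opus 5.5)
Your proposal is correct, and its Step 1 is exactly the paper's proof. The paper gets $d\eta = 0$ and $\de_{\vp_0}\om = 0$ separately by citing the $L^2$-orthogonality of $d(H^1_\la\Om^2(M))$ and $\de_{\vp_0}(H^1_\la\Om^4(M))$ from \cite[Proposition 4.31]{KarigiannisLotay}. Your cutoff version also goes through: the paper's rate has $\ve = -2\la - 7 \ge 0$ (Lemma \ref{lemma smallness of dirac}), so $L^2_\la \subset L^2$, and the error term involves only $d\eta$ and $\om$ and is bounded by $CR^{-1}\|d\eta\|_{L^2(\{R \le \rho \le 2R\})}\|\om\|_{L^2(\{R \le \rho \le 2R\})} \to 0$.

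Your Steps 2--3 supply what the paper omits when it calls $d\eta = 0$ ``Laplacian flow'': strictly, $\vp(t)$ solves the Laplacian--DeTurck flow, and it is $\Psi_t^*\vp(t)$ that solves the Laplacian flow. Also, $-d\de_\vp\vp = -\De_\vp\vp$ for closed $\vp$, so the sign in your Step 2 display needs fixing (the paper's own $\cP$ and $\cQ$ disagree on this sign as well).
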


\begin{proof}
    In Proposition \ref{proposition solution on whole space up to coexact} it was shown that for $t \in [0,\eps]$,
    \begin{align*}
        \cQ(\beta,\om) &= d(\dd_t (\beta + \beta_0) + \de_\vp \vp  + V(\vp) \lrcorner \vp) + \de_{\vp_0} \om \\
        &= 0.
    \end{align*}
    Thus, 
    \[
    d(\dd_t (\beta + \beta_0) + \de_\vp \vp  + V(\vp) \lrcorner \vp) = \de_{\vp_0} \om,
    \]
    and on each time-slice
    \[
    d(\dd_t (\beta + \beta_0) + \de_\vp \vp  + V(\vp) \lrcorner \vp) \in d(H^1_{\la}\Om^2(M)), \text{ and } \de_{\vp_0} \om \in \de_{\vp_0}(H^1_{\la}\Om^4(M)).
    \]
    However, by \cite[Proposition 4.31]{KarigiannisLotay}, $d(H^1_{\la}\Om^2(M))$ and $\de_{\vp_0}(H^1_{\la}\Om^4(M))$ are $L^2$-orthogonal closed subspaces of $L^2_\la \Om^3(M)$ and thus intersect only at 0. This implies that
    \[
    d(\dd_t (\beta + \beta_0) + \de_\vp \vp  + V(\vp) \lrcorner \vp) = 0
    \]
    for each time $t \in [0,\eps]$ and thus that $\vp$ is a solution of Laplacian flow on $M$ with initial condition $\vp_0$. 
\end{proof}

\section{Uniqueness of the Laplacian flow on a complete manifold}\label{section uniqueness}

The uniqueness of the flow constructed in the previous sections on an AC background will follow from a general uniqueness theorem for complete closed Laplacian flows with bounded curvature tensor. This theorem will be proved by Kotschwar's energy method in \cite{kotschwarEnergy}, with modifications for the Laplacian flow case drawn from Lotay-Wei's forward uniqueness result \cite[\S 6]{LotayWeiLaplacianFlow}.  
\begin{thm}\label{theorem uniqueness complete bdd curvature}
    Let $(M,\vp_0)$ be a closed $G_2$-structure on a noncompact manifold $M$ such that $(M,g_{\vp_0})$ is a complete Riemannian manifold with bounded curvature. If $\vp(t)$ and $\tvp(t)$ are smooth solutions to \eqref{equation laplacian flow} on $M \times [0,T]$, for which
    \[
    |\Rm (x,t)|_{g(t)} + |\widetilde \Rm (x,t)|_{\tg(t)} \le K_0
    \]
    on $M \times [0,T]$ for some constant $K_0 \ge 0$, then $\vp(t) = \tvp(t)$ for all $t \in [0,T]$. 
\end{thm}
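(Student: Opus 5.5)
Following Kotschwar's energy method as adapted by Lotay--Wei \cite[\S 6]{LotayWeiLaplacianFlow}, I would compare $\vp$ and $\tvp$ through the differences of their geometric quantities and show that a weighted energy satisfies a Gr\"onwall inequality. First, Shi-type derivative estimates for Laplacian flow (Lotay--Wei, \S 4), applied on $[0,T]$ under the bound $|\Rm|+|\widetilde\Rm|\le K_0$, give bounds on $|\na T|, |\na^2 T|, |\na \Rm|$ and on the tilde analogues, possibly of the form $C t^{-1/2}$ near $t=0$. If the initial data have bounded derivatives, these bounds hold up to $t=0$. By \eqref{equation compare metrics bdd K}, $g(t)$, $\tg(t)$ and $g_0$ are uniformly equivalent, so all norms may be measured with respect to $g(t)$ up to constants. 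I would set $h=g-\tg$, $A=\na-\tna$, $\Psi=\vp-\tvp$, $V=T-\widetilde T$, $W=\na T-\tna\widetilde T$, $S=\Rm-\widetilde\Rm$.

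Next I would derive the PDE--ODE system. Using \eqref{equation evolution metrics} and \eqref{equation evolution christoffel symbols}, $\dd_t h$, $\dd_t A$ and $\dd_t\Psi$ are bounded pointwise by $C(|h|+|A|+|\Psi|+|V|+|W|+|S|)+C|\na S|$ (the term $|\na S|$ enters through $\na\Ric$ in $\dd_t\Ga$). Using \eqref{equation evolution torsion} and \eqref{equation evolution curvature}, $V$, $W$ and $S$ satisfy equations of the form $\dd_t X-\De X=\operatorname{div}(\ldots)+(\text{lower order})$, where $\De-\widetilde\De$ is rewritten as $\tg^{-1}*A*\na(\cdot)+h*\na^2(\cdot)$ and $\na^2\widetilde\Rm$ is bounded. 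I would include $|W|^2$ precisely because $\nabla^2 T$ appears in the curvature equation. Differentiating, this yields
\[
\dd_t(|V|^2+|W|^2+|S|^2)\le \De(|V|^2+|W|^2+|S|^2)-2(|\na V|^2+|\na W|^2+|\na S|^2)+C\,\mathcal{Z}
\]
modulo divergence terms, where $\mathcal{Z}$ is the sum of the squared differences.

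For the energy estimate, I would let $\eta=\eta(x)$ be a smooth function comparable to $d_{g_0}(x,x_0)$ with $|\na\eta|\le C$, and define $\cE(t)=\int_M(\ldots)e^{-\eta}\,d\mu_{g(t)}$ as in the outline. All terms are bounded, and the metrics are uniformly equivalent to a complete bounded-curvature metric with at most exponential volume growth, so $\cE$ is finite provided $\eta$ grows linearly (take $\eta = a\, d$ with $a$ large). Differentiating $\cE$ and integrating by parts, the boundary terms vanish by the $e^{-\eta}$ decay, but they produce $|\na\eta|\,|X||\na X|e^{-\eta}$ terms. These are absorbed by Cauchy--Schwarz into the good gradient terms, as are the $|\na S|$ contributions from the ODE components. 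This gives $\cE'\le C\cE$, and since $\cE(0)=0$, Gr\"onwall's inequality yields $\cE\equiv0$, so $\vp=\tvp$.

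The main obstacle is the integration-by-parts step. Terms like $\langle S, h*\na^2\widetilde\Rm\rangle$ and $\langle S,\tg^{-1}*A*\na\widetilde \Rm\rangle$ require bounds on derivatives of $\widetilde\Rm$, and if these bounds blow up like $t^{-1/2}$ or $t^{-1}$ near $0$, Kotschwar's time weights $t^{-\alpha}$ on the ODE quantities would be needed to keep the constants integrable in $t$. I would first try to avoid time weights by using smoothness together with bounded derivatives at $t=0$, and would fall back on Kotschwar's weighted energy if necessary.
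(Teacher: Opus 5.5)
\textbf{There is a genuine gap.} Your overall strategy is the right one: Kotschwar's energy method applied to the Lotay--Wei difference system. Your time-independent weight $e^{-a\,d(x)}$ is also a legitimate simplification of Kotschwar's Gaussian weight here, since $|\nabla\eta|$ is bounded and volume growth is at most exponential.

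The gap is in your main line, which assumes bounds on $\nabla\Rm$, $\nabla^2T$, $\widetilde\nabla\widetilde\Rm$, $\widetilde\nabla^2\widetilde T$ (and even $\widetilde\nabla^2\widetilde\Rm$) holding up to $t=0$. The theorem assumes only $|\Rm|+|\widetilde\Rm|\le K_0$. Smoothness on a noncompact $M$ gives no uniform derivative bounds. The Shi-type estimates give only $|\nabla\Rm|+|\nabla^2T|\le Ct^{-1/2}$, and $Ct^{-1}$ for second derivatives of curvature. This is exactly why the $(\Delta-\widetilde\Delta)$ terms must be put in divergence form: then only $\widetilde\nabla\widetilde\Rm$ and $\widetilde\nabla^2\widetilde T$ enter.

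Even so, the unweighted energy does not close. The fluxes satisfy $|\mathcal{S}|\lesssim|\widetilde\nabla\widetilde\Rm|\,|h|+|A|$ and $|\mathcal{V}|\lesssim|\widetilde\nabla^2\widetilde T|\,|h|+|A|$. After integrating by parts, absorbing $|\nabla S|$ and $|\nabla V|$ into the gradient terms forces you to pay $|\mathcal{S}|^2+|\mathcal{V}|^2\lesssim t^{-1}|h|^2+|A|^2$. So you only obtain $\mathcal{E}'\le Ct^{-1}\mathcal{E}$. That inequality is satisfied by $\mathcal{E}=t^{C}\neq 0$, so it yields nothing from $\mathcal{E}(0)=0$.

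The time weights are therefore not an optional fallback but the heart of the proof. Your proposal omits what makes them work:
\begin{itemize}
\item[(i)] \emph{A priori rates.} One needs $|\vp-\tvp|,|h|\le Nt$ and $|A|,|T-\widetilde T|\le N\sqrt t$. These come from integrating the ODE inequalities in time, using the curvature bound, $|\nabla T|\le C|\Rm|$, and the integrable Shi bound $|\nabla\Rm|+|\widetilde\nabla\widetilde\Rm|\le Ct^{-1/2}$. They guarantee that $t^{-1}\int(|h|^2+|\vp-\tvp|^2)e^{-\eta}$ and $t^{-\beta}\int(|A|^2+|T-\widetilde T|^2)e^{-\eta}$, with $\beta\in(0,1)$, still tend to $0$ as $t\searrow0$.
\item[(ii)] \emph{Absorption.} Differentiating the weights produces good terms $-t^{-1}\mathcal{H}$ and $-\beta t^{-1}\mathcal{I}$. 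These absorb the bad contributions $t^{-1}|h|^2$, $t^{\beta-1}\mathcal{I}$, and the $t^{-2\beta}|A|^2$ coming from $|A||\nabla S|$, provided $t\le T_0(\beta)$ is small.
\item[(iii)] \emph{Iteration.} The resulting estimate $\mathcal{E}'\le N_0\mathcal{E}$ is iterated over $[0,T]$.
\end{itemize}
As written, your route proves uniqueness only under the extra hypothesis that the initial data have bounded curvature derivatives (as in the AC setting). That is a strictly weaker statement than the theorem.
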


\subsection{Preliminaries}

Before proving this theorem, recall the following results from Lotay-Wei \cite{LotayWeiLaplacianFlow}. The first are the local Shi-type estimates.

\begin{thm}\cite[Theorem 4.4]{LotayWeiLaplacianFlow}\label{theorem local shi estimates}
Let $K>0$ and $r> 0$. Let $M$ be a 7-manifold, $p \in M$, and $\vp(t)$, $t \in [0,1/K]$ be a solution to Laplacian flow \eqref{equation laplacian flow} for closed $G_2$-structures on an open neighborhood $U$ of $p$ containing $B_{g(0)}(p,r)$ as a compact subset, such that $K$ bounds the curvature $|\Rm|_{g(t)}$ for all $t$.

For any $k \in \bN$, there exists a constant $C = C(K,r,k)$ such that if $\La(x,t) \le K$ for all $x \in U$ and $t \in [0,1/K]$, then for all $y \in B_{g(0)}(p, r/2)$ and $t \in [0,1/K]$, we have
\[
|\na^k \Rm | + |\na^{k+1}T| \le C(K,r,k)t^{-\frac{k}{2}}.
\]
\end{thm}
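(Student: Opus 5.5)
The statement is the local Shi-type derivative estimate of Lotay--Wei, so the plan is a Bernstein--Shi cutoff argument by induction on $k$, run on the evolution equations \eqref{equation evolution torsion} and \eqref{equation evolution curvature}. For closed $G_2$-structures, $\na T$ is controlled algebraically by $\Rm$ and $T * T$, and $|T|^2 = -R$ by \eqref{equation R = -|T|^2}. So the hypothesis $\La \le K$, which we read as bounding $|\Rm| + |\na T|$ (equivalently, $|\Rm|$ together with $|T|^2 \le C|\Rm|$), gives uniform bounds on $T$, $\na T$ and $\Rm$ on $U \times [0,1/K]$. Time-rescaling by Lemma \ref{lemma scaling of metric} lets us normalise $K = 1$, and the metrics $g(t)$ are uniformly equivalent on $[0,1/K]$ by \eqref{equation compare metrics bdd K}.

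\textbf{Step 1 (evolution inequalities).} Set $Q_k := |\na^k \Rm|^2 + |\na^{k+1} T|^2$. Differentiate \eqref{equation evolution curvature} and \eqref{equation evolution torsion} $k$ times, using \eqref{equation evolution christoffel symbols} to commute $\dd_t$ with $\na$. This gives
\[
\dd_t Q_k \le \De Q_k - 2\big(|\na^{k+1}\Rm|^2 + |\na^{k+2}T|^2\big) + C\sum_{i+j = k} \big(|\na^i \Rm| + |\na^{i+1}T|\big)\big(|\na^j \Rm| + |\na^{j+1}T|\big)\sqrt{Q_k} + \cdots .
\]
The lower-order contractions are bounded using the inductive hypotheses $Q_j \le C t^{-j}$ for $j<k$. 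The term to watch is $\na T * \na T$ in \eqref{equation evolution curvature}. After $k$ differentiations it produces $\na^{k+1}T * \na T$, which is of the same order as $Q_k$. It also produces $\na^2 T * T$, which after $k$ differentiations gives $\na^{k+2}T * T$. That top-order term is absorbed into the good gradient term $-|\na^{k+2}T|^2$ by Young's inequality. This absorption is why $T$ is carried one derivative higher than $\Rm$ in $Q_k$.

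\textbf{Step 2 (Shi quantity).} Following Shi, consider
\[
F_k = t^k Q_k\,(A + t^{k-1}Q_{k-1}),
\]
or, in the base case,
\[
F_1 = t\,Q_1\,(A + Q_0),
\]
with $A$ a large constant depending on $K$. The negative term $-|\na^{k}\Rm|^2 - |\na^{k+1}T|^2$ coming from the evolution of $Q_{k-1}$ then dominates, giving
\[
\dd_t F_k \le \De F_k - c F_k^2 + C.
\]

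\textbf{Step 3 (localisation).} Multiply by a cutoff $\zeta$ supported in $B_{g(0)}(p,r)$, equal to $1$ on $B_{g(0)}(p,r/2)$, and satisfying
\[
|\na \zeta|^2 \le C\zeta, \qquad |\na^2\zeta| \le C
\]
with respect to $g(t)$. Producing such a cutoff is the main obstacle, since the metric moves. Build $\zeta$ from the $g(0)$-distance and use \eqref{equation compare metrics bdd K} together with bounds on the Christoffel difference $\Ga(t)-\Ga(0)$. Those bounds come from integrating \eqref{equation evolution christoffel symbols}, which requires $\na \eta \sim \na \Rm + \na T * T$. This is only integrable in time, $\int_0^t s^{-1/2}\,ds$, once the $k=1$ estimate is known. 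To avoid circularity, first use a distance-like cutoff with only Laplacian comparison (Hessian bound replaced by a barrier), as in Shi's and Lotay--Wei's arguments.

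\textbf{Step 4 (conclusion).} At an interior maximum of $\zeta F_k$ apply the maximum principle on the compact set $\overline{B_{g(0)}(p,r)}$ (available because $U$ compactly contains this ball). This gives $\zeta F_k \le C(K,r,k)$, hence
\[
|\na^k\Rm| + |\na^{k+1}T| \le C(K,r,k)\,t^{-k/2} \quad \text{on } B_{g(0)}(p,r/2).
\]
Induction on $k$, shrinking radii geometrically at each stage (from $r$ towards $r/2$), completes the proof.
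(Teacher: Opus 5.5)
Your outline is correct and is essentially the Shi-type cutoff-and-induction argument by which Lotay--Wei prove this estimate: you carry $\nabla^{k+1}T$ alongside $\nabla^k \mathrm{Rm}$ to absorb the top-order $T * \nabla^{k+2}T$ term, and use the product quantity $t^kQ_k(A + t^{k-1}Q_{k-1})$; the paper itself gives no proof and simply quotes \cite[Theorem 4.4]{LotayWeiLaplacianFlow}. The one step you leave vague, the cutoff in Step 3, is cleanest if $\zeta$ is a function of the $g(t)$-distance to $p$, since Laplacian comparison (from $\mathrm{Ric}_{g(t)} \ge -CK$) and $|\partial_t g| \le C|\mathrm{Rm}|$ then bound $\partial_t \zeta - \Delta_{g(t)}\zeta$ in the barrier sense without any control of $\Gamma(t)-\Gamma(0)$.
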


\begin{rmk}
We will always take $r = 1$ in order to remove the $r$-dependence of the bound.
\end{rmk}

The energy quantity to be analyzed consists of certain quantities arising from tensors associated to the flows $\vp(t)$ and $\tvp(t)$. Following Lotay-Wei \cite[\S 6.1]{LotayWeiLaplacianFlow}, define
\[
\phi = \vp - \tvp, \; h = g - \tg, \; A = \na - \tna 
\]
\[
U = T- \widetilde T, \; V = \na T - \tna \widetilde T, \; S = \Rm - \widetilde \Rm.
\]
The evolution equations for these quantities were calculated in Lotay-Wei and are reproduced in the following lemma.
\begin{lem}\label{lemma evolution equations differences}\cite[Lemma 6.2]{LotayWeiLaplacianFlow} We have the following inequalities:
\begin{align}
    \bigg| \frac{\dd }{\dd t} \phi(t) \bigg|_{g(t)} &\le C (|V(t)|_{g(t)} + |A(t)|_{g(t)}); \label{equation ev diff g2 structures} \\
    \bigg| \frac{\dd }{\dd t} h(t) \bigg|_{g(t)} &\le C (|S(t)|_{g(t)} +|h(t)|_{g(t)} + |U(t)|_{g(t)}); \label{equation ev diff metrics} \\
    \bigg| \frac{\dd }{\dd t} A(t) \bigg|_{g(t)} &\le C(|A(t)|_{g(t)} +|\tna \widetilde \Rm|_{g(t)}|h(t)|_{g(t)} \nonumber \\
    &\qquad + |U(t)|_{g(t)} + |V(t)|_{g(t)} + |\na S (t)|_{g(t)}); \label{equation ev diff christoffels}\\
    \bigg| \frac{\dd }{\dd t} U(t) \bigg|_{g(t)} &\le C\ (|\phi(t)|_{g(t)} + |A(t)|_{g(t)} +|U(t)|_{g(t)} + |S(t)|_{g(t)} \nonumber \\
    &\qquad + |V(t)|_{g(t)} + |\na V (t)|_{g(t)}); \label{equation ev diff torsions}
\end{align}
\begin{align}
    &\bigg| \frac{\dd }{\dd t} V(t)  - \De V(t) - \Div \cV(t) \bigg|_{g(t)} \nonumber \\
    &\qquad \le C (|V(t)|_{g(t)} +(1+|\tna^2\widetilde T|_{g(t)})|A(t)|_{g(t)} +(1+ |\na \Rm|_{g(t)})|U(t)|_{g(t)}  \nonumber \\ 
    &\qquad \qquad +|S(t)|_{g(t)} + (1+|\tna^2\widetilde T|_{g(t)})|h(t)|_{g(t)} +(1+ |\na \Rm|_{g(t)})|\phi(t)|_{g(t)} \nonumber \\
    &\qquad \qquad \qquad + |\na S (t)|_{g(t)} + |\na V (t)|_{g(t)}), \label{equation ev diff deriv tors} 
\end{align}
where $\cV$ satisfies 
\[
|\cV(t)|_{g(t)} \le C(|\tna^2 \widetilde T|_{g(t)}|h(t)|_{g(t)} + |A(t)|_{g(t)});
\]
and
\begin{align}
    &\bigg| \frac{\dd }{\dd t} S(t)  - \De S(t) - \Div \cS(t) \bigg|_{g(t)} \nonumber \\
    &\qquad \le C (|V(t)|_{g(t)} + |\tna \widetilde \Rm|_{g(t)}|A(t)|_{g(t)} \nonumber \\
    &\qquad \qquad \qquad +|\na^2 T|_{g(t)}|U(t)|_{g(t)} +|S(t)|_{g(t)} + |\na V (t)|_{g(t)}), \label{equation ev diff curvatures} 
\end{align}
where $\cV$ satisfies 
\[
|\cS(t)|_{g(t)} \le C(|\tna \widetilde \Rm|_{g(t)}|h(t)|_{g(t)} + |A(t)|_{g(t)}).
\]
In the above inequalities, $\na$, $\De$, and $\mathrm{div}$ are the Levi-Civita connection, Laplacian and divergence on $M$ with respect to $g(t)$ and $C$ denotes uniform constants depending on the curvature upper bound $K_0$.  
\end{lem}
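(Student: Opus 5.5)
The plan is to differentiate each difference quantity directly and use the Laplacian flow evolution equations together with the identities relating $g$, $\na$, $T$, $\na T$ and $\Rm$ to $\vp$; this is the approach of Lotay--Wei \cite[Lemma 6.2]{LotayWeiLaplacianFlow}. Throughout, the bound $|\Rm|+|\widetilde\Rm|\le K_0$ is combined with $|T|^2=-R$ from \eqref{equation R = -|T|^2}, which gives $|T|,|\widetilde T|\le C(K_0)$. The metrics $g(t),\tg(t)$ are uniformly equivalent by \eqref{equation compare metrics bdd K}, so norms taken with respect to $\tg$ may be converted to $g(t)$-norms at the cost of a constant. The general principle is that every difference of the form $X*Y-\widetilde X*\widetilde Y$ is written as $(X-\widetilde X)*Y+\widetilde X*(Y-\widetilde Y)$. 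Each resulting factor is then bounded either by a bounded geometric quantity or by one of $\phi,h,A,U,V,S$, or, in the terms we keep explicit, by $\na\Rm$, $\tna\widetilde\Rm$ or $\tna^2\widetilde T$.

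The steps are taken in order.

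\textbf{(1)} For $\phi$: use $\dd_t\vp=\De_\vp\vp=d\tau$, where $\tau$ is linear in $T$ contracted with $\vp$. Expanding $d\tau$ gives $\na T*\vp+T*T*\star\vp$. Subtracting the corresponding tilde expression and replacing $\na$ by $\tna+A$ produces $V$, $A*\widetilde T$, and lower-order terms. The lower-order terms involve $\phi$ and $U$; since the statement only records $V$ and $A$, I would track whether these actually cancel, and otherwise accept the constant $C$ as absorbing them.

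\textbf{(2)} For $h$: use \eqref{equation evolution metrics}. The difference of the right-hand sides is $-2(\Ric-\widetilde\Ric)$ plus quadratic torsion terms, which gives $S$, $U$, and $h$ (through the metric contractions in $\Ric$ and $|T|^2g$).

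\textbf{(3)} For $A$: use \eqref{equation evolution christoffel symbols}. Here $\dd_t A=g^{-1}*\na\eta-\tg^{-1}*\tna\widetilde\eta$. Writing $\na=\tna+A$ and $g^{-1}-\tg^{-1}=O(h)$ produces $\na S$, the term $\tna\widetilde\Rm*h$, $A$, and, from the derivatives of $T*T$, the terms $U$ and $V$.

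\textbf{(4)} For $U$: use \eqref{equation evolution torsion}, whose principal term $\De T$ contributes $\na V$ after commuting and differencing. The remaining terms contribute $\phi,A,U,S,V$.

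\textbf{(5)} For $V$ and $S$: differentiate $\na T$ in time, using $\dd_t\na T=\na\dd_t T+\dd_t\Ga*T$, and use \eqref{equation evolution curvature}. Then write
\[
\De V=\De_g(\na T)-\widetilde\De(\tna\widetilde T),
\qquad
\widetilde\De(\tna\widetilde T)=\De(\tna\widetilde T)+\Div\big(h*\tna^2\widetilde T + A*\tna\widetilde T\big)+\cdots .
\]
The divergence-form remainder defines $\cV$ (respectively $\cS$). This explains the stated bound on $|\cV|$ and the coefficient $\tna\widetilde\Rm$ in the bound on $|\cS|$.

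The main obstacle is step (5). Differentiating the metric and connection differences inside the Laplacian produces second derivatives of $h$ and first derivatives of $A$. These cannot be bounded pointwise, so they must be placed in divergence form and kept inside $\cV$ and $\cS$ rather than estimated. They can later be integrated by parts in the energy $\cE(t)$. The bookkeeping has to ensure that whatever is left outside the divergence involves only $\na S$, $\na V$ and the listed zeroth-order quantities, with unbounded factors ($\na\Rm$, $\tna^2\widetilde T$, $\tna\widetilde\Rm$) appearing only linearly in front of $h$, $A$, $U$ or $\phi$. Those factors are later controlled by the Shi-type estimates of Theorem \ref{theorem local shi estimates} as $Ct^{-1/2}$.
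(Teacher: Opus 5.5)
Your approach is the one the paper relies on. The paper gives no computation of its own: it defers to the proof of \cite[Lemma 6.2]{LotayWeiLaplacianFlow}, keeping the factors $\na\Rm$, $\tna\widetilde\Rm$, $\na^2T$, $\tna^2\widetilde T$ explicit. Your step (5) correctly explains where $\Div\cV$ and $\Div\cS$ come from. Two of your steps, however, do not give what you claim.

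\textbf{Step (1).} Letting $C$ ``absorb'' leftover $\phi$- and $U$-terms is not legitimate, because no constant bounds $|\phi|$ or $|U|$ pointwise by $|V|+|A|$. The missing observation has two parts. First, for a closed structure $T=-\tfrac12\tau$, with no contraction against $\vp$. Second, $d$ does not depend on the metric. Hence $\dd_t\phi=d(\tau-\tilde\tau)$ is a multiple of the skew-symmetrization of $-2(\na T-\na\widetilde T)=-2(V-A*\widetilde T)$, with $T$ viewed as a $(0,2)$-tensor. This gives \eqref{equation ev diff g2 structures} at once. In fact the $A$-term drops out, since $A$ is symmetric in its lower indices.

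Separately, in steps (3)--(5) you treat $\tna\widetilde T$ as a bounded coefficient. The identity $|T|^2=-R$ does not give this. You need $|\na T|\le C(|\Rm|+|T|^2)$, which comes from the $G_2$-Bianchi identity; the paper invokes it via \cite[Proposition 2.4]{LotayWeiLaplacianFlow}.

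\textbf{Step (4).} It is not true that everything besides $\na V$ contributes only $\phi,A,U,S,V$ with $C=C(K_0)$. Differencing the principal term gives
\[
\De T-\widetilde{\De}\widetilde T=g^{-1}*\big(\na V+A*\tna\widetilde T\big)+(g^{-1}-\tg^{-1})*\tna^2\widetilde T .
\]
The last term has size $|h|\,|\tna^2\widetilde T|$, and its coefficient is only $O(t^{-1/2})$ by Theorem \ref{theorem local shi estimates}. It cannot be traded for $\na V$, $A$ or $\phi$ with bounded coefficients, whichever formula for $\dd_tT$ you start from, because the leading coefficients depend on $\vp$. The same term arises from $\na^2T*T$ in \eqref{equation evolution curvature} in the $S$-estimate. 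In Lotay--Wei's compact setting $\tna^2\widetilde T$ is bounded, so this is harmless there; here its coefficient must be tracked.

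Your own bookkeeping therefore yields \eqref{equation ev diff torsions} and \eqref{equation ev diff curvatures} only with an extra $|\tna^2\widetilde T|\,|h|$ on the right. This is the same type of term already present in \eqref{equation ev diff deriv tors}. The weaker form still suffices downstream. In $\cI'$ it contributes at most
\[
Ct^{-\beta-1/2}|h||U|\le Ct^{-\beta/2}\big(t^{-1}|h|^2+t^{-\beta}|U|^2\big),
\]
that is, $Ct^{-\beta/2}(\cH+\cI)$. The $-\tfrac12t^{-1}\cH$ and $-\beta t^{-1}\cI$ terms absorb this for small $t$. But the term has to be recorded, not asserted away.
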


\begin{proof}
    The small modifications to the statement of \cite[Lemma 6.2]{LotayWeiLaplacianFlow} to make the dependence of the estimates on the derivatives $\na \Rm$, $\tna \widetilde \Rm$, $\na^2 T$, and $\tna^2 \widetilde T$ follow immediately from examining the computations in the proof of Lemma 6.2 in \cite{LotayWeiLaplacianFlow}. 
\end{proof}

Next, analogously to \cite[Lemma 6]{kotschwarEnergy} we will prove estimates for the decay as $t \rightarrow 0$ for the ``ODE" quantities $\phi$, $h$, $A$ and $U$.

\begin{lem}
Let $T^* = \max\{T,1\}$. Under the assumptions of Theorem \ref{theorem uniqueness complete bdd curvature}, we have $|\phi(p,t)|, |h(p,t)| < Nt$ and $|A(p,t)|, |U(p,t)| \le N \sqrt {t}$ on $M \times [0, T]$ for some constant $N = (K_0, T^*)$. 
\end{lem}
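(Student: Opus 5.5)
The plan is to integrate the evolution equations for the ODE quantities $\phi$, $h$, $A$, $U$ in time from $t=0$, where all four vanish because $\vp(0)=\tvp(0)$. The right-hand sides will be bounded pointwise using the curvature bound $K_0$ together with the local Shi-type estimates, Theorem \ref{theorem local shi estimates}. This follows \cite[Lemma 6]{kotschwarEnergy}.

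First I will establish uniform pointwise bounds on both flows. By the curvature bound, $|T|^2 = -R \le CK_0$. Applying Theorem \ref{theorem local shi estimates} with $r=1$ at every point (completeness makes every $g(0)$-ball of radius $1$ relatively compact) gives $|\na \Rm| + |\na^2 T| \le C t^{-1/2}$ and $|\na T| \le C$. If $T>1/K$, I will apply the theorem on successive intervals of length $1/K$ to cover $[0,T]$; this is why the constant depends on $T^* = \max\{T,1\}$. The same bounds hold for the tilde quantities. Next, \eqref{equation compare metrics bdd K} gives uniform equivalence of $g(t)$, $\tg(t)$ and $g(0)=\tg(0)$, so norms taken with respect to $g(t)$, $\tg(t)$ or $g(0)$ are comparable with constants depending only on $K_0$ and $T^*$.

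With these in hand, I will estimate each quantity directly from its evolution equation:
\[
|\dd_t \phi| = |\De_\vp\vp - \De_{\tvp}\tvp| \le C(|\na T| + |T|^2 + |\tna\widetilde T| + |\widetilde T|^2) \le C,
\]
using the standard expression of $\De_\vp\vp$ in terms of $\na T$ and $T*T$ from \cite{LotayWeiLaplacianFlow}. Similarly, \eqref{equation evolution metrics} bounds $|\dd_t g|$ and $|\dd_t \tg|$ by $C K_0$, so $|\dd_t h|\le C$. Integrating from $0$, and using the metric equivalence to pass from the time-dependent norm to the fixed norm $g(0)$ (the derivative of the norm itself is controlled by $|\dd_t g|\,|h|$, handled by Gr\"onwall), gives $|\phi|,|h|\le Nt$. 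For $A$, I will use \eqref{equation evolution christoffel symbols}: $\dd_t \Ga$ is bounded by $C(|\na\Rm| + |T||\na T|) \le Ct^{-1/2}$, and the same holds for the tilde connection. For $U$, the evolution \eqref{equation evolution torsion} contains $\De T$, which involves $\na^2 T$ and is bounded by $Ct^{-1/2}$, while the remaining terms are bounded. Integrating $Ct^{-1/2}$ from $0$ gives $|A|,|U|\le N\sqrt t$.

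The main obstacle is making the Shi-type bounds uniform on all of $M$ and valid on all of $[0,T]$. It has to be checked that the hypothesis $\La\le K$ in Theorem \ref{theorem local shi estimates} follows from the curvature bound alone. On later time intervals the initial time is no longer $0$, but the bounds obtained there are better than $t^{-1/2}$ away from $t=0$, so restarting on each interval causes no problem. A secondary technical point is differentiating norms taken with respect to time-dependent metrics; I will handle this by working with respect to $g(0)$ throughout and absorbing the comparison constants.
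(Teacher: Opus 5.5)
Your proposal is correct and is essentially the paper's argument. The paper also integrates the time derivatives from $t=0$: the curvature bound, together with the uniform equivalence of $g(t)$, $\tilde g(t)$ and $g(0)$, handles $\phi$ and $h$, and the Shi-type bound $|\nabla \Rm|+|\nabla^2 T|=O(t^{-1/2})$ gives the $\sqrt{t}$ rate for $A$ and $U$. The only difference is cosmetic: the paper routes $\phi$ through the difference inequality $|\partial_t\phi|\le C(|V|+|A|)$, which needs a bound on $A$ first, whereas you bound $\Delta_{\varphi}\varphi$ and $\Delta_{\tilde\varphi}\tilde\varphi$ separately.

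The point you leave open, whether the hypothesis $\Lambda\le K$ of the Shi-type estimates follows from the curvature bound alone, is settled as the paper does. For closed $G_2$-structures one has $|\nabla T|\le C|\Rm|$ \cite[Proposition 2.4]{LotayWeiLaplacianFlow}. This holds because the $G_2$ Bianchi identity determines $\nabla T$ algebraically from $\Rm$ and $T*T$ (since $T$ is skew), and $|T|^2=-R$. This also removes the circularity in your use of Shi with $k=0$ to get $|\nabla T|\le C$.
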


\begin{proof}
    The bounded curvature condition implies that the metrics $g(t), \tg(t), g_0$ are all uniformly equivalent on $[0,T]$ by a standard application of Gr\"onwall's inequality (c.f. \cite{ShiDeforming}). From this and the evolution equations \eqref{equation evolution christoffel symbols}, the Christoffel symbols are also uniformly bounded. For any $p \in M$,
    \begin{align*}
        |\phi(p,t)| \le N|\phi(p,t)|_{g_0} &\le N \int_0^t |V(p,s)|_{g_0} + |A(p,s)|_{g_0} ds \\
        &\le N \int_0^t |\Rm (p,s)|_{g_0} + |\widetilde \Rm (p,s)|_{g_0} \\
        &\qquad \qquad \qquad + | \Ga (p,s)|_{g_0} + | \widetilde \Ga (p,s)|_{g_0} ds\\
        &\le Nt.
    \end{align*}
    The bound $|\na T| \le C |\Rm|$ used implicitly in the second line follows from \cite[Proposition 2.4]{LotayWeiLaplacianFlow}. One can argue similarly for the bound on $|h|$, noting that $|T|$ is uniformly bounded since $|T|^2 = -R$. 

    The bounds on $|A|$ and $|U|$ are demonstrated with a similar argument, except that the integrand will be dominated for small $t$ by the first derivatives of the curvature. The Shi-type estimates in Theorem \ref{theorem local shi estimates} tell us that $|\na \Rm|$ and $|\tna \widetilde \Rm|$ are $O\big(t^{-1/2}\big)$. Thus, for any $0 < \eps < t$, we have
    \begin{align*}
        |A(p,t) - A(p, \eps)| &\le N|A(p,t) - A(p, \eps)|_{g_0} \\
        &\le N \int_{\eps}^t \Big( |\na \Rm(p,s)|_{g_0} + |\tna \widetilde \Rm(p,s)|_{g_0} + 1 \Big) ds \\
        &\le N \int_{\eps}^t s^{-1/2} ds\\
        &\le N(\sqrt{t} - \sqrt{\eps}). 
    \end{align*}
Send $\eps \rightarrow 0$ to obtain the result for $|A|$. The result for $|U|$ is proved along exactly the same lines.
\end{proof}

We will need to introduce an auxiliary function in order to define a weight function that decays rapidly enough to make our energy quantity well-defined. The following lemma in \cite{kotschwarEnergy} establishes the existence of such a function. 

\begin{lem}\label{lemma decay function}\cite[Lemma 5]{kotschwarEnergy}
    Suppose $\bar g(t)$ is a smooth family of complete metrics on $M \times [0,T]$ satisfying $\ga^{-1}\bar g(0) \le g(t)$. Let $x_0 \in M$ and define $\bar r(x) = \mathrm{dist}_{\bar g(0)}(x,x_0)$. Then, for any positive constants $L_1$ and $L_2$, there exists a positive constant $T' = T'(\ga, L_1,L_2,T)$ and a function $\eta: M \times [0,T'] \rightarrow \R$ that is smooth in $t$, Lipschitz everywhere and smooth $d\mu_g-a.e.$ on each $M \times \{t\}$, and that simultaneously satisfies the conditions
    \[
    -\frac{\dd \eta}{\dd t} + L_1 |\na \eta|^2_{\bar g(t)} \le 0,  \;\; \text{ and } \; e^{-\eta} \le e^{-L_2\bar r^2(x)},
    \]
    on $M \times [0,\tau]$ whenever $0 < \tau  \le T'$.
\end{lem}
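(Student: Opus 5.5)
The plan is to write down the function $\eta$ explicitly, as in \cite[Lemma 5]{kotschwarEnergy}, using a Gaussian-type ansatz in the background distance $\bar r$:
\[
\eta(x,t) := \frac{a\,\bar r^2(x)}{T_0 - t},
\]
where the constants $a>0$ and $T_0>0$ will be fixed in terms of $\ga, L_1, L_2$. We will then take $T' := \min\{T, T_0/2\}$. Throughout, I read the hypothesis $\ga^{-1}\bar g(0)\le g(t)$ as the comparison $\ga^{-1}\bar g(0) \le \bar g(t)$ for the family $\bar g(t)$ in which $|\na\eta|$ is measured.

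\textbf{Regularity.} Since $\bar g(0)$ is complete, $\bar r$ is $1$-Lipschitz with respect to $\bar g(0)$. It is smooth away from $x_0$ and the cut locus of $x_0$, and this set has measure zero. Hence $\bar r^2$ is locally Lipschitz on $M$ and smooth almost everywhere, with $|\na \bar r^2|_{\bar g(0)} \le 2\bar r$ almost everywhere. For $t<T_0$ the factor $(T_0-t)^{-1}$ is smooth in $t$. So $\eta$ is smooth in $t$, Lipschitz on each time-slice, and smooth $d\mu_g$-a.e.

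\textbf{The differential inequality.} The metric comparison $\ga^{-1}\bar g(0)\le \bar g(t)$ inverts to $\bar g(t)^{-1} \le \ga\,\bar g(0)^{-1}$ on covectors, and therefore $|\na \eta|^2_{\bar g(t)} \le \ga |\na\eta|^2_{\bar g(0)}$. Combining this with the gradient bound on $\bar r^2$ gives, almost everywhere,
\[
-\frac{\dd\eta}{\dd t} + L_1|\na\eta|^2_{\bar g(t)} \le -\frac{a\bar r^2}{(T_0-t)^2} + L_1\ga\frac{4a^2\bar r^2}{(T_0-t)^2}.
\]
The right-hand side is $\le 0$ provided $4L_1\ga a \le 1$. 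I therefore fix $a := (4L_1\ga)^{-1}$.

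\textbf{The decay condition.} For $0\le t\le\tau\le T'\le T_0/2$ we have $T_0 - t \le T_0$, so $\eta(x,t)\ge a\bar r^2/T_0$. Choosing $T_0 := a/L_2 = (4L_1L_2\ga)^{-1}$ then gives $\eta\ge L_2\bar r^2$, which is exactly $e^{-\eta}\le e^{-L_2\bar r^2}$. The resulting $T' = \min\{T,(8L_1L_2\ga)^{-1}\}$ depends only on $\ga, L_1, L_2, T$, as required.

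\textbf{Main obstacle.} There is no serious obstacle; the only delicate point is the nonsmoothness of $\bar r$. This is why the inequality is claimed only almost everywhere. If a genuinely smooth $\eta$ were needed later, I would instead replace $\bar r$ by a smooth distance-like function $\tilde r$ with $|\tilde r - \bar r|\le 1$ and $|\na\tilde r|_{\bar g(0)}\le 2$. This changes only the constants, after slightly enlarging $L_2$ to absorb the additive error in $\tilde r$.
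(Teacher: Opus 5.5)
Your proof is correct. The weight $\eta(x,t)=a\bar r^2(x)/(T_0-t)$, with $a=(4L_1\gamma)^{-1}$, $T_0=a/L_2$ and $T'=\min\{T,T_0/2\}$, is essentially the explicit construction behind Kotschwar's Lemma 5, which the paper cites without reproving; you also read the hypothesis as $\gamma^{-1}\bar g(0)\le \bar g(t)$, which is the intended reading, and correctly note that ``Lipschitz'' can only mean locally Lipschitz, since $\bar r^2$ is not globally Lipschitz on a noncompact $M$.
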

We will choose $L_2 = B/8T$ for some $B>0$ such that $L_1 = 3$ and 
\[
\frac{\dd \eta}{\dd t} - 3|\na \eta|^2 \ge 0. 
\]
We can now define the four integral quantities whose sum will comprise our energy functional. Let $t \in (0,T]$ and $\beta \in  (0,1)$. Define 
\[
\cG (t) := \int_M (|V|^2 + |S|^2)e^{-\eta} d\mu, \;\; \cH(t) := t^{-1}\int_M (|h|^2 + |\phi|^2)e^{-\eta}d\mu,
\]
\[
\cI (t) := t^{-\beta}\int_M (|A|^2 + |U|^2)e^{-\eta} d\mu, \;\; \cJ(t) := \int_M (|\na S|^2 + |\na V|^2)e^{-\eta}d\mu,
\]
and let the energy quantity be defined as
\[
\cE(t) := \cG(t) + \cH(t) + \cI(t). 
\]

\begin{lem}\label{lemma well defined integral quantities}
The quantities $\cG(t), \cH(t), \cI(t),$ and $\cJ(t)$ are well-defined and $\cE(t)$ is differentiable on $(0, T]$ and $\lim_{t\searrow 0} \cE(t) = 0$.
\end{lem}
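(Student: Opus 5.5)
The plan is to follow Kotschwar's argument \cite[Lemma 7]{kotschwarEnergy}, adapted as in Lotay--Wei. The two ingredients are the Gaussian decay of the weight $e^{-\eta}$ from Lemma \ref{lemma decay function}, and the uniform bounds available under the hypotheses of Theorem \ref{theorem uniqueness complete bdd curvature}. Those bounds are as follows. All of $g(t)$, $\tg(t)$, $g_0$ are uniformly equivalent. The quantities $|\Rm|, |\widetilde\Rm|, |T|, |\widetilde T|, |\na T|$ are bounded by $C(K_0)$. By Theorem \ref{theorem local shi estimates} (with $r=1$, applied on unit balls, which is legitimate because the curvature bound is global) we also have $|\na\Rm|+|\na^2 T| \le C t^{-1/2}$, and similarly for higher derivatives, with $t^{-k/2}$ growth.

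The first step is well-definedness for each fixed $t\in(0,T]$. The integrands of $\cG$ and $\cJ$ are pointwise bounded by constants depending on $t$: $|V|,|S|$ are bounded by $C(K_0)$, while $|\na S|,|\na V|$ are bounded by $C t^{-1}$ via Shi. The integrands of $\cH$ and $\cI$ are bounded by $N^2t^2$ and $N^2 t$ respectively, by the preceding lemma. Since $e^{-\eta}\le e^{-L_2\bar r^2}$ and the volume growth of $g_0$ is at most exponential in $\bar r$ (Bishop--Gromov with $\Ric\ge -6K_0$), and since $d\mu_{g(t)}$ is comparable to $d\mu_{g_0}$, each integral converges absolutely. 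The same domination shows $\cH(t)\le C t\,\mathrm{Vol}_{e^{-\eta}}$ and $\cI(t)\le C t^{1-\beta}$, so both tend to $0$.

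For $\cG(t)\to0$ as $t\searrow 0$, note that $V$ and $S$ vanish at $t=0$ since $\vp(0)=\tvp(0)$, and both are uniformly bounded. Smoothness of the flows on $M\times[0,T]$ gives pointwise convergence $V(x,t),S(x,t)\to 0$. Dominated convergence, using the integrable majorant $C e^{-L_2\bar r^2}$ (uniform in $t$, given that $\eta$ is increasing in $t$ by $\dd_t\eta\ge 3|\na\eta|^2\ge0$), then yields $\cG(t)\to0$.

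For differentiability on $(0,T]$, I would differentiate under the integral sign. On any interval $[t_0,T]$ with $t_0>0$, the $t$-derivatives of the integrands are controlled as follows. The time derivatives of $\phi,h,A,U$ come from Lemma \ref{lemma evolution equations differences}. The time derivatives of $V,S$ come from the evolution equations and involve at most $\na^2$ of curvature and $\na^3T$, which are bounded by $C t_0^{-1}$ and $C t_0^{-3/2}$ by the Shi estimates. Further contributions come from $\dd_t d\mu = -\tfrac23 R\,d\mu$ and from $\dd_t g$ in the norms, both bounded, and from $\dd_t e^{-\eta}=-\dd_t\eta\, e^{-\eta}$.

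The main obstacle is this last term. We need $|\dd_t\eta|e^{-\eta}$ to be dominated by an integrable function uniformly on $[t_0,T]$. I would handle it by appealing to the explicit form of Kotschwar's $\eta$, which is essentially $\eta = \bar r^2 B/(8(\tau - t))$-type with $\tau$ fixed. For such $\eta$, $\dd_t\eta \le C\bar r^2$, so $|\dd_t\eta| e^{-\eta}\le C\bar r^2 e^{-L_2\bar r^2}$, which is still integrable against exponential volume growth. Granting this, the derivatives of all integrands are bounded by $C(t_0)(1+\bar r^2)e^{-L_2\bar r^2}$, and the standard theorem on differentiation under the integral gives that $\cE$ is $C^1$ on $(0,T]$.
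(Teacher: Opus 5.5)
Your proposal is correct and follows essentially the same route as the paper. Integrability comes from the Gaussian weight $e^{-\eta}\le e^{-L_2\bar r^2}$ against the exponential volume growth from Bishop--Gromov; differentiability on $(0,T]$ comes from the Shi-type estimates and evolution equations together with the at most quadratic growth of $\partial_t\eta$; and $\cE(t)\to 0$ follows by dominated convergence. Your version is somewhat more careful than the paper's: you give explicit rates $\cH(t)\le Ct$ and $\cI(t)\le Ct^{1-\beta}$, and for $\cG$ you use pointwise rather than uniform convergence of $|V|^2+|S|^2$. Overestimates such as $|\nabla^3 T|\le Ct_0^{-3/2}$ (Shi actually gives $Ct_0^{-1}$) are harmless.
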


\begin{proof}
    The proof follows similar the analogous arguments in \cite[\S 2.2]{kotschwarEnergy}. The uniform comparability of the metrics $g(t),\tg(t),$ and $g_0$, the curvature bound $K_0$, and the Bishop-Gromov comparison theorem imply the following bounds on volume growth.
    \[
    \mathrm{vol}_{g(t)}(B_{g_0}(x_0, r)) \le Ne^Nr, 
    \]
    for some $N$. Since for each $0 < t < T$, the quantities $|V|^2$, $|S|^2$, $|h|^2$, $|\phi|^2$, etc. are bounded in space, the integrals $\cG(t), \cH(t), \cI(t),$ and $\cJ(t)$ are defined away from $0$. Since 
    \[
    |V|^2, |S|^2, t^{-1}(|h|^2 + |\phi|^2) \;\; \text{and } t^{-\beta} (|A|^2 + |U|^2) \searrow 0
    \]
    uniformly, the quantities $\cG(t), \cH(t), \cI(t)$ are defined at $0$. 
    
    For any $\eps > 0$, the Shi-type estimates (Theorem \ref{theorem local shi estimates}) and the evolution equations (Lemma \ref{lemma evolution equations differences}) imply that the time derivatives of $\phi, h, A, U, V$ and $S$ are uniformly bounded on $M \times [\eps, T]$. The derivatives $\frac{\dd \eta}{\dd t}$ and $\na \eta$ are of quadratic growth in the radial variable and are thus integrable. Finally, $\frac{\dd}{\dd t} d\mu = -\frac23 R d\mu$ by \eqref{equation evolution volume}. Thus, $\cE(t)$ is differentiable on $(0,T]$. 

    Finally, 
    \[
    \lim_{t\searrow 0} \cE(t) = \lim_{t\searrow 0} (\cG(t) + \cH(t) + \cI(t)) = 0.
    \]
    by the dominated convergence theorem.
\end{proof}

\subsection{Vanishing of the energy} The following proposition analogous to \cite[Proposition 7]{kotschwarEnergy} gives a differential inequality for $\cE$.

\begin{prop}\label{proposition diff ineq for E}
There exists $N_0 = N_0(K_0, T^*) >0$ and $T_0=  T_0(\beta) \in (0,T]$ such that $\cE'(t) \le N_0 \cE(t)$ for all $t \in (0,T_0]$. Hence $\cE \equiv 0$ on $(0,T_0]$.
\end{prop}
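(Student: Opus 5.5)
We follow Kotschwar's proof of \cite[Proposition 7]{kotschwarEnergy}, using the evolution inequalities of Lemma \ref{lemma evolution equations differences} in place of the Ricci flow ones. Differentiate each of $\cG$, $\cH$, $\cI$ under the integral sign, which Lemma \ref{lemma well defined integral quantities} justifies on $(0,T]$. The time derivative of the measure $e^{-\eta}d\mu$ contributes $-\tfrac23 R - \dd_t\eta$. The term $-\tfrac23R$ is bounded by $CK_0$. The term $-\dd_t\eta$ we keep, since it is the good term that will absorb all the gradient of $\eta$ contributions.

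For $\cG$ we use the parabolic inequalities \eqref{equation ev diff deriv tors} and \eqref{equation ev diff curvatures} and integrate by parts the $\De V$, $\Div\cV$, $\De S$, $\Div\cS$ terms against $e^{-\eta}$. This produces
\[
-2\int(|\na V|^2+|\na S|^2)e^{-\eta}
\]
together with cross terms of the shape $|\na V||V||\na\eta|$ and $|\cV|(|\na V|+|V||\na\eta|)$. Young's inequality turns these into $\tfrac12\cJ + 3\int |\na\eta|^2(|V|^2+|S|^2)e^{-\eta}$ plus terms quadratic in $|\cV|,|\cS|$. The $|\na\eta|^2$ piece is absorbed by $-\dd_t\eta$, using $\dd_t\eta \ge 3|\na\eta|^2$.

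The reaction terms all carry coefficients involving $|\na\Rm|$, $|\tna\widetilde\Rm|$, $|\na^2T|$, $|\tna^2\widetilde T|$. By Theorem \ref{theorem local shi estimates}, applied with $r=1$ and uniform $K_0$, these are $O(t^{-1/2})$. They always multiply $|h|$, $|\phi|$, $|A|$ or $|U|$, and Young's inequality gives, for example,
\[
t^{-1/2}|h||V|\le |V|^2+t^{-1}|h|^2 .
\]
This is exactly why $\cH$ carries the weight $t^{-1}$. Similarly $t^{-1/2}|A||S|\le t^{-\beta}|A|^2 + t^{\beta-1}|S|^2$; here $t^{\beta-1}$ is not bounded, so we instead use $|A|^2 t^{-1}\le t^{\beta-1}\cdot t^{-\beta}|A|^2$. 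This is the delicate bookkeeping step.

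For $\cH$, differentiating gives $-t^{-1}\cH$ plus, by \eqref{equation ev diff g2 structures}--\eqref{equation ev diff metrics},
\[
2t^{-1}\int |h|(|S|+|h|+|U|) + |\phi|(|V|+|A|).
\]
Young's inequality bounds this by $\tfrac12 t^{-1}\cH\cdot t$ plus $C(\cG+\cI)$-type terms, using $t^{-1}|h||S|\le t^{-2}|h|^2\epsilon+\epsilon^{-1}|S|^2$. The negative $-t^{-1}\cH$ absorbs the $t^{-2}$ contribution.

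For $\cI$, differentiation gives $-\beta t^{-1}\cI$. By \eqref{equation ev diff christoffels} and \eqref{equation ev diff torsions} the positive terms include $t^{-\beta}|A||\na S|$ and $t^{-\beta}|U||\na V|$, which we bound by $\tfrac12\cJ\cdot(\text{small}) + Ct^{-2\beta}(|A|^2+|U|^2)$. Since $t^{-2\beta}=t^{-\beta}\cdot t^{-\beta}\le \tfrac{\beta}{2}t^{-1}t^{-\beta}$ for $t\le T_0(\beta)$, these are absorbed by $-\beta t^{-1}\cI$. The cross terms with $\tna\widetilde\Rm|h|$ become $t^{-\beta-1/2}|A||h|\le t^{-\beta-1}|A|^2\delta + \delta^{-1}t^{-\beta}|h|^2$. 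The first piece is again absorbed by the negative term, and the second is at most $t^{1-\beta}\cdot t^{-1}|h|^2$, which is controlled by $\cH$.

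The main obstacle is choosing the Young's constants consistently: the $\cJ$ terms arising from $\cI$ must be absorbed by the $-2\cJ$ from $\cG$, and every $t^{-1/2}$ Shi factor must land on an ODE quantity whose weight compensates. This forces $\beta\in(0,1)$ and $t\le T_0(\beta)$.

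Summing the three estimates yields $\cE'\le N_0\cE$ on $(0,T_0]$. Since $\lim_{t\searrow0}\cE(t)=0$ by Lemma \ref{lemma well defined integral quantities}, Gr\"onwall's inequality gives $\cE\equiv0$ on $(0,T_0]$.
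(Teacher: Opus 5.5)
Your proposal is correct and follows the paper's own adaptation of Kotschwar's argument. The shared steps are: integrate by parts in $\cG'$ and absorb the $|\na\eta|^2$ terms using $\dd_t\eta\ge 3|\na\eta|^2$; apply the Shi-type $O(t^{-1/2})$ bounds only to coefficients that multiply $h,\phi,A,U$; absorb the resulting $t^{\beta-1}\cI$, $t^{-\beta}\cI$ and $\cJ$ contributions into $-\tfrac12 t^{-1}\cH$, $-\beta t^{-1}\cI$ and the negative $\cJ$ term from $\cG'$ for $t\le T_0(\beta)$; then conclude with Gr\"onwall.

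Two small points, neither affecting the argument. Your Young split of the $|\tna\widetilde\Rm||h||A|$ term in $\cI'$ differs cosmetically from the paper's. In your $\cH$ paragraph, ``$\tfrac12 t^{-1}\cH\cdot t$'' should read $\epsilon\, t^{-1}\cH$.
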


\begin{proof}
The proof is along the same lines as the proof of \cite[Proposition 7]{kotschwarEnergy}. For any $0<t\le T$, differentiation under the integral sign and integration by parts are valid, by the arguments in the proof of Lemma \ref{lemma well defined integral quantities}. The constants $C >0$ will denote universal constants and the constants $N > 0$ will depend at most on the parameters $\beta, K_0,$ and $T^*$.

When we differentiate the integral quantities $\cG, \cI,$ etc., we differentiate under the integral.  By the product rule, we must differentiate the time-dependent norms and the time-dependent volume element, which, by the evolution equations \eqref{equation evolution metrics} and \eqref{equation evolution volume}, will yield a term of the form $N\cG, N\cI$, and so on. 

It remains to estimate the other summands of the derivative of the integrands. We begin with $\cG$. 
\begin{align*}
    \cG' &\le N\cG + \int_M \bigg( 2  \bigg \langle \frac{\dd S}{\dd t} , S \bigg \rangle + 2  \bigg \langle \frac{\dd V}{\dd t} , V \bigg \rangle - \frac{\dd \eta}{\dd t}(|S|^2 + |V|^2) \bigg) e^{-\eta} d\mu.
\end{align*}
Substituting \eqref{equation ev diff curvatures} and \eqref{equation ev diff deriv tors}, applying the Shi-type estimates in Theorem \ref{theorem local shi estimates}, and using Young's inequality, we obtain the following inequality.
\begin{align*}
    \cG' &\le N\cG + (N + t^{-1+\beta})\cI + N\cH + \eps \cJ \\
    &\qquad + \int_M \bigg( 2  \bigg \langle \De S + \Div \cS , S \bigg \rangle + 2  \bigg \langle \De V + \Div \cV , V \bigg \rangle  - \frac{\dd \eta}{\dd t}(|S|^2 + |V|^2) \bigg) e^{-\eta} d\mu. 
\end{align*}
Next, we integrate by parts to estimate the integral on the right-hand side. 
\begin{align*}
    &\int_M  2  \bigg \langle \De S + \Div \cS , S \bigg \rangle e^{-\eta} d\mu \\
    &\qquad \le  - \int_M 2|\na S|^2 e^{-\eta} d\mu +  \int_M 2\Big( |\na \eta||\na S||S| + |\cS||\na S| + |\na \eta||\cS||S| \Big ) e^{-\eta} d\mu \\
    &\qquad \le - \int_M |\na S|^2 e^{-\eta} d\mu + \int_M \big( 3|\na \eta|^2|S|^2 + 3|\cS|^2 \big ) e^{-\eta} d\mu.
\end{align*}
Similarly,
\begin{align*}
    &\int_M  2  \bigg \langle \De V + \Div \cV , V \bigg \rangle e^{-\eta} d\mu \\
    &\qquad \le - \int_M |\na V|^2 e^{-\eta} d\mu + \int_M \big( 3|\na \eta|^2|V|^2 + 3|\cV|^2 \big ) e^{-\eta} d\mu.
\end{align*}
Summing these two expressions yields
\begin{align*}
    & \int_M \bigg( 2  \bigg \langle \De S + \Div \cS , S \bigg \rangle + 2  \bigg \langle \De V + \Div \cV , V \bigg \rangle  - \frac{\dd \eta}{\dd t}(|S|^2 + |V|^2) \bigg) e^{-\eta} d\mu \\
    & \qquad \le - \cJ  + \int_M \Big(  3|\na \eta| - \frac{\dd \eta}{\dd t}\Big) (|S|^2 + |V|^2) e^{-\eta} d\mu  + \int_M 3\big( |\cV|^2  + |\cS|^2 \big ) e^{-\eta} d\mu.
\end{align*}
By Lemma \ref{lemma decay function} and the estimates on $\cV$ and $\cS$ from Lemma \ref{lemma evolution equations differences}, this further reduces to
\begin{align*}
    &\le -\cJ + Nt^\beta \cI + N\cH.
\end{align*}
Combining all these results gives (for $0 < t \le T$)
\begin{equation}\label{equation derivative of G}
\cG' \le N\cG + (t^{\beta -1} + N) \cI + N\cH - (1-\eps) \cJ,
\end{equation}
where $t \le T^*$ and $t^\beta \le (T^*)^\beta$. 

Next, evaluate $\cH'$. Differentiating the metric quantities and the factor of $t^{-1}$, we obtain the bound
\[
\cH' \le (N-t^{-1})\cH + t^{-1}\int_M \bigg( 2 \bigg\langle \frac{\dd h}{\dd t}, h \bigg \rangle + 2  \bigg\langle \frac{\dd \phi}{\dd t}, \phi \bigg \rangle  - \frac{\dd \eta}{\dd t}(|h|^2 + |\phi|^2) \bigg)e^{-\eta} d\mu.
\]
Observe that Lemma \ref{lemma decay function} implies that $\dd_t \eta \le 0$, and apply the estimates \eqref{equation ev diff g2 structures} and \eqref{equation ev diff metrics} to obtain
\begin{equation}\label{equation derivative of H}
\cH' \le (N - (1/2)t^{-1}) \cH + t^{\beta-1} \cI + N\cG.
\end{equation}

Finally, evaluate $\cI'$. Differentiating the metric quantities and $t^{-\beta}$ gives 
\begin{align*}
    \cI' \le (N-\beta t^{-1})\cI + t^{-\beta} \int_M \bigg( 2 \bigg\langle \frac{\dd A}{\dd t}, A \bigg \rangle + 2  \bigg\langle \frac{\dd U}{\dd t}, U \bigg \rangle  - \frac{\dd \eta}{\dd t}(|h|^2 + |\phi|^2) \bigg)e^{-\eta} d\mu.
\end{align*}
We use \eqref{equation ev diff christoffels} and \eqref{equation ev diff torsions} and Young's inequality to estimate
\begin{align*}
    t^{- \beta}\bigg| \bigg\langle \frac{\dd A}{\dd t}, A \bigg \rangle \bigg| + t^{-2\beta} \bigg| \bigg\langle \frac{\dd U}{\dd t}, U \bigg \rangle \bigg|
    & \le  (N + Ct^{-\beta})(|A|^2 + |U|^2) + Nt^{-1}(|\phi|^2 + |h|^2) \\ 
    &\qquad \qquad+ N(|S|^2 + |V|^2) + \tfrac12(|\na S|^2 + |\na V|^2).
\end{align*}
Thus, 
\begin{equation}\label{equation derivative of I}
    \cI' \le (N-\beta t^{-1} + Ct^{-\beta})\cI + N\cH + N\cG + \tfrac12 \cJ.
\end{equation}
Combining \eqref{equation derivative of G}, \eqref{equation derivative of H}, and \eqref{equation derivative of I} to obtain
\[
\cE'(t) \le N\cE(t) -(1/2)t^{-1} \cH - t^{-1}(\beta - 2t^\beta - Ct^{1-\beta})\cI(t).
\]
For sufficiently small $T_0$, and $N_0(K_0, \beta, T^*)$ sufficiently large, 
\[
\cE'(t) \le N_0 \cE(t),
\]
on $(0,T_0]$. Since $\cE(t) \searrow 0$ as $t \searrow 0$, Gr\"onwall's inequality implies that $\cE \equiv 0$ on $[0, T_0]$.
\end{proof}

\begin{proof}[Proof of Theorem \ref{theorem uniqueness complete bdd curvature}]
The theorem follows by iterating Proposition \ref{proposition diff ineq for E} over the whole interval $[0,T]$.
\end{proof}

\section{Preservation of the AC condition and long-time existence}\label{section long time behavior}

In this section, we prove two facts. The first is that, given an upper bound on the norm of the full curvature tensor along a flow $\vp(t)$, the AC condition is preserved and the short-time existence theorem can be reapplied at any time-slice to show that the flow continues as long as the curvature bound holds. The second is that since the curvature bound implies the preservation of the AC condition, a doubling time estimate for the norm of the curvature can be established, which gives a lower bound for the existence time of an AC flow $\vp(t)$, depending only on the geometric data of the initial condition $\vp(0) = \vp_0$.

We begin by proving the first fact. The key to applying the short-time existence theorem in the bounded curvature regime is to show that, given a bound on curvature, the curvature decays at comparable rates on different time-slices. To do this, we will need to apply Theorem \ref{theorem ecker huisken max}, the Ecker-Husiken maximum principle, along the same lines as in \cite{DaiMa}. The following lemma introduces an auxiliary function necessary for this argument. 

\begin{lem}\cite[Lemma 6]{DaiMa}\label{lemma aux function}
Let $\vp(t)$ be a solution of Laplacian flow \eqref{equation laplacian flow} over $[0,T]$ with AC initial condition $\vp_0$. Assume that the associated flow of metrics $g(t)$ has a uniform curvature bound $|\Rm_{g(t)}|\le K$. Then, for sufficiently large $R$, there is a smooth positive function $f$ on $M$ such that
\[
f(x) = C_0 \gg 1, \qquad \text{for } x \in B_R
\]
\[
cd^t(x) \le f(x) \le C d^t(x). \qquad \text{for } x \in M - B_R,
\]
where $d^t$ is the distance function for the $g(t)$ metric from a fixed point $p$. Moreover, 
\[
f \ge C_0, \; \; |\na^t f| \le C_1, \;\; |\De^t f| \le C_2.
\]
\end{lem}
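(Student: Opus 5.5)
The plan is to take $f$ to be a smoothed version of the distance function from $p$, measured at a fixed time, and to use the curvature bound to pass control to every other time. Concretely, I would work with the $g_0$-distance $d^0$ rather than $d^t$. The corollary giving \eqref{equation compare metrics bdd K} yields
\[
e^{-c(K)T}g(0)\le g(t)\le e^{c(K)T}g(0),
\]
so $d^t$ and $d^0$ are uniformly comparable on $[0,T]$. It therefore suffices to build $f$ with $c\,d^0\le f\le C\,d^0$ off a ball, and the comparability will transfer the two-sided bound to $d^t$.

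For the construction I would exploit the AC structure of $g_0$ directly. On the end $V\cong \cC^\Sg_{r_0}$, Lemma \ref{lemma decay estimates on AC G2 forms}(c) gives $d^0\sim \rho$. Take
\[
f := C_0 + \chi(\rho)\,\rho,
\]
where $\chi$ is a cutoff vanishing on $B_R$ and equal to $1$ for large $\rho$, and $\rho$ is the radius function already fixed in \S\ref{section weighted sobolev holder}. Then $f\equiv C_0$ on $B_R$, we have $f\ge C_0$, and $f$ is comparable to $d^0$, hence to $d^t$, outside $B_R$. With respect to $g_0$, the AC asymptotics give $|\na^0\rho|\to 1$ and $\De^0\rho = 6/\rho + o(\rho^{-1})$. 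So $|\na^0 f|$ and $|\De^0 f|$ are bounded, and $|(\na^0)^2 f|\le C$ by Lemma \ref{lemma decay estimates on AC G2 forms}(b) (indeed it is $O(\rho^{-1})$).

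The main obstacle is turning these $g_0$ bounds into bounds for $\na^t f$ and $\De^t f$ uniformly in $t$. The gradient bound is immediate from the metric equivalence. For the Laplacian I would write
\[
\De^t f = g(t)^{ij}\big(\dd_i\dd_j f - \Ga(t)^k_{ij}\dd_k f\big)
\]
and control $\Ga(t)-\Ga(0)$ by integrating \eqref{equation evolution christoffel symbols}. That integrand involves $\na\eta \sim \na\Rm + T*\na T$. The local Shi-type estimates of Theorem \ref{theorem local shi estimates} give $|\na \Rm|\le C t^{-1/2}$, which is integrable in $t$. This yields $|\Ga(t)-\Ga(0)|_{g_0}\le C(K,T)\sqrt t$, and hence
\[
|\De^t f|\le C\big(|(\na^0)^2 f| + |\Ga(t)-\Ga(0)|\,|\na^0 f|\big)\le C_2 .
\]
The argument is the same as in the paper's earlier lemma bounding $|A|\le N\sqrt t$. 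The $T$-dependence of these constants is harmless. If $f$ must be smooth, with no distance-function singularities, then this choice already qualifies, because $\rho$ is smooth. Finally, $R$ is taken large enough that the AC expansions are valid on $M\setminus B_R$.
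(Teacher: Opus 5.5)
Your proposal is correct and takes the same approach as the paper. Like the cited Dai--Ma argument, you let $f$ agree with the AC radius function $\rho$ of $g(0)$ far out (up to the additive constant $C_0$) and transfer the distance, gradient and Laplacian bounds from $g(0)$ to $g(t)$ via \eqref{equation compare metrics bdd K}; you also make explicit a step the paper leaves inside that citation, namely that bounding $\De^t f$ requires control of $\Ga(t)-\Ga(0)$, which you obtain by integrating \eqref{equation evolution christoffel symbols} against the $t^{-1/2}$ Shi-type bound of Theorem \ref{theorem local shi estimates} (this uses $|\na T|\le C|\Rm|$, and you must restart the estimate for $t>1/K$).
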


\begin{proof}
    The proof is almost exactly the same as that of \cite[Lemma 6]{DaiMa}, except that instead of using the Euclidean norm to estimate the time-dependent gradient and Laplacian, we use the AC metric $g(0)$ and compare this metric to $g(t)$ via \eqref{equation compare metrics bdd K}.  Note that the constants $c$, $C$, $C_1$ and $C_2$ depend on $T$. In our case, if $\rho$ represents the standard ``radius" function with respect to $g(0)$ on the AC ends, $f(x)$ can be said to coincide with $\rho(x)$ for $d^0(x)$ sufficiently large.
\end{proof}

Given a flow of AC $G_2$-structures $\vp(t)$ satisfying \eqref{equation laplacian flow}, it will be useful to solve the heat equation on this moving background with initial conditions quadratically decaying along the ends.

\begin{lem}\label{lemma existence of heat solution}
Let $u_0$ be a smooth function on $M$ such that $u_0$ decays at the rate $O(d(x)^{-2})$, where $d$ represents the distance in the $g(0)$-metric from a fixed point $p \in M$. Then, there exists $u(x,t)$ defined for $t \in [0,T]$ such that
\begin{equation}\label{equation moving heat equation}
u_t = \De^t u,
\end{equation}
with initial condition $u(0) = u_0$. Furthermore, if $u_0 \ge 0$, then $u \ge 0$ for all $t \in [0,T]$. 
\end{lem}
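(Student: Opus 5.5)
The plan is to follow Shi's exhaustion method for the heat equation on a complete manifold with bounded geometry, using the uniform equivalence of metrics \eqref{equation compare metrics bdd K}. Here $|\Rm|\le K$ and $|T|^2=-R\le CK$, so $|\partial_t g|\le CK$. Choose the exhaustion $M_{R_i}$ from before, or $g(0)$-balls $B_{R_i}(p)$ with smooth boundary. On each cylinder $M_{R_i}\times[0,T]$ we solve the linear parabolic Dirichlet problem
\[
\partial_t u_i=\De^t u_i \ \text{in } M_{R_i}\times[0,T],\qquad u_i(\cdot,0)=\chi_i u_0,\qquad u_i=0 \ \text{on }\partial M_{R_i}\times[0,T].
\]
The cutoff $\chi_i$ equals $1$ on $M_{R_i/2}$ and vanishes near $\partial M_{R_i}$, so the compatibility conditions hold. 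Standard existence theory, namely Theorem \ref{theorem existence of linear weak solution} and Theorem \ref{theorem higher regularity linear par pde} applied to scalar functions, gives smooth solutions. The coefficients of $\De^t$ are smooth in space and time by the Shi-type estimates, Theorem \ref{theorem local shi estimates}.

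Next we need bounds on $u_i$ that are uniform in $i$. Since $u_0$ is bounded, the weak maximum principle on compact domains gives $|u_i|\le \sup|u_0|$. If $u_0\ge0$, then $\chi_i u_0\ge 0$, and the same maximum principle gives $u_i\ge0$. This is how the positivity statement in the lemma will be obtained. For the decay rate, compare $u_i$ with the barrier $e^{Nt}f^{-2}$, where $f$ is the function from Lemma \ref{lemma aux function}. Since $|\na^t f|\le C_1$, $|\De^t f|\le C_2$ and $f\ge C_0$, we get
\[
(\partial_t-\De^t)(e^{Nt}f^{-2})\ge e^{Nt}f^{-2}\big(N-C f^{-1}C_2-Cf^{-2}C_1^2\big)\ge 0
\]
for $N$ large. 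Because $u_0=O(d^{-2})$ and $f\sim d^0$ on the end, we have $|u_0|\le A f^{-2}$. Applying the maximum principle to $Ae^{Nt}f^{-2}\pm u_i$ gives $|u_i|\le Ae^{NT}f^{-2}$, uniformly in $i$.

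Uniform $C^0$ bounds together with interior parabolic Schauder estimates then give local $C^{2+\al,1+\al/2}$ bounds on compact subsets of $M\times[0,T]$, up to $t=0$ because $u_0$ is smooth. Here the coefficients $g(t)$ are locally uniformly controlled via Theorem \ref{theorem local shi estimates} and \eqref{equation compare metrics bdd K}. By Arzel\`a--Ascoli and a diagonal argument, a subsequence converges in $C^{2,1}_{loc}$ to a solution $u$ of \eqref{equation moving heat equation} with $u(0)=u_0$. The limit inherits $u\ge0$ when $u_0\ge 0$, and also the decay bound $|u|\le Ce^{NT}f^{-2}$.

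I expect the main obstacle to be regularity near $t=0$. Shi's estimates blow up like $t^{-k/2}$ for higher derivatives of $\Rm$. However, the Schauder estimates only need H\"older control of the second-order coefficients $g^{ij}(t)$ and of the Christoffel symbols, which enter as first-order coefficients. These are controlled by $|\partial_t g|\le CK$ and $|\na\Rm|\lesssim t^{-1/2}$. The latter is integrable in $t$, so $\Ga$ is H\"older in time, in the same way as the $|A|\le N\sqrt t$ argument of \S\ref{section uniqueness}. If this control proves insufficient, the fallback is to use the smoothness of the flow itself, which holds near $t=0$ since it was constructed from smooth AC data in \S\ref{section short time existence IVT argument}.
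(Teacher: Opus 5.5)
Your proposal is correct and follows essentially the same route as the paper: exhaust $M$ by compact domains, solve the linear Dirichlet problems via Theorem~\ref{theorem existence of linear weak solution}, use the maximum principle both for $i$-independent $C^0$ bounds and for preservation of nonnegativity, and pass to a local limit using interior Schauder estimates. The differences are minor. You impose zero lateral data with cut-off initial data $\chi_i u_0$, which makes the compatibility conditions automatic, where the paper uses lateral data $u_0|_{\partial M_R}$. Your barrier $e^{Nt}f^{-2}$ also yields a decay bound that the paper instead derives separately in Proposition~\ref{proposition comparable decay rates} via the Ecker--Huisken maximum principle.
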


\begin{proof}
Let $R_0 >0$ be a large positive constant, and consider $R > R_0$. Consider the family of Cauchy-Dirichlet problems
\begin{equation}\label{equation heat equation with boundary}
    \left\{
    \begin{array}{ll}
      u_t =   \De^t u \;\; &\text{on } M_R \times [0,T]\\
      u(0) = u_0 \;\; &\text{on } M_R \times \{0\}\\ 
      u(x,t) = u_0(x) \;\; &\text{ for }(x,t) \in  \dd M_R \times [0,T].
    \end{array}
  \right.
\end{equation}
Transforming these into Dirichlet problems with homogeneous boundary values in the usual way, we may apply Theorem \ref{theorem existence of linear weak solution} to obtain a weak solution in $H^{1,1}(M)$. We can then use the maximum principle, followed by the interior Schauder estimates on an AC background (see the proof of Proposition \ref{proposition weighted schauder estimates}) to obtain uniform bounds on the derivatives of $u$ on $M_{R-1} \times [0,T]$ to arbitrary order. These local $C^k$-bounds may depend on $k$ but do not depend on $R$. Therefore, we may take a $C^\infty_{loc}$-limit of the solutions to these Dirichlet problems for a sequence of $R_i \rightarrow \infty$, which we call $u_\infty$. It is clear that $u_\infty(t)$ satisfies \eqref{equation moving heat equation}, has initial condition $u(0) = u_0$, and is defined for $t \in [0, T]$. 

Additionally, if $u_0 \ge 0$, the solution $u(x,t)$ of \eqref{equation heat equation with boundary} must be non-negative for all $t \in [0,T]$ by the parabolic maximum principle. Thus, the limit $u_\infty \ge 0$ for all time. 
\end{proof}

If $u_0 \ge 0$, then the next proposition shows that the solution $u(x,t)$ found in Lemma \ref{lemma existence of heat solution} has spatial decay rate comparable to the decay rate of $u_0$, up to a multiplicative constant depending on $T$. 

\begin{prop}\label{proposition comparable decay rates}
Let $u_0$ be a positive initial condition such that 
\[
u_0(x) = O(\rho^{-\sg}), \frac{1}{u_0(x)} = O(\rho^{\sg}).
\]
Let $u(x,t)$ be the solution of the heat equation as in Lemma \ref{lemma existence of heat solution}, let $f$ be the auxiliary function defined in Lemma \ref{lemma aux function}, and let $w := f^\sg u$, for $\sg > 0$. Then, there exist positive constants $c(T), C(T)$ such that 
\[
0<de^{-bt} \le c(T) \le C(T) \le De^{bT},
\]
where $D$,$d$, and $b$ are constants depending only on $u_0$ and $g(0)$, and
\[
c(T) \le \min_{M \times [0,T]} w  \le \max_{M \times [0,T]} w \le C(T).
\]
In particular, 
\[
u(x,t) \le C(T) f(x)^{-\sg}. 
\]
\end{prop}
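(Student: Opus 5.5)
The plan is to follow the Dai--Ma approach: derive a parabolic equation for $w = f^\sg u$ on the evolving background and apply the maximum principle for complete manifolds (the Ecker--Huisken maximum principle, Theorem \ref{theorem ecker huisken max}) to $w e^{-bt}$ and $w e^{bt}$. Since $u = f^{-\sg} w$, substitute into $u_t = \De^t u$ to get
\[
\dd_t w = \De^t w - 2\sg f^{-1}\langle \na^t f, \na^t w\rangle + \Big(\sg(\sg+1) f^{-2}|\na^t f|^2 - \sg f^{-1}\De^t f\Big) w .
\]
Here $f$ is independent of $t$, so $\dd_t f^\sg = 0$. By Lemma \ref{lemma aux function}, $f \ge C_0$, $|\na^t f| \le C_1$ and $|\De^t f| \le C_2$ uniformly on $M\times[0,T]$. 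Hence the zeroth order coefficient $c(x,t) := \sg(\sg+1) f^{-2}|\na^t f|^2 - \sg f^{-1}\De^t f$ is bounded by a constant $b$, and the drift $a := -2\sg f^{-1}\na^t f$ is bounded as well.

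\textbf{Step 1: bounds at $t=0$.} Lemma \ref{lemma aux function} gives $f \sim d^0 \sim \rho$ outside a compact set and $f = C_0$ inside. The hypotheses $u_0 = O(\rho^{-\sg})$ and $1/u_0 = O(\rho^\sg)$ then give $d \le w(\cdot,0) \le D$, with $d, D$ depending only on $u_0$ and $g(0)$. In particular $w(\cdot,0)$ is bounded and positive.

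\textbf{Step 2: upper bound.} Set $\bar w := e^{-bt} w$. It satisfies $\dd_t \bar w \le \De^t \bar w + \langle a, \na \bar w\rangle$ wherever $\bar w \ge 0$, which holds since $u \ge 0$ by Lemma \ref{lemma existence of heat solution}. To apply Theorem \ref{theorem ecker huisken max} we need a growth condition, for instance $\int_0^T\int_M \bar w^2 e^{-\alpha (d^t)^2} d\mu\, dt < \infty$, which follows if $w$ grows at most polynomially. I expect this to be the main obstacle. I would handle it by going back to the Dirichlet approximations in the proof of Lemma \ref{lemma existence of heat solution}. On $M_R$ the ordinary weak maximum principle gives $0 \le u_R \le \sup u_0$, so $u$ is bounded and $w \le C f^\sg$, which has polynomial growth. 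Alternatively, one can run the whole comparison on $M_R$, using the fact that the boundary data equal $u_0$, so $w_R = f^\sg u_0 \in [d,D]$ on $\dd M_R$. The bounded-domain maximum principle then gives $d e^{-bt}\le w_R \le De^{bt}$ uniformly in $R$, and we pass to the limit. This second route avoids Ecker--Huisken altogether, and I would try it first. Either way, $\max w \le De^{bT} =: C(T)$.

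\textbf{Step 3: lower bound.} Similarly, $\underline w := e^{bt} w$ satisfies $\dd_t \underline w \ge \De^t \underline w + \langle a,\na \underline w\rangle$, since $c + b \ge 0$. The minimum principle, applied on the domains $M_R$ with boundary values bounded below by $d$, gives $w \ge d e^{-bt} \ge d e^{-bT} =: c(T) > 0$. Combining Steps 2 and 3 yields $c(T)\le w\le C(T)$, and dividing by $f^\sg$ gives $u \le C(T) f^{-\sg}$.

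The care point is that the constants $C_1$ and $C_2$ in Lemma \ref{lemma aux function} depend on $T$ and $K$ through \eqref{equation compare metrics bdd K}. So $b$ depends on these as well as on $\sg$, and the stated dependence "only on $u_0$ and $g(0)$" should be read as including the curvature bound $K$ and the metric comparison.
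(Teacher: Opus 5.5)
Your proof is correct, but your preferred second route differs from the paper's.

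\textbf{The paper's route.} The paper works directly on the complete manifold. It verifies the growth hypothesis of the Ecker--Huisken principle (Theorem \ref{theorem ecker huisken max}) by bounding $|\na u|$ through the uniform Schauder estimates from the proof of Lemma \ref{lemma existence of heat solution}. This shows $|\na w|$ grows at most polynomially in $f$. The paper then follows Dai--Ma, using the maximum principle for the upper bound and the minimum principle applied to $w - de^{-bt}$ for the lower bound.

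\textbf{Your route.} You use the fact that $u$ is constructed as a $C^\infty_{loc}$-limit of Dirichlet problems on $M_R$ whose lateral data are exactly $u_0$. Hence $w_R = f^\sg u_0 \in [d,D]$ on the entire parabolic boundary. Since $u_R \ge 0$, the classical weak maximum and minimum principles on a bounded domain then give $de^{-bt} \le w_R \le De^{bt}$, uniformly in $R$. This bound passes to the limit pointwise.

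\textbf{What each approach buys.} Your argument is more elementary. It needs no volume growth, no bounded metric variation, and no gradient growth condition. What it gives up is generality: it proves the estimate only for the particular solution built by exhaustion. The Ecker--Huisken route applies to any solution satisfying the growth condition, and that is the framework the paper reuses when comparing $|\Rm|^2$ with $e^{Kt}u$.

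\textbf{A correction to your first route.} Theorem \ref{theorem ecker huisken max} requires integrability of $e^{-\al_3 (d^t)^2}|\na w|^2$, not of $w^2$. So boundedness of $u$ alone would not verify its hypothesis. You would need a gradient bound, which is exactly what the paper extracts from the Schauder estimates.

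Your caveat that $b$ depends on $T$ and $K$ through $C_1, C_2$ is accurate and matches Lemma \ref{lemma aux function}.
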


\begin{proof}
The proof follows that of \cite[Theorem 4]{DaiMa} almost exactly. All that needs to be proved is that $|\na w|$ is bounded, in order to show that $w$ satisfies the hypothesis \eqref{equation growth condition} in the Ecker-Huisken theorem. Since $|\na f| \le C$, it suffices to show that $|\na u|$ is bounded. However, this is an immediate consequence of the uniform Schauder estimates in the proof of Lemma \ref{lemma existence of heat solution}. Thus,
\[
|\na w| \le C(T)f^{\sg + 1},
\]
which clearly satisfies \eqref{equation growth condition}. After this is established, the proof of \cite[Theorem 4]{DaiMa} can be followed word for word. The bound on the constant $C(T)$ is implied by Dai and Ma's proof. 

The lower bound can be proved by following the proof of \cite[Theorem 4]{DaiMa} and applying the Ecker-Huisken \textit{minimum} principle to the differential inequality 
\[
    (\dd_t - \De^t)(w - de^{-bt}) \ge -b (w - de^{-bt}) - 2\na \log (f^\sg) \na (w - d e^{-bt}),
\]
where $w(x,0) - d > 0$. This implies that there exists a uniform constant $c$ such that 
\[
c \le \min_{M \times [0,T]} w,
\]
which yields the lower bound. 
\end{proof}

The next proposition shows that the quadratically weighted norm of the curvature is uniformly bounded along the flow, depending only on the curvature bound $K_0$ and the existence time $T$.

\begin{prop}\label{proposition curvature quadratic decay}
Let $\vp(t)$ satisfy \eqref{equation laplacian flow} on $M \times [0,T]$ with initial condition $\vp_0$, an AC $G_2$-structure. Suppose further that the curvature of the associated metrics $g(t)$ is uniformly bounded by a constant $K_0$. Then, there is a positive constant $0 < C(T,K_0) = De^{KT}$, where $D,K$ are universal constants depending on $g(0)$ and $K_0$, such that 
\[
|\Rm_{g(t)}|_{g(t)} \le C \rho^{-2}. 
\]
\end{prop}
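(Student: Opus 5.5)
Our approach is a barrier comparison: we dominate $|\Rm|$ by a solution of the heat equation on the moving background, whose decay is controlled by Proposition \ref{proposition comparable decay rates} with $\sg = 2$. This follows the strategy of Dai--Ma for Ricci flow.

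\textbf{Step 1: a subsolution inequality.} From the evolution equation \eqref{equation evolution curvature}, Kato's inequality and the curvature bound $K_0$, we get
\[
(\dd_t - \De^t)|\Rm| \le C(K_0)\big(|\Rm| + |\na^2 T| + |\na T|^2/|\Rm|\big)
\]
in the barrier sense. The $T$-terms are a problem because they are not controlled pointwise by $|\Rm|$. Two facts deal with the first-order term: $|\na T|\le C|\Rm|$ by \cite[Proposition 2.4]{LotayWeiLaplacianFlow}, and $|T|^2=-R\le C|\Rm|$. The term $\na^2 T$ is the real difficulty. Here we use the identity for closed $G_2$-structures expressing $\Rm$ in terms of $\na T$ and $T*T$, together with the Bianchi-type relation $\na^2 T = \na\Rm * \vp + \na T * T *\vp + \dots$. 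This is what produces the $\nabla^2T*T$ structure seen in \eqref{equation evolution curvature}. It lets us reduce to a combined quantity $Q := |\Rm|^2 + |\na T|^2$, or a small multiple of $|T|^4$ added in. We expect the reaction terms in $(\dd_t-\De^t)Q$ to be bounded by $C(K_0)Q$ after absorbing gradient terms into the good terms $-2|\na\Rm|^2 - 2|\na^2T|^2$. The main obstacle is arranging this absorption, in particular the cross term $\langle \Rm, \na^2T*T\rangle$ and its $\na^2 T$ contribution. We plan to follow the computation of \cite[\S 4]{LotayWeiLaplacianFlow} for $\La=(|\na T|^2+|\Rm|^2)^{1/2}$. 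There they show $(\dd_t-\De)\La^2 \le -|\na\Rm|^2-|\na^2T|^2 + C\La^3$, which under $\La\le CK_0$ yields $(\dd_t - \De^t)\La \le C(K_0)\La$ in the barrier sense.

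\textbf{Step 2: comparison.} Set $v := e^{-C(K_0)t}\La$, so that $(\dd_t-\De^t)v\le 0$. By Lemma \ref{lemma decay estimates on AC G2 forms}(d) and the AC condition, $\La(\cdot,0)\le C\rho^{-2}$. We choose $u_0>0$ smooth with $u_0\ge \La(0)$, $u_0 = A\rho^{-2}$ outside a compact set, and $1/u_0 = O(\rho^{2})$. Let $u$ be the solution given by Lemma \ref{lemma existence of heat solution}. We then apply the Ecker--Huisken maximum principle (Theorem \ref{theorem ecker huisken max}) to $v-u$. Its growth hypothesis holds because $v$ is bounded by $K_0$, $u$ is bounded, and the metrics are uniformly equivalent by \eqref{equation compare metrics bdd K}. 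Since $v-u\le 0$ at $t=0$, we conclude $v\le u$ on $M\times[0,T]$.

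\textbf{Step 3: conclusion.} Proposition \ref{proposition comparable decay rates} with $\sg=2$ gives $u\le C(T) f^{-2}$ with $C(T)\le De^{bT}$. Lemma \ref{lemma aux function} gives $f$ comparable to $\rho$, with constants depending on $T$ and $K_0$ through \eqref{equation compare metrics bdd K}. Combining, $|\Rm|\le \La \le e^{C(K_0)t}u \le De^{KT}\rho^{-2}$. The one remaining point is that the constants in $f\sim\rho$ should also be absorbed into the exponential form. This holds because $d^t$ and $d^0$ differ by factors of $e^{c(K_0)T}$.
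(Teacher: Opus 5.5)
Your overall strategy is the paper's: a heat-equation barrier with polynomially decaying initial data, the Ecker--Huisken maximum principle, then Proposition \ref{proposition comparable decay rates}. However, two steps do not go through as written.

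\textbf{The unsquared inequality fails.} The inequality $(\partial_t-\Delta)\Lambda^2\le-(|\nabla\Rm|^2+|\nabla^2T|^2)+C\Lambda^3$ does not yield $(\partial_t-\Delta)\Lambda\le C(K_0)\Lambda$. Since $2\Lambda(\partial_t-\Delta)\Lambda=(\partial_t-\Delta)\Lambda^2+2|\nabla\Lambda|^2$, and Kato only gives $|\nabla\Lambda|^2\le|\nabla\Rm|^2+|\nabla^2T|^2$, you would need the good gradient term with coefficient $2$. That coefficient is unavailable: cross terms such as $\langle\Rm,\nabla^2T*T\rangle$, with $\nabla^2T=\nabla\Rm*\varphi+\dots$, must be absorbed by Young's inequality. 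For the unsquared quantity you are left with $(\partial_t-\Delta)\Lambda\le C|T|\,(|\nabla\Rm|+|\nabla^2T|)+C\Lambda^2$. Its gradient term is not of the form $\mathbf{a}\cdot\nabla w+bw$ required in \eqref{equation ecker huisk diff ineq}, and $\Lambda$ is also not smooth where it vanishes.

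The remedy, which is what the paper does, is to stay with squares. After absorbing the cross terms, $(\partial_t-\Delta)|\Rm|^2\le C(K_0)|\Rm|^2$. So $w=|\Rm|^2-e^{Kt}u$, with $u_0=C_0\rho^{-4}\ge|\Rm(0)|^2$, satisfies \eqref{equation ecker huisk diff ineq} with $\mathbf a=0$. You then conclude with Proposition \ref{proposition comparable decay rates} for $\sigma=4$ and take a square root. The same works for $\Lambda^2$, which is comparable to $|\Rm|^2$ since $|\nabla T|\le C|\Rm|$.

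\textbf{The growth hypothesis is checked for the wrong quantity.} Condition \eqref{equation growth condition} is on $|\nabla w|^2$, so boundedness of $v$ and $u$ and uniform equivalence of the metrics are not what is needed. You must bound $|\nabla u|$; the paper takes this from the uniform Schauder estimates behind Lemma \ref{lemma existence of heat solution}. You must also bound $|\nabla\Rm|$ (and $|\nabla^2T|$ if you use $\Lambda$) uniformly down to $t=0$.

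The Shi-type estimates of Theorem \ref{theorem local shi estimates} give only $|\nabla\Rm|\le Ct^{-1/2}$, and $\int_0^T t^{-1}\,dt=\infty$, so they do not suffice on their own. The paper supplies the missing input as follows. On a short interval $[0,\eps]$ the flow is the solution produced by Theorem \ref{theorem solution is exact solution} (via uniqueness, Theorem \ref{theorem uniqueness complete bdd curvature}), and that solution has uniformly bounded derivatives. The paper then uses the Shi-type estimates on $[\eps,T]$. Some such derivative control up to $t=0$, coming from the smooth AC initial data, has to be added to your argument.
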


\begin{proof}
Let $u_0$ be a smooth function equal to $C_0\rho(x)^{-4}$ such that it dominates $|\Rm(g_0)|_{g_0}^2$. Let $u$ be the solution to the heat equation with initial condition $u_0$, as in Lemma \ref{lemma existence of heat solution}. We must show that $|\Rm| - e^{Kt}u$ satisfies the conditions of the Ecker-Huisken maximum principle for some $K >0$. The evolution equation \eqref{equation evolution curvature} tells us that
\begin{align*}
\dd_t |\Rm|^2 &= 2 \langle \Rm , \Delta \Rm + \Rm * \Rm + \Rm * T * T + \nabla^2 T * T + \nabla T * \nabla T \rangle \\
&\le \De|\Rm|^2 - 2|\na \Rm|^2 + CK_0 |\Rm|^2 +  \eps |\na \Rm|^2 \\
&\le \De|\Rm|^2  + CK_0 |\Rm|^2.
\end{align*}
Thus, we can see that \eqref{equation ecker huisk diff ineq} holds for $|\Rm|^2 - e^{Kt}u$. It remains to bound $|\na \Rm|^2$ for \eqref{equation growth condition}. Notice that the short-time existence theorem for AC initial conditions (Theorem \ref{theorem solution is exact solution}) gives a solution with uniformly bounded derivatives on a time interval $[0,\eps]$. Thus, the curvature remains uniformly bounded until at least $t = \eps$. Then, we can use the uniform curvature bound $K_0$ and the Shi-type estimates from Theorem \ref{theorem local shi estimates}, to show that the derivatives of curvature are uniformly bounded on the time interval $[\eps, T]$. Thus, $|\na \Rm|^2$ is uniformly bounded on $[0,T]$ and the growth estimate \eqref{equation growth condition} is satisfied.  

Applying the Ecker-Huisken maximum principle (Theorem \ref{theorem ecker huisken max}) to $|\Rm|^2 - e^{Kt}u$ gives us
\[
|\Rm|^2 \le Ce^{CKt}\rho^{-4}, 
\]
which proves the proposition.
\end{proof}

\begin{cor}\label{corollary higher order curvature decay}
Let $\vp(t)$ satisfy \eqref{equation laplacian flow} on $M \times [0,T]$ with initial condition $\vp_0$, an AC $G_2$-structure satisfying \eqref{equation ac condition order k}. Suppose further that the curvature of the associated metrics $g(t)$ is uniformly bounded by a constant $K_0$. Then, there is a positive constant $0 < C(T,K_0,k) = De^{KT}$, where $D,K$ are universal constants depending on $k$, $K_0$, and the higher order data of $g(0)$ such that 
\[
|\na^k \Rm_{g(t)}|_{g(t)} \le C \rho^{-2-2k}. 
\]
\end{cor}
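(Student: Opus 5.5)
We argue by induction on $k$, with the case $k=0$ being Proposition \ref{proposition curvature quadratic decay}. At each step we run the same Ecker--Huisken barrier argument, applied now to $|\na^k \Rm|^2$ and compared against a solution of the heat equation whose initial data decays like $\rho^{-4-4k}$.

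First we derive the evolution inequality for $|\na^k\Rm|^2$. Differentiate \eqref{equation evolution curvature} $k$ times and commute $\na^k$ with $\dd_t$, using \eqref{equation evolution christoffel symbols} for $\dd_t \Ga$, and with $\De$. This gives, schematically,
\[
\dd_t \na^k\Rm = \De \na^k \Rm + \sum_{i+j=k} \na^i \Rm * \na^j \Rm + \sum_{a+b+c = k+2} \na^a T * \na^b T * \na^c(\cdot) + \ldots,
\]
where every term other than those involving $\na^{k}\Rm$ or $\na^{k+1}\Rm$ contains only lower-order quantities. Since the flow is closed, $\na T$ is controlled by $\Rm$ (\cite[Proposition 2.4]{LotayWeiLaplacianFlow}), so $\na^{j+1}T$ is expressible through $\na^{j}\Rm$, $T$ and $\vp$. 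The top-order terms $\na^{k+2}T * T$ then become $\na^{k+1}\Rm * T$, which we absorb by Young's inequality into $-2|\na^{k+1}\Rm|^2$. Using the uniform bound $|T|^2=|R|\le CK_0$, we obtain
\[
\dd_t |\na^k\Rm|^2 \le \De |\na^k\Rm|^2 + C|\na^k \Rm|^2 + C\sum_{\text{lower}} (\text{products of } \na^{j}\Rm,\ j<k)^2 .
\]
By the inductive hypothesis, together with $|T|\le C\rho^{-1}$ (which follows from $|T|^2=-R$ and the $k=0$ case), each lower-order product is bounded by $Ce^{KT}\rho^{-4-4k}$. For instance, $\na^i\Rm*\na^j\Rm$ with $i+j=k$ decays like $\rho^{-4-2k}$, which is at least as fast as $\rho^{-2-2k}$. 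This is enough for the source term to be dominated by the right barrier.

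The barrier itself is $e^{Kt}u$, where $u$ solves \eqref{equation moving heat equation} with $u_0 = C_0\rho^{-4-4k}$ and $C_0$ chosen so that $u_0$ dominates $|\na^k \Rm(g_0)|^2$; this is possible because $\vp_0$ satisfies the AC condition to all orders, so Lemma \ref{lemma decay estimates on AC G2 forms} applies. Proposition \ref{proposition comparable decay rates} with $\sg = 4+4k$ gives two-sided bounds $u\simeq f^{-4-4k}\simeq\rho^{-4-4k}$ on $[0,T]$. The lower bound on $u$ is what lets us absorb the inhomogeneous term $Ce^{KT}\rho^{-4-4k}$ into $(\dd_t-\De)(e^{Kt}u) = Ke^{Kt}u$ once $K$ is large. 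We can therefore apply Theorem \ref{theorem ecker huisken max} to $|\na^k\Rm|^2 - A e^{Kt}u$ for a suitable constant $A$.

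The growth hypothesis \eqref{equation growth condition} requires a uniform bound on $|\na^{k+1}\Rm|$. We get this as in Proposition \ref{proposition curvature quadratic decay}: on $[0,\eps]$ from the smooth short-time solution of Theorem \ref{theorem solution is exact solution}, and on $[\eps,T]$ from the Shi-type estimates of Theorem \ref{theorem local shi estimates}. Both rely on the uniqueness of Theorem \ref{theorem uniqueness complete bdd curvature}, which identifies the given flow with the constructed one. The maximum principle then yields $|\na^k\Rm|^2\le Ce^{KT}\rho^{-4-4k}$, which is the claimed bound with constant $De^{KT}$.

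The main obstacle is the bookkeeping in the first step. We have to check that every lower-order term, in particular the mixed torsion terms $\na^a T*\na^b T*\na^c T$ and the commutator terms from $\dd_t\Ga$, really decays like $\rho^{-4-4k}$ after squaring. If some term decays only like $\rho^{-2-2k}\cdot\rho^{-2}$, we would fall back on proving the weaker rate $\rho^{-2-k}$ first and bootstrapping. The plan is to track weights by assigning $\na$ weight $\rho^{-2}$ in the inductive hypothesis and checking each term.
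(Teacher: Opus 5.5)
\textbf{Same route as the paper, but the first step fails, and it fails there too.} Your outline is the paper's argument almost line for line: induction from Proposition~\ref{proposition curvature quadratic decay}, the evolution inequality for $|\nabla^k\mathrm{Rm}|^2$, and the barrier $e^{Kt}u$ built from Lemma~\ref{lemma existence of heat solution} with the two-sided bound of Proposition~\ref{proposition comparable decay rates}. It then applies Ecker--Huisken, with the growth condition supplied by short-time existence, uniqueness and Shi's estimates.

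The failing step is the choice of barrier. For $k\ge1$ you cannot choose $C_0$ with $C_0\rho^{-4-4k}\ge|\nabla^k\mathrm{Rm}(g_0)|^2$. Lemma~\ref{lemma decay estimates on AC G2 forms} only gives $|\nabla^l(\Psi^*\varphi_0-\varphi_C)|=o(\rho^{-l})$, so each \emph{spatial} derivative gains one power of $\rho$. Compare the weights $\rho^{-\lambda+2i+j}$ in Definition~\ref{definition weighted parabolic holder norms}: $\rho^{2}$ is the weight of a time derivative, not of $\nabla$. Hence $|\nabla^k\mathrm{Rm}_{g_0}|=|\nabla^k\mathrm{Rm}_{g_C}|+o(\rho^{-2-k})$.

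This is sharp whenever the cone is not flat. By dilation invariance, $|\mathrm{Rm}_{g_C}|(r,\sigma)=r^{-2}F(\sigma)$ with $F\not\equiv0$. Kato's inequality then gives $|\nabla\mathrm{Rm}_{g_C}|\ge|\partial_r|\mathrm{Rm}_{g_C}||=2r^{-3}F(\sigma)$. So the stated bound with exponent $-2-2k$ is false for $k\ge1$. A torsion-free AC structure with non-flat cone, e.g.\ a Bryant--Salamon structure, is a stationary solution satisfying every hypothesis, yet $|\nabla\mathrm{Rm}|\sim\rho^{-3}$. Your plan to ``assign $\nabla$ weight $\rho^{-2}$'' rests on the same mis-scaling.

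\textbf{What the argument does prove.} With no other change it gives $|\nabla^k\mathrm{Rm}_{g(t)}|_{g(t)}\le De^{KT}\rho^{-2-k}$: take $u_0=C_0\rho^{-4-2k}$ and $\sigma=4+2k$ in Proposition~\ref{proposition comparable decay rates}. In this natural scaling the bookkeeping you were worried about closes.
\begin{itemize}
\item From $|T|^2=-R$ and the case $k=0$ you get $|T|\le C\rho^{-1}$.
\item Iterating $\nabla T=\mathrm{Rm}*\varphi+T*T*\varphi$ and $\nabla\varphi=T*\psi$ with the inductive hypothesis gives $|\nabla^jT|\le C\rho^{-1-j}$ for $j\le k$.
\item Every term of $(\partial_t-\Delta)\nabla^k\mathrm{Rm}$ not linear in $\nabla^k\mathrm{Rm}$ or $\nabla^{k+1}\mathrm{Rm}$ is then $O(\rho^{-4-k})$. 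After Young's inequality its square is $O(\rho^{-8-2k})$, which $Ke^{Kt}u\ge Kc(T)f^{-4-2k}$ absorbs once $K$ is large.
\item The $\nabla^{k+1}\mathrm{Rm}*T$ term coming from $\nabla^{k+2}T*T$ is absorbed into $-2|\nabla^{k+1}\mathrm{Rm}|^2$, as you describe.
\end{itemize}
This corrected rate is also all the later application needs, since Lemma~\ref{lemma approx extension} only uses bounds of the form $|\nabla^k\varphi(t)|\le C_k\rho^{-k}$.
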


\begin{proof} The proof is by induction. The base case, $k= 0$, is proved in Proposition \ref{proposition curvature quadratic decay}.
    We can repeat the proof of Proposition \ref{proposition curvature quadratic decay} almost verbatim for $|\na^k \Rm|^2$, once the evolution inequality for this quantity is established. Consider
    \begin{align*}
        \tfrac12 \dd_t |\na^k \Rm|^2 &= \langle  \na^k \Rm, \dd_t \na^k \Rm \rangle \\
        &= \bigg\langle  \na^k \Rm,  \na^k \dd_t \Rm + \sum_{i=1}^k C_i\na^{i-1}(\na \Rm) * \na^{k-i}\Rm \bigg\rangle \\
        &= \bigg\langle  \na^k \Rm,  \na^k \Big( \De \Rm + \Rm * \Rm + \Rm * T * T \\
        &\qquad \qquad + \na^2 T * T + \na T * \na T \Big) +  \sum_{i=1}^k C_i \na^{i-1}(\na \Rm) * \na^{k-i}\Rm \bigg\rangle \\
        &= \bigg\langle  \na^k \Rm,  \De \na^k \Rm +  \sum_{j=0}^k C_j \na^j \Rm * \na^{k-j} \Rm \\
        &\qquad + \na^k \Big(  \Rm * \Rm + \Rm * T * T + \na^2 T * T + \na T * \na T \Big) \\
        &\qquad \qquad+ \sum_{i=1}^k C_i \na^{i-1}(\na \Rm) * \na^{k-i}\Rm \bigg\rangle.
    \end{align*}
    Distributing the derivatives, taking norms, and subtracting the $|\na^{k+1}\Rm|^2$ term yields the following inequality. 
    \begin{align*}
        \dd_t |\na^k \Rm|^2 &\le \De |\na^k \Rm|^2 + C(K_0,k)\bigg(\sum_{i=1}^{k-1} |\na^i \Rm|^2|\na^{k-i} \Rm|^2 + |\nabla^k \Rm|^2\bigg).
    \end{align*}
    Now, take a positive function $u_0 := C\rho^{-4 -4k}$ such that $u_0$ dominates $|\na^k \Rm|^2$ at $t=0$, and consider the heat solution $u$ (as in Lemma \ref{lemma existence of heat solution}) starting at $u_0$.  By Proposition \ref{proposition comparable decay rates}, this solution $u(x,t)$ is bounded below by $c(T)f^{-4-4k}$. Thus, by the inductive hypothesis, there exists a large $C>0$ depending on $T$ and $k$, and the initial conditions such that $Cu$ dominates $|\na^{k-i} \Rm||\na^i \Rm|$ for all $t \in [0,T]$. 
    
    We redefine $u$ to be equal to $Cu$ and take
    \begin{align*}
        \dd_t \big(|\na^k \Rm|^2 - e^{Kt}u\big) &\le \De 
        \big(|\na^k \Rm|^2 - e^{Kt}u|) + C(K_0,k)|\na^k \Rm|^2 - Ke^{Kt}u \\
        &\qquad + C(K_0,k)\sum_{i=1}^{k-1} |\na^i \Rm||\na^{k-i} \Rm|\\
        &\le \De \big(|\na^k \Rm|^2 - e^{Kt}u) + C(K_0,k)\big(|\na^k \Rm|^2 - e^{Kt}u\big),
    \end{align*}
    for $K$ sufficiently large depending on $K_0$ and $k$. The same argument involving the Shi-type estimates from Proposition \ref{proposition curvature quadratic decay} shows that $\na (|\na^k \Rm|^2-e^{Kt}u)$ satisfies \eqref{equation growth condition}.  Applying the Ecker-Huisken maximum principle to this function and invoking Proposition \ref{proposition comparable decay rates} to obtain the decay rate of $u$ on $[0,T]$ yields the corollary.
\end{proof}

The next lemma shows that if the curvature tensor is uniformly bounded, then if a flow $\vp(t) - \vp_0$ is in $H^{r,s}_\la$ for $t \in [0, T)$, then an approximate extension with good properties (see Lemma \ref{lemma approx extension}) can be found on the time interval $t \in [0, T+\ga]$ for some $\ga > 0$.

\begin{lem}\label{lemma approximate solution for AC preservation}
Suppose that $\vp_0$ is an initial closed AC $G_2$-structure of sufficiently high regularity, which satisfies \eqref{equation ac condition order k} for sufficiently large $k$. Let $\vp(t)$ be a flow of closed AC $G_2$-structures defined for $t \in [0, T)$ such that $\vp(0) = \vp_0$, $|\Rm_{g(t)}|_{g(t)} \le K_0$ and 
\[
\theta(t) = \vp(t) - \vp_0 \in H^{r+1, s+2}_{\la} \Om^3(M \times [0,T)),
\]
for all $t \in [0,T)$. There is a constant $\ga >0$ (depending only on $T$, $K_0$ and $\vp_0$) such that there is a family parametrized by $\tau$ (with $0 < T-\tau \ll 1$) of approximate extensions of $\vp(t)$, defined on $[0,T+\ga]$ and which coincide with $\vp(t)$ on $[0,\tau] \subset [0,T]$, which satisfies the conditions of Lemma \ref{lemma approx extension}. Furthermore, given $0 < \eps \ll 1$, there are constants $\ka, \tau >0$ which can be chosen depending only on $\eps$, $T$, $K_0$ and $\vp_0$ such that the form $\eta_\ka$ defined in Lemma \ref{lemma approx extension} vanishes on $[0, T + \ka/2] \subset [0,T+\ga]$ and is $\eps$-close to $\eta_0$ (defined in Lemma \ref{lemma approx extension}). 
\end{lem}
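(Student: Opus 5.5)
The strategy is to reduce the statement to Lemma \ref{lemma approx extension}, applied at a time $\tau<T$ close to $T$. The key point is that every constant in that lemma depends only on the bounds $C_k$ in \eqref{equation long time curvature bounds}. Those bounds will be supplied uniformly in $\tau\in[0,T)$ by the curvature decay results of this section.

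\textbf{Step 1: uniform weighted bounds on $[0,T)$.} From $|\Rm_{g(t)}|\le K_0$, Proposition \ref{proposition curvature quadratic decay} and Corollary \ref{corollary higher order curvature decay} give
\[
|\na^k\Rm_{g(t)}|_{g(t)}\le C(T,K_0,k)\rho^{-2-2k}
\]
for every $t\in[0,T)$. The constants have the form $De^{KT}$ and so stay finite up to $T$.

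\textbf{Step 2: translate to the form required by Lemma \ref{lemma approx extension}.} By \eqref{equation compare metrics bdd K}, $g(t)$ and $g_0$ are uniformly equivalent on $[0,T)$ with a constant depending only on $K_0$ and $T$. Combining this with \eqref{equation nabla phi and torsion}, $|\na T|\le C|\Rm|$, and the identity $R=-|T|^2$, I would convert the curvature bounds into bounds $|(\na^{g(t)})^k\vp(t)|_{g_0}\le C_k\rho^{-k}$. Here the $C_k$ depend only on $T$, $K_0$ and $\vp_0$. Obtaining the correct powers of $\rho$ here may require inducting on $k$ and using Shi-type estimates (Theorem \ref{theorem local shi estimates}) away from $t=0$, together with the short-time AC solution near $t=0$, exactly as in the proof of Proposition \ref{proposition curvature quadratic decay}.

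\textbf{Step 3: apply Lemma \ref{lemma approx extension} on $[0,\tau]$.} For each $\tau<T$, restrict the flow to $[0,\tau]$ and apply Lemma \ref{lemma approx extension}. The lemma yields an approximate extension $\theta_\tau$ on $[0,T'_\tau]$ that agrees with $\vp(t)-\vp_0$ on $[0,\tau]$. Its error term $\eta_0$ vanishes to order $r$ and lies in $H^{r,s}_{0,\la-2}$. The extension length satisfies $T'_\tau-\tau\ge\ga_0$, where $\ga_0$ depends only on the $C_k$ and hence only on $T$, $K_0$, $\vp_0$. Choose $\tau$ with $T-\tau<\ga_0/2$ and set $\ga:=\ga_0/2$. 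Then $[0,T+\ga]\subset[0,T'_\tau]$, which produces the required family in $\tau$.

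\textbf{Step 4: the vanishing form.} Lemma \ref{lemma approx extension} also provides, for each $\eps$, a $\ka_0$ depending only on the $C_k$ and an exact $\eta_{\ka_0}$ that is $\eps$-close to $\eta_0$ and vanishes on $[0,\tau+\ka_0]$. Since $\ka_0$ is independent of $\tau$, I would choose $\tau$ with $T-\tau<\ka_0/2$ and set $\ka:=\ka_0$. Then $\tau+\ka_0>T+\ka/2$, so $\eta_\ka$ vanishes on $[0,T+\ka/2]$. Shrinking $\ka$ if necessary keeps $T+\ka/2\le T+\ga$.

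The main obstacle is Step 2. It requires showing that the $C_k$ entering Lemma \ref{lemma approx extension} remain bounded as $\tau\to T$, with the correct $\rho$-weights and measured in the fixed metric $g_0$. This is what ensures that $\ga$ and $\ka$ do not degenerate. A secondary point is checking that $\theta_\tau$ stays in $H^{r+1,s+2}_\la$ uniformly in $\tau$, which should follow from the weighted decay in Step 1.
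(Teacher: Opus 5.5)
Your proposal is correct and is essentially the paper's argument. You take the uniform-in-$\tau$ decay of $\nabla^k\mathrm{Rm}$ from Proposition \ref{proposition curvature quadratic decay} and Corollary \ref{corollary higher order curvature decay} (constants $D_ke^{K_kT}$), feed it into Lemma \ref{lemma approx extension} at a time $\tau$ close to $T$, and so keep $\gamma$ and $\kappa$ from degenerating. Your Step 2 and your explicit choice of $\tau$ make precise what the paper leaves implicit: Step 2 turns the curvature decay into the $\nabla^k\varphi$ bounds that Lemma \ref{lemma approx extension} actually assumes, via \eqref{equation nabla phi and torsion}, $|T|^2=-R$ and $|\nabla T|\le C|\mathrm{Rm}|$. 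One ordering fix: set $\kappa\le\gamma_0$ before choosing $\tau$, so that shrinking $\kappa$ afterwards does not break $T-\tau<\kappa/2$.
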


\begin{proof}
    Fix $0 < \eps \ll 1$. Given $\tau < T$, we may apply Lemma \ref{lemma approx extension} to obtain an approximate extension to $[0, \tau']$, for $\tau' > \tau$ and a $\ka(\tau) > 0$ such that $\eta_{\ka}$ vanishes on $[0, \tau + \ka(\tau)] \subset [0, \tau']$. We must prove that $|\tau'-\tau|$ and $|\ka (\tau)|$ are bounded below independently of the choice of $\tau$. This problem reduces to showing that constants $C_k$ in the curvature bounds \eqref{equation long time curvature bounds} are independent of $\tau$, i.e. that they depend only on $T$, $K_0$, and $\vp_0$.

    Assume that $\vp_0$ satisfies \eqref{equation ac condition order k} up to order $k \gg r+ s$. Then, by Proposition \ref{proposition curvature quadratic decay} and Corollary \ref{corollary higher order curvature decay} combined with \eqref{equation compare metrics bdd K}
    \[
    |\na^k \Rm_{g(t)}|_{g(0)} \le D_ke^{K_k\tau} \rho^{-2-2k} \le D_ke^{K_kT} \rho^{-2-2k},
    \]
    for all $t \in [0,\tau]$ where $D,K$ depend on $K_0$, $k$, $\vp_0$, and the covariant derivatives of of $\vp_0$. Since we may let 
    \begin{equation}\label{equation extension curvature norms}
    C_k := D_ke^{K_kT},
    \end{equation}
    for all $k$ under consideration, for each $\tau < T$, the proof of Lemma \ref{lemma approx extension} makes it clear that $|\tau'-\tau|$ and $|\ka (\tau)|$ are bounded below independently of the choice of $\tau$. Thus, we obtain the desired approximate solution by selecting $\tau$ sufficiently close to $T$. 
\end{proof}

In the next proposition, we prove that with the correct choice of $\eps > 0$, there is a constant $\ka > 0$ and solution to Laplacian flow \ref{equation laplacian flow} on $M \times [0, T + \ka]$ that extends $\vp(t)$ and satisfies the correct decay properties. 

\begin{prop}\label{proposition preservation of AC}
Suppose that $\vp_0$ is an initial closed AC $G_2$-structure of sufficiently high regularity, which satisfies \eqref{equation ac condition order k} for sufficiently large $k$. Let $\vp(t)$ be a flow of closed AC $G_2$-structures defined for $t \in [0, T)$ such that $\vp(0) = \vp_0$, $|\Rm_{g(t)}|_{g(t)} \le K_0$ and 
\[
\theta(t) = \vp(t) - \vp_0 \in H^{r+1, s+2}_{\la} \Om^3(M \times [0,T)),
\]
for all $t \in [0,T)$. Then, there exists a constant $\ka > 0$ such that the flow $\vp(t)$ continues to exist for $t \in [0, T + \ka]$ and 
\[
\theta(t) = \vp(t) - \vp_0 \in H^{r+1, s+2}_{\la} \Om^3(M \times [0,T + \ka]).
\]
In particular, the flow exists and remains AC as long as the curvature is uniformly bounded above.
\end{prop}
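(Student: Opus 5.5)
The plan is to rerun the domain-wise implicit function theorem argument of \S\ref{section short time existence IVT argument}, now centered at the approximate extension from Lemma \ref{lemma approximate solution for AC preservation} rather than at the Taylor approximate solution of Lemma \ref{lemma approximate solution}. Fix $0<\eps\ll 1$. Lemma \ref{lemma approximate solution for AC preservation} then supplies $\tau<T$ with $T-\tau$ small, an extension $\theta(t)\in H^{r+1,s+2}_{\la}\Om^3(M\times[0,T+\ga])$ agreeing with $\vp(t)-\vp_0$ on $[0,\tau]$, and a form $\eta_\ka$ that vanishes on $[0,T+\ka/2]$ and is $\eps$-close to $\eta_0$. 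Here $\ga$, $\ka$ and $\tau$ depend only on $T$, $K_0$ and $\vp_0$. Let $\beta_0$ be a potential of this $\theta$, which exists since $\theta$ is exact. On $[0,\tau]$ the pair $(0,0)$ solves the problem exactly, so we work in the spaces $H^{r,s}_{0,\la+3}\Om^2_D$ of forms that vanish to order $r-1$ at $t=\tau$ rather than at $t=0$. Equivalently, we shift time so that $\tau$ plays the role of the initial slice; the initial data is then the AC structure $\vp(\tau)$.

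The key step is to check that every constant in the IFT argument is controlled by the constants $C_k=D_ke^{K_kT}$ of \eqref{equation extension curvature norms}, and in particular is independent of $\tau$. Specifically:
\begin{itemize}
\item The Lipschitz bound of Lemma \ref{lemma lipschitz bounds} only uses Sobolev embedding and algebraic formulas in the geometry of $\theta+\vp_0$. Lemma \ref{lemma approx extension} bounds that geometry in terms of the $C_k$.
\item The coefficient decay needed in Lemma \ref{lemma coefficients for weighted space} and Corollary \ref{corollary coefficients for weighted space time derivs} follows from Proposition \ref{proposition curvature quadratic decay} and Corollary \ref{corollary higher order curvature decay}: $|\Rm|=O(\rho^{-2})$, and the torsion is $O(\rho^{-1})$ because $|T|^2=-R$.
\item The barrier of Lemma \ref{lemma barrier for norm} only needs $|\Rm|\le K_0$ and the AC structure of $g_0$. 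Metric equivalence then follows from \eqref{equation compare metrics bdd K}.
\end{itemize}
The potential estimates of Proposition \ref{proposition boundedness of restricted forms} and the projection bound of Proposition \ref{proposition uniform boundedness of projection} are taken with respect to the fixed background $g_0$. They are therefore unchanged. Consequently Proposition \ref{proposition uniform boundedness of right inverse} gives $\|\cT_R\|\le C(T,K_0,\vp_0)$, and Theorem \ref{theorem existence of inverse on domains} produces domain solutions $(\beta_R,\om_R)$ whose images vanish on $[0,T+\ka/2]$.

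Next, take the $H^k_{loc}$ limit as $R\to\infty$. By Proposition \ref{proposition solution on whole space up to coexact} and Theorem \ref{theorem solution is exact solution}, the limit gives a Laplacian(-DeTurck) flow on $[0,T+\ka/2]$. It lies in $H^{r+1,s+2}_\la$ because the $\beta_R$ are uniformly bounded in the weighted norms and these norms are weakly lower semicontinuous under the extension of Lemma \ref{lemma extension of functions}. This flow agrees with $\vp$ on $[0,\tau]$. By Theorem \ref{theorem uniqueness complete bdd curvature}, applied from time $\tau$ (both flows have bounded curvature there), it also agrees with $\vp$ on $[\tau,T)$. This yields the extension with $\ka$ replaced by $\ka/2$.

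For the final clause, suppose the maximal time $T^*$ up to which the flow remains AC were finite while the curvature stays bounded. Since $\ka$ depends only on $T^*$, $K_0$ and $\vp_0$, the argument above extends the AC flow past $T^*$, which is a contradiction. I expect the main obstacle to be the $\tau$-independence of the right-inverse bound. The subtle point is that the potential gauge and the approximate kernel are built from $g_0$, whereas the linearized operator has coefficients from the evolved metric. One must verify that the difference $\Delta_{g(t)}-\Delta_{g_0}$ still has coefficients satisfying \eqref{equation conditions on coeffs} uniformly, and this is where the weighted curvature decay of Corollary \ref{corollary higher order curvature decay} is essential.
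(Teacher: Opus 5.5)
Your proposal is correct and follows essentially the paper's route. You linearize around the approximate extension of Lemma \ref{lemma approximate solution for AC preservation}, control the Schauder coefficients and hence $\|\cT_R\|$ uniformly through the constants $C_k$ of \eqref{equation extension curvature norms}, apply the quantitative inverse function theorem with $\eps$ taken below the resulting radius, pass to the limit $R_i\to\infty$, and identify the result with $\vp$ via Theorem \ref{theorem uniqueness complete bdd curvature}. The only difference is cosmetic: the paper keeps $t=0$ as the initial slice, solving on $[0,T+\ga]$ from $\vp_0$ and invoking uniqueness on all of $[0,T)$, whereas you restart at $t=\tau$ from $\vp(\tau)$ and need uniqueness only on $[\tau,T)$.
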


\begin{proof}
The extension is obtained by linearizing around the approximate solution obtained in Lemma \ref{lemma approximate solution for AC preservation} and applying the proof of Theorem \ref{theorem solution is exact solution} to obtain a solution on $[0, T + \ka/2]$ (in the notation of Lemma \ref{lemma approximate solution for AC preservation}). In particular, one aims to invert the linearized operator up to the approximate kernel on a sequence of space-time domains $M_{R_i} \times [0,T+ \ga]$, where $R_i \rightarrow \infty$. If $\sL_{R_i}$ is the linearization of the nonlinear map $\cP$ defined in \eqref{equation fix notation mapping} on this domain, it suffices to show that the norm of its right inverse $\cT_{R_i}$ is uniformly bounded, independent of $R_i$.

This question further reduces to bounding the coefficients of the Schauder estimates in Proposition \ref{proposition weighted schauder estimates} applied to the potential $\beta$ and its time derivatives $\dd_t^k\beta$. In the proof of Proposition \ref{proposition uniform boundedness of right inverse}, the uniform boundedness of the Schauder constants is established via Lemma \ref{lemma coefficients for weighted space} and Corollary \ref{corollary coefficients for weighted space time derivs}. These results use the uniform boundedness in the weighted norms of the geometric data the approximate solution $\vp_0 + \theta$ around which the nonlinear operator is linearized.  Since we are linearizing around the approximate extension defined in Lemma \ref{lemma approximate solution for AC preservation} on $M \times [0, T + \ga]$, the relevant norms on the higher order data of the approximate extension are bounded up to order $k$ by
\[
\max_{i \le k+r+s+2} 2|C_i|,
\]
where the $C_i$ are those defined in \eqref{equation extension curvature norms}, and we assume that $\ga < 1$. Applying these uniform bounds to Lemma \ref{lemma coefficients for weighted space} and Corollary \ref{corollary coefficients for weighted space time derivs} gives us uniform control of the Schauder constants for the relevant weighted parabolic equations. The a priori estimates obtained in Corollary \ref{corollary apriori estimate on L2 norm of solution} and Proposition \ref{proposition boundedness of restricted forms} can be obtained exactly as before. Therefore, it is possible to run the proof of Proposition \ref{proposition uniform boundedness of right inverse} to obtain the uniform boundedness of the right inverse $\cT_{R}$ in the appropriate weighted Sobolev norms. 

By the quantitative inverse function theorem (Theorem \ref{theorem quantitative inverse function theorem}), the operator norm bound $\|\cT_R\| < C$ for all $R \gg 0$ yields a radius $\eps > 0$ in which we may solve the inhomogeneous non-linear equation. Set this constant $\eps$ to be the parameter $\eps$ in the statement of Proposition \ref{lemma approximate solution for AC preservation}, and choose $\ka > 0$ so that the form $\eta_\ka$ is $\eps$-close to $\eta_0$ in $H^{r, s}_\la \Om^3 (M_R \times [0,T+\ga])$. 

Run the proof of short-time existence (Theorem \ref{theorem main theorem}) to obtain an exact solution $\hat \vp(t)$ to \eqref{equation laplacian deturck boundary} on $[0, T + \ka /2]$ with initial condition $\vp_0$. The inverse function theorem implies that \[
\hat \vp(t) - \vp_0 \in H^{r+1, s+2}_{\la} \Om^3(M \times [0,T + \ka])
\]
and the uniqueness theorem (Theorem \ref{theorem uniqueness complete bdd curvature}) implies that $\hat \vp(t)$ coincides with $\vp(t)$ on $[0,T)$ and is really an extension. Relabeling $\hat \vp(t)$ by $\vp(t)$ and $\ka/2$ by $\ka$ yields the statement of the proposition. 
\end{proof}

The preservation of the AC condition (for initial conditions satisfying \eqref{equation ac condition order k} for sufficiently large $k$) can be used to prove the following doubling time estimate for the curvature.

\begin{cor}\label{corollary doubling time estimate}
Let $\vp(t)$ be a solution to the Laplacian flow \eqref{equation laplacian flow} with an AC initial condition $\vp_0$ satisfying \eqref{equation ac condition order k} for sufficiently large $k$. Following \cite{LotayWeiLaplacianFlow}, define 
\[
\La(x,t) = \Big( |\na T(x,t)|^2_{g(t)} + |\Rm(x,t)|^2_{g(t)} \Big)^\frac12.
\]
There exists a constant $C > 0$ such that the flow $\vp(t)$ can be extended to $\big[0, \tfrac{1}{C\La(0)} \big]$, remains AC, and $\La(t) \le 2\La(0)$.
\end{cor}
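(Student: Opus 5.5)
We follow Lotay--Wei's doubling time argument \cite[\S 4]{LotayWeiLaplacianFlow}, using Proposition \ref{proposition preservation of AC} to supply the missing ingredient in the non-compact setting: as long as curvature stays bounded, the flow stays AC, so $\La$ decays at infinity and its spatial supremum is attained.

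\textbf{Step 1: a differential inequality for $\La$.} From \eqref{equation evolution torsion} and \eqref{equation evolution curvature}, together with the identity expressing $\na T$ and $\Rm$ algebraically in terms of each other (cf.\ \cite[Proposition 2.4]{LotayWeiLaplacianFlow}), we expect, exactly as in \cite[Proposition 4.1]{LotayWeiLaplacianFlow},
\[
\dd_t \La^2 \le \De \La^2 - |\na \Rm|^2 - |\na^2 T|^2 + C\La^3 .
\]
This is a local computation and carries over verbatim.

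\textbf{Step 2: control of the maximum.} Fix $K$ with $\La \le K$ on $[0,T_1]$. Proposition \ref{proposition curvature quadratic decay} and Corollary \ref{corollary higher order curvature decay}, applied with the bound $K$, give $|\Rm| \le C\rho^{-2}$ and corresponding decay for $\na T$ uniformly on each time slice. Hence $\La(\cdot,t) \to 0$ at infinity, and $\La_{\max}(t) := \sup_M \La(\cdot,t)$ is a maximum. It is locally Lipschitz in $t$, since the flow is smooth with uniformly bounded derivatives by the local Shi estimates (Theorem \ref{theorem local shi estimates}). Hamilton's trick then gives $\tfrac{d}{dt}\La_{\max}^2 \le C\La_{\max}^3$ in the sense of forward difference quotients. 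Integrating yields
\[
\La_{\max}(t) \le \frac{\La(0)}{1 - \tfrac{C}{2}\La(0)t},
\]
so $\La_{\max}(t) \le 2\La(0)$ for $t \le 1/(C\La(0))$ after renaming $C$. As an alternative, the Ecker--Huisken maximum principle (Theorem \ref{theorem ecker huisken max}) can be applied to $\La^2$ compared with the ODE solution; the growth hypothesis \eqref{equation growth condition} then follows from the bounded derivatives.

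\textbf{Step 3: continuation by a bootstrap.} Let $T^*$ be the supremum of times $\tau \le 1/(C\La(0))$ such that the flow exists and is AC on $[0,\tau)$ with $\La \le 2\La(0)$. Theorem \ref{theorem solution is exact solution} gives $T^*>0$. If $T^* < 1/(C\La(0))$, then on $[0,T^*)$ we have $|\Rm| \le 2\La(0) =: K_0$. Proposition \ref{proposition preservation of AC} extends the flow as an AC flow to $[0,T^*+\ka]$. By continuity together with the strict inequality from Step 2 at times before $1/(C\La(0))$, the bound $\La \le 2\La(0)$ persists slightly beyond $T^*$, which contradicts maximality.

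The main obstacle is the rigor of Step 2. The constants in Proposition \ref{proposition curvature quadratic decay} depend on the a priori bound $K$, so the decay must be invoked with the bootstrap bound $2\La(0)$ rather than circularly. The statement also needs $\La_{\max}$ to be attained and continuous. Both points are handled by the uniform decay estimates on the compact time interval $[0,T^*]$.
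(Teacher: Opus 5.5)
Your proposal is correct and follows essentially the same route as the paper. Both use the Lotay--Wei inequality $\partial_t \Lambda^2 \le \Delta \Lambda^2 - (|\nabla \mathrm{Rm}|^2+|\nabla^2 T|^2) + C\Lambda^3$, the attainment of the spatial maximum on a compact set locally uniformly in time (which you justify through the quadratic decay of Proposition~\ref{proposition curvature quadratic decay}, where the paper appeals directly to the AC condition), Hamilton's trick to get $\Lambda(t) \le \Lambda(0)/(1-\tfrac12 C\Lambda(0)t)$, and continuation via Proposition~\ref{proposition preservation of AC}. Your explicit bootstrap on $T^*$ and your choice to work with $\Lambda^2$ rather than $\Lambda$ only make the paper's brief argument more careful.
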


\begin{proof}
    The proof of the doubling time is identical to that of Lotay-Wei in \cite[Proposition 4.1]{LotayWeiLaplacianFlow}. The same methods can be used to establish the differential inequality 
    \[
    \frac{\dd}{\dd t} \La(x,t)^2 \le \De \La (x,t)^2 - (|\na \Rm|^2 + |\na^2 T|^2) + C\La(x,t)^3. 
    \]
    Remove the squares by differentiating and absorbing. Then consider the Lipschitz function $\La(t) = \max_M \La(x,t)$. Since $\vp(t)$ is AC for as long as the flow is defined on $[0,T)$, for each $t \in (0,T)$ there is a $\de >0$ and a compact subset $K$ such that the maximum of $\La(x,t')$ is achieved inside $K$ for all $t' \in (t-\de, t+\de)$. Thus, we may apply the non-compact maximum principle \cite[Theorem 2.1.1]{Mantegazza} to $\La(t)$ to deduce that
    \begin{equation}\label{equation curvature max bound}
    \La(t) \le \frac{\La(0)}{1- \tfrac12 C \La(0)t},
    \end{equation}
    for as long as the flow exists. However, if $T < \tfrac{1}{C\La(0)}$, then the uniform curvature bound $\La(t) \le 2\La(0)$ for $t \in [0,T)$ ensures that the flow may be continued and remains AC for a short time beyond $T$, by Proposition \ref{proposition preservation of AC}. 
\end{proof}

Proposition \ref{proposition preservation of AC} and Corollary \ref{corollary doubling time estimate} give the following long-time existence result. 

\begin{cor}\label{corollary existence time lower bound}
    The flow $\vp(t)$ satisfying \eqref{equation laplacian flow} with AC initial condition $\vp_0$ exists as long as $\La(t) < \infty$. Furthermore, it can be uniquely extended to the time interval $\big[0, \tfrac{2}{C\La(0)}\big)$. 
\end{cor}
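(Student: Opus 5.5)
The plan is to derive this corollary directly from Proposition \ref{proposition preservation of AC} and Corollary \ref{corollary doubling time estimate}, using a standard maximal-time continuation argument.

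\textbf{Step 1: set up a maximal time.} Let $T_{\max}$ be the supremum of times $T$ for which there is a closed AC Laplacian flow $\vp(t)$ on $[0,T)$ starting at $\vp_0$ with $\theta(t)=\vp(t)-\vp_0 \in H^{r+1,s+2}_{\la}\Om^3(M\times[0,T))$. Theorem \ref{theorem solution is exact solution} shows that $T_{\max}>0$. Theorem \ref{theorem uniqueness complete bdd curvature} shows that any two such flows agree on their common interval, since both have bounded curvature on compact subintervals. So these flows patch together into a single flow on $[0,T_{\max})$.

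\textbf{Step 2: first claim.} Suppose $T_{\max}<\infty$ but $\sup_{[0,T_{\max})}\La(t)<\infty$. Then $|\Rm_{g(t)}|\le K_0$ on $[0,T_{\max})$. Proposition \ref{proposition preservation of AC} then extends the flow, still AC and in the same weighted space, to $[0,T_{\max}+\ka]$. This contradicts maximality. Hence the flow exists, and stays AC, as long as $\La(t)<\infty$.

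The point needing care is that the constants in Proposition \ref{proposition preservation of AC} depend only on $T$, $K_0$ and $\vp_0$. They must not depend on the particular time slice from which we extend. Lemma \ref{lemma approximate solution for AC preservation} supplies exactly this uniformity, so the argument closes.

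\textbf{Step 3: second claim.} The differential inequality $\La'\le C\La^2$ in the proof of Corollary \ref{corollary doubling time estimate} gives the bound \eqref{equation curvature max bound}:
\[
\La(t)\le \frac{\La(0)}{1-\tfrac12 C\La(0)t}.
\]
This holds on any interval where the flow exists and is AC. The right-hand side is finite for $t<\tfrac{2}{C\La(0)}$. So on every interval $[0,T']$ with $T'<\tfrac{2}{C\La(0)}$ the curvature is bounded.

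Step 2 therefore forces $T_{\max}\ge \tfrac{2}{C\La(0)}$. This gives existence on $\big[0,\tfrac{2}{C\La(0)}\big)$. Uniqueness on this interval follows from Theorem \ref{theorem uniqueness complete bdd curvature}, applied on each compact subinterval.

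\textbf{Main obstacle.} The hardest step is justifying the maximum principle for $\La(t)=\max_M\La(\cdot,t)$ on the whole interval. We need the maximum to be attained, which uses the AC decay from Proposition \ref{proposition curvature quadratic decay}. That decay in turn requires a curvature bound on the interval in question.

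I would avoid this circularity with a continuity argument. Let $T_1$ be the supremum of times $T'$ for which \eqref{equation curvature max bound} holds on $[0,T']$. On $[0,T_1)$ the curvature is bounded, so the flow is AC and $\La$ attains its maximum. The maximum principle then propagates the bound slightly past $T_1$, unless $T_1\ge\tfrac{2}{C\La(0)}$. This is consistent with the argument of Corollary \ref{corollary doubling time estimate}.
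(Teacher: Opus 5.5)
Your proposal is correct and follows the paper's argument. As in the paper, you extend past any time where $\La$ stays bounded via Proposition \ref{proposition preservation of AC}, get uniqueness from Theorem \ref{theorem uniqueness complete bdd curvature}, and read off the lower bound $\tfrac{2}{C\La(0)}$ from the blow-up time of \eqref{equation curvature max bound}; your maximal-time bookkeeping and continuity argument for the maximum principle on $\La(t)$ just spell out what the paper compresses into the proof of Corollary \ref{corollary doubling time estimate}.
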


\begin{proof}
    If $\La(t) < C$ for all $t \in [0,T')$, then by Proposition \ref{proposition preservation of AC}, the flow can be extended to $[0,T'+\eps)$, for $\eps >0$. The uniqueness of this extension follows from Theorem \ref{theorem uniqueness complete bdd curvature}. The lower bound on the existence time follows from the inequality \eqref{equation curvature max bound} used to prove the doubling time estimate in Corollary \ref{corollary doubling time estimate}.
\end{proof}

\subsection{Proof of Theorem \ref{theorem main theorem}} If $\vp_0$ is a smooth, asymptotically conical $G_2$-structure that converges to its asymptotic cone to infinite order, apply Theorem \ref{theorem solution is exact solution} to obtain short-time existence. Then, apply Corollary \ref{corollary existence time lower bound} to extend the existence time up to some $T := C(\max_M |\Rm_{\vp_0}|)^{-1}$. If $\vp_0$ is assumed only to be asymptotically conical in the sense of Definition \ref{definition asymptotically conical} and to have no further regularity, applying Corollary \ref{corollary existence for lower regularity} yields Theorem \ref{theorem main theorem} in its entirety. 

\appendix

\section{Linear theory for elliptic and parabolic equations on vector bundles}\label{section linear parabolic appdx}

Useful results from the general theory of linear elliptic and parabolic equations are collected in this appendix.

\subsection{Existence theory for linear equations on vector bundles}\label{subsection existence parabolic pde}

In this section, we collect the relevant results from the exposition in \cite{Baker} regarding the existence of $H^{1,1}$-regular solutions with homogeneous boundary conditions to linear parabolic equations on vector bundles over manifolds with boundary. 

Let $E_{\omega}$ be a vector bundle over $M \times [0,\omega]$, where $(M, \dd M)$ is a manifold with boundary. We consider the following boundary value problem in which $U$ and $V$ are sections of $E_{\omega}$ with a fixed connection $\na$. 
\begin{equation}\label{equation linear parabolic pde}
\left\{
\begin{array}{lr}
\dd_t U^a - LU^a := \dd_t U^a - \na_i(A^{aij}_b\na_j U^b) + B^{ai}_b(X) \na_i U^b + C^a_b(X) U^b = F^a(X),   \\
U|_{t= 0} \equiv 0 , 
\end{array}
\right.
\end{equation}
where $X \in M_\om $, and $A^{aij}_b$ satisfies the following \textit{Legendre-Hadamard condition}: there is $\la > 0$ such that
\[
A^{aij}_b\xi_i \xi_j\eta_a{\eta^*}^b > \la |\xi|^2|\eta|^2,
\]
where $\eta \in \Ga(E)$, $\eta^* \in \Ga(E^*)$, and $\xi \in \Om^1(M)$. This condition implies parabolicity.

Let $\overset{\bullet}{H^{1,1}}(E_\omega)$ be the closure in $H^{1,1}$ of $\overset{\bullet}{C^\infty}(\overline{M},E_\omega)$, the smooth (up to the boundary $\dd M$) sections of $E_\omega$ that vanish at the parabolic boundary of $M \times [0,\omega]$, i.e. $(\dd M \times [0,\omega]) \cup (M \times \{0\})$. The boundedness and coercivity of the bilinear form associated to the operator on the left-hand side of the equation implies (via the standard Lax-Milgram argument) the following result.

\begin{thm}\cite[Theorem 3.8]{Baker}\label{theorem existence of linear weak solution}
    If $F \in L^2(E_\omega)$, then the initial value problem \eqref{equation linear parabolic pde} admits a unique weak solution $U \in \overset{\bullet}{H^{1,1}}(E_\omega)$. 
\end{thm}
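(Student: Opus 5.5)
The plan is the standard Galerkin/Lax--Milgram argument in the parabolic setting, as in \cite{Baker}, adapted to sections of $E_\omega$ with a fixed connection. The first step is to write down the weak formulation. Multiply \eqref{equation linear parabolic pde} by a test section $\Phi \in \overset{\bullet}{C^\infty}(\overline{M},E_\omega)$ and integrate over $M\times[0,\omega]$. Integrating the principal term by parts in space gives no boundary contribution, because $\Phi$ vanishes on $\dd M\times[0,\omega]$. This leads to the bilinear form
\[
B(U,\Phi) = \int_0^\omega\!\!\int_M \Big( \langle \dd_t U,\Phi\rangle + A^{aij}_b \na_j U^b \na_i \Phi_a + B^{ai}_b\na_i U^b\Phi_a + C^a_bU^b\Phi_a\Big)\, d\mu\, dt .
\]
A weak solution is then some $U\in\overset{\bullet}{H^{1,1}}(E_\omega)$ with $B(U,\Phi)=\int\langle F,\Phi\rangle$ for every test section. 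Boundedness of $B$ on $\overset{\bullet}{H^{1,1}}$ follows from Cauchy--Schwarz and the boundedness of the coefficients $A,B,C$ on the compact set $\overline{M}\times[0,\omega]$.

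The second step is coercivity. To get it, I would pass to the exponentially weighted unknown $\tilde U = e^{-\gamma t}U$. This replaces $C$ by $C+\gamma$. Testing against $U$ itself, the time term gives $\tfrac12\int_M|U|^2(\omega)\ge 0$, since $U(0)=0$. The Legendre--Hadamard condition, combined with G\aa rding's inequality, controls the principal part from below by $\la\|\na U\|_{L^2}^2 - c\|U\|_{L^2}^2$. For Dirichlet data the G\aa rding step works after a partition of unity and freezing of coefficients, using Plancherel on the locally flattened pieces. The first-order term is absorbed by Young's inequality into $\tfrac{\la}{2}\|\na U\|^2 + c'\|U\|^2$. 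Choosing $\gamma$ large then gives $B(U,U)\ge c_0\|U\|_{L^2(H^1)}^2$.

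The main obstacle is that $B(U,U)$ does not control $\dd_t U$. So $B$ is not coercive on $\overset{\bullet}{H^{1,1}}$ in the naive sense, and plain Lax--Milgram does not apply directly. To handle this, I would use the Lions version of Lax--Milgram: take the Hilbert space $\mathcal H=L^2(0,\omega;H^1_0)$ and the pre-Hilbert test space of smooth sections vanishing at $t=\omega$. After moving the time derivative onto the test function, $B$ is coercive on this space, and that yields a weak solution $U\in\mathcal H$ with $U(0)=0$ in the weak sense. Alternatively, one can use a Galerkin approximation by eigensections of the Dirichlet problem for $-\na_i(A^{ij}\na_j\cdot)$ (symmetrized if necessary) and extract a limit from energy estimates.

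The third step upgrades the regularity to $\dd_t U\in L^2$, so that $U\in\overset{\bullet}{H^{1,1}}$. Reading the equation distributionally gives $\dd_t U\in L^2(H^{-1})$. One then tests the Galerkin approximations with $\dd_t U_m$ to obtain $\|\dd_t U_m\|_{L^2L^2}$ bounded in terms of $\|F\|_{L^2}$. This uses $U(0)=0$ and, for symmetric $A$, the monotonicity of the quadratic form; any nonsymmetric part can be treated as a lower-order perturbation after rewriting it.

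Finally, uniqueness follows from the same energy estimate applied to the difference of two solutions. That difference has $F=0$, and Gr\"onwall's inequality forces it to vanish.
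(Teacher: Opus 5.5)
Your Steps 1, 2 and 4 are sound, and more explicit than the paper, which only asserts that the associated bilinear form is bounded and coercive and cites Baker's Lax--Milgram argument. You are right that the naive form cannot be coercive in a norm that controls $\partial_t U$. Lions' version of Lax--Milgram on $L^2(0,\omega;H^1_0)$ handles this: with the shift $e^{-\gamma t}$ and G\aa rding's inequality (which needs only Legendre--Hadamard and continuity of $A$), it gives a solution with $\partial_t U\in L^2(H^{-1})$ and $U(0)=0$. The Gr\"onwall argument then gives uniqueness in that class.

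The gap is in Step 3, in the sentence asserting that a nonsymmetric principal part ``can be treated as a lower-order perturbation after rewriting it.'' For systems this is false. Take the part of $A$ that is skew under $(a,i)\leftrightarrow(b,j)$. Its piece skew in $(i,j)$ is harmless, since it only sees $[\nabla_i,\nabla_j]U$, i.e.\ curvature times $U$, plus first-order terms. Its piece symmetric in $(i,j)$, and hence skew in $(a,b)$, is genuinely second order.

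For instance, $A^{aij}_b=g^{ij}(\delta^a_b+J^a_b)$ with $J$ skew for the fibre metric satisfies Legendre--Hadamard, because $\langle\eta,J\eta\rangle=0$. Testing with $\partial_tU$ then leaves the term $\int_M\langle J\nabla U,\nabla\partial_tU\rangle$. This is not the time derivative of any quadratic form in $\nabla U$, since such derivatives only see the symmetric part. After integrating by parts (the boundary term vanishes because $\partial_tU=0$ on $\partial M$), it is bounded only by $|J|\,\|\nabla^2U\|\,\|\partial_tU\|$. That is exactly the order you are trying to estimate, so it can be absorbed only when $|J|$ is small compared with the elliptic regularity constant.

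So the $\partial_tU$-multiplier gives $\partial_tU\in L^2$ only for $A$ symmetric (modulo the harmless piece) with $\partial_tA$ bounded. The general Legendre--Hadamard case needs genuine parabolic $L^2$-theory, such as localization with frozen-coefficient estimates, or maximal regularity for the sectorial form.

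This does not hurt the paper's application. In Proposition~\ref{proposition unique solution sufficient regularity} the principal part is $g^{ij}G^a_b$, which is symmetric. Also, Theorem~\ref{theorem higher regularity linear par pde} (Evans) takes as input only the energy-space weak solution, which your Step 2 already provides. But as written, your argument does not establish $\partial_tU\in L^2$ in the full generality of \eqref{equation linear parabolic pde}. To close the gap, either restrict to symmetric $A$ or settle for the energy space.
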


\subsection{Compatibility conditions and higher regularity}\label{subsection compatibility conditions}

In order to improve the regularity of the above weak solution up to the boundary $\dd M$, it is necessary to check the following compatibility conditions. The following statement is adapted from \cite[Theorem 6, p. 386]{EvansPDE} for the initial value problem \eqref{equation linear parabolic pde}. 

\begin{thm}\cite{EvansPDE}\label{theorem higher regularity linear par pde}
Assume that 
\[
\frac{d^k}{dt^k}F \in L^2([0,\om]; H^{2m-2k}(E_\omega))\qquad (k = 0, \ldots m).
\]
Suppose also that the following $m$-th-order \textit{compatibility conditions} hold:
\[
\left\{
\begin{array}{lr}
g_1 := F|_{t=0} \in H^1_0(E_\omega), g_2 := \frac{d}{dt}F\big|_{t=0} - Lg_1 \in H^1_0(E_\omega), \\
\ldots, g_m := \frac{d^{m-1}}{dt^{m-1}}F\big|_{t=0} - Lg_{m-1} \in H^1_0(E_\omega).
\end{array}
\right.
\]
Then
\[
\frac{d^k}{dt^k} U \in L^2([0,\om],H^{2m+2 -2k}(E_\om)) \qquad (k = 0, \ldots m + 1).
\]
\end{thm}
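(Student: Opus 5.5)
The plan is to follow the classical Galerkin/energy argument of Evans \cite[\S 7.1.3, Theorem 6]{EvansPDE}, carried over to sections of the vector bundle $E_\om$ over the compact manifold with boundary $M$. First, approximate the weak solution $U$ of Theorem \ref{theorem existence of linear weak solution} by Galerkin approximations $U_N$. These are built from an $L^2$-orthonormal basis of $H^1_0(E_\om)$ consisting of eigensections of a fixed self-adjoint elliptic operator $-\na^*\na + 1$ with Dirichlet conditions; such a basis exists by compactness of $M$ and elliptic regularity up to the boundary. The energy estimates are then derived for $U_N$ and passed to the limit, so every step only needs uniform bounds in $N$.

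The key steps are as follows. For $m = 0$, test the equation with $\dd_t U_N$ and with $-LU_N$. Using the Legendre--Hadamard condition, G\aa rding's inequality and Gr\"onwall, this gives $U \in L^2(H^2) \cap L^\infty(H^1_0)$ and $\dd_t U \in L^2(L^2)$. Here one uses elliptic $H^2$-regularity up to the boundary for the Dirichlet problem $LU = F - \dd_t U$ on each time slice.

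Next, differentiate the equation in time. The section $\tilde U := \dd_t U$ formally solves $\dd_t \tilde U - L\tilde U = \dd_t F + (\dd_t L)U$, with initial value $\tilde U(0) = F(0) = g_1$. The compatibility condition $g_1 \in H^1_0$ is exactly what makes the $m=0$ estimate (now with nonzero initial data in $H^1_0$) applicable. One then proves the statement by induction on $m$. At stage $k$, the section $\dd_t^k U$ solves a parabolic problem with initial datum $g_k$, which is computed recursively from $F$ and $L$ as in the statement. The hypothesis $g_k \in H^1_0$ gives $\dd_t^k U \in L^2(H^2)$ and $\dd_t^{k+1}U \in L^2(L^2)$. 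Finally, convert time regularity into spatial regularity by reading the equation as the elliptic boundary value problem
\[
L(\dd_t^{k}U) = \dd_t^{k+1}U - \dd_t^k F + (\text{lower order})
\]
with Dirichlet data, and apply elliptic regularity up to the boundary repeatedly. This climbs from $\dd_t^{m+1}U \in L^2(L^2)$ down to $\dd_t^k U \in L^2(H^{2m+2-2k})$, using $\dd_t^k F \in L^2(H^{2m-2k})$.

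Since the coefficients $A,B,C$ depend on $X$ and hence on $t$, the time-differentiated equations acquire commutator terms $(\dd_t^j L)\dd_t^{k-j}U$. These are absorbed into the forcing using the induction hypothesis and smoothness of the coefficients.

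I expect the main obstacle to be the elliptic regularity up to the boundary for systems that satisfy only the Legendre--Hadamard condition rather than strong ellipticity. G\aa rding's inequality on $H^1_0$ still holds in this setting, and boundary regularity for the Dirichlet problem follows from the standard difference-quotient argument in boundary charts, after flattening the boundary and trivializing $E$. I would first check that the operator in \eqref{equation linear parabolic pde} coming from \eqref{equation riemannian potential linear equation} is in fact strongly elliptic, since its principal part is $g^{ij}\na_i\na_j$, which removes the difficulty. I would then cite Evans for the general mechanism, noting that all constants depend only on the geometry of $M$ and the coefficients.
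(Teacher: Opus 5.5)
Your strategy is the one the paper relies on. The paper gives no argument of its own and simply invokes Evans's Theorem~6 in \S7.1.3, whose proof is exactly your induction: differentiate in $t$, apply the $m=0$ estimate to $\partial_t U$ with initial datum $g_1\in H^1_0$, then recover spatial regularity from the Dirichlet elliptic problem. Evans, however, assumes in that section that the coefficients do not depend on $t$. The step you add to cover $t$-dependent coefficients is where your argument breaks.

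The commutators $(\partial_t^j L)\,\partial_t^{k-j}U$ do not only enter the forcing; they also change the initial data. From $\partial_t U = F + LU$ and $U(0)=0$ one gets
\[
\partial_t^k U(0) = \partial_t^{k-1}F(0) + \sum_{j=0}^{k-1}\binom{k-1}{j}\,(\partial_t^j L)(0)\,\partial_t^{k-1-j}U(0).
\]
For instance, $\partial_t^3 U(0) = \partial_t^2 F(0) + L(0)\,\partial_t^2U(0) + 2(\partial_t L)(0)\,g_1$. So for $k\ge 3$ the datum of $\partial_t^k U$ is not the $g_k$ of the statement. The hypothesis $g_k\in H^1_0$ then does not give what your level-$k$ energy estimate needs, since $(\partial_t L)(0)g_1$ has no reason to vanish on $\partial M$. (There is also a sign slip. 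With the convention $\partial_t U - LU = F$ that you and the paper use, the recursion is $g_k = \partial_t^{k-1}F(0) + L g_{k-1}$; the minus sign in the statement is Evans's convention $u_t + Lu = f$.)

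This cannot be patched inside the proof, because the statement itself fails for $t$-dependent coefficients once $m\ge 3$. Any $U$ with the asserted regularity has $\partial_t^3 U \in C([0,\omega];H^1_0)$. Now take the scalar operator $LU = (1+t)\,\partial_x^2 U$ on $(0,1)$ and $F = x(1-x) + 2t$. Then $g_1 = x(1-x)\in H^1_0$ and $g_2 = g_3 = 0$ (recursion with $+L(0)$), while $\partial_t^3 U(0) = 2\,\partial_x^2 g_1 = -4$, which is not in $H^1_0$.

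There are two ways to fix this. You can assume $t$-independent coefficients, as Evans does. Or you can define the $g_k$ by the full recursion above (with $g_0=0$) and require those to lie in $H^1_0$. With that definition your induction goes through, including absorbing the commutators into the forcing via the level-$(m-1)$ result.

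For the paper's purposes nothing is lost. The theorem is only applied with $F=\xi$ vanishing to order $r-1$ at $t=0$, so all the data vanish under either definition.
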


\subsection{Weighted Schauder estimates}\label{subsection weighted schauder}

We now prove Schauder estimates in weighted Sobolev spaces for solutions to \eqref{equation linear parabolic pde}, which we assume to be globally defined on a time-dependent vector bundle $E_{\om}$ over $M \times [0,\om]$, where $M$ is an $AC$ Riemannian manifold with metric $g$. We assume that the coefficients $A$, $B$, and $C$ have the property that
\begin{equation}\label{equation conditions on coeffs}
|g^{ij}G^a_b - A^{aij}_b| = o(\rho^{-2}), |\na (g^{ij}G^a_b - A^{aij}_b)|, |B|, \text{ and } |C| = O(\rho^{-2}),
\end{equation}
as the radial coordinate on $M$, $\rho$, approaches infinity. Here, $G_{ab}$ is the metric on the fibers of the vector bundle $E_\om$, $g$ is the metric on $M$, and all norms are measured with respect to $g$. If $R < \infty$ and $M_R := \rho^{-1}((0,R]) \subset M$, we denote the restriction vector bundle $E_{\om}|_{M_R \times [0,\om]}$ by $E_{R,\om}$.

We largely follow the statement and the proof of \cite[Propositions 7.3-7.4]{LMCFwithsings}, with modifications along the lines of \cite[\S 7]{SchlagSchauder} to prove estimates up to the boundary.

\begin{prop}\label{proposition weighted schauder estimates}
    Let $(M, g)$ be a complete manifold equipped with an AC metric $g$. Let $T>0$, $k \in \bN$ with $k \ge 2$. Let $R_0 > 0$ be a large constant depending on $g$. If $\rho: M \rightarrow \R$ is the radius function on $M$, we consider domains $M_R$, with $0 < R_0 < R_0 + 2 < R$ such that 
    \[
    M_R := \rho^{-1}((0,R]). 
    \]
   Let $E_{R,\om}$ be the restriction of the vector bundle $E_\om$ over $M_R \times [0,\om]$. Let $F \in H^{0,k-2}_{\la -2}(E_{R,\om}) \cap C^{0,0}(E_{R,\om})$ such that $F \equiv 0$ on the ``corner" of the parabolic boundary, $\dd M_R  \times \{t= 0\}$.  Assume that $U \in H^{1,2}(E_{R,\om}) \cap  L^2_{\la}(E_{R,\om})$ and $U$ solves \eqref{equation linear parabolic pde}, whose coefficients are assumed to satisfy \eqref{equation conditions on coeffs}. Then there exists a constant $c >0$ independent of $U, F,$ and $R$, such that
    \begin{equation}\label{equation schauder estimates vb with boundary}
        \|U\|_{H^{1,k}_{\la}(E_{R,\om})} \le c\Big(\|F\|_{H^{0,k-2}_{\la-2}(E_{R,\om})} + \|U\|_{L^2_{\la}(E_{R,\om})} \Big).
    \end{equation}
\end{prop}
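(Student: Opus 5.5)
We prove the estimate by a Bartnik-type scaling argument. The region $M_R$ is covered by dyadic annuli $A_j := \rho^{-1}([2^{j},2^{j+1}])$ together with a fixed compact core $\rho^{-1}((0,2R_0])$. The estimate is then reduced to unweighted local parabolic $L^2$ estimates on rescaled pieces of uniformly bounded geometry. Because the number of annuli meeting $\dd M_R$ is bounded by a constant independent of $R$, and the rescaled boundary pieces all look like a fixed model, the constant $c$ will not depend on $R$.

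\textbf{Step 1: rescaling.} On an annulus $A_j$ with $2^{j}\ge R_0$, pull back through the AC diffeomorphism $\Psi$ and the dilation $\rho_{2^j}$. Rescale space by $x\mapsto 2^{-j}x$ and time by $t\mapsto 2^{-2j}t$. Lemma \ref{lemma decay estimates on AC G2 forms} shows that the rescaled metric is $C^k$-close to the cone metric on the fixed model annulus $\{1\le r\le 2\}\times\Sg$. Under the rescaling the conditions \eqref{equation conditions on coeffs} become: the top-order coefficients are uniformly elliptic and close to $g^{ij}G^a_b$ in $C^1$, and the rescaled $B,C$ are bounded. The weights behave correctly: $\rho^{-\la+j}\na^j$ turns into the unweighted derivative multiplied by the common factor $2^{-j\la}$, and the measure $\rho^{-7}dV$ is scale invariant. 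So on each annulus the weighted norm of $U$ is comparable to $2^{-j\la}$ times an unweighted norm on a fixed domain. Since $F$ carries weight $\la-2$, its rescaled norm matches that of $LU$, because $L$ costs two powers of $\rho$.

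\textbf{Step 2: local estimates.} On the model domain we apply the standard interior $L^2$ parabolic regularity estimate (energy estimates plus difference quotients, iterated up to $k$ spatial derivatives on slightly enlarged annuli):
\[
\|U\|_{H^{1,k}(A_j)}\le c\big(\|F\|_{H^{0,k-2}(\tilde A_j)}+\|U\|_{L^2(\tilde A_j)}\big),
\]
where $\tilde A_j$ is the union of $A_j$ with its two neighbors. The fixed core is handled in the same way with no rescaling. For the at most two annuli meeting $\dd M_R$, the boundary version of the estimate is needed, which is proved as in \cite[\S 7]{SchlagSchauder}. Since $U$ vanishes on the whole parabolic boundary, take a tangential difference quotient and flatten the boundary, which is the rescaled smooth hypersurface $\{r=R/2^j\}$ in the model with uniformly bounded geometry. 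Normal derivatives are then recovered from the equation, using the Legendre--Hadamard condition to solve for $\na_\nu^2U$. The hypothesis $F\equiv0$ on $\dd M_R\times\{0\}$ is the first-order compatibility condition, consistent with Theorem \ref{theorem higher regularity linear par pde}. It guarantees that the boundary regularity holds up to $t=0$ with constants independent of $j$.

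\textbf{Step 3: summation.} Raise the annular estimates to the power $2$, multiply by $2^{-2j\la}$, and sum over $j$. Each point lies in at most three of the sets $\tilde A_j$, so the sum yields \eqref{equation schauder estimates vb with boundary}.

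The main obstacle is the boundary estimate in Step 2, specifically showing that the boundary constants are uniform in $R$. The first thing to try is to note that after rescaling by $R$, the boundary always sits at $r\approx1$ of a model that converges to the cone as $R\to\infty$. Uniformity then follows from continuous dependence of the local constants on the $C^k$ geometry together with the coefficient bounds.
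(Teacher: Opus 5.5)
Your architecture matches the paper's. The paper also rescales strips $\Sigma\times(\tfrac12,1)$ by $(t,x,\rho)\mapsto(s^2t,x,s\rho)$, uses interior parabolic estimates with scale-independent constants, rescales by $s=R$ near $\partial M_R$, localizes there following \cite[\S 7]{SchlagSchauder}, and sums. The gap is in your boundary step for $k\ge 3$. ``Recovering normal derivatives from the equation'' is an elliptic argument. In the parabolic equation, $A^{\nu\nu}\nabla_\nu^2U$ is balanced against $\partial_tU$, which carries the same parabolic weight. So to get $\nabla_\nu^{k}U\in L^2$ up to $\partial M_R$ you need $\nabla_\nu^{k-2}\partial_tU\in L^2$ up to $\partial M_R$, and tangential difference quotients do not produce this. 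Getting it requires either time regularity of $F$ (differentiating in $t$) or compatibility of $F$ along the whole lateral boundary $\partial M_R\times(0,\omega)$. The hypothesis $F\in H^{0,k-2}_{\lambda-2}$ gives neither. The corner condition $F=0$ on $\partial M_R\times\{0\}$ is also not the relevant condition: for $k=2$ no compatibility is needed, and for $k\ge3$ it does not suffice.

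In fact no argument can close this step: under the stated hypotheses the estimate fails for $k\ge3$. Near a boundary point, take the model $\partial_tU-\Delta U=F$ on $\{x_n>0\}\times(0,T)$, with $U=0$ on $\{x_n=0\}$ and at $t=0$. Let $F=\chi(x)h(t)e$, where $\chi\in C^\infty_c$ with $\chi=1$ near $0$, $e$ is a fixed vector, and $h$ is continuous with $h(0)=0$ but $h\notin H^{1/4}(0,T)$. Then $F$ is smooth in $x$, continuous, and vanishes on the corner, and $U\in H^{1,2}$ by maximal $L^2$-regularity. Suppose $U\in H^{1,3}=L^2(H^3)\cap H^1(H^1)$. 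Then $\nabla U\in L^2(H^2)\cap H^1(L^2)$, so the anisotropic trace theorem gives $\nabla^2U|_{x_n=0}\in H^{1/4}(0,T;L^2)$. Since $U$ and $\partial_tU$ vanish on $\{x_n=0\}$, the equation gives $F|_{x_n=0}=-\Delta U|_{x_n=0}$, which forces $h\in H^{1/4}$, a contradiction. The same computation works on $M_R$ with variable coefficients, since the lower-order terms have traces with more time regularity. The paper's proof has the same limitation: Schlag's inequality only controls $\nabla^2U$, and ``absorption and interpolation'' cannot supply the higher boundary derivatives. What does work for $k\ge3$ is the following:
\begin{enumerate}
\item Add time regularity of $F$ together with $F(0)\in H^1_0$; this is where the corner condition genuinely enters.
\item Apply the $k=2$ estimate to $\partial_tU$, whose initial value is $F(0)$.
\item Recover spatial derivatives on each time slice from the elliptic Dirichlet problem for $U$ with right-hand side $\partial_tU-F$. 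Its weighted estimates are uniform in $R$ by the same rescaling.
\end{enumerate}
This is the mechanism of Theorem~\ref{theorem higher regularity linear par pde}, and the forcing terms in the application do carry time regularity.
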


\begin{proof}
    Given a solution to \eqref{equation linear parabolic pde} on $E_{R,\om}$, the standard interior Schauder estimates for parabolic equations in unweighted H\"older spaces on $M_{R_0} \subset \subset M_R$ yield
    \begin{equation}\label{equation base schauder estimates}
    \|U\|_{H^{1,k}(E_{R_0,\om})} \le c\Big(\|F\|_{H^{0,k-2}(E_{(R_0+2),\om})} + \|U\|_{L^2(E_{(R_0+2),\om})} \Big).
    \end{equation}
    Note that our choice of $R_0$ will depend on the AC structure of $(M,g)$ and nothing else, and therefore is a universal constant. Next we will assume that we have modified each end by the diffeomorphism $\Psi$ described in Definition \ref{definition asymptotically conical} and that all properties in Lemma \ref{lemma decay estimates on AC G2 forms} hold with $\Psi = \mathrm{Id}$. We define the following parametrization map on the asymptotically conical end $V \cap M_R \cong \Sigma \times (R_0, R)$.
    \[
    \delta^s : (0,s^{-2}T) \times \Sigma \times \big(\tfrac12, 1\big) \subset \big(0,\tfrac12\big) \times \Sigma \times \big(\tfrac12, 1\big) \rightarrow (0,T) \times \Sg \times (R_0,R)
    \]
    \[
    \delta^s(t,x,\rho) = (s^2t, x, s\rho)
    \]
    where $s$ is chosen in $\big(2R_0,R \big)$. For $U$ and $F$ (restricted to the appropriate domains) and valid $s$, we define
    \begin{align*}
        U^s : (0,s^2T) \times \Sigma \times \big( \tfrac12, 1 \big) \rightarrow E_\om, &\qquad U^s = s^{-\la}(\delta^s)^*U \\
        F^s : (0,s^2T) \times \Sigma \times \big( \tfrac12, 1 \big) \rightarrow E_\om, &\qquad F^s = s^{-\la + 2}(\delta^s)^*F. 
    \end{align*}
    Because $U \in L^2_{\la}$ and $F \in H^{0,k-2}_{\la-2}$, the change of variables formula implies that there is a constant $c > 0$ independent of $r$ such that 
    \[
    \|\U^s\|_{L^2}, \|F^s\|_{H^{0,k-2}} < c,
    \]
    for all $s \in \big(2R_0, R \big)$. We rescale the linear equation \eqref{equation linear parabolic pde} to obtain that $U^s$ and $F^s$ satisfy
    \begin{align*}
    \dd_t U^s &= s^{-\la + 2}(\delta^s)^*\dd_t U \\ 
    &= s^{-\la + 2}(\delta^s)^*\big(\na_i(A^{aij}_b\na_j U^b) - B^{ai}_b(X) \na U^b - C^a_b(X) U^b + F^a(X)\big) \\
    &= s^{-\la + 2}(\delta^s)^*\big(\De U^a + \na_i((A^{aij}_b - g^{ij}G^a_b)\na_j U^b) - B^{ai}_b(X) \na U^b - C^a_b(X) U^b + F^a(X)\big) \\
    &= s^{2}(\delta^s)^* (\Delta_g \big((\delta^s)^{-1}\big)^*U^s)^a + s^{2}(\delta^s)^*(A^{aij}_b - g^{ij}G^a_b)\na^2_{ij} (U^s)^b \\ 
    &\qquad +  s^{2}(\delta^s)^*(\na_i(A^{aij}_b - g^{ij}G^a_b))\na_j (U^s)^b - s^2 (\delta_i^s)^* B^{ai}_b(X) \na (U^s)^b \\
    &\qquad \qquad - s^2 (\delta_i^s)^*C^a_b(X) (U^s)^b + F^s \\
    &= \Delta_{g_C} (U^s)^a + (LU^s)^a + s^{2}(\delta^s)^*(A^{aij}_b - g^{ij}G^a_b)\na^2_{ij} (U^s)^b \\ &\qquad +  s^{2}(\delta^s)^*(\na_i(A^{aij}_b - g^{ij}G^a_b))\na_j (U^s)^b  - s^2 (\delta_i^s)^* B^{ai}_b(X) \na (U^s)^b \\
    &\qquad \qquad - s^2 (\delta_i^s)^*C^a_b(X) (U^s)^b + F^s
    \end{align*}
    where $g_{C}$ is the asymptotic cone metric on the end $V$, and $L$ is a differential operator given by
    \begin{align*}
    L U^s &= s^{2}(\delta^s)^* \Big(\Delta_{g}\big((\delta^s)^{-1}\big)^*U^s - \Delta_{g_{C}}\big((\delta^s)^{-1}\big)^*U^s \Big).
    \end{align*}
    The asymptotic conicality of $g$ contributes an $O(\rho^{-1})$ term for each pulled-back derivative, and the conditions \eqref{equation conditions on coeffs} contribute enough decay to counteract the largeness of the $s^2$ term.  The condition \eqref{equation ac condition order k} and Lemma \ref{lemma decay estimates on AC G2 forms} imply that the coefficients of $L$ are $o(1)$ in $s$. The rescaled equation may be written as
    \begin{align}\label{equation rescaled parabolic system}
    (\dd_t U^s)^a
    &= \Delta_{g_C} (U^s)^a + \hat A^{aij}_b \na^2_{ij} (U^s)^b + \hat B^{ai}_b(X) \na (U^s)^b  + \hat C^a_b(X) (U^s)^b + (F^s)^a,
    \end{align}
    where $\hat A = o(1)$ as $s\rightarrow \infty$ (independently of $R$), and $\hat B, \hat C$ are uniformly bounded independently of $R$, $s$, $U$ and $F$. Thus, standard interior parabolic Schauder estimates allow us to obtain the following bound for a constant $c > 0$ independent of $R$, $s$, $U$ and $F$.
    \begin{equation}\label{equation interior schauder estimates}
       \|U^s\|_{H^{1,k}(\Om_{s,\eps})} \le c\Big(\|F^s\|_{H^{0,k-2}(\Om_s)} + \|U^s\|_{L^2(\Om_s)} \Big),
    \end{equation}
    where the domain $\Om_{s,\eps}$ on the left hand side is $E_{R,\om}$ pulled back by $(\de^s)^*$ to $(0,s^{-2}T) \times \Sg \times \big(\tfrac12 + \eps, 1 - \eps\big)$ and the domain $\Om_s$ on the right hand side is $E_{R,\om}$ pulled back to $(0,s^{-2}T) \times \Sg \times \big(\tfrac12, 1\big)$. Distances on the base of both the domains $\Om_{s,\eps}$ and $\Om_{s}$ are measured with respect to the cone metric $g_C$. In particular, the constant $c >0$ depends on the choice of $\eps > 0$.

    By rescaling and choosing $\eps > 0$ so that a finite selection of non-overlapping strips $\Omega_{s_i,\eps}$ covers $\Sg \times (R_0, R(1-\eps))$, we obtain the interior estimate
    \[
    \|U\|_{H^{1,k}_{\la}(E_{R(1-\eps),\om})} \le c\Big(\|F\|_{H^{0,k-2}_{\la-2}(E_{R,\om})} + \|U\|_{L^2_{\la}(E_{R,\om})} \Big),
    \]
    where $c > 0$ does not depend on $R$, but does depend on the choice of $\eps > 0$.

    It remains to prove the estimate up to the boundary $\partial M_R = \rho^{-1}(R)$. Due to the asymptotically conical structure of $g$, the geometry of the exterior boundary $(\de^{r})^{-1}(\dd M_R)$ of the rescaled domain $(\de^{r})^{-1}(M_R)$ approaches the geometry of the link $\Sg$ of the cone $C$, for large values of $R$. 

    Consider a neighborhood $(1-\eps,1] \times \Sg$ equipped with the cone metric $g_C$. Then, given $\de \in (0,1)$, the proof of \cite[Lemma 4]{SchlagSchauder} can be modified to show that the space-time neighborhood $[0,R^{-2}T] \times \Sg \times (1-\eps,1]$ can be covered by the union of finitely many neighborhoods $P_i$, contained in larger neighborhoods $\hat P_i$ and associated to diffeomorphisms $\Psi_i$ such that
    \begin{enumerate}
        \item $\mathrm{diam}(\hat P_i) < \de$ in the anisotropic parabolic metric, for all $i$.
        \item Let $B_s^{+} \equiv \{x = (x_1, \ldots, x_n) \in \R^n| \max |x^i| < s, x^n  \ge 0$\}, and let $Q_s = B_s^{+} \times [0,R^{-2}T]$. For all $i$, $\Psi_i(P_i) \subset Q_1$ and $\Psi_i(\hat P_i) = Q_3$. 
        \item for all $i$, bounds on the derivatives of $\Psi_i$, scaled by appropriate powers of $\de$ hold (see (iv) in the statement of \cite[Lemma 4]{SchlagSchauder}).
        \item At every point $x$ in the domain, the number of $\hat P_i$'s that overlap at $x$ is bounded above independently of $x$ and $\de$.
    \end{enumerate}
    Note that the shrinking of the time-interval $[0,R^{-2}T]$ does not affect the geometry of the decomposition. In fact, the $P_i$'s can be constructed for each $\delta$ with respect to the time interval $[0,1]$, and then restricted to $[0,R^{-2}T]$ for a given $R>0$.

    Recall that $U^R = R^{-\la}(\de^R)^*U$. For all $i$, the local section $U^R$ restricted to $\hat P_i$ satisfies \eqref{equation rescaled parabolic system} on $\hat P_i$. Notice that for large $R$, the coefficients of the second derivatives have uniformly bounded modulus of continuity. Thus, for any rescaling $R$, the following version of inequality \cite[(65)]{SchlagSchauder} holds with constants independent of $R$. 
    \begin{align}
        \int_{P_i} |\na^2 U^R|^2 &\le C_0 \sup_{(x,t) \in \hat P_i} |(g_C + A)(x,t)-(g_C + A)(x_i,t_i)|^2 \int_{\hat P_i} |\na^2 U^R|^2 \label{equation boundary schauder estimates}\\
        &\qquad + C_\de \int_{\hat P_i} (|F^R|^2 + |\na U^R|^2 + |U^R|^2),\nonumber
    \end{align}
    where $(x_i,t_i) \in P_i$ is arbitrary but fixed. We choose $\de >0$ small enough that \[
    C_0 \sup_i\sup_{(x,t) \in \hat P_i} |(g_C + A)(x,t)-(g_C + A)(x_i,t_i)|^2 < \frac12,\]
    and note that this $\de$ may be chosen once and for all due to the uniform bounds on the coefficients of the second derivatives in \eqref{equation rescaled parabolic system}. Then combining the estimates \eqref{equation base schauder estimates}, \eqref{equation interior schauder estimates}, and \eqref{equation boundary schauder estimates}, and applying standard absorption and interpolation results, we obtain the full Schauder estimates \eqref{equation schauder estimates vb with boundary}, with coefficient $c >0$ independent of the rescaling parameter $R > R_0$.
    \end{proof}

We will also need Schauder estimates for boundary value problems involving elliptic operators of high order. The following proposition establishes a priori estimates for the weighted Hodge bilaplacian, an operator of order 4. 

\begin{prop}\label{proposition lopatinski shapiro condition}
Let $L$ be the elliptic operator of order $4$ defined on $k$-forms $\Omega^k(M)$ on a complete AC manifold $(M,g)$ by
\begin{align*}
    L\sg &= \De^*\De \sg \\
        &= \rho^{2\la + 4 + 7}\De (\rho^{-2\la -7} \De \sg) \\
        &= \rho^{4}\De \De \sg + C_3 \rho^{3} \na \rho * \na \De \sg + \rho^{2} (\De \rho )(\De \sg).
\end{align*}
Notice that the $j$th order coefficients $B_j$ have the property that
\begin{equation}\label{equation growth of coefficients}
|B_j| = O(1),
\end{equation}
for $j = 0, \ldots, 4$ (compare to the equivalent condition to \eqref{equation conditions on coeffs} in the parabolic case). Let $L^*$ denote the adjoint of $L$, defined by
\begin{align*}
    L^*\sg &= \De\De^* \sg \\
        &= \De \rho^{2\la + 4 + 7}\De (\rho^{-2\la -7} \sg) \\
        &= \rho^{4}\De \De \sg + \sum_{i=0}^3 \sum_{j=1}^{4-i} C_{ij} \rho^i \na^j \rho * \na^{4-i-j} \rho * \na^i \sg
\end{align*}
As before, the $j$th order coefficients $B_j$ have $O(1)$ decay. Let $M_R = \rho^{-1}((0,R]) \subset M$ and suppose that $\sg \in L^2_{\la}\Om^k(M_R) \cap H^4_{loc} \Om^k(M_R)$ solves either of the following adjoint boundary value problems: 
\begin{equation}\label{equation fourth order bvp with LS}
    \left\{
\begin{array}{lr}
L\sg = \eta   & \;\; \text{ on } M_R\\ 
\bt \sg = 0 \;\text{ and }\; \bt \rho \de \sg = 0 & \;\; \text{ on } \dd M_R,
\end{array}
\right.
\end{equation}
or its adjoint,
\begin{equation}\label{equation fourth order bvp with LS adjoint}
    \left\{
\begin{array}{lr}
L^*\sg = \eta   & \;\; \text{ on } M_R\\ 
\bt \sg = 0 \;\text{ and }\; \bt \rho^{2\la + 7 + 1} \de (\rho^{-2\la -7} \sg) = 0 & \;\; \text{ on } \dd M_R,
\end{array}
\right.
\end{equation}
where $\eta \in H_{\la}^{\ell-4}(M_R)$ for $\ell \ge 4$.
Then there exists a constant $c >0$ independent of $\sg, \eta,$ and $R$, such that
    \begin{equation}\label{equation schauder estimates forms with lopatinski shapiro}
        \|\sg\|_{H^{\ell}_{\la}(M_R)} \le c\Big(\|\eta\|_{H^{\ell-4}_{\la}(M_R)} + \|\sg\|_{L^2_{\la}(M_R)} \Big).
    \end{equation}
\end{prop}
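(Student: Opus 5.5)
The estimate \eqref{equation schauder estimates forms with lopatinski shapiro} will be proved by the same three-part scheme as Proposition \ref{proposition weighted schauder estimates}: a fixed compact core, dyadic rescaled annuli on the AC end, and a rescaled collar near $\dd M_R$. The elliptic input replacing the parabolic Schauder theory is the Agmon--Douglis--Nirenberg $L^2$ theory for elliptic boundary problems satisfying the Lopatinski--Shapiro condition.

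\emph{Core region.} On $M_{R_0+2}$, with $R_0$ depending only on the AC structure, interior elliptic $L^2$ estimates for the fourth-order operator $L$ (or $L^*$) give
\[
\|\sg\|_{H^\ell(M_{R_0})} \le c\big(\|\eta\|_{H^{\ell-4}(M_{R_0+2})} + \|\sg\|_{L^2(M_{R_0+2})}\big).
\]
The principal symbol is $\rho^4|\xi|^4$, so $L$ is uniformly elliptic there.

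\emph{Interior of the end.} On the end I use the dilations $\de^s(x,\rho)=(x,s\rho)$ with $s\in(2R_0,R)$ and set $\sg^s = s^{-\la}(\de^s)^*\sg$. Because $L$ is scale-invariant of weight $0$ (its leading term is $\rho^4\De\De$ and the coefficients obey \eqref{equation growth of coefficients}), the pulled-back equation on $\Sg\times(\tfrac12,1)$ reads
\[
L_{g_C}\sg^s + \hat L\sg^s = s^{-\la}(\de^s)^*\eta ,
\]
where $L_{g_C}$ is the corresponding operator for the cone metric. By Lemma \ref{lemma decay estimates on AC G2 forms}, the coefficients of $\hat L$ are $o(1)$ in $s$, uniformly in $R$, and all lower-order coefficients are bounded. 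Interior elliptic estimates on the fixed annulus, with constants independent of $s$, then hold. Summing over a finitely overlapping cover by annuli and using the change-of-variables identity for the weighted norms (the $\rho^{-7}dV$ measure is exactly dilation-invariant) gives the weighted estimate on $M_{R(1-\eps)}$.

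\emph{Boundary collar; main obstacle.} The hard step is near $\dd M_R$. After rescaling by $s=R$, the boundary becomes $\Sg\times\{1\}$ in a collar $\Sg\times(1-\eps,1]$, with a metric $o(1)$-close to $g_C$ uniformly in $R$. I must verify that the boundary operators $\bt\sg$ and $\bt\rho\de\sg$ (respectively $\bt\sg$ and $\bt\rho^{2\la+8}\de(\rho^{-2\la-7}\sg)$) satisfy the Lopatinski--Shapiro condition for the principal part $\rho^4\De^2$. Freezing coefficients, the principal symbols of the boundary operators are those of $(\bt\sg,\bt\de\sg)$ up to a nonvanishing factor; the weights only affect lower order. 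For the bilaplacian, the problem $\De^2\sg=\eta$ with $\bt\sg=0$, $\bt\de\sg=0$ is the composition of two Dirichlet-type problems for $\De$ with absolute boundary conditions. To check the condition directly, I take the half-space ODE model $(\dd_n^2-|\xi'|^2)^2 u=0$ with decaying solutions $(a+bx_n)e^{-|\xi'|x_n}$, split $\sg$ into tangential and normal components, and show that the only decaying solution with vanishing boundary data is zero. This is a finite-dimensional linear algebra check, which I expect to go through, since it is essentially the well-posedness of the Dirichlet problem \eqref{equation intro dirichlet BVP} iterated.

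Given the Lopatinski--Shapiro condition at the model cone boundary, it persists under $o(1)$ perturbations. The ADN boundary estimates, localized by a finite cover of the compact collar as in \cite{SchlagSchauder}, then hold with constants independent of $R$, since the rescaled geometry converges to the fixed cone collar. Finally I undo the scaling, combine the three regions, and absorb intermediate norms by interpolation. This yields \eqref{equation schauder estimates forms with lopatinski shapiro} for both \eqref{equation fourth order bvp with LS} and \eqref{equation fourth order bvp with LS adjoint}.
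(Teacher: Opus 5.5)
Your three-region scheme (compact core, dilations $\de^s$ on the end, rescaling by $s=R$ near $\dd M_R$ followed by Lopatinski--Shapiro boundary estimates) is the route the paper takes. The paper's proof also simply asserts that $(\bt,\bt\de_{g_C})$ satisfies the Lopatinski--Shapiro condition for $\De_{g_C}\De_{g_C}$. The step that fails is the linear-algebra check you defer.

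A fourth-order elliptic operator on a bundle of rank $N=\binom{7}{k}$ needs $2N$ scalar boundary conditions. In your half-space model the decaying solutions $(a+bx_n)e^{-|\xi'|x_n}$ form a $2N$-dimensional space. But $\bt\sg$ and $\bt\de\sg$ together have only $\binom{6}{k}+\binom{6}{k-1}=N$ components, so the map to boundary data has a kernel of dimension at least $N$. Explicitly, take $u=x_ne^{-|\xi'|x_n}b$ with $b$ a constant form having no $dx_n$-component. Then $u=0$ and $\dd_n u=b$ at $x_n=0$, so $\bt u=0$ there, and the symbol of $\de u$ at $x_n=0$ reduces to $-\iota_{\dd_n}b=0$. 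For $k=0$ the two conditions collapse to the single condition $\sg|_{\dd M_R}=0$ for a fourth-order scalar equation. Your heuristic that the problem is ``two Dirichlet problems composed'' is where this slips: the composition would also require $\bt$ and $\bt\de$ of the intermediate form $\rho^{-2\la-7}\De\sg$ to vanish. The Lopatinski--Shapiro property available from \cite{HodgeDecompSchwarz} concerns the second-order problem $(\De;\bt,\bt\de)$, not its square.

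As a result, no argument can establish \eqref{equation schauder estimates forms with lopatinski shapiro} under the stated hypotheses, even for one fixed $R$. For $k=0$, let $H$ be any harmonic function on $M_R$, smooth up to the boundary, and let $\sg$ solve $\De\sg=\rho^{2\la+7}H$ with $\sg|_{\dd M_R}=0$. Then $L\sg=\rho^{2\la+11}\De H=0$ and both boundary conditions in \eqref{equation fourth order bvp with LS} hold. So the solutions with $\eta=0$ form an infinite-dimensional space, and an $L^2$-orthonormal sequence in it would be bounded in $H^\ell$, contradicting Rellich. Similar examples exist for every $k$ and for \eqref{equation fourth order bvp with LS adjoint}.

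The repair is to add the missing pair of conditions: $\bt(\rho^{-2\la-7}\De\sg)=0$ and $\bt\de(\rho^{-2\la-7}\De\sg)=0$ for \eqref{equation fourth order bvp with LS}, and $\bt\De^*\sg=0$ and $\bt\de\De^*\sg=0$ for \eqref{equation fourth order bvp with LS adjoint}. These do hold for the forms to which the paper applies the proposition:
for $\psi_i$, by Corollary \ref{corollary vanishing of boundary terms} applied to $\phi_i=\sg_i\De\psi_i$; and for $\phi_i$, because $\De^*\phi_i=\sg_i^{-1}\psi_i$ lies in $H^2\Om^k_{hom}(M_R)$.

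With these conditions your factorization becomes correct. In the frozen-coefficient model, the inner factor $\De u$ of a decaying solution with vanishing data solves the second-order model problem with zero $(\bt,\bt\de)$ data, hence vanishes, and then so does $u$. The rest of your scaling argument then carries over unchanged.
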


\begin{rmk}
    Note that the boundary conditions $\bt \rho \de \sg = 0$ and $\bt \rho^{2\la + 7 + 1} \de (\rho^{-2\la -7} \sg) = 0$ are equivalent to $\bt \de \sg = 0$ and $\bt \de (\rho^{-2\la -7} \sg) = 0$.
\end{rmk}

\begin{proof}
    The proposition will be first proved for the boundary value problem \eqref{equation fourth order bvp with LS}. Since the proof for \eqref{equation fourth order bvp with LS adjoint} is almost identical, we will simply make a note at the end of the necessary modifications needed to prove the estimates for the adjoint problem. 
    
    The proof uses the same method of scaling as the proof of Proposition \ref{proposition weighted schauder estimates}. The only substantial difference is that the estimates at the boundary are obtained by using elliptic estimates that are not equivalent to the parabolic estimates used before. 

    The scale-invariant interior estimates are proved first. As before, define a parametrization map on the asymptotically conical end $V \cap M_R \cong \Sigma \times (R_0, R)$.
    \[
    \delta^s : \Sigma \times \big(\tfrac12, 1\big) \rightarrow \Sg \times (R_0,R)
    \]
    \[
    \delta^s(x,\rho) = (x, s\rho)
    \]
    where $s$ is chosen in $\big(2R_0,R \big)$. For $\sg$ and $\eta$ (restricted to the appropriate domains) and valid $s$, we define
    \begin{align*}
        \sg^s : \Sigma \times \big(\tfrac12, 1 \big) \rightarrow \La^k(M_R), &\qquad \sg^s = s^{-\la}(\delta^s)^*\sg \\
        \eta^s :  \Sigma \times \big(\tfrac12, 1\big) \rightarrow \La^k(M_R), &\qquad \eta^s = s^{-\la}(\delta^s)^*\eta. 
    \end{align*}
    Because $\sg \in L^2_{\la}$ and $\eta \in H^{\ell-4}_{\la}$, the change of variables formula implies that there is a constant $c > 0$ independent of $R$ such that 
    \[
    \|\sg^s\|_{L^2}, \|\eta^s\|_{H^{\ell-4}} < c,
    \]
    for all $s \in \big(2R_0, R \big)$. We rescale the linear equation \eqref{equation fourth order bvp with LS} to obtain that $\sg^s$ and $\eta^s$ satisfy
    \begin{align*}
    \eta^s &= s^{-\la}(\delta^s)^*\eta \\ 
    &= s^{-\la}(\delta^s)^*(L\sg) \\
    &= s^{-\la}(\delta^s)^*\big(\rho^{4}\De \De \sg + \rho^{3} \na \rho * \na \De \sg + \rho^{2} (\De \rho )(\De \sg)\big) \\
    &= s^{-\la}(\delta^s)^*\big(\rho^{4}\De_g \De_g \sg + \rho^{3} \na \rho * \na \De_g \sg + \rho^4 \cR * \De_g \sg + \rho^4 \De_g (\cR * \sg) \\
    &\qquad + \rho^{2} (\De \rho )(\De \sg) + \rho^{3} \na \rho * \na (\cR * \sg) + \rho^4 \cR * \cR * \sg  \big),
    \end{align*}
    where the last line follows by applying the Weitzenb\"ock identity. Observe that the lower order coefficients satisfy the boundedness condition \eqref{equation growth of coefficients} and we simply collect these terms in the bounded tensors $B_0,\ldots,B_3$. Continuing the computation,
    \begin{align*}
    \eta^s &= (\delta^s)^* (\rho^4 \De_g \Delta_g \big((\delta^s)^{-1}\big)^*\sg^s) + \sum_{j=0}^3 \big((\delta^s)^*B_j\big) * (\na^j_{g_C} \sg^s) \\ 
    &= \bigg(\frac{(\de^s)^* \rho}{s}\bigg)^4\De_{g_C} \Delta_{g_C} \sg^s + \sR \sg^s + \sum_{j=0}^3 \big((\delta^s)^*B_j\big) * (\na^j_{g_C} \sg^s), 
    \end{align*}
    where $g_{C}$ is the asymptotic cone metric on the end $V$, and $\sR$ is a differential operator given by
    \begin{align*}
    \sR \sg^s &= (\delta^s)^* \Big(\rho^{4}\De_{g}\Delta_{g}\big((\delta^s)^{-1}\big)^*\sg^s - \rho^{4}\De_{g_{C}}\Delta_{g_{C}}\big((\delta^s)^{-1}\big)^*\sg^s \Big).
    \end{align*}
    The asymptotic conicality of $g$ contributes an $O(\rho^{-1})$ term for each pulled-back derivative.  The condition \eqref{equation ac condition order k} and Lemma \ref{lemma decay estimates on AC G2 forms} imply that the coefficients of $\sR$ are $o(1)$ in $s$. Putting everything together
    \begin{align}\label{equation rescaled elliptic system}
    \eta^s
    &= \bigg(\frac{(\de^s)^* \rho}{s}\bigg)^4 \De_{g_C} \Delta_{g_C} \sg^s + \sR \sg^s + \sum_{j=0}^3 \big((\delta^s)^*B_j\big) * (\na^j \sg^s),
    \end{align}
    where the coefficients of $\sR$ are $o(1)$ as $s\rightarrow \infty$ (independently of $R$), and the lower order coefficients are uniformly bounded independently of $R$, $s$, $\sg$ and $\eta$. Furthermore, the term 
    \[
    \bigg(\frac{(\de^s)^* \rho}{s}\bigg)^4
    \]
    is a linear function interpolating between $\tfrac{1}{16}$ and $1$ over the interval $\big(\tfrac12, 1 \big)$. In particular, the equation in uniformly elliptic. Thus, standard interior elliptic Schauder estimates (see \cite[Corollary 10.3.9]{NicolaescuManifoldLectures}) allow us to obtain the following bound for a constant $c > 0$ independent of $R$, $s$, $\sg$ and $\eta$.
    \begin{equation}
      \|\sg^s\|_{H^{\ell}\Om^k(\sD_{s,\eps})} \le c\bigg(\bigg\|\eta^s + \sR \sg^s + \sum_{j=0}^3 \big((\delta^s)^*B_j\big) * (\na^j \sg^s)\bigg\|_{H^{\ell-4}\Om^k(\sD_s)} + \|\sg^s\|_{L^2 \Om^k(\sD_s)} \bigg)
    \end{equation}
    The smallness of the coefficients of $\sR$ and standard interpolation results (see \cite[Theorem 7.28]{GT}) give the bound 
    \[
    \bigg\| \sR \sg^s  + \sum_{j=0}^3 \big((\delta^s)^*B_j\big) * (\na^j \sg^s)\bigg\|_{H^{\ell-4}\Om^k(\sD_s)}  \le \eps_1 \|\sg^s\|_{H^{\ell}\Om^k(\sD_{s})} + C(\eps_1) \|\sg^s\|_{L^2 \Om^k(\sD_s)}.
    \]
    Putting these inequalities together yields the following interior estimate.
    \begin{equation}\label{equation interior elliptic schauder estimates}
      \|\sg^s\|_{H^{\ell}\Om^k(\sD_{s,\eps})} \le c\Big(\|\eta^s\|_{H^{\ell-4}\Om^k(\sD_s)} + \|\sg^s\|_{L^2_{\la}\Om^k(\sD_s)} \Big) + \eps_1 \|\sg^s\|_{H^{\ell} \Om^k(\sD_{s})},
    \end{equation}
    where the space $\Om^k(\sD_{s,\eps})$ on the left hand side is $\Om^k(M_R)$ pulled back by $(\de^s)^*$ to $\Sg \times \big(\tfrac12 + \eps, 1 - \eps\big]$ and the space $\Om^k(\sD_{s})$on the right hand side is $\Om^k(M_R)$ pulled back by $(\de^s)^*$ to $\Sg \times \big(\tfrac12, 1\big)$. In particular, the constant $c >0$ depends on the choice of $\eps > 0$.

    By rescaling and choosing $\eps > 0$ so that a finite selection of non-overlapping strips $\de^{s_i}\big( \sD_{s_i,\eps} \big)$ covers $\Sg \times (R_0, R(1-\eps))$, we obtain the interior estimate
    \begin{equation}\label{equation interior only estimates}
        \|\sg\|_{H^{\ell}_{\la}\Om^k(M_{R(1-\eps)})} \le c\Big(\|\eta\|_{H^{\ell-4}_{\la}\Om^k(M_R)} + \|\sg\|_{L^2_{\la}\Om^k(M_R)} \Big) + \eps_1 \|\sg\|_{H^{\ell}_{\la}\Om^k(M_{R})},
    \end{equation}
    where $c > 0$ does not depend on $R$, but does depend on the choice of $\eps > 0$.

    It remains to prove the estimate up to the boundary. Assume that $\eps \ll \tfrac14$. Consider a cutoff function $\zeta \in C^\infty(\Sg \times [\tfrac12,1])$ such that $\zeta \equiv 1$ on $\Sg \times (\tfrac34 , 1)$ and $\zeta \equiv 0$ on $\Sg \times (\tfrac12, \frac58)$, that $\na^j \zeta$ are bounded, possibly depending on $j$, and (by an abuse of notation) $\zeta(x) = \zeta(|x|)$. Let $\zeta_R(x) = \zeta (R^{-1}|x|)$, where $x \in \Sg \times [\tfrac12, 1]$. 

    Consider the expression
    \begin{align*}
        L(\zeta_R \sg) &= \zeta_R L\sg  + \sum_{j=1}^3 \sum_{i= 1}^j B_j * \na^{j-i}\sg * \na^i \zeta_R\\
        &= \zeta_R \eta + \sum_{j=1}^3 \sum_{i= 1}^j  (\rho^{j-i} R^{i-j} B_j) * \na^{j-i}\sg * \na^{i} \zeta \\
        &:= \hat \eta.
    \end{align*}
    We repeat the rescaling process used for the interior estimates, except we set $s := R$ (e.g. $s^{-\la}(\de^s)^*= R^{-\la}(\de^R)^*$, etc.). Note that $(\de^R)^* \zeta_R = \zeta$. We obtain the equation
    \begin{align*}
    \bigg(\frac{(\de^R)^* \rho}{R}\bigg)^4\De_{g_C} \Delta_{g_C} \zeta \sg^R + \sR\zeta \sg^R 
    & = \zeta \eta^R + \sum_{j=1}^3 \sum_{i= 1}^j  \big((\delta^R)^*B_j\big) * \na^{j-i}\sg^R * \na^{i} \zeta \\
    & = \hat \eta^R.
    \end{align*}
    Since $\na^i \zeta \equiv 0$ on $[\tfrac34 , 1]$, the appropriate norms of $\hat \eta^R$ can be bounded in terms of the norms of $\eta^R$ and the interior estimates on $\sg^R$. 

    Now note that after rescaling, the boundary conditions $\bt \sg = 0$ and $\bt \rho \de \sg = 0$ imply that $\bt \zeta \sg^R$ and $\bt \de_{R^{-2}(\de^R)^*g} \zeta \sg^R$ vanish on the boundary $\dd (\Sg \times [\tfrac12,1]) := (\Sg \times \{\frac12\} \cup (\Sg \times \{1\})$. Observe that $\zeta \sg^R$ solves the following elliptic boundary value problem 
    \begin{equation}
        \left\{
            \begin{array}{lr}
                \big(\tfrac{(\de^R)^* \rho}{R}\big)^4\De_{g_C} \Delta_{g_C} \zeta \sg^R = \hat \eta^R - \sR \zeta \sg^R  & \;\; \text{ on } \Sg \times [\tfrac12,1]\\ 
                \bt \zeta \sg^R = 0 \;\text{ and }\; \bt \de_{g_C} \sg^R = \bt(\de_{g_C} - \de_{R^{-2}(\de^R)^*g})\sg^R & \;\; \text{ on } \dd (\Sg \times [\tfrac12,1]),
            \end{array}
        \right.
    \end{equation}
    where the pair $(\bt, \bt \de_{g_C})$ satisfies the Lopatinski-Shapiro condition defined in \cite[Definition 1.6.1]{HodgeDecompSchwarz}. Thus, we may apply the elliptic apriori estimate in \cite[(ii), Theorem 1.6.2]{HodgeDecompSchwarz} to find that
    \begin{multline}\label{equation initial boundary estimates}
        \|\sg^R\|_{H^{\ell}\Om^k(\Sg \times [\frac34,1])} \le c\Big(\|\hat \eta^R - \sR \zeta \sg^R\|_{H^{\ell-4}\Om^k(\Sg \times [\frac12,1])} \\ + \|\bt(\de_{g_C} - \de_{R^{-2}(\de^R)^*g})\sg^R\|_{H^{\ell-\frac32}\Om^k(\dd (\Sg \times [\frac12,1]))} + \|\sg^R\|_{L^2\Om^k(\Sg \times [\frac12,1])} \Big),
    \end{multline}
    where $c>0$ does not depend on $R$. The trace operator is a continuous operator by \cite[Theorem 1.3.7(c)]{HodgeDecompSchwarz}, so there is a constant $c>0$ such that
    \[
    \|\bt(\de_{g_C} - \de_{R^{-2}(\de^s)^*g})\sg^s\|_{H^{\ell-\frac32}\Om^k(\dd (\Sg \times [\frac12,1]))} \le c \|(\de_{g_C} - \de_{R^{-\la}(\de^s)^*g})\sg^R\|_{H^{\ell-1} \Om^k(\Sg \times [\frac12,1])}
    \]
    The smallness of the coefficients of the operators $\sR$ and $\de_{g_C} - \de_{R^{-2}(\de^R)^*g}$ allow us to absorb their corresponding terms into the left-hand side of the inequality \eqref{equation initial boundary estimates}. Undoing the scaling and using the interior estimates to bound $\hat \eta$, we find that
    \begin{align*}
        \|\sg\|_{H^{\ell}_{\la}\Om^k(M_{R}\setminus M_{3R/4})} & \le c\Big(\|\eta\|_{H^{\ell-4}_{\la}\Om^k(M_R)} + \|\sg\|_{L^2_{\la}\Om^k(M_R)} \Big).
    \end{align*}
    Adding this estimate up to the boundary to the interior estimates \eqref{equation interior only estimates} finishes the proof of the proposition for the problem \eqref{equation fourth order bvp with LS}. To prove the estimates for solutions to \eqref{equation fourth order bvp with LS}, simply note that a) the principal symbol of $\De^*\De$ and that of $\De \De^*$ are the same, and b) the principal symbol of the boundary operator $\big(\bt \sg, \bt \rho^{2\la + 7 + 1} \de (\rho^{-2\la -7} \sg) \big)$ is the same as the principal symbol of $\big(\bt \sg, \bt \rho \de \sg \big)$. This is enough to show that the estimates \eqref{equation schauder estimates forms with lopatinski shapiro} can be obtained for solutions to \eqref{equation fourth order bvp with LS adjoint}.
\end{proof}

\begin{cor}\label{corollary lopatinski shapiro higher order}
Take the hypotheses of Proposition \ref{proposition lopatinski shapiro condition} and consider a form $\sg \in L^2_{\la}\Om^k(M_R) \cap H^{4m}_{loc} \Om^k(M_R)$ which solves either of the following boundary value problems.
\begin{equation}\label{equation higher order arb elliptic bvp with LS}
    \left\{
\begin{array}{lr}
L^m\sg = \eta   & \;\; \text{ on } N\\ 
\bt \sg = 0 \;\text{ and }\; \bt \de \sg = 0 & \;\; \text{ on } \dd N,
\end{array}
\right.
\end{equation}
or,
\begin{equation}\label{equation higher order arb elliptic bvp with LS adjoint}
    \left\{
\begin{array}{lr}
(L^*)^m\sg = \eta   & \;\; \text{ on } N\\ 
\bt \sg = 0 \;\text{ and }\; \bt \rho^{2\la + 7 + 1} \de (\rho^{-2\la -7} \sg) = 0 & \;\; \text{ on } \dd N,
\end{array}
\right.
\end{equation}
where $\eta \in H_{\la}^{k-4m}(M_R)$. Then there exists a constant $c >0$ independent of $\sg, \eta,$ and $R$, such that
    \begin{equation}\label{equation high order elliptic schauder estimates on forms}
        \|\sg\|_{H^{k}_{\la}(M_R)} \le c\Big(\|\eta\|_{H^{k-4m}_{\la}(M_R)} + \|\sg\|_{L^2_{\la}(M_R)} \Big).
    \end{equation}
\end{cor}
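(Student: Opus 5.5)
The plan is to run the scaling argument of Proposition \ref{proposition lopatinski shapiro condition} again, this time directly for the operator $L^m$ (respectively $(L^*)^m$) of order $4m$. I will not iterate the fourth-order estimate: iterating would need boundary conditions on the intermediate forms $L^j\sg$, and the statement only imposes $\bt\sg = 0$ and $\bt\de\sg = 0$.

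\textbf{Step 1 (rescaled operator).} Expanding $L^m$ by the Leibniz rule and the Weitzenb\"ock identity gives
\[
L^m\sg = \rho^{4m}\De^{2m}\sg + \sum_{j<4m} B_j * \rho^{j}\na^j\sg ,
\]
with $|B_j| = O(1)$. This is the analogue of \eqref{equation growth of coefficients}, because each $\na$ falling on a power of $\rho$ costs one power of $\rho$. Pulling back by $\de^s$ and setting $\sg^s = s^{-\la}(\de^s)^*\sg$ and $\eta^s = s^{-\la}(\de^s)^*\eta$, I will obtain on $\Sg\times(\tfrac12,1)$
\[
\eta^s = \Big(\tfrac{(\de^s)^*\rho}{s}\Big)^{4m}\De_{g_C}^{2m}\sg^s + \sR_m\sg^s + \textstyle\sum_{j<4m}\big((\de^s)^*B_j\big)*\na^j_{g_C}\sg^s .
\]
Here the coefficients of $\sR_m$ are $o(1)$ as $s\to\infty$, by Lemma \ref{lemma decay estimates on AC G2 forms} and \eqref{equation ac condition order k}, and the lower-order coefficients are bounded uniformly in $s$ and $R$.

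\textbf{Step 2 (interior estimate).} Standard interior elliptic estimates for this uniformly elliptic operator, together with interpolation to absorb $\sR_m$ and the lower-order terms, give the rescaled estimate on $\Sg\times(\tfrac12+\eps,1-\eps)$. Summing over a finite family of dyadic strips and adding the compact-core estimate on $M_{R_0}$ yields the analogue of \eqref{equation interior only estimates}: the $H^k_\la$ norm of $\sg$ on $M_{R(1-\eps)}$ is bounded by $\|\eta\|_{H^{k-4m}_\la}+\|\sg\|_{L^2_\la}$ plus an absorbable term $\eps_1\|\sg\|_{H^k_\la}$.

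\textbf{Step 3 (boundary estimate, the main obstacle).} I will cut off with $\zeta_R$ as in the Proposition and rescale with $s=R$. Then $\zeta\sg^R$ solves a fixed-geometry problem on $\Sg\times[\tfrac12,1]$ with principal part $(\De_{g_C})^{2m}$. Its boundary data are $\bt(\zeta\sg^R)=0$ and $\bt\de_{g_C}(\zeta\sg^R)=$ (small error), which I will handle through the trace bound exactly as before. The difficulty is that an operator of order $4m$ with only these two boundary operators is not a complementing (Lopatinski--Shapiro) system once $m>1$. The a priori estimate of \cite[Theorem 1.6.2]{HodgeDecompSchwarz} therefore cannot be quoted verbatim.

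My first attempt is to use the structure of the problem, which is built from $\De$. I would write $L^m = L\circ L^{m-1}$ and verify that, for solutions of \eqref{equation higher order arb elliptic bvp with LS} in the class the corollary is applied to, the missing $2m-2$ boundary operators on $\sg$ are determined by the equation together with the two given ones. I would check this on the half-space model by direct computation of the decaying ODE solutions of $(\De_{g_C})^{2m}$ in the normal variable. If that check succeeds, the complemented system satisfies the Lopatinski--Shapiro condition. Schwarz's estimate then applies on the fixed annulus with a constant independent of $R$, since the only $R$-dependence sits in coefficients that are uniformly close to those of the cone. If the check fails, I would instead derive the boundary estimate from Proposition \ref{proposition lopatinski shapiro condition} applied once to $\sg$, with right-hand side $L\sg$, and bound $\|L\sg\|_{H^{k-4}_\la}$ through the interior estimate for $L^{m-1}$ plus an $L^2_\la$ control of $L\sg$ coming from the self-adjoint structure of $\De^*\De$ under these boundary conditions.

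\textbf{Step 4 (conclusion and adjoint case).} Undoing the scaling gives the estimate on $M_R\setminus M_{3R/4}$, and adding Step 2 proves \eqref{equation high order elliptic schauder estimates on forms}. For \eqref{equation higher order arb elliptic bvp with LS adjoint}, I will use that $(L^*)^m$ and $L^m$ have the same principal symbol, and that the two boundary operators differ only in lower order. The same argument then applies verbatim.
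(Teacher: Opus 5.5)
\textbf{Gap in Step 3.} Your Steps 1, 2 and 4 coincide with the paper's proof. The paper handles your Step 3 only by asserting that $(\bt,\bt\de)$ remains a Lopatinski--Shapiro system for $L^m$ and citing \cite[Theorem 1.6.2]{HodgeDecompSchwarz}. You are right to distrust that assertion. However, neither of your two routes can work, because with only these two boundary conditions the estimate is false, already for $m=1$.

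Count scalar conditions. On $k$-forms, $\bt\sg$ and $\bt\de\sg$ impose $\binom{6}{k}+\binom{6}{k-1}=\binom{7}{k}=:N$ conditions. That is the right number for the second-order $\De$, but an operator of order $4m$ on a rank-$N$ bundle needs $2mN$. In the half-space model at tangential frequency $\xi\neq0$, the decaying solutions of $\De^{2m}u=0$ are $P(x_n)e^{-|\xi|x_n}$ with $P$ a fibre-valued polynomial of degree $<2m$. This space is $2mN$-dimensional, and the boundary operators cut out only $N$ dimensions. The interior equation imposes no further relation among boundary values, so your model check must fail. Note also that $(2m-1)N$ scalar conditions are missing, not $2m-2$ operators.

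An explicit counterexample: let $h$ range over the infinite-dimensional space of $k$-forms smooth up to $\dd M_R$ with $\De h=0$ and $\rho^{2\la+7}h\perp\sH^k_D(M_R)$, and let $\sg$ be the Dirichlet potential of $\rho^{2\la+7}h$. Then $\bt\sg=0$, $\bt\de\sg=0$, and $L\sg=\rho^{2\la+11}\De h=0$, so $L^m\sg=0$ for every $m\ge1$. The inequality with $\eta=0$ would then bound the left-hand Sobolev norm by the $L^2_\la$-norm on an infinite-dimensional space, contradicting Rellich compactness on the compact $M_R$. Taking $\sg=\rho^{2\la+7}\tau$, with $\tau$ the Dirichlet potential of $\rho^{-2\la-11}h$, gives the same failure for \eqref{equation higher order arb elliptic bvp with LS adjoint}.

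\textbf{Your second route.} The case $m=1$ above shows that Proposition \ref{proposition lopatinski shapiro condition} has the same defect, so applying it to $\sg$ with right-hand side $L\sg$ rests on an invalid estimate. Independently, that route needs up-to-the-boundary control of $L\sg$, which satisfies no boundary condition, so interior estimates for $L^{m-1}$ give nothing near $\dd M_R$. Nor is $\De^*\De$ symmetric under $(\bt,\bt\de)$ alone: the Green's formula of Proposition \ref{proposition formal adjoint of laplacian} leaves boundary terms in $\bt(\rho^{-2\la-7}\De\sg)$ and $\bt\de(\rho^{-2\la-7}\De\sg)$.

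\textbf{The missing ingredient.} It is exactly what you set aside at the outset: boundary conditions on the intermediate iterates. These are available where the corollary is actually used, in Proposition \ref{proposition uniform boundedness of projection}, where $\nu$ is a finite combination of the $\phi_i$. By Corollary \ref{corollary vanishing of boundary terms}, together with $\De^*\phi_i=\sg_i^{-1}\psi_i$ and $\De\psi_i=\sg_i^{-1}\phi_i$:
\begin{itemize}
\item each $(\De\De^*)^j\nu$ satisfies $\bt(\rho^{-2\la-7}\,\cdot\,)=0$ and $\bt\de(\rho^{-2\la-7}\,\cdot\,)=0$;
\item each $\De^*(\De\De^*)^j\nu$ lies in $H^2\Omega^k_{\mathrm{hom}}(M_R)$.
\end{itemize}
Add these as hypotheses. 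Then iterate the second-order estimate for $\De$ with $(\bt,\bt\de)$, which genuinely is Lopatinski--Shapiro, applying it alternately to $\rho^{-2\la-7}(\De\De^*)^j\nu$ and to $\De^*(\De\De^*)^j\nu$ via your rescaling argument. Absorb the intermediate lower-order norms by interpolation. This yields \eqref{equation high order elliptic schauder estimates on forms} with $R$-independent constants for the forms that actually occur.
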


\begin{proof}
    The proof proceeds along the same lines as that of Proposition \ref{proposition lopatinski shapiro condition}, but this time the principal symbol of $L^k$ is
    \[
    \mathrm{sym}_P(L^k) = \rho^{4m}g^{i_1j_1}g^{i_2j_2}\cdots g^{i_{2m}j_{2m}}\xi_{i_1}\xi_{j_1}\xi_{i_2}\xi{j_2}\cdots \xi_{i_{2m}}\xi{j_{2m}},
    \]
    which is elliptic of order $m$. The lower order terms scale appropriately and the boundary conditions are still elliptic in the sense of Lopatinski-Shapiro. Observing that estimates in \cite[Corollary 10.3.9]{NicolaescuManifoldLectures} and \cite[Theorem 1.6.2]{HodgeDecompSchwarz} apply to elliptic operators of arbitrary order and following the proof of Proposition \ref{proposition lopatinski shapiro condition} gives the corollary.
\end{proof}

\subsection{Extension of Sobolev functions}

In this section, we define an operator $E_R$ extending forms in $W^{k,p}_{\la}(M_R)$ to forms in $W^{k,p}_{\la}(M)$, for $1 < p < \infty$ and any $R > R_0$, such that the operator norms $\|E_R\|$ are uniformly bounded independent of $R$.  

\begin{constr}\label{construction extension to collar neighborhood}
Let $\phi$ be a form in $W^{k,p}_{\la}\Omega^k(M_R)$. Define the map
\begin{align*}
    \de^R : \Sg \times \big(\tfrac12, 1\big] &\rightarrow \Sg \times \big(\tfrac{R}{2}, R \big]\\
    \de^R(x, t) &= (x,Rt).
\end{align*}
Consider $\phi$ restricted to $\rho^{-1}\big(\frac{R}{2}, R \big)$, and take 
\[
\phi^R := R^{-\la} (\de^R)^* \phi.
\]
Multiply $\phi^R$ by a cutoff function in a collar neighborhood of $t^{-1}(\{1\})$ and define $\phi^R$ on the exterior of $t^{-1}(\{1\})$ in this neighborhood by a higher order reflection of the sort employed in \cite[Corollary 1.3.7]{HamiltonNashMoser}. This defines an extension of $\phi^R$ as an $W^{k,p}$-regular form over the boundary $t^{-1}(\{1\})$. This extension is bounded as an operator on $W^{k,p}$-regular forms over the appropriate domains. We restrict our view to a band of width $\tfrac{1}{4}$ on either side of the boundary $t^{-1}(\{1\})$. Undoing the scaling transformations, we see that the extension on this $1/4$-sized region defines a bounded extension of $\phi$ in $W^{k,p}_{\la}$ to $M_{5R/4}$. Denote this extension by $\hat \phi$. Let $E_R$ be defined by
\[
E_R(\phi) = g_R \hat \phi,
\]
where $g_R: M \rightarrow \R$ is a family of functions parametrized by $R > R_0$ and depending only on the real variable $\rho$ which are uniformly equal to 1 on $M_R$, and equal to zero on $M\setminus M_{\frac{9R}{8}}$. These $g_R$'s are defined so that
\begin{align*}
    g_R &= O(1)\\
    g_R'&=O(R^{-1})\\
    &\;\; \vdots \\
    g_R^{(k)} &= O(R^{-k}).
\end{align*}
with the constants in the implied upper bounds not depending on $R$. 
\end{constr}

\begin{lem}\label{lemma extension of functions}
The operator $E_R$ defined in Construction \ref{construction extension to collar neighborhood} is bounded as a map from $W^{k,p}_\la \Om^k(M_R)$ to $W^{k,p}_{\la} \Om^k (M)$, with an upper bound independent of $R$.
\end{lem}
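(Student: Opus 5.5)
We have to show that $\|E_R\phi\|_{W^{k,p}_\la(M)}\le C\|\phi\|_{W^{k,p}_\la(M_R)}$ with $C$ independent of $R>R_0$. The plan is to split $M$ into three regions and estimate each one separately: $M_{R}$, where $E_R\phi=\phi$; the collar $M_{9R/8}\setminus M_R$, where $E_R\phi=g_R\hat\phi$; and the complement $M\setminus M_{9R/8}$, where $E_R\phi=0$. The first region contributes exactly $\|\phi\|_{W^{k,p}_\la(M_R)}$ and the third contributes nothing. All the work is therefore in the collar, and there the only available tool is the dilation $\de^R$, which converts weighted norms on annuli $\rho^{-1}(aR,bR)$ into unweighted norms on the fixed region $\Sg\times(a,b)$.

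\textbf{Scaling identity.} On $\rho^{-1}(R/2,5R/4)$ we have $\rho\sim R$, and the metric pulled back by $\de^R$ and rescaled, $R^{-2}(\de^R)^*g$, converges to the cone metric $g_C$ in $C^k$ by Lemma \ref{lemma decay estimates on AC G2 forms}. So for $R_0$ large these rescaled metrics are uniformly equivalent to $g_C$, together with their first $k$ derivatives. Using the same change of variables as in the proof of Proposition \ref{proposition weighted schauder estimates}, the weights $\rho^{-\la+j}$ and the measure $\rho^{-7}dV_g$ should combine so that
\[
c^{-1}\|\psi^R\|_{W^{k,p}(\Sg\times(a,b))}\le \|\psi\|_{W^{k,p}_\la(\rho^{-1}(aR,bR))}\le c\|\psi\|_{W^{k,p}(\Sg\times(a,b))},\qquad \psi^R:=R^{-\la}(\de^R)^*\psi,
\]
with $c$ independent of $R$. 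I will verify this first, since everything else rests on it.

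\textbf{Reflection step.} On the fixed model region, Hamilton's higher-order reflection \cite[Corollary 1.3.7]{HamiltonNashMoser} is one fixed bounded operator from $W^{k,p}(\Sg\times(\tfrac12,1])$ to $W^{k,p}(\Sg\times(\tfrac12,\tfrac54))$. Its norm depends only on $k$, $p$ and the fixed product structure, since the reflection is performed in the coordinate $t$ and does not see $R$. Combining this with the scaling identity gives
\[
\|\hat\phi\|_{W^{k,p}_\la(M_{5R/4}\setminus M_{R/2})}\le C\|\phi\|_{W^{k,p}_\la(M_R\setminus M_{R/2})}.
\]

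\textbf{Cutoff step.} By the Leibniz rule, $\na^j(g_R\hat\phi)=\sum_{i\le j}\na^i g_R*\na^{j-i}\hat\phi$. Since $g_R$ is a function of $\rho$ alone, $|\na^i g_R|=O(R^{-i})=O(\rho^{-i})$ on the collar, where $\rho\sim R$. Hence
\[
\rho^{-\la+j}|\na^i g_R||\na^{j-i}\hat\phi|\le C\rho^{-\la+j-i}|\na^{j-i}\hat\phi|,
\]
which is exactly a term already controlled by the weighted norm of $\hat\phi$. The one point needing care is that $\na$ is $g$-covariant, so bounding $\na^i\rho$ requires the bounds on the Christoffel symbols; on the AC end these satisfy $O(\rho^{-1})$ per derivative by Lemma \ref{lemma decay estimates on AC G2 forms}. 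This is the only mildly delicate step: every derivative of a coefficient must produce the matching factor $\rho^{-1}$, which is guaranteed by the $(AC_k)$ assumption.

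Summing the three regions completes the proof, with a constant depending only on $k$, $p$, $\la$ and the AC structure of $g$.
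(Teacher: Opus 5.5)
Your proposal is correct and follows the paper's argument. The $R$-uniform bound on $\hat\phi$ comes from conjugating one fixed higher-order reflection by the dilation $\de^R$ (the paper places this step in Construction~\ref{construction extension to collar neighborhood}), and the cutoff is handled by the Leibniz rule with $|\na^i g_R| = O(\rho^{-i})$, using $\na^i\rho = O(\rho^{1-i})$.

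When you check your scaling identity, note that for a form of degree $q$ with model norms taken in $g_C$, the correct normalization is $R^{-\la-q}(\de^R)^*\psi$, not $R^{-\la}(\de^R)^*\psi$. Since the reflection is linear, this scalar factor cancels and your conclusion is unaffected.
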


\begin{proof}
    If $\phi \in W^{k,p}_\la \Om^k(M_R)$, then $\|\hat \phi g_R\|_{L^p_\la(M)} \le C \|\hat \phi\|_{W^{k,p}_\la (M_R)}$. Now take the first derivatives: since $g_R$ only changes in the $\rho$-direction. 
    \[
    \na (g_R \hat \phi) = g_R \na \hat \phi + g_R' \na \rho \otimes \hat \phi. 
    \]
    The first summand is bounded by $\|\hat \phi \|_{W^{1,p}_{\la}(M_R)}$, and the second summand is bounded by $|g'|\| \phi \|_{L^p_{\la}(M_R)}$. The decay conditions on $g_R$ and, for higher derivatives, the fact that $\nabla^k \rho = O(\rho^{-k+1})$ gives the correct decay to the derivatives. Thus, we find that 
    \[
    \|E_R(\phi)\|_{W^{k,p}_{\la}(M)} \le C\|\phi\|_{W^{k,p}_{\la}(M_R)},
    \]
    where $C>0$ is independent of $R$.
\end{proof}

\subsection{The Ecker-Huisken maximum principle}

In order to run the barrier arguments in \S \ref{section long time behavior}, we appeal to the Ecker-Huisken maximum principle for heat subsolutions with respect to a moving background metric. For the reader's convenience, we reproduce the full statement of the result here, as written in \cite[Theorem 5]{DaiMa}. We adopt the convention that the superscript `$t$' indicates the $t$-dependence of a geometrical quantity associated to the moving metric $g(t)$.

\begin{thm}\label{theorem ecker huisken max}
    Suppose that the complete non-compact manifold $M^n$ with Riemannian metric $g(t)$ satisfies the uniform volume growth condition
    \begin{equation}\label{equation uniform volume growth}
        \vol^t(B^t_R(p)) \le \exp \big( k (1 + r^2) \big)
    \end{equation}
    for some point $p \in M$ and a uniform constant $k > 0$ for all $t \in [0,T]$. Let $w$ be a function on $M \times [0,T]$ which is smooth on $M \times (0,T]$ and continuous on $M \times [0,T]$. Asssume that $w$ and $g(t)$ satisfy
    \begin{enumerate}
        \item the differential inequality 
            \begin{equation}\label{equation ecker huisk diff ineq}
                \dd_t w - \De^t w \le \mathbf{a} \cdot \na w + b w
            \end{equation}
            where the vector field $\mathbf{a}$ and the function $b$ are uniformly bounded
            \begin{equation}\label{equation ecker huisk coeff bdd}
               \sup_{M \times [0,T]}|\mathbf{a}| \le \al_1, \qquad \sup_{M \times [0,T]}|b| \le \al_2
            \end{equation}
            for some constants $\al_1, \al_2 < \infty$;
        \item the initial data
            \begin{equation}\label{equation initial data negative}
                w(p,0) \le 0
            \end{equation}
            for all $p \in M$;
        \item the growth condition
            \begin{equation}\label{equation growth condition}
                \int_0^T \bigg( \int_M \exp \big[ -\al_3 d^t(p,y)^2\big]|\na w|^2(y)d\mu_t \bigg)dt< \infty
            \end{equation}
            for some constant $\al_3 >0$;
        \item the bounded variation conditions for metrics
            \begin{equation}\label{equation bounded metric variation condition}
                \sup_{M \times [0,T]} |\dd_t g(t)| \le \al_4
            \end{equation}
            for some constant $\al_4 < \infty$.
    \end{enumerate}
    Then, we have
    \[
    w \le 0
    \]
    on $M \times [0,T]$.
\end{thm}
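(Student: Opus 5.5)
This is the Ecker--Huisken maximum principle, in the form of \cite[Theorem 5]{DaiMa}. I would prove it by the weighted $L^2$ energy method of Ecker--Huisken rather than by a pointwise barrier. The growth condition \eqref{equation growth condition} only controls $\na w$ in a Gaussian-weighted $L^2$ sense, so an integral argument is the natural one.

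\textbf{Setup.} Fix $p$ and let $w_+ = \max(w,0)$. I would track a localized, Gaussian-weighted quantity
\[
I(t) = \int_M w_+^2\, \phi^2\, e^{-h(y,t)}\, d\mu_t .
\]
Here $h(y,t) = \frac{\al\, d^t(p,y)^2}{2\tau - t}$ for a small $\tau>0$ and a suitable $\al$, working on $[0,\tau]$. The function $\phi$ is a cutoff on $B^t_R(p)$ with $|\na\phi|\le C/R$; it is removed at the end by letting $R\to\infty$.

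Because $d^t$ is only Lipschitz, I would replace it by a smooth comparable function. One option is the function $f$ of Lemma \ref{lemma aux function}, or one could simply work with $d^0$. Condition \eqref{equation bounded metric variation condition} makes all the $g(t)$ uniformly equivalent to $g(0)$, so $d^t$ and $d^0$ are comparable and $|\na^t d^0|$ is bounded.

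\textbf{Differentiating $I$.} I would compute $I'(t)$ using \eqref{equation ecker huisk diff ineq}, and control $\dd_t d\mu_t$ by \eqref{equation bounded metric variation condition}. The Laplacian term is integrated by parts, which produces
\[
-2\int |\na w_+|^2 \phi^2 e^{-h}\, d\mu_t
\]
plus cross terms involving $\na\phi$ and $\na h$. Using Young's inequality, the drift term $\mathbf a\cdot\na w$ is absorbed into this good gradient term, at the cost of $C\al_1^2 w_+^2$. The $b w$ term gives $C\al_2 w_+^2$.

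The key choice is the Ecker--Huisken relation $\dd_t h \ge C|\na h|^2$. This holds for this $h$ once $\al$ is small relative to the comparability constants, for $t\le\tau$. With it, the terms $-\dd_t h\, w_+^2$ and $|\na h|^2 w_+^2$ combine to a nonpositive contribution. The outcome is
\[
I'(t) \le C I(t) + \frac{C}{R^2}\int_{B_{2R}\setminus B_R} w_+^2 e^{-h}\, d\mu_t .
\]

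\textbf{Removing the cutoff (the main obstacle).} The error term must go to $0$ as $R\to\infty$, and this requires $\int_M w_+^2 e^{-h}\,d\mu_t<\infty$. I would obtain this integrability in two steps. First, choose $\tau$ so small that $e^{-h}\le e^{-\al_3' d^2}$ with the weight strong enough. Then control $w_+$ in the weighted $L^2$ space from its gradient bound \eqref{equation growth condition}, using a weighted Poincaré-type inequality along rays from $p$. The volume growth \eqref{equation uniform volume growth} ensures the Gaussian weight beats the volume, and $w(\cdot,0)\le 0$ supplies the base value.

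This step, verifying that all the boundary and cutoff terms are finite and vanish, is where I expect the real work to lie. If the direct route is delicate, I would follow Ecker--Huisken's original trick: keep the cutoff, integrate the differential inequality in time, and use \eqref{equation growth condition} together with the term $\int |\na w_+|^2$ to control the annulus integrals.

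\textbf{Conclusion.} Letting $R\to\infty$ gives $I'\le CI$ with $I(0)=0$ by \eqref{equation initial data negative}. Gronwall's inequality then gives $I\equiv 0$ on $[0,\tau]$, so $w\le 0$ there. Since $\tau$ depends only on the constants $\al_i$ and $k$, iterating in time covers all of $[0,T]$.
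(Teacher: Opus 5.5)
The paper does not prove this statement; it quotes it from \cite[Theorem 5]{DaiMa}, i.e.\ Ecker--Huisken's maximum principle on an evolving background. Your argument therefore has to stand on its own, and it has a genuine gap exactly at the step you flag.

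Your localized estimate is fine. With $h$ built from $d^0$, and $\alpha,\tau$ small, you do get $I'\le CI+CR^{-2}\int_{B_{2R}\setminus B_R}w_+^2e^{-h}\,d\mu_t$. But this error term involves $w_+$ itself, while the only growth hypothesis, \eqref{equation growth condition}, controls $\nabla w$.

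The bridge you propose is not available under the stated hypotheses:
\begin{itemize}
\item A weighted Poincar\'e or Hardy inequality along rays from $p$ needs pointwise control of the volume density along geodesics, i.e.\ Bishop comparison from a lower Ricci bound. That bound holds in the paper's applications, where curvature is bounded, but it is not a hypothesis here. \eqref{equation uniform volume growth} only bounds volumes of balls.
\item Under volume growth alone, Gaussian-weighted $L^2$ control of $\nabla u$ does not imply Gaussian-weighted $L^2$ control of $u$. Attach unit-volume pieces to $\R^n$ at distances $r_j\to\infty$ through necks of rapidly shrinking cross-section. Let $u$ climb across the $j$-th neck to an arbitrarily large constant $H_j$. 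The weighted gradient energy stays finite while the weighted $L^2$ norm of $u$ diverges.
\item The condition $w(\cdot,0)\le 0$ cannot anchor a spatial inequality at a later time $t$.
\end{itemize}
Your fallback does not close the gap either. The annulus terms contain $w_+$ itself, as $w_+^2|\nabla\phi|^2$ (or $w_+|\nabla w_+||\nabla\phi|$ before Young). The good term $\int|\nabla w_+|^2\phi^2e^{-h}$ cannot absorb them without the same missing Poincar\'e inequality.

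What is missing is a way to exploit a hypothesis on $\nabla w$ alone. One way is to truncate from above, so that everything is bounded and no integrability of $w_+$ is needed. Run your computation for $\int_M F_K(w)\phi^2e^{-h}\,d\mu_t$ with $F_K(s)=\min(s_+,K)^2$. Take $\tau$ so small that $h\ge\gamma (d^0)^2$, with $\gamma$ larger than the volume-growth constant and larger than $\alpha_3$ (up to the comparability constant between $d^t$ and $d^0$). Then $\int_M e^{-h}\,d\mu_t<\infty$ by \eqref{equation uniform volume growth}. Since $|F_K'|\le 2K$, the cutoff term is bounded by
\[
\frac{CK}{R}\Big(\int_0^\tau\!\!\int_M|\nabla w|^2e^{-h}\,d\mu_t\,dt\Big)^{1/2}\Big(\int_0^\tau\!\!\int_M e^{-h}\,d\mu_t\,dt\Big)^{1/2}\longrightarrow 0 \qquad (R\to\infty).
\]
The price is that $F_K(w)$ is no longer a subsolution. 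The concave corner at $w=K$ contributes the term $2K\int_{\{w=K\}}|\nabla w|\phi^2e^{-h}\,d\sigma$. Average the inequality over $K\in[K_0,2K_0]$ and use the coarea formula; this term is then bounded by $4\int_{\{w>K_0\}}|\nabla w|^2e^{-h}\,d\mu_t$. Its time integral tends to $0$ as $K_0\to\infty$, by \eqref{equation growth condition} and dominated convergence. Now apply Gronwall, let $R\to\infty$, and then let $K_0\to\infty$. Since $\int_M\min(w_+,K)^2e^{-h}\,d\mu_t$ is nondecreasing in $K$, this gives $\int_M\min(w_+,K)^2e^{-h}\,d\mu_t=0$ for every $K$. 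This is where the gradient form of the growth hypothesis is actually used.
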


In the case of a Laplacian flow $\vp(t)$ on a manifold $M$ such that the associated metrics $g(t)$ have uniformly bounded curvature, two of the hypotheses of Theorem \ref{theorem ecker huisken max} can be confirmed immediately.

\begin{lem}\label{lemma confirm volume and variation bound}
If the flow $\vp(t)$ satisfies \eqref{equation laplacian flow} and its associated metrics $g(t)$ have the uniform curvature bound 
\[
\sup_{M \times [0,T]} |\Rm| \le K,
\]
then $g(t)$ satisfies \eqref{equation uniform volume growth} and \eqref{equation bounded metric variation condition}. 
\end{lem}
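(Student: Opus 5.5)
The bounded-variation condition \eqref{equation bounded metric variation condition} comes straight from the metric evolution equation \eqref{equation evolution metrics}, and the volume growth condition \eqref{equation uniform volume growth} from uniform equivalence of the metrics together with a comparison theorem.

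For \eqref{equation bounded metric variation condition}, I would write $\dd_t g = \eta = -2\Ric - \tfrac23|T|^2 g - 4T*T$. Then $|\Ric|\le C|\Rm|\le CK$, and by \eqref{equation R = -|T|^2} we have $|T|^2 = -R \le C|\Rm| \le CK$. Hence $|\dd_t g|_{g(t)} \le C(n)K =: \al_4$ on $M\times[0,T]$.

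For \eqref{equation uniform volume growth}, the first step is the corollary giving \eqref{equation compare metrics bdd K}, so that $e^{-cT}g(0)\le g(t)\le e^{cT}g(0)$ with $c=c(K)$. It follows that $B^t_r(p)\subset B^0_{e^{cT/2}r}(p)$. The Riemannian volume forms are also comparable, $d\mu_t \le e^{7cT/2}d\mu_0$; alternatively one can integrate \eqref{equation evolution volume}, using $|R|\le CK$, to get $d\mu_t\le e^{CKT}d\mu_0$.

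Next I need a volume bound at time $0$. Since $|\Rm_{g(0)}|\le K$, we have $\Ric_{g(0)}\ge -6K g(0)$, and Bishop--Gromov comparison with the hyperbolic space of curvature $-K$ gives
\[
\vol^0(B^0_\rho(p)) \le C_7 \int_0^\rho \big(K^{-1/2}\sinh(\sqrt K s)\big)^6\,ds \le C(K)\,e^{6\sqrt K \rho}.
\]
Combining the three bounds yields $\vol^t(B^t_r(p)) \le C(K,T)\exp\big(6\sqrt K e^{cT/2} r\big)$. Since $ar \le \tfrac{a^2}{4}+r^2$, the right-hand side is at most $\exp(k(1+r^2))$ for a suitable $k=k(K,T)$, which gives \eqref{equation uniform volume growth}.

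I expect no real obstacle here. The only point needing care is that the comparison \eqref{equation compare metrics bdd K} is uniform on $[0,T]$ and relies only on the bound $|\eta|\le CK$, which is the same estimate that gives \eqref{equation bounded metric variation condition}. Completeness of $g(0)$, which Bishop--Gromov requires, is part of the standing hypotheses.
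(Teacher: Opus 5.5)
Your proof is correct and follows the paper's argument. The bounded-variation condition comes from \eqref{equation evolution metrics} together with $|T|^2=-R$, and the volume growth comes from the curvature bound plus Bishop--Gromov. The only difference is your detour through $g(0)$ via \eqref{equation compare metrics bdd K}, which is valid but not needed: since $|\Rm_{g(t)}|\le K$ holds at every time, you can apply Bishop--Gromov to $g(t)$ directly. In that version the uniform equivalence of the metrics is needed only to guarantee that $g(t)$ is complete.
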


\begin{proof}
    The evolution equation \eqref{equation evolution metrics} combined with the curvature bound immediately implies the bounded variation condition \eqref{equation bounded metric variation condition} for $g(t)$. The curvature bound and the Bishop-Gromov volume comparison theorem ensure that a constant $k$ may be selected so that \eqref{equation uniform volume growth} holds for all $t \in [0,T]$.
\end{proof}

\section{Estimates on the potentials of exact forms} \label{section estimates potential}

Since we are solving the Laplacian flow equation on the level of the potential, the ability to find a reasonable choice of potential function for a given exact form (and likewise for families) will be crucial. We will use the tools of the Hodge-Morrey decomposition on domains to this end. Namely, given a compact manifold with boundary $(N, \dd N)$, recall from \eqref{equation hodge morrey further decomp} the \emph{Dirichlet harmonic fields}:
\[
\sH^k_D(N) := \sH^k(N) \cap H^1\Omega^k_D(N) = \{\nu \in H^1 \Omega^k(N)\; |\; d\nu = \de \nu = 0, \; \bt \nu = 0\}.
\]
By \cite[Theorem 2.2.2]{HodgeDecompSchwarz}, the space $\sH^k_D(N)$ is finite-dimensional. Therefore, there is an $L^2$-orthogonal decomposition 
\[
L^2\Omega^k(N) = \sH^k_D(N) \oplus \sH^k_D(N)^\perp. 
\]
If $\eta \in \sH^k_D(N)^\perp$, by \cite[Theorems 2.2.4-6]{HodgeDecompSchwarz}, there is a strong solution $\phi_D$ called the \emph{Dirichlet potential} to the boundary value problem 
\begin{align}\label{equation dirichlet potential bvp}
    \begin{aligned}
    \Delta \phi_D &= \eta \\
    \bt \phi_D = 0 \;\;\;&\text{ and }\;\;\; \bt \de \phi_D = 0
    \end{aligned}
    &&
    \begin{aligned}
        &\text{ on }N\\
     &\text{ on }\dd N. 
    \end{aligned}
\end{align}
The latter two boundary conditions are called the \emph{homogeneous boundary conditions}. 

\begin{rmk}
    In this Appendix, $\De$ will always refer to the Hodge Laplacian. In the case of the domains $M_R$, this operator will be taken with respect to the restriction of a fixed background metric on $M$. 
\end{rmk}

Suppose that $\eta \in L^2\Omega^k(N)$, $\phi_D$ is its Dirichlet potential, and let $d\al_\eta$ be its component in the exact part $\sE^k(N) \subset L^2\Omega^k(N)$ of the Hodge-Morrey decomposition. Then, one may choose
\[
\al_\eta = \de \phi_D,
\]
where the potential $\al_\eta$ satisfies the Dirichlet boundary condition $\bt \al_\eta$ by the homogeneous boundary condition for $\phi_D$. Clearly, any desired estimates on $\al_\eta$ will follow from estimates on the Dirichlet potential $\phi_D$.

As discussed above, there is a Green's function for the boundary value problem \eqref{equation dirichlet potential bvp} on the $L^2$-orthogonal complement of the Dirichlet harmonic fields, $\sH_D^k(N)^\perp$.  However, as we need estimates on not only one potential $\phi_D$, but a whole sequence of potentials associated to the domains $M_{R_i}$, we need to make sure that the norm of the Green's operator does not become unbounded as $R_i \rightarrow \infty$. 

Unfortunately, this introduces two complications. The first is that the uniformity of the estimates is equivalent to boundedness as a map between the weighted Sobolev spaces, $H^s_\la$, in which many important operators are not self-adjoint. The second is that regardless of which norms we take, the Green's operator's norm may explode as $i \rightarrow \infty$: This roughly corresponds to the so-called ``problem of small eigenvalues" in a way which we will make precise soon. 

Recall our notation for the domains $M_R := \rho^{-1}([0, R])$ where $\rho: M \rightarrow \R$ is the radius function associated to the asymptotically conical end of $M$ with respect to the asymptotically conical metric $g_{\vp_0}$.

Now, consider the Hilbert space $H^s_\la \Omega^k(M_R) \cap \sH^k_D(M_R)^\perp$. By the equivalence of the weighted and unweighted spaces on domains (c.f. Remark \ref{remark weighted unweighted equivalence}) this is merely the space of forms in $\sH^k_D(M_R)^\perp$ that have $s$ weak $L^2$-derivatives, only measured with respect to the weighted $H^s_\la$-inner product,
\begin{align*}
    \langle \phi, \psi \rangle_{H^s_\la} &= \sum_{j = 0}^s \int_{M_R} \langle \rho^{-\la + j}\nabla^j \phi,\rho^{-\la + j}\nabla^j \psi \rangle \rho^{-7}dV_g.
\end{align*}

The following lemma about the $L^2_{\la}$-orthogonal projection of $H^s_\la \Omega^k(M_R)$ to $\sH^k_D(M_R)^\perp$ will be useful.

\begin{lem}\label{lemma projection to harmonic dirichlet fields}
    The $L^2_{\la}$-orthogonal projection of $H^s_\la \Omega^k(M_R)$ to $H^s_\la \Omega^k(M_R) \cap \sH^k_D(M_R)^\perp$ is bounded with respect to the $H^s_{\la}$-norm, with constant independent of $R$. The kernel of this map is spanned by $\{\rho^{2\la + 7} \sg_i\}_i$, where the $\{\sg_i\}_i$ comprise a basis of $\sH^k_D(M_R)$.
\end{lem}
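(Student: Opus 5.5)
The plan is to compute the $L^2_\la$-orthogonal complement of $\sH^k_D(M_R)^\perp$ explicitly, and then bound the projection through a dual basis. The statement is ambiguous about which complement is meant: $\sH^k_D(M_R)^\perp$ as defined is the unweighted $L^2$-orthogonal complement, but the projection is taken in the $L^2_\la$-inner product. I would first check the weights in
\[
\langle \phi, \psi\rangle_{L^2_\la} = \int_{M_R} \langle \phi,\psi\rangle \rho^{-2\la-7}\, dV_g .
\]
With these weights, $\phi \perp_{L^2} \sg$ if and only if $\langle \phi, \rho^{2\la+7}\sg\rangle_{L^2_\la} = 0$. Hence the $L^2_\la$-orthogonal complement of $\sH^k_D(M_R)^\perp$ is the finite-dimensional span of $\{\rho^{2\la+7}\sg_i\}_i$. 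This already gives the kernel claim. Here I use that $\sH^k_D(M_R)$ is finite-dimensional \cite[Theorem 2.2.2]{HodgeDecompSchwarz} and that its elements are smooth up to the boundary, so $\rho^{2\la+7}\sg_i \in H^s_\la$ for each fixed $R$.

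Next I would write the projection explicitly. Choose $\{\sg_i\}$ to be $L^2$-orthonormal and set $\tau_i := \rho^{2\la+7}\sg_i$. Then $\langle \tau_i,\tau_j\rangle_{L^2_\la} = \langle \sg_i, \tau_j\rangle_{L^2} = \int \langle\sg_i,\sg_j\rangle\rho^{2\la+7}dV =: G_{ij}$, and the projection is
\[
\Pi\phi = \phi - \sum_{i,j} (G^{-1})_{ij}\langle \phi, \sg_j\rangle_{L^2}\, \tau_i .
\]
Bounding $\Pi$ in $H^s_\la$ uniformly in $R$ then reduces to two estimates. The first is $|\langle\phi,\sg_j\rangle_{L^2}| \le \|\phi\|_{L^2_\la}\|\tau_j\|_{L^2_\la}$, by Cauchy--Schwarz in the weighted space. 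The second is control of $\|\tau_i\|_{H^s_\la}$ in terms of $\|\tau_i\|_{L^2_\la}$. Rather than work with $G^{-1}$ directly, I would write the correction term as the $L^2_\la$-orthogonal projection onto $\mathrm{span}\{\tau_i\}$. That projection has $L^2_\la$-norm $1$, so the $L^2_\la$ part of the bound is automatic. What remains is a uniform \emph{reverse} (inverse) inequality on this finite-dimensional space:
\[
\|\tau\|_{H^s_\la} \le C\|\tau\|_{L^2_\la}, \qquad \tau \in \rho^{2\la+7}\sH^k_D(M_R),
\]
with $C$ independent of $R$ and of the dimension of the space. Note that the dimension of $\sH^k_D(M_R)$ is the relative cohomology $H^k(M_R,\dd M_R)$, which stabilizes for $R > R_0$ by the AC structure.

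The main obstacle is this reverse inequality. Each $\tau = \rho^{2\la+7}\sg$ satisfies $(d+\de)(\rho^{-2\la-7}\tau) = 0$ with $\bt(\rho^{-2\la-7}\tau)=0$. This is an elliptic boundary system for $d + \de$ with the absolute/relative boundary condition, which satisfies Lopatinski--Shapiro. I would therefore run the scaling argument of Proposition \ref{proposition lopatinski shapiro condition}, with the second-order $\De$ (or its square to match that proposition) in place of $\De^*\De$, on dyadic annuli $\rho^{-1}([s/2,s])$ and at the boundary collar via the rescaling $\de^R$. After multiplying by $\rho^{-2\la-7}$ the conjugated operator has coefficients of the bounded form \eqref{equation growth of coefficients}, so this yields
\[
\|\tau\|_{H^s_\la(M_R)} \le c\big(\|\De \tau\|_{H^{s-2}_\la} + \|\tau\|_{L^2_\la}\big),
\]
where the $\De\tau$ term consists only of lower-order terms from commuting past $\rho^{2\la+7}$. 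Those terms are absorbed by interpolation, giving the reverse inequality with $c$ independent of $R$. If the conjugated weight does not give bounded coefficients, my fallback is the Hodge--Morrey boundary estimates \cite[Theorem 1.6.2]{HodgeDecompSchwarz} applied to $\sg$ directly in the rescaled picture.
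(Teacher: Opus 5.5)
Your proposal is correct and matches the paper's proof step for step. The paper also identifies the $L^2_\la$-complement of $\sH^k_D(M_R)^\perp$ as the span of $\rho^{2\la+7}\sg_i$ and uses that $\mathrm{id}-\Pi$ is an $L^2_\la$-contraction. It then obtains the $R$-independent reverse inequality $\|\eta\|_{H^s_\la}\le C_M\|\eta\|_{L^2_\la}$ on that span from the Lopatinski--Shapiro problem $\De(\rho^{-2\la-7}\eta)=0$, $\bt\eta=0$, $\bt\de(\rho^{-2\la-7}\eta)=0$, via the scaling argument of Proposition~\ref{proposition lopatinski shapiro condition}.

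One bookkeeping point: keep the boundary conditions on $\rho^{-2\la-7}\tau=\sg$, as in your conjugated formulation, rather than switching to $\De\tau$ via commutators. The reason is that $\bt\de\tau=-\bt(\iota_{\nabla\rho^{2\la+7}}\sg)$ is generally nonzero, so in that route it would have to be carried as lower-order, absorbable boundary data.
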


\begin{proof}
    Let $\ve = -2\la - 7$. If the $\{\sg_i\}_{i=1}^m$ comprise a basis of $\sH^k_D(M_R)$, then the vectors $\rho^{-\ve} \sg_i$ are in the $L^2_{\la}$-orthogonal complement of $\sH^k_D(M_R)^\perp$. If the $\sg_i$'s are $L^2$-orthogonal, then
    \[
    \langle \rho^{-\ve} \sg_i, \rho^{-\ve} \sg_j \rangle_{L^2_{\la - \ve}} = \de_{ij}.
    \]
    So these vectors are linearly independent, have rank equal to $\dim \sH^k_D(M_R)$, and thus comprise a basis of the $L^2_{\la}$-orthogonal complement of $\sH^k_D(M_R)^\perp$. Let $\Pi$ denote the 
     $L^2_{\la}$-orthogonal projection of $H^s_\la \Omega^k(M_R)$ to $\sH^k_D(M_R)^\perp \cap H^s_\la \Omega^k(M_R)$. Then if $\zeta \in H^s_\la \Omega^k(M_R)$,
     \[
     \eta := (\id - \Pi)(\zeta) = \sum_{i=1}^m a_i \rho^{-\ve} \sg_i.
     \]
     \begin{clm}\label{claim adjoint harmonic field}
    Any kernel element $\eta$ of the $L^2_\la$-orthogonal projection onto $\sH^k_D(M_R)^\perp$ has the property that
    \[
    \rho^{-\ve + 2} d (\rho^{\ve} \eta), \; \rho^{-\ve+2} \de (\rho^{\ve} \eta )= 0.
    \]
    \end{clm}
    \begin{proof}[Proof of Claim]
    Any kernel element $\eta$ is given by a linear combination
    \[
    \eta = \sum_{i=1}^m a_i \rho^{-\ve} \sg_i.
    \]
    Then, 
    \begin{align*}
    \rho^{-\ve + 2 } d (\rho^{\ve} \eta) &= \rho^{-\ve + 2} d \bigg( \sum_{i=1}^m a_i \sg_i \bigg) \\
    &= \sum_{i=1}^m \rho^{-\ve + 2}  a_i d \sg_i \\
    &= 0.
    \end{align*}
    The vanishing of $\rho^{-\ve + 2} \de (\rho^{\ve} \eta )$ is proved exactly analogously.
    \end{proof}
    Thus, $\eta = (\id - \Pi)(\zeta)$ satisfies 
    \begin{equation}\label{equation dirichlet harmonic bounds}
    \left\{
    \begin{array}{lr}
    \rho^{-\ve + 4}\De \rho^{\ve} \eta = 0  & \;\; \text{ on } M_R\\ 
    \bt \eta = 0 \;\text{ and }\; \bt \rho^{-\ve + 2}\de (\rho^{\ve} \eta) = 0 & \;\; \text{ on } \dd M_R.
    \end{array}
    \right.
    \end{equation}
    This elliptic boundary-value problem satisfies the Lopatinski-Shapiro condition (see the proof of \cite[Lemma 1.6.5]{HodgeDecompSchwarz}). Thus by the proof of Proposition \ref{proposition lopatinski shapiro condition}, $\eta$ satisfies an apriori estimate of the form 
    \[
    \|\eta\|_{H^s_\la(M_R)} \le C_M \| \eta \|_{L^2_\la(M_R)},
    \]
    where $C_M$ does not depend on $R$. Since $\Pi$ is $L^2_\la$-orthogonal projection, 
    \[
    \|\eta\|_{L^2_\la(M_R)} \le \|\zeta\|_{L^2_\la(M_R)}. 
    \]
    Thus,
    \begin{align*}
        \|\Pi(\zeta)\|_{H^s_\la(M_R)} &\le C_M\|\zeta\|_{L^2_\la(M_R)} + \|\zeta\|_{H^s_\la(M_R)}\\
        &\le (1 + C_M)\|\zeta\|_{H^s_\la(M_R)}.
    \end{align*}
    This concludes the proof of the lemma.
\end{proof}

We now reframe the Green's operator as a map between weighted spaces. Define the space of $2$-times weakly differentiable $k$-forms obeying the homogeneous boundary conditions by
\[
H^{2}\Omega^k_{\mathrm{hom}} (M_R) = \{\om \in H^{2}\Omega_D^k(M_R) \;|\; \bt \de \om = 0\}.
\]
Restricting the domain of the Hodge Laplacian $\De$ to these homogeneous forms, the existence and regularity results of \cite[Theorems 2.2.4-6]{HodgeDecompSchwarz} imply that (c.f. \cite[p. 78]{HodgeDecompSchwarz})
\[
\ker(\De| H^{2}\Omega^k_{\mathrm{hom}} ) = \sH^k_D(M_R).
\]
We then consider $\Delta$ as an operator on the weighted space $H_{\la+2}^{2}\Omega^k_{\mathrm{hom}} (M_R)$, i.e. $H^{2}\Omega^k_{\mathrm{hom}} (M_R)$ seen as a subspace of $H^{2}_{\la+2}\Omega^k(M_R)$. The equivalence of the weighted and unweighed norms implies that
\[
\ker(\De| H_{\la+2}^{2}\Omega^k_{\mathrm{hom}} ) = \sH_D^k(M_R) \subset H_{\la+2}^{2}\Omega^k (M_R).
\]
The strategy will be to compare the kernel of the Hodge Laplacian on the complete manifold $M$ in the weighted space,
\[
\De: H^{2}_{\la + 2}\Omega^k(M) \rightarrow L^{2}_{\la}\Omega^k(M),
\]
to a space $V_R$ consisting of \emph{both} the kernel elements \emph{and} the ``small eigenforms" associated to the Hodge Laplacian on the space of homogeneous $k$-forms on $M_R$, 
\[
\De : H^{2}_{\la + 2}\Omega^k_{\mathrm{hom}}(M_R) \rightarrow L^{2}_{\la}\Omega^k(M_R).
\]

\subsection{Estimating the kernel of the Hodge Laplacian}

\subsubsection{Facts from the noncompact case}

We recall the following result for $AC$ manifolds from Karigiannis and Lotay (originally proved in \cite{Lockhart1987}, see Remark \ref{remark lockhart vs KL}). 

\begin{thm}\cite[Theorem 4.11]{KarigiannisLotay}\label{theorem fredholmness of laplacian} Let $(M,g)$ be an AC Riemannian manifold with asymptotic cone $C$. The map
\[
\De_M: H^{2}_{\la + 2}\Omega^k(M) \rightarrow L^{2}_{\la}\Omega^k(M)
\]
is Fredholm if and only if $\la +2$ is not in the set of \emph{critical rates} of the asymptotic cone $C$, which is a discrete, countable subset of $\R$ that has finite intersection with any closed, bounded interval of $\R$.
\end{thm}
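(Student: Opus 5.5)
The statement is Lockhart's Fredholm theorem for asymptotically translation-invariant elliptic operators, transferred to the conical setting by the conformal change. We have already checked that $g_{\vp_0}$ is admissible: after the mollification of Lemma \ref{lemma infinite ACness of mollified} and Remark \ref{remark assumption of smoothness}, $g=e^{2z}h$ with $h$ asymptotically cylindrical and $z=\log\rho$. So the plan is to reduce the conical statement to the cylindrical one.

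\textbf{Step 1: conjugate to a cylindrical operator.} Work on the end $\Sg\times(\log R_0,\infty)$ in the variable $z=\log\rho$. Conjugate $\De_M$ by powers of $\rho$ and consider
\[
P:=\rho^{2}\,\De_M\,\rho^{\,0}
\]
acting between the unweighted cylindrical spaces. Use the isometries $H^{2}_{\la+2}(M)\cong e^{(\la+2)z}H^2(h)$ and $L^2_\la(M)\cong e^{\la z}L^2(h)$; the factor $\rho^{-7}dV_g$ is exactly what turns $dV_g$ into $dV_h$. Under these identifications, Fredholmness of $\De_M$ between the weighted spaces becomes Fredholmness of $e^{-\la z}\rho^2\De_M e^{(\la+2)z}$ on $H^2(h)\to L^2(h)$.

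Next, write $k$-forms on the cone as $\om=\rho^k(dz\wedge\al+\be)$. A direct computation shows that $\rho^2\De_{g_C}$ is a translation-invariant elliptic operator $\partial_z^2+a\,\partial_z+Q_\Sg$ on the cylinder, with $Q_\Sg$ built from the Hodge Laplacian, $d_\Sg$ and $\de_\Sg$ on the link. On the actual manifold, $\rho^2\De_M$ differs from this model by an operator whose coefficients, together with all their $h$-derivatives, decay as $z\to\infty$. This decay is exactly Definition \ref{definition asymptotically translation invariant} for the smoothed data.

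\textbf{Step 2: apply the cylindrical Fredholm theorem.} By Lockhart--McOwen \cite{LockhartMcOwen1985}, \cite{Lockhart1987}, an elliptic operator asymptotic to a translation-invariant one is Fredholm on $e^{\mu z}H^s\to e^{\mu z}H^{s-2}$ if and only if the indicial family $Q(\zeta)$, obtained by replacing $\partial_z$ with $i\zeta$, is invertible for every $\zeta$ with $\mathrm{Im}\,\zeta=-\mu$.

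The construction has two parts. First, for admissible $\mu$, build a parametrix by patching an interior elliptic parametrix on $M_{R_0}$ with the inverse on the end. That inverse is obtained via the Fourier--Laplace transform in $z$ along the line $\mathrm{Im}\,\zeta=-\mu$; the perturbation to the model is small far out and is absorbed by a Neumann series. The remainder is then compact by Rellich. Second, for the converse, when $Q(\zeta)$ has a kernel on that line, construct a Weyl sequence of translated, cut-off approximate solutions $e^{i\zeta z}\chi_j(z)v$. This sequence defeats the closed-range estimate, so the operator is not Fredholm.

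\textbf{Step 3: identify the critical rates.} The critical rates are the values $\la+2$ for which $\De_{g_C}$ has a nonzero homogeneous solution $\rho^{\la+2}\om_\Sg$ with $\om_\Sg$ a form on $\Sg$, possibly with polynomial factors in $\log\rho$. Because $Q_\Sg$ is elliptic on the closed link $\Sg$, $Q(\zeta)$ is an analytic Fredholm family that is invertible for $|\mathrm{Re}\,\zeta|$ large. Analytic Fredholm theory then shows that its non-invertibility set is discrete and has finite intersection with every bounded strip. Translating back, the critical rates form a discrete countable set with finitely many elements in each closed bounded interval. Concretely, they are given by $\la+2$ solving quadratic relations in the eigenvalues of the link operators.

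\textbf{Main obstacle.} I expect the main difficulty to be the compactness of the parametrix error on the noncompact end in Step 2. This requires the decay of the non-translation-invariant part in every derivative, so the full $(AC_k)$ condition for all $k$, together with a careful uniform handling of the Fourier--Laplace inversion when $\mu$ lies near a critical line.
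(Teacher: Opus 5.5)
Your proposal is correct and takes essentially the paper's route. The paper does not reprove this statement: it cites Lockhart (through Karigiannis--Lotay, cf.\ Remark~\ref{remark lockhart vs KL}) after checking that the smoothed metric $g_{\vp_0}=e^{2z}h$ is admissible, and your Steps 1--3 just unpack the Lockhart--McOwen argument for that admissible metric.

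There is one bookkeeping slip in Step 1. With the identifications you state, the conjugated operator should be $e^{-(\la+2)z}\rho^{2}\De_M e^{(\la+2)z}$ (equivalently $e^{-\la z}\De_M e^{(\la+2)z}$), not $e^{-\la z}\rho^{2}\De_M e^{(\la+2)z}$. The extra factor $e^{2z}$ in your version would destroy asymptotic translation invariance. The corrected operator is also what your Step 2 actually uses, with the same weight $\mu=\la+2$ on both sides.
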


When the rate $\la + 2$ is non-critical, the finite dimensionality of $\ker(\De_M)_{\la +2}$ gives the following orthogonal decomposition with respect to the $H^{2}_{\la + 2}$-inner product. 
\[
H^{2}_{\la + 2}\Omega^k(M) = \ker(\De_M)_{\la +2} \oplus \ker(\De_M)_{\la +2}^\perp.
\]
Observe that $\De_M$ restricted to $\ker(\De_M)_{\la +2}^\perp$ will be an isomorphism onto its image, with operator norm of the inverse bounded by a constant $\ka^{-1} > 0$.  

\subsubsection{A singular value decomposition for the Laplacian on domains} \label{section SVD}
 
Let $G$ be the solution (Green's) operator for the boundary value problem \eqref{equation dirichlet potential bvp}, such that 
\[
G: \sH^k_D(M_R)^\perp \subset L^2_{\la}\Omega^k(M_R) \rightarrow H^{2}_{\la +2} \Omega^k_{\mathrm{hom}} (M_R). 
\]
Note that the $\perp$ is with respect to the standard $L^2$-norm, since we've invoked \cite[Theorems 2.2.4-6]{HodgeDecompSchwarz} for the existence of $G$. In the strategy to find domains where the Green's function $G$ is uniformly bounded for all $R$, the key tool is a singular value decomposition for $G$ (and ultimately for $\De$).  

\begin{lem}\label{lemma compactness of Greens}
    The Green's operator is a compact mapping 
    \[
    G: \sH^k_D(M_R)^\perp \subset L^2_{\la}\Omega^k(M_R) \rightarrow L^{2}_{\la +2} \Omega^k (M_R).
    \]
\end{lem}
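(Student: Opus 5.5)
The plan is to factor $G$ through a space that embeds compactly into $L^2_{\la+2}\Om^k(M_R)$. Since $M_R$ is a compact manifold with smooth boundary, Remark \ref{remark weighted unweighted equivalence} says that every weighted norm on $M_R$ is equivalent to its unweighted counterpart. The constants in this equivalence depend on $R$, but compactness only concerns a fixed $R$, so this dependence is harmless. It therefore suffices to show that
\[
G: \sH^k_D(M_R)^\perp \subset L^2\Om^k(M_R) \rightarrow L^2\Om^k(M_R)
\]
is compact in the unweighted norms.

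\textbf{Step 1: boundedness into $H^2$.} Use the existence and regularity theory of \cite[Theorems 2.2.4-6]{HodgeDecompSchwarz} for the boundary value problem \eqref{equation dirichlet potential bvp}. For $\eta \in \sH^k_D(M_R)^\perp$, there is a unique solution $\phi_D = G\eta$ in $H^2\Om^k_{\mathrm{hom}}(M_R) \cap \sH^k_D(M_R)^\perp$. The Lopatinski-Shapiro a priori estimate \cite[Theorem 1.6.2]{HodgeDecompSchwarz} for the elliptic system $(\De; \bt, \bt\de)$ then gives
\[
\|G\eta\|_{H^2} \le C_R\big(\|\eta\|_{L^2} + \|G\eta\|_{L^2}\big).
\]

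\textbf{Step 2: remove the $L^2$ term.} To get $\|G\eta\|_{H^2} \le C_R\|\eta\|_{L^2}$, argue by contradiction. Suppose there were $\eta_n$ with $\|\eta_n\|_{L^2}\to 0$ and $\|G\eta_n\|_{L^2}=1$. By Step 1, the $G\eta_n$ are bounded in $H^2$. Rellich-Kondrachov then gives a subsequence converging in $L^2$ and weakly in $H^2$ to some $\phi$ with $\|\phi\|_{L^2} = 1$. The limit satisfies $\De\phi=0$ together with the homogeneous boundary conditions, because the trace operators are continuous on $H^2$. It is also $L^2$-orthogonal to $\sH^k_D(M_R)$, since that is a closed condition. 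But $\ker(\De|H^2\Om^k_{\mathrm{hom}})=\sH^k_D(M_R)$, so $\phi=0$, a contradiction. Alternatively, one can simply cite that Schwarz's Green operator is bounded $L^2\to H^2$.

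\textbf{Step 3: conclude.} The inclusion $H^2\Om^k(M_R)\hookrightarrow L^2\Om^k(M_R)$ is compact by Rellich-Kondrachov on the compact manifold with boundary $M_R$. So $G$ is a bounded map into $H^2$ composed with a compact embedding, hence compact. Converting back to the weighted norms via Remark \ref{remark weighted unweighted equivalence} gives the statement.

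The only step needing care is Step 2, specifically the uniqueness of the limit. This rests on the kernel identification $\ker(\De|H^2\Om^k_{\mathrm{hom}})=\sH^k_D(M_R)$ recorded just before the lemma, and on the fact that the orthogonality to $\sH^k_D$ is the unweighted $L^2$ one. No uniformity in $R$ is needed here.
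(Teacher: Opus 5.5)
Your proposal is correct and follows the paper's proof: it composes the boundedness of $G$ into $H^2\Om^k_{\mathrm{hom}}(M_R)$ with the compact Rellich embedding into $L^2$, using the $R$-dependent equivalence of weighted and unweighted norms on the compact domain $M_R$. Your Step 2 only makes explicit the $L^2 \to H^2$ bound for $G$, which the paper takes for granted from Schwarz's existence and regularity theory.
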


\begin{proof}
    Recall that for the compact domains $M_R$, the weighted Sobolev norms are equivalent to the unweighted ones, albeit via constants depending on $R$. Therefore, we may use the usual Sobolev embedding theorem to compactly embed $ H^{2}_{\la +2} \Omega^k_{\mathrm{hom}} (M_R)$ into the space $L^2_{\la +2} \Omega^k (M_R)$. Composing $G$ with this embedding completes the proof.
\end{proof}

\begin{rmk}
    Note that the embedding $H^{s+2}_{\beta} \Omega^k(M) \hookrightarrow H^{s}_{\ga} \Omega^k(M)$ is only compact for noncompact $M$ if $\beta < \la$. Since we are dealing with compact $M_R$, we can use the usual Sobolev embedding theorem and let $\beta = \ga = \la + 2$. However, the failure of this embedding to be compact for $M$ accounts for the fact that the spectrum of $\De$ in the weighted spaces on $M$ may be continuous, but approximated by discrete singular values on each subdomain $M_R$.
\end{rmk}

The compactness of $G$ allows us to establish the following singular value decomposition.

\begin{prop}\label{proposition singular value decomp for weighted}
    For any $\la$, there is 
    \begin{enumerate}
        \item an $L^2_{\la}$-orthonormal basis $\{\phi_i\}_{i\in \bN}$ of $\sH^k_D(M_R)^\perp \subset L^2_{\la}\Omega^k(M_R)$
        \item an $L^{2}_{\la + 2}$-orthonormal basis $\{\psi_i\}_{i\in \bN} \subset H^2\Om^k_{hom}(M_R)$ of $L^{2}_{\la + 2}\Omega^k(M_R)$.
        \item singular values $\{\sg_i\}_{i \in \bN}$ with $\sg_i > 0$, $\sg_i \rightarrow 0$ as $i \rightarrow \infty$
    \end{enumerate}  
    such that the Green's operator $G$ for the boundary value problem \eqref{equation dirichlet potential bvp} has the following form for $\eta \in \sH^k_D(M_R)^\perp$.
    \begin{equation}\label{equation SVD for greens function}
        G(\eta) := \sum_{i=1}^\infty \sg_i \langle \phi_i, \eta \rangle_\la \psi_i.
    \end{equation}
\end{prop}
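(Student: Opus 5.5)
This is the singular value decomposition of a compact operator between two Hilbert spaces. The plan is to set $H_1 := \sH^k_D(M_R)^\perp$, equipped with the $L^2_\la$ inner product, and $H_2 := L^2_{\la+2}\Omega^k(M_R)$, so that by Lemma \ref{lemma compactness of Greens} the operator $G: H_1 \to H_2$ is compact. Two preliminary checks are needed.
\begin{enumerate}
\item $H_1$ is a closed subspace of $L^2_\la\Omega^k(M_R)$, hence a Hilbert space in the $L^2_\la$ inner product. Indeed, $\sH^k_D(M_R)$ is finite dimensional, the $L^2$ and $L^2_\la$ norms are equivalent on the compact domain $M_R$ (Remark \ref{remark weighted unweighted equivalence}), and $H_1$ is the annihilator of a finite-dimensional space.
\item $G$ is injective on $H_1$. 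If $G\eta = 0$, then $\eta = \De G\eta = 0$.
\end{enumerate}

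Next consider $G^*G: H_1 \to H_1$, where $G^*$ is the Hilbert adjoint of $G$ with respect to the inner products $\langle\cdot,\cdot\rangle_{L^2_{\la+2}}$ on $H_2$ and $\langle\cdot,\cdot\rangle_{L^2_\la}$ on $H_1$. This operator is compact, self-adjoint and nonnegative, and it is injective because $\langle G^*G\eta,\eta\rangle = \|G\eta\|^2_{L^2_{\la+2}}$ and $G$ is injective. The spectral theorem for compact self-adjoint operators then gives an $L^2_\la$-orthonormal eigenbasis $\{\phi_i\}$ of $\overline{\im(G^*G)} = (\ker G^*G)^\perp = H_1$. The eigenvalues $\mu_i > 0$ satisfy $\mu_i \to 0$, since $H_1$ is infinite dimensional. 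Set $\sg_i := \mu_i^{1/2}$ and $\psi_i := \sg_i^{-1} G\phi_i$. Then
\[
\langle \psi_i, \psi_j\rangle_{\la+2} = \sg_i^{-1}\sg_j^{-1}\langle G^*G\phi_i,\phi_j\rangle_\la = \de_{ij},
\]
and $\psi_i \in \im G \subset H^2\Om^k_{hom}(M_R)$. Expanding any $\eta \in H_1$ as $\eta = \sum_i \langle\phi_i,\eta\rangle_\la \phi_i$ and applying the bounded operator $G$ term by term gives \eqref{equation SVD for greens function}.

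The step I expect to be the main obstacle is the claim in (2) that $\{\psi_i\}$ is a basis of all of $L^2_{\la+2}\Omega^k(M_R)$. The $\psi_i$ span $\overline{\im G}$, so one must show that $\im G$ is dense in $L^2_{\la+2}$. I would argue as follows. Take $\om \in L^2_{\la+2}$ that is $L^2_{\la+2}$-orthogonal to $\im G$. Then $\rho^{-2\la-4-7}\om$ is $L^2$-orthogonal to $G(H_1)$. The range $G(H_1)$ contains every smooth homogeneous form $u$ that is $L^2$-orthogonal to $\sH^k_D(M_R)$: for such $u$, Green's formula (Proposition \ref{proposition greens theorem}) shows $\De u \perp \sH^k_D$, so $\De u \in H_1$, and uniqueness of the Dirichlet potential gives $G\De u = u$. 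These forms are dense in $\sH^k_D(M_R)^\perp \subset L^2$. It follows that $\rho^{-2\la-11}\om \in \sH^k_D(M_R)$, so $\om$ lies in a finite-dimensional space of weighted Dirichlet harmonic fields.

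To finish, those finitely many directions have to be handled. I would either show directly that they also lie in $\overline{\im G}$, or, if that fails, adjoin an orthonormal basis of this finite-dimensional complement with $\sg = 0$. Adjoining such vectors leaves the formula \eqref{equation SVD for greens function} unchanged, since $G$ has no component in those directions, and so it does not affect any later use of the decomposition.
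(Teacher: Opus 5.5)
Your route differs from the paper's in which Gram operator you diagonalize. You diagonalize $G^*G$ on $H_1=\sH^k_D(M_R)^\perp$ and set $\psi_i:=\sg_i^{-1}G\phi_i$. The paper instead diagonalizes $GG^*$ on $L^2_{\la+2}\Omega^k(M_R)$, takes its eigenbasis as $\{\psi_i\}$, and sets $\phi_i:=\ka_i^{-1/2}G^*\psi_i$.

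Your choice is the cleaner one. Injectivity of $G$ makes $G^*G$ injective, so $\sg_i>0$ and completeness of $\{\phi_i\}$ in $H_1$ come for free. Your argument therefore fully proves (1), (3), the formula for $G$, and that $\{\psi_i\}$ is an $L^2_{\la+2}$-orthonormal set in $H^2\Om^k_{hom}(M_R)$.

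In the paper's route, completeness of $\{\phi_i\}$ holds only because $G^*$ has dense range. The paper says ``surjective'', which no compact operator into an infinite-dimensional space is. More seriously, the paper asserts that every eigenvalue of $GG^*$ is positive. This silently assumes $\ker G^*=(\mathrm{im}\,G)^{\perp_{\la+2}}=0$, which is exactly the point you isolated.

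Your own computation settles that point negatively, so your first option cannot work. You used the normalization in which $G\eta$ is the Dirichlet potential $L^2$-orthogonal to $\sH^k_D(M_R)$; that is what gives $G\De u=u$. With it, $\mathrm{im}\,G\subset\sH^k_D(M_R)^{\perp_{L^2}}$. Hence every $\rho^{2\la+11}h$ with $h\in\sH^k_D(M_R)$ is $L^2_{\la+2}$-orthogonal to $\mathrm{im}\,G$, and your inclusion is an equality: $(\mathrm{im}\,G)^{\perp_{\la+2}}=\rho^{2\la+11}\sH^k_D(M_R)$.

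So (2) together with $\sg_i>0$ holds if and only if $\sH^k_D(M_R)=0$, equivalently $H^k(M_R,\dd M_R;\bR)=0$. This fails in cases the paper needs. For example, take $k=3$ on the Bryant--Salamon manifold $\Lambda^2_-(S^4)$, where $H^3(M_R,\dd M_R;\bR)\cong H_4(S^4;\bR)\neq 0$.

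Your second option does not rescue the literal statement either. Besides violating $\sg_i>0$, for $2\la+11\neq 0$ the adjoined vectors are not in $H^2\Om^k_{hom}(M_R)$. On $\dd M_R$ one has $\bt\de(\rho^{2\la+11}h)=-(2\la+11)R^{2\la+10}\,\bt(\iota_{\nabla\rho}h)$. This vanishes only if $\bn h=0$, and a Dirichlet field with $\bn h=0$ is zero by unique continuation.

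The correct repair is to weaken (2) to ``an $L^2_{\la+2}$-orthonormal basis of $\overline{\mathrm{im}\,G}=(\rho^{2\la+11}\sH^k_D(M_R))^{\perp_{\la+2}}$''. This is precisely what your argument proves, and it leaves the formula for $G$ unchanged.

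Your closing remark that later uses are unaffected also needs checking. When the paper's next lemma extends the expansion of $\De\nu$ to all of $\sD(\De)$, it places $\sH^k_D(M_R)$ itself in the complement of $G(\sH^k_D(M_R)^\perp)$. The actual complement is $\rho^{2\la+11}\sH^k_D(M_R)$.
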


\begin{proof}
    By Lemma \ref{lemma compactness of Greens}, $G$ is a compact operator from $\sH^k_D(M_R)^\perp$ to $L^{2}_{\la +2} \Omega^k (M_R)$. These are Hilbert spaces, so the adjoint map $G^*$ is a compact operator from $L^{2}_{\la +2} \Omega^k (M_R)$ to $\sH^k_D(M_R)^\perp$. The map 
    \[
    GG^* : L^2_{\la+2}\Omega^k(M_R) \rightarrow L^2_{\la+2}\Omega^k(M_R)
    \]
    is a compact, self-adjoint linear operator. There is, therefore, an orthonormal eigenbasis $\{\psi_i\}_{i = 1}^\infty$ of $L^2_{\la +2} \Om^k(M_R)$ with corresponding positive, real eigenvalues $\{\ka_i\}_{i=1}^\infty$ accumulating only at $0$. Consider the set $\{G^*(\psi_i)\}_i$. 
    \begin{align*}
        \langle G^*(\psi_i), G^*(\psi_j) \rangle_{\la}
        &= \langle GG^*(\psi_i), \psi_j \rangle_{\la + 2} \\
        &= \ka_i \langle \psi_i, \psi_j \rangle_{\la + 2} \\
        &= \ka_i \de_{ij}.
    \end{align*}
    Thus, the $G^*(\psi_i)$'s are an orthogonal set in $\sH^k_D(M_R)^\perp$. Note that $\ka_i = \|G^*(\psi_i)\|_\la^2 >0$. Now take $\phi_i := {\ka_i}^{-1/2}G^*(\psi_i)$ and $\sg_i = \ka_i^{1/2}$. The $\psi_i$'s are an orthonormal basis and $G^*$ is linear and surjective (since $G$ is injective), so the orthonormal set $\{\phi_i\}_i$ is actually a basis of $\sH^k_D(M_R)^\perp$. Thus, we can decompose $\eta$ as
    \[
    \eta = \sum_i \langle \phi_i, \eta \rangle_{\la} \phi_i.
    \]
    Then
    \begin{align*}
    G(\eta) &= \sum_i \langle \phi_i, \eta \rangle_{\la} G(\phi_i) \\
    &= \sum_i \sg_i \langle \phi_i, \eta \rangle_{\la} \psi_i.
    \end{align*}
    Note finally that $G(\phi_i) = \psi_i$, the definition of $G$ implies that $\psi_i \in H^2\Om^k_{hom}(M_R)$. This completes the proof.
\end{proof}

We wish to use the singular value decomposition to represent the action of the Hodge Laplacian and its adjoint on certain forms. However, in order to do this, the correct domains of the Hodge Laplacian and its adjoint need to be defined. The first step is to calculate the formal adjoint of $\De$.

\begin{prop}\label{proposition formal adjoint of laplacian}
    The Hodge Laplacian restricted to $H^2_{\la +2} \Omega^k_{hom}(M_R)$ maps into $ L^2_{\la} \Om^k(M_R)$ and has formal adjoint given by the formula
    \[
    \De^*\eta = \rho^{2\la + 4 + 7}\De (\rho^{-2\la -7} \eta),
    \]
    where $\eta \in L^2_{\la} \Om^k(M_R)$ is assumed to be smooth.
\end{prop}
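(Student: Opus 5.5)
The plan is to compute the adjoint directly from the definition of the weighted inner products, treating $\De$ as an operator from $L^2_{\la+2}$ (its domain carries the $H^2_{\la+2}$ structure) to $L^2_\la$. The mapping property comes first. On the compact domain $M_R$ the weighted norms are equivalent to the unweighted ones (Remark \ref{remark weighted unweighted equivalence}). The Hodge Laplacian is a second-order operator with smooth coefficients. Hence $\De: H^2\Om^k(M_R) \to L^2\Om^k(M_R)$ is bounded, and therefore so is $\De: H^2_{\la+2}\Om^k_{hom}(M_R) \to L^2_\la\Om^k(M_R)$.

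For the formula, I would write out the two pairings explicitly:
\[
\langle \De\sg, \eta\rangle_{L^2_\la} = \int_{M_R} \langle \De\sg,\eta\rangle \rho^{-2\la-7}\,dV_g,
\qquad
\langle \sg, \zeta\rangle_{L^2_{\la+2}} = \int_{M_R} \langle \sg,\zeta\rangle \rho^{-2\la-4-7}\,dV_g .
\]
Put $\tilde\eta := \rho^{-2\la-7}\eta$. The first integral is then the unweighted pairing $\langle \De \sg, \tilde\eta\rangle_{L^2}$. Since the Hodge Laplacian is formally self-adjoint in $L^2$, it equals $\langle \sg, \De\tilde\eta\rangle_{L^2}$ up to boundary terms. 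Rewriting this in the $L^2_{\la+2}$ pairing means inserting $\rho^{2\la+4+7}\rho^{-2\la-4-7}$, which gives $\langle \sg, \rho^{2\la+11}\De(\rho^{-2\la-7}\eta)\rangle_{L^2_{\la+2}}$. This is exactly the claimed expression for $\De^*\eta$.

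The only real content lies in the boundary terms. Here "formal adjoint" means we may either take $\eta$ compactly supported in the interior of $M_R$, or keep track of the boundary terms and see which of them vanish. I would apply Green's formula (Proposition \ref{proposition greens theorem}) twice, writing $\De = d\de + \de d$. For the $d\de$ piece this gives
\[
\langle d\de\sg,\tilde\eta\rangle = \langle \de\sg, \de\tilde\eta\rangle + \int_{\dd M_R} \bt\de\sg\wedge\star\bn\tilde\eta .
\]
The boundary integral vanishes because $\bt\de\sg=0$ for $\sg$ homogeneous. A second application produces $\langle \sg, d\de\tilde\eta\rangle$ together with a boundary term that pairs $\bn\sg$ against $\bt\de\tilde\eta$. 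The $\de d$ piece behaves analogously: its first boundary integral involves $\bt\sg$, which vanishes by the Dirichlet condition. What survives consists of boundary terms involving $\bn\sg$, $\bn d\sg$ paired with tangential parts of $\tilde\eta$ and $\de\tilde\eta$. These vanish when $\eta$ is compactly supported in the interior, and also when $\eta$ satisfies the adjoint conditions $\bt\eta=0$ and $\bt\de(\rho^{-2\la-7}\eta)=0$ appearing in \eqref{equation fourth order bvp with LS adjoint}.

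The main obstacle is this boundary bookkeeping. One has to identify the correct domain for $\De^*$ so that the identity holds on that domain and not only for interior-supported $\eta$, which is what the later SVD representation will require. For the formal statement as given, restricting to smooth $\eta$ with compact support in the interior of $M_R$ already suffices, and the interior computation above then finishes the proof.
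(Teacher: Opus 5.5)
Your proposal is correct and follows essentially the paper's argument: absorb the weight $\rho^{-2\la-7}$ into $\eta$, apply Green's formula (Proposition \ref{proposition greens theorem}) twice to $\De = d\de + \de d$, use the homogeneous boundary conditions on the domain side, and reinsert $\rho^{2\la+11}\rho^{-2\la-11}$ to read off $\De^*\eta$. The only difference is that the paper kills the surviving boundary terms by restricting the test forms $\nu$ in the domain of $\De$ to smooth forms vanishing at the boundary, $\Om^k_0(M_R)$, rather than restricting $\eta$. Your adjoint conditions $\bt\eta = 0$ and $\bt\de(\rho^{-2\la-7}\eta)=0$ are the ones the paper derives later in Corollary \ref{corollary vanishing of boundary terms}.
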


\begin{proof} Let $\eta $ be a smooth form in $L^2_{\la} \Om^k(M_R)$, and let $\nu$ have two weak derivatives and satisfy the homogeneous boundary conditions. Apply Green's formula \cite[Proposition 2.1.2]{HodgeDecompSchwarz} and compute
    \begin{align*}
        \int_{M_R} \langle \eta, \De \nu \rangle \rho^{-2\la}\rho^{-7} dV & = \int_{M_R} \langle \eta, (d\de + \de d) \nu \rangle \rho^{-2\la -7} dV\\
        &= \int_{M_R} \langle \de \nu, \de (\rho^{-2\la- 7} \eta) \rangle dV + \int_{\dd M_R} \bt \de \nu \wedge \star \bn \rho^{-2\la- 7} \eta \\
        &\qquad + \int_{M_R} \langle d \nu, d (\rho^{-2\la- 7} \eta) \rangle dV + \int_{\dd M_R} \bt (\rho^{-2\la- 7} \eta) \wedge \star \bn d \nu \\
        &= \int_{M_R} \langle \de \nu, \de (\rho^{-2\la- 7} \eta) \rangle dV + \int_{M_R} \langle d \nu, d (\rho^{-2\la- 7} \eta) \rangle dV \\
        &\qquad \qquad + \int_{\dd M_R} \bt (\rho^{-2\la- 7} \eta) \wedge \star \bn d \nu,
    \end{align*}
    where the last line follows from the homogeneous boundary conditions on $\nu$ and the fact that $\bt d \nu = d \bt \nu$. Now we continue to apply Green's theorem to obtain the adjoint. 
    \begin{align}\label{equation adjoint via greens}
       &\int_{M_R} \langle \de \nu, \de (\rho^{-2\la- 7} \eta) \rangle dV + \int_{M_R} \langle d \nu, d (\rho^{-2\la- 7} \eta) \rangle dV + \int_{\dd M_R} \bt (\rho^{-2\la- 7} \eta) \wedge \star \bn d \nu \nonumber  \\ &\qquad = \int_{M_R} \langle  \nu, d\de (\rho^{-2\la- 7} \eta) \rangle dV - \int_{\dd M_R} \bt \de (\rho^{-2\la- 7} \eta) \wedge \star \bn \nu  \nonumber \\
       &\qquad \qquad + \int_{M_R} \langle \nu, \de d (\rho^{-2\la- 7} \eta) \rangle dV - \int_{\dd M_R} \bt \nu \wedge \star \bn d (\rho^{-2\la- 7} \eta) \nonumber\\
       &\qquad \qquad \qquad + \int_{\dd M_R} \bt (\rho^{-2\la- 7} \eta) \wedge \star \bn d \nu \nonumber \\
       &\qquad = \int_{M_R} \langle \nu, \rho^{2\la + 4 + 7}\De(\rho^{-2\la - 7} \eta) \rangle \rho^{-2\la -4 - 7} dV - \int_{\dd M_R} \bt \de (\rho^{-2\la- 7} \eta) \wedge \star \bn \nu \nonumber \\
       &\qquad \qquad + \int_{\dd M_R} \bt (\rho^{-2\la- 7} \eta) \wedge \star \bn d \nu 
    \end{align}
    To obtain the formal adjoint, we restrict our test forms $\nu$ to the smooth forms vanishing at the boundary $\Om^k_0(M_R)$. For such test forms $\nu$, the boundary terms in \eqref{equation adjoint via greens} vanish. Thus, the formal adjoint of $\De$ is given by the formula
    \[
    \De^*\eta = \rho^{2\la + 4 + 7}\De (\rho^{-2\la -7} \eta),
    \]
    where all necessary derivatives of $\eta$ are assumed to be defined. 
\end{proof}

The domain of the adjoint operator is defined by
\[
\sD(\De^*) = \{ \om \in L^2_{\la}\Om^k(M_R) \;|\; \exists \eta \in L^2, \; \forall \nu \in \sD(\De), \; \langle \om, \De \nu \rangle_{\la} = \langle \eta, \nu \rangle_{\la+2} \}.
\]
Since the smooth test forms vanishing on the boundary, $\Om^k_0(M_R)$, are contained in $H^2_{\la+2}\Om^k_{hom}(M_R)$, the integration by parts calculation in Proposition \ref{proposition formal adjoint of laplacian} means that 
\[
\De^* \om = \rho^{2\la + 4 + 7}\De (\rho^{-2\la -7} \om ) = \eta 
\]
in the distributional sense. However, since we are testing against a larger set of functions, $H^2_{\la+2}\Om^k_{hom}(M_R) \supset \Om^k_0(M_R)$, forms $\om$ in $\sD(\De^*)$ must also satisfy (in the distributional sense) dual boundary conditions to the homogeneous boundary conditions (see Corollary \ref{corollary vanishing of boundary terms}). 

The following lemma provides expressions for the Laplacian and its adjoint on domains of interest.  

\begin{lem}\label{lemma SVD for hodge laplacian}
    The Hodge Laplacian on the domain $\sD(\De)$ taken as a subset of $L^{2}_{\la + 2}\Om^k(M_R)$ has the form
    \begin{equation}\label{equation SVD for hodge laplacian}
        \De \nu := \sum_{i=1}^\infty \sg_i^{-1} \langle \psi_i, \nu \rangle_{\la + 2} \phi_i.
    \end{equation}
    The adjoint $\De^*$ has the following form when applied to forms $\eta \in \sH^k_D(M_R)^\perp \cap \sD(\De^*)$.
    \begin{equation}\label{equation SVD for exact adjoint laplacian}
        \De^* \eta := \sum_{i=1}^\infty \sg_i^{-1} \langle \phi_i, \eta \rangle_{\la} \psi_i.
    \end{equation}
\end{lem}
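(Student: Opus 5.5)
The plan is to read both formulas off the singular value decomposition in Proposition \ref{proposition singular value decomp for weighted}, using only that $G$ inverts $\De$ on $\sD(\De)$ and that $\De^*$ is the Hilbert adjoint for the pairing $\langle \De\nu,\om\rangle_\la = \langle \nu,\De^*\om\rangle_{\la+2}$.

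\emph{The formula for $\De$.} By construction $G(\phi_i)=\sg_i\psi_i$ with $\psi_i\in H^2\Om^k_{hom}(M_R)$. Since $G$ solves \eqref{equation dirichlet potential bvp}, $\De\psi_i = \sg_i^{-1}\phi_i$. Given $\nu\in\sD(\De)$, expand $\nu=\sum_i\langle\psi_i,\nu\rangle_{\la+2}\psi_i$ in the $L^2_{\la+2}$-orthonormal basis $\{\psi_i\}$. I then need to justify moving $\De$ inside the sum. I would do this weakly: pair $\De\nu\in\sH^k_D(M_R)^\perp$ with each $\phi_j$ and show $\langle\phi_j,\De\nu\rangle_\la=\sg_j^{-1}\langle\psi_j,\nu\rangle_{\la+2}$. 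The left side equals $\sg_j^{-1}\langle G^*\psi_j,\De\nu\rangle_\la=\sg_j^{-1}\langle\psi_j,G\De\nu\rangle_{\la+2}$. It then suffices that $G\De\nu=\nu$ modulo $\sH^k_D(M_R)$, because $\nu$ and $G\De\nu$ are both homogeneous solutions with the same Laplacian, so their difference lies in $\ker(\De|H^2\Om^k_{hom})=\sH^k_D(M_R)$. One point needs care: $\{\psi_i\}$ spans $L^2_{\la+2}$, so the harmonic-field component must pair to zero against each $\psi_j$. This follows from $\langle\psi_j,h\rangle_{\la+2}=\sg_j^{-1}\langle G\phi_j,h\rangle_{\la+2}=\sg_j^{-1}\langle\phi_j,G^*h\rangle_\la$, together with the convention that $\sD(\De)$ is taken $L^2_{\la+2}$-orthogonal to $\sH^k_D$, i.e.\ the range of $G$. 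I would state this convention explicitly. The coefficients then match, and \eqref{equation SVD for hodge laplacian} holds with convergence in $L^2_\la$.

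\emph{The formula for $\De^*$.} For $\eta\in\sH^k_D(M_R)^\perp\cap\sD(\De^*)$, expand $\De^*\eta\in L^2_{\la+2}$ in $\{\psi_i\}$. Using the definition of $\sD(\De^*)$ with the test form $\nu=\psi_i\in\sD(\De)$, we get $\langle\psi_i,\De^*\eta\rangle_{\la+2}=\langle\De\psi_i,\eta\rangle_\la=\sg_i^{-1}\langle\phi_i,\eta\rangle_\la$. Summing over $i$ gives \eqref{equation SVD for exact adjoint laplacian}. This step is essentially formal once $\De\psi_i=\sg_i^{-1}\phi_i$ is known.

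The main obstacle is the first part: making the domain $\sD(\De)$ precise so that the harmonic fields are quotiented out correctly, and justifying the termwise application of the unbounded operator $\De$. I expect the coefficient-matching argument above, using only bounded operators $G$ and $G^*$, to avoid any convergence issues.
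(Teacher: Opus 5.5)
Your proposal is correct and is essentially the paper's argument. The coefficient matching through $\phi_j=\sg_j^{-1}G^*\psi_j$ is the paper's inversion of the singular value decomposition of $G$ on its range, and the convention you isolate (that $G^*$ annihilates $\sH^k_D(M_R)$) is exactly the paper's step of regarding $\sH^k_D(M_R)$ as $L^2_{\la+2}$-orthogonal to $G(\sH^k_D(M_R)^\perp)$. For the adjoint, testing $\De^*\eta$ against $\nu=\psi_i$ is a slightly cleaner variant of the paper's pairing with a general $\nu$. Note, however, that under this convention the $\psi_i$ span only the $L^2_{\la+2}$-orthogonal complement of $\sH^k_D(M_R)$, not all of $L^2_{\la+2}$ as you assert; the same tension is already present in the paper's Proposition. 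This still suffices, because $\langle h,\De^*\eta\rangle_{\la+2}=\langle \De h,\eta\rangle_{\la}=0$ for every $h\in\sH^k_D(M_R)$.
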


\begin{proof}
    It follows from the injectivity of the Green's function $G$ that the formula \eqref{equation SVD for greens function} in Proposition \ref{proposition singular value decomp for weighted} implies the formula \eqref{equation SVD for hodge laplacian} for $\nu \in G(\sH^k_D(M_R)^\perp)$. However, we can represent \[
    H^2\Om^k_{hom}(M_R) \cong G(\sH^k_D(M_R)^\perp) \oplus \ker \De|_{H^2\Om^k_{hom}} \cong G(\sH^k_D(M_R)^\perp) \oplus \sH^k_D(M_R),
    \]
    where $\sH^k_D(M_R)$ is a finite-dimensional vector space. Any sequence in $\sH^k_D(M_R)^\perp$ approaching zero is mapped by $G$ to a sequence approaching zero in the $H^2_{\la+2}$-norm (and consequently the $L^2_{\la+2}$-norm). Thus, the kernel of $\De$, $\sH^k_D(M_R)$, can be considered to be in the $L^2_{\la+2}$-orthogonal complement of $G(\sH^k_D(M_R)^\perp)$, and thus the formula \eqref{equation SVD for hodge laplacian} may be extended to all forms in $\sD(\De)$.
    
    To obtain the adjoint formula, let $\eta \in \sH^k_D(M_R)^\perp$. Then, for all $\nu \in H^2 \Om^k_{hom} (M_R)$,
    \begin{align*}
        \langle \eta, \De \nu \rangle_{\la} &= \sum_{i=1}^\infty \sg_i^{-1} \langle \psi_i, \nu \rangle_{\la + 2} \langle \phi_i, \eta \rangle_{\la} \\
        &= \bigg \langle \sum_{i=1}^\infty \sg_i^{-1} \langle \phi_i, \eta \rangle_{\la} \psi_i, \nu \bigg\rangle_{\la+2}.
    \end{align*}
    This establishes formula \eqref{equation SVD for exact adjoint laplacian}, when the weak derivative 
    \[
    \sum_{i=1}^\infty \sg_i^{-1} \langle \phi_i, \eta \rangle_{\la} \psi_i
    \]
    is in $L^2$, i.e. when $\eta \in \sD(\De^*)$.
\end{proof}

\begin{cor}\label{corollary smoothness of eigenforms}
The orthogonal basis forms $\{\phi_i\}_i$ in Proposition \ref{proposition singular value decomp for weighted} are smooth on the interior of $M_R$. 
\end{cor}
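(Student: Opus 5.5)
The plan is to show that each $\phi_i$ satisfies, in the interior of $M_R$, a linear elliptic equation with smooth coefficients, and then to apply interior elliptic regularity. From the proof of Proposition \ref{proposition singular value decomp for weighted} we know $\phi_i = \sg_i^{-1} G^*(\psi_i)$ and $G(\phi_i) = \psi_i$, so $\psi_i \in H^2\Om^k_{hom}(M_R)$ and $\De \psi_i = \sg_i^{-1}\phi_i$; this last identity is the $i$-th term of \eqref{equation SVD for hodge laplacian}. To obtain the dual relation, I will check that $\phi_i \in \sD(\De^*)$. For every $\nu \in H^2\Om^k_{hom}(M_R)$, the expansion in Lemma \ref{lemma SVD for hodge laplacian} gives
\[
\langle \phi_i, \De \nu\rangle_\la = \sg_i^{-1}\langle \psi_i, \nu\rangle_{\la+2}.
\]
Hence $\phi_i \in \sD(\De^*)$ and $\De^*\phi_i = \sg_i^{-1}\psi_i$.

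Next I restrict the test forms to $\nu \in \Om^k_0(M_R)$, the smooth forms compactly supported in the interior. By Proposition \ref{proposition formal adjoint of laplacian}, the identity above then says that, in the sense of distributions on the interior,
\[
\rho^{2\la+4+7}\De(\rho^{-2\la-7}\phi_i) = \sg_i^{-1}\psi_i .
\]
Set $u := \rho^{-2\la-7}\phi_i$. Since $\rho$ is smooth and positive, $u \in L^2_{loc}$ and it solves $\De u = \sg_i^{-1}\rho^{-2\la-11}\psi_i$ weakly. The Hodge Laplacian of the smooth background metric is a second-order elliptic operator with smooth coefficients, and the right-hand side lies in $H^2_{loc}$ because $\psi_i \in H^2$. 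Interior elliptic regularity (for instance \cite[Corollary 10.3.9]{NicolaescuManifoldLectures}, or Weyl's lemma for distributional solutions) therefore gives $u \in H^4_{loc}$, and hence $\phi_i = \rho^{2\la+7}u \in H^4_{loc}$.

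The final step is to bootstrap through the coupled system $\De\psi_i = \sg_i^{-1}\phi_i$ and $\De u = \sg_i^{-1}\rho^{-2\la-11}\psi_i$. If $\phi_i \in H^m_{loc}$, then $\psi_i \in H^{m+2}_{loc}$ by interior regularity applied to the first equation. That makes the right-hand side of the second equation $H^{m+2}_{loc}$, so $u$ and hence $\phi_i$ lie in $H^{m+4}_{loc}$. Iterating, $\phi_i$ and $\psi_i$ lie in $H^m_{loc}$ for every $m$, and Sobolev embedding on compact interior subsets gives smoothness of both.

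The step I expect to need the most care is the first one: the adjoint equation must hold distributionally with $\phi_i$ known only to be in $L^2$, so the regularity theorem has to be one that applies to $L^2$ (or distributional) solutions rather than to $H^2$ solutions. If the cited estimate requires a priori $H^2$, I will first mollify $u$ in interior coordinate charts and use difference-quotient estimates to lift $u$ from $L^2_{loc}$ to $H^2_{loc}$, and then run the bootstrap. Boundary behaviour does not enter, since the statement only claims smoothness in the interior.
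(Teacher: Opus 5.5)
Your proposal is correct and follows the paper's route. Both verify $\phi_i\in\sD(\De^*)$ with $\De^*\phi_i=\sg_i^{-1}\psi_i$ from Lemma~\ref{lemma SVD for hodge laplacian}, read this as a distributional equation on the interior, and apply interior elliptic regularity. The only difference is that the paper applies regularity once to the fourth-order equation $\De\De^*\phi_i=\sg_i^{-2}\phi_i$, whereas you bootstrap the two second-order equations starting from $\psi_i\in H^2$; the two are equivalent. One small slip: $G(\phi_i)=\sg_i\psi_i$, not $\psi_i$, which is what your identity $\De\psi_i=\sg_i^{-1}\phi_i$ requires, and the constant is irrelevant for regularity.
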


\begin{proof}
First, we show that the basis vectors $\phi_i$ are in the domain of the adjoint, $\sD(\De^*)$. For any test form $\nu \in H^2 \Om^k_{hom}(M_R)$, observe that by Lemma \ref{lemma SVD for hodge laplacian}, 
\begin{align*}
    \langle \phi_i , \De \nu \rangle_{\la} &= \bigg \langle \sum_{j=1}^\infty \sg_j^{-1} \langle \phi_j, \phi_i \rangle_{\la} \psi_j, \nu \bigg\rangle_{\la+2}\\
    &= \langle \sg_i^{-1} \psi_i, \nu \rangle_{\la+2}
\end{align*}
Thus, in the distributional sense $\rho^{2\la + 4 + 7}\De(\rho^{-2\la -7}\phi_i) = \sg_i^{-1} \psi_i$, which is in $H^2 \subset L^2$. Thus the $\phi_i$'s are in $\sD(\De^*)$. 

By Lemma \ref{lemma SVD for hodge laplacian} and the previous calculation, $\De \De^* \phi_j = \sg_j^{-2} \phi_i$ in the distributional sense, and in particular is in $L^2_{\la}\Om^k(M_R)$. Then, $\phi_j \in L^2_{\la}$ weakly solves the elliptic equation 
\[
\De\De^* \phi_j - \sg_j^{-2} \phi_j = 0. 
\]
The interior elliptic regularity proved in \cite[Corollary 10.3.9]{NicolaescuManifoldLectures} then imply that $\phi_j$ is in $H^k_{loc}$ for all $k > 0$. 
\end{proof}

\begin{cor}\label{corollary equality of adjoints}
    For each $i \in \bN$, 
    \begin{equation}\label{equation adjoints and eigenvalues}
    \rho^{2\la + 4 +7} \De (\rho^{-2\la - 7} \phi_i) = \De^* \phi_i = \sg_i^{-1} \psi_i
    \end{equation}
    as smooth forms. 
\end{cor}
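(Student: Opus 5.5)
We upgrade the distributional identity from the proof of Corollary \ref{corollary smoothness of eigenforms} to a pointwise identity of smooth forms. That proof tested $\phi_i$ against arbitrary $\nu \in H^2\Om^k_{hom}(M_R)$ and used Lemma \ref{lemma SVD for hodge laplacian} to obtain
\[
\langle \phi_i, \De \nu\rangle_{\la} = \langle \sg_i^{-1}\psi_i, \nu\rangle_{\la+2}.
\]
Restricting to test forms $\nu \in \Om^k_0(M_R)$ (smooth, compactly supported in the interior) and applying the integration by parts of Proposition \ref{proposition formal adjoint of laplacian}, where all boundary terms vanish, gives
\[
\rho^{2\la+4+7}\De(\rho^{-2\la-7}\phi_i) = \sg_i^{-1}\psi_i
\]
in the sense of distributions on the interior of $M_R$. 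The middle term $\De^*\phi_i$ in \eqref{equation adjoints and eigenvalues} is, by definition of $\sD(\De^*)$ and Lemma \ref{lemma SVD for hodge laplacian}, exactly the $L^2$ element $\sg_i^{-1}\psi_i$. Since $\phi_i\in\sD(\De^*)$ was already shown, the three quantities agree as distributions, hence as elements of $L^2_{\la+2}$.

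It remains to show each side is smooth, so that the equality holds pointwise. By Corollary \ref{corollary smoothness of eigenforms}, $\phi_i \in H^k_{loc}$ for every $k$, so $\phi_i$ is smooth on the interior by Sobolev embedding. Because $\rho$ is smooth and positive, $\rho^{-2\la-7}\phi_i$ is smooth, and so the left side is a smooth form. Hence $\psi_i = \sg_i\,\rho^{2\la+4+7}\De(\rho^{-2\la-7}\phi_i)$ has a smooth representative. The distributional identity between a smooth form and an $L^2$ form forces the $L^2$ form to agree a.e. with that smooth form, and we choose this representative for $\psi_i$. This proves \eqref{equation adjoints and eigenvalues} as smooth forms on the interior of $M_R$.

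The only delicate point is the one already handled in Corollary \ref{corollary smoothness of eigenforms}: interior elliptic regularity for the fourth-order operator $\De\De^*$. This applies because $\De\De^*\phi_i = \sg_i^{-2}\phi_i$ weakly, the principal symbol is $\rho^4$ times that of $\De^2$ and hence elliptic, and \cite[Corollary 10.3.9]{NicolaescuManifoldLectures} then bootstraps regularity. Smoothness up to the boundary is not claimed here. If it were needed, one would instead invoke Corollary \ref{corollary lopatinski shapiro higher order} with the adjoint boundary conditions from \eqref{equation higher order arb elliptic bvp with LS adjoint}, which are satisfied distributionally by forms in $\sD(\De^*)$.
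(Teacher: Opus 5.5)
Your proposal is correct and follows essentially the same route as the paper. You identify $\Delta^*\phi_i=\sigma_i^{-1}\psi_i$ in $L^2$ via the singular value decomposition, then use the interior smoothness of $\phi_i$ from Corollary \ref{corollary smoothness of eigenforms} to identify $\Delta^*\phi_i$ with the formal adjoint $\rho^{2\lambda+11}\Delta(\rho^{-2\lambda-7}\phi_i)$, and conclude that $\psi_i$ has a smooth representative on the interior. Your version spells out, by testing against interior compactly supported forms (which lie in $H^2\Omega^k_{\mathrm{hom}}(M_R)$), the step the paper compresses into ``the smoothness of $\phi_i$ implies that $\Delta^*$ is in this case equal to its formal adjoint.''
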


\begin{proof}
Equation \eqref{equation SVD for exact adjoint laplacian} implies that $\De^* \phi_i = \sg_i^{-1} \psi_i$ as $L^2$ forms.  The smoothness of $\phi_i$ implies that $\De^*$ is in this case equal to its formal adjoint, which was derived in Proposition \ref{proposition formal adjoint of laplacian}. That is,
\[
\rho^{2\la + 4 +7} \De (\rho^{-2\la - 7} \phi_i) = \De^* \phi_i.
\]
Finally, note that the pointwise almost-everywhere equivalence in \eqref{equation adjoints and eigenvalues} and the smoothness of $\phi_i$ implies that $\psi_i$ is smooth on the interior of $M_R$. 
\end{proof}

\begin{prop}\label{proposition smoothness up to boundary of eigenfunctions}
    For any $k > 0$ and all $i \in \bN$, $\phi_i$ and $\psi_i$ are in the Sobolev space $H^k(M_R)$.
\end{prop}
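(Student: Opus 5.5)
The plan is to show that $\phi_i$ and $\psi_i$ solve fourth-order boundary value problems of the type treated in Proposition \ref{proposition lopatinski shapiro condition} and Corollary \ref{corollary lopatinski shapiro higher order}, and then bootstrap. By Corollary \ref{corollary equality of adjoints}, $\De^*\phi_i = \sg_i^{-1}\psi_i$. By Lemma \ref{lemma SVD for hodge laplacian} together with $G(\phi_i)=\psi_i$, we also have $\De\psi_i = \sg_i^{-1}\phi_i$. Hence
\[
\De^*\De\psi_i = \sg_i^{-2}\psi_i, \qquad \De\De^*\phi_i = \sg_i^{-2}\phi_i .
\]
The boundary conditions come from two sources. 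Since $\psi_i\in H^2\Om^k_{hom}(M_R)$, it satisfies $\bt\psi_i = 0$ and $\bt\de\psi_i=0$, which are the conditions in \eqref{equation fourth order bvp with LS}. For $\phi_i$, membership in $\sD(\De^*)$ should force the dual boundary conditions. To see this, use the integration-by-parts identity \eqref{equation adjoint via greens}: testing against all $\nu\in H^2\Om^k_{hom}(M_R)$, the identity $\langle\phi_i,\De\nu\rangle_\la=\langle\De^*\phi_i,\nu\rangle_{\la+2}$ makes the boundary terms
\[
\int_{\dd M_R}\bt\de(\rho^{-2\la-7}\phi_i)\wedge\star\bn\nu \quad\text{and}\quad \int_{\dd M_R}\bt(\rho^{-2\la-7}\phi_i)\wedge\star\bn d\nu
\]
vanish for all such $\nu$. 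Since $\bn\nu$ and $\bn d\nu$ can be prescribed essentially independently on $\dd M_R$ subject to $\bt\nu=\bt\de\nu=0$, this yields $\bt\phi_i=0$ and $\bt\de(\rho^{-2\la-7}\phi_i)=0$ weakly. These are exactly the conditions in \eqref{equation fourth order bvp with LS adjoint}.

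Regularity would then be bootstrapped in two steps. First, $\psi_i\in H^2$ and $\De\psi_i = \sg_i^{-1}\phi_i\in L^2$. Because $\phi_i\in L^2$ solves the second-order Dirichlet problem $\De(\rho^{-2\la-7}\phi_i)=\sg_i^{-1}\rho^{-2\la-4-7}\psi_i\in H^2$ with the boundary conditions above (which satisfy Lopatinski–Shapiro by \cite[Lemma 1.6.5]{HodgeDecompSchwarz}), the standard regularity theory of \cite[Theorems 2.2.4-6]{HodgeDecompSchwarz} gives $\phi_i\in H^4$. Second, the second-order problem $\De\psi_i=\sg_i^{-1}\phi_i$ with homogeneous boundary conditions upgrades $\psi_i$ to $H^6$. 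Alternating these two steps raises the regularity of each form by two every round, giving $H^k$ for all $k$. On the fixed compact domain $M_R$, constants depending on $R$ are harmless, so weighted and unweighted norms can be interchanged freely (Remark \ref{remark weighted unweighted equivalence}). Alternatively, the a priori estimate \eqref{equation high order elliptic schauder estimates on forms} could be applied to $L^m$ and $(L^*)^m$, but the a priori estimate alone does not give regularity, so the alternating elliptic regularity route is preferable.

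The main obstacle is the boundary-condition step for $\phi_i$: it must be shown rigorously, at low regularity (with $\phi_i$ a priori only in $L^2$), that the weak boundary conditions hold in the sense needed to invoke boundary regularity. I would handle this by working with the weak formulation directly. Specifically, define $\tilde\phi=\rho^{-2\la-7}\phi_i$ and observe that $\langle\tilde\phi,\De\nu\rangle_{L^2}=\langle f,\nu\rangle_{L^2}$ for all $\nu\in H^2\Om^k_{hom}$, with $f\in H^2$. This characterizes $\tilde\phi$ as the (very weak) solution of the self-adjoint Dirichlet problem for the Hodge Laplacian with the same homogeneous conditions. Uniqueness of this solution modulo $\sH^k_D(M_R)$, together with the existence of a strong $H^4$ solution $G(f)$ after projecting off the harmonic fields, should identify $\tilde\phi$ with a strong solution. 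This would give $\phi_i\in H^4$ without separately justifying the boundary traces.
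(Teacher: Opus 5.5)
Your argument is correct, but it takes a different route from the paper. The paper treats $\psi_i$ alone as a solution of the fourth-order problem $\De^*\De\psi_i=\sg_i^{-2}\psi_i$ with $\bt\psi_i=\bt\de\psi_i=0$. It appeals once to the Lopatinski--Shapiro estimate \cite[Theorem 1.6.2(ii)]{HodgeDecompSchwarz} and then reads off $\phi_i=\sg_i\De\psi_i$. You stay entirely within second-order Hodge--Dirichlet theory. First, regularity of the Dirichlet potential makes $\psi_i=\sg_i^{-1}G(\phi_i)$ two derivatives more regular than $\phi_i$. Second, for $\tilde\phi=\rho^{-2\la-7}\phi_i$, your final paragraph is the right way to proceed: $G(f)$ satisfies the same very weak identity by Green's formula, so $\tilde\phi-G(f)$ annihilates $\De(H^2\Om^k_{hom}(M_R))=\sH^k_D(M_R)^\perp$. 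It therefore lies in the smooth, finite-dimensional space $\sH^k_D(M_R)$, and you never take traces of an $L^2$ form.

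What your route buys is robustness. As written, the paper's boundary operator $(\bt,\bt\de)$ supplies only $\binom{7}{k}$ scalar conditions. That is the right number for the second-order $\De$, but half of what a fourth-order system needs. The missing conditions are the dual ones $\bt(\rho^{-2\la-7}\De\psi_i)=\bt\de(\rho^{-2\la-7}\De\psi_i)=0$. The paper only derives these afterwards, in Corollary \ref{corollary vanishing of boundary terms}, and the boundary integrals there already presuppose the regularity being proved here. Also, as you note, an a priori estimate alone does not produce regularity. Once those points are granted, the paper's proof is a one-line appeal. Yours costs an iteration but needs nothing beyond existence, regularity and the kernel of the Dirichlet potential problem.

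One small slip: $G(\phi_i)=\sg_i\psi_i$, not $\psi_i$; that is what gives $\De\psi_i=\sg_i^{-1}\phi_i$.
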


\begin{proof}
    The $\psi_i$'s are in $L^2$ and satisfy the boundary value problem
    \begin{align*}
        \De^* \De \psi_i - \sg_i^{-2}\psi_i &= 0\\
        \bt \psi_i = 0 \; \textrm{ and } \; \bt \de \psi_i &= 0 
    \end{align*}
    pointwise. The principal symbol of the operator $\De^*\De$ satisfies the condition for ellipticity, and the boundary operator $(\bt, \bt \de)$ satisfies ellipticity conditions in the sense of the Lopatinski-Shapiro condition (see \cite[Lemma 1.6.5]{HodgeDecompSchwarz}). Since the boundary data and the right hand side of the equation vanish and are therefore smooth, the apriori estimate \cite[(ii), Theorem 1.6.2]{HodgeDecompSchwarz} for elliptic operators in the sense of Lopatinski-Shapiro implies that $\psi_i$ is in $H^k(M_R)$ for any given $k$. Since $\De \psi_i = \sg_i^{-1} \phi_i$, the same is true for the $\phi_i$'s.
\end{proof}

\begin{rmk}
Note that on the domain $M_R$, the weighted Sobolev norms are equivalent to the unweighted ones, and the spaces $H^s\Om^k(M_R) = H^s_{\la}\Om^k(M_R) = H^s_{\la + 2}\Om^k(M_R)$.
\end{rmk}

The following corollary will be useful for estimating the ``approximate kernel" later on in the paper.

\begin{cor}\label{corollary vanishing of boundary terms}
The orthogonal basis forms $\{\phi_i\}_i$ in Proposition \ref{proposition singular value decomp for weighted} have the property that
\[
\bt \de (\rho^{-2\la -7} \phi_i) = 0,
\]
and
\[
\bt (\rho^{-2\la -7} \phi_i) = 0,
\]
for all $i \in \bN$.
\end{cor}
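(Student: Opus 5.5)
The plan is to read off the boundary conditions from the boundary terms in the Green's formula computation \eqref{equation adjoint via greens} of Proposition \ref{proposition formal adjoint of laplacian}, now tested against all of $H^2\Om^k_{hom}(M_R)$ instead of only $\Om^k_0(M_R)$. Set $\tilde\phi_i := \rho^{-2\la-7}\phi_i$. By Proposition \ref{proposition smoothness up to boundary of eigenfunctions}, $\phi_i$ (and hence $\tilde\phi_i$) lies in $H^k(M_R)$ for every $k$. All traces below therefore make sense, and the integration by parts in \eqref{equation adjoint via greens} is justified with $\eta=\phi_i$ for every $\nu\in H^2\Om^k_{hom}(M_R)$.

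First, for every such $\nu$, Corollary \ref{corollary equality of adjoints} together with the weak identity from the proof of Corollary \ref{corollary smoothness of eigenforms} gives
\[
\langle \phi_i,\De\nu\rangle_\la=\langle \sg_i^{-1}\psi_i,\nu\rangle_{\la+2}=\langle \De^*\phi_i,\nu\rangle_{\la+2}.
\]
Here $\De^*\phi_i$ is the pointwise formal adjoint $\rho^{2\la+11}\De\tilde\phi_i$. Comparing with \eqref{equation adjoint via greens}, the interior terms cancel and we are left with
\[
-\int_{\dd M_R}\bt\de\tilde\phi_i\wedge\star\bn\nu+\int_{\dd M_R}\bt\tilde\phi_i\wedge\star\bn d\nu=0
\qquad\text{for all }\nu\in H^2\Om^k_{hom}(M_R).
\]

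Second, I will choose test forms so that the two boundary terms decouple. The homogeneous conditions only constrain $\bt\nu$ and $\bt\de\nu$. So, in a collar $\dd M_R\times[0,\epsilon)$ with normal coordinate $r$, I will take $\nu=\chi(r)\,dr\wedge\alpha$ with $\alpha$ an arbitrary smooth $(k-1)$-form on $\dd M_R$ and $\chi$ a cutoff equal to $1$ near $r=0$.
\begin{itemize}
\item[(a)] Applying the boundary identity to $\nu$: we have $\bt\nu=0$, and since $\nu$ is (near the boundary, up to curvature terms of the collar metric) of the form $dr\wedge\alpha$, $\bt\de\nu$ involves only tangential derivatives of $\alpha$ and contractions with the second fundamental form.
\item[(b)] Applying it to $r\nu$: this also satisfies $\bt(r\nu)=0$, and $\bt\de(r\nu)=r\,\bt\de\nu\pm\bt(\iota_{\dd_r}(dr\wedge\alpha))=\pm\alpha$, which is not zero. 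So I will instead take $\nu_1=\chi\, r\, dr\wedge\alpha'$, choosing $\alpha'$ so that the two contributions to $\bt\de\nu_1$ cancel.
\end{itemize}
More cleanly, the trace map $\nu\mapsto(\bt\nu,\bt\de\nu,\bn\nu,\bn d\nu)$ restricted to $\{\bt\nu=0,\bt\de\nu=0\}$ is surjective onto pairs $(\bn\nu,\bn d\nu)$ of arbitrary smooth boundary data. This is a standard consequence of the Lopatinski-Shapiro/trace theory in \cite[\S1.3, Lemma 1.6.5]{HodgeDecompSchwarz}: the four traces correspond to independent normal jets of orders $0$ and $1$, in the tangential and normal components respectively.

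Granting this surjectivity, first pick $\nu$ with $\bn d\nu=0$ and $\bn\nu$ arbitrary. This gives $\int_{\dd M_R}\bt\de\tilde\phi_i\wedge\star\bn\nu=0$ for all $\bn\nu$, hence $\bt\de\tilde\phi_i=0$. Then pick $\bn\nu=0$ and $\bn d\nu$ arbitrary to obtain $\bt\tilde\phi_i=0$; the wedge pairing on $\dd M_R$ is nondegenerate between tangential forms and the Hodge duals of normal forms of complementary degree.

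The main obstacle is the surjectivity claim. I would prove it by hand in the collar, writing $\nu=\nu_0(y)\,dr\wedge(\cdot)+r\,\nu_1(y)+r\,dr\wedge\nu_2(y)$ with cutoffs, and computing the traces explicitly. In this ansatz, $\bn\nu$ is governed by $\nu_0$, while $\bn d\nu$ is governed by $\nu_1$ (the tangential part of $d\nu$ normal-contracted is $\dd_r$ of the tangential part, namely $\nu_1$, plus terms in $d_{\dd}\nu_0$). The free term $\nu_2$ is used to kill $\bt\de\nu$. Lower-order terms coming from the non-product metric are absorbed by solving this triangular system.
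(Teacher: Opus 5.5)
Your proposal is correct and is essentially the paper's argument. You restrict the Green's-formula identity from Proposition \ref{proposition formal adjoint of laplacian} to $\nu \in H^2\Om^k_{hom}(M_R)$, use $\langle \phi_i, \De \nu\rangle_{\la} = \langle \De^*\phi_i, \nu\rangle_{\la+2}$ to cancel the interior terms, and then decouple the two surviving boundary integrals with test forms having either $\bn\nu$ arbitrary and $\bn d\nu = 0$, or $\bn \nu = 0$ and $\bn d\nu$ arbitrary. The only difference is that the paper obtains these test forms from \cite[Lemma 3.3.2]{HodgeDecompSchwarz}, while your triangular construction in the normal jets over a collar (which replaces the abandoned attempts (a)--(b)) builds them directly.
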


\begin{proof}
    Let $\nu$ be a k-form with sufficient regularity that the integration by parts calculation in Proposition \ref{proposition formal adjoint of laplacian} holds. Then, given a $\phi_i$ in the orthonormal basis, 
    \begin{align}\label{equation integration by parts with bdry}
        & \int_{M_R} \langle \phi_i, \De \nu \rangle \rho^{-2\la} \rho^{-7} dV -  \int_{M_R} \langle \De^* \phi_i, \nu \rangle \rho^{-2\la-4}\rho^{-7} dV \\
        &\qquad = \int_{\dd M_R} \bt \de \nu \wedge \star \bn \rho^{-2\la- 7} \phi_i + \int_{\dd M_R} \bt (\rho^{-2\la- 7} \phi_i) \wedge \star \bn d \nu \nonumber\\
        &\qquad \qquad   -\int_{\dd M_R} \bt \de (\rho^{-2\la- 7} \phi_i) \wedge \star \bn \nu - \int_{\dd M_R} \bt \nu \wedge \star \bn d (\rho^{-2\la- 7} \phi_i). \nonumber
    \end{align}
    We will focus first on the term 
    \[
    \int_{\dd M_R} \bt \de (\rho^{-2\la- 7} \phi_i) \wedge \star \bn \nu = \int_{\dd M_R} \bt \de (\rho^{-2\la- 7} \phi_i) \wedge \bt (\star \nu).
    \]
    If $\bt (\star \nu)$ could be taken to be any smooth form on $\dd M_R$, and this integral were known to vanish for all such choices, the corollary would be proved. To this end, let $\eta$ be an arbitrary smooth form in $\Om^{k+1}(\dd M_R)$, and $\tilde \eta$ be its smooth extension to $\Om^{k+1} (M_R)$. We then seek a form $\nu \in \Om^{k+1} (M_R)$ such that
    \begin{equation}
        \nu|_{\dd M_R} = \star \tilde \eta|_{\dd M_R}, \; \bn d\nu = 0, \; \bt \de \nu = 0
    \end{equation}
    where $\star$ is taken with respect to the metric on $M_R$. This extension problem can be made homogeneous by rearranging terms
    \begin{equation}\label{equation homogeneous extension problem}
        \hat \nu|_{\dd M_R} = 0, \; \bn d\hat \nu = - \bn d (\star \tilde \eta), \; \bt \de \hat \nu = - \bt (\de \star \tilde \eta)
    \end{equation}
    By \cite[Lemma 3.3.2]{HodgeDecompSchwarz}, there is a smooth solution $\hat \nu$ to the homogeneous extension problem \eqref{equation homogeneous extension problem}. This gives us a form $\nu \in \Om^{k+1} (M_R)$ such that
    \[
    \bt (\star \nu) = \eta, \; \bn (\star \nu) = 0, \; \bn d\nu = 0, \; \bt \de \nu = 0
    \]
    Duality gives 
    \begin{align*}
        \bn(\star \nu) &= \star \bt \nu \\
        &= 0\\
        \implies \bt \nu & = 0
    \end{align*}
    Similarly, we deduce that $\star \bn d \nu = 0$. Thus, for such a choice of $\nu$, the right-hand side of \eqref{equation integration by parts with bdry} can be simplified to be
    \[
    \int_{\dd M_R} \bt \de (\rho^{-2\la- 7} \phi_i) \wedge \eta. 
    \]
    Notice that our choice of $\nu$ is smooth and satisfies the homogeneous boundary conditions $\bt \nu, \bt \de \nu = 0$, i.e. $\nu \in H^2_{\la+2}\Om^k_{hom}(M_R)$. Appealing to \eqref{equation SVD for hodge laplacian}, 
    \begin{align*}
    \int_{M_R} \langle \phi_i, \De \nu \rangle \rho^{-2\la} \rho^{-7} dV &= \int_{M_R} \bigg\langle \phi_i, \sum_{j=1}^\infty \sg_j^{-1} \langle \psi_j, \nu \rangle_{\la + 2} \phi_j \bigg\rangle \rho^{-2\la} \rho^{-7} dV  \\
    &=\sg_i^{-1} \langle \psi_i, \nu \rangle_{\la + 2}.
    \end{align*}
    Furthermore, by \eqref{equation adjoints and eigenvalues}, 
    \begin{align*}
        \int_{M_R} \langle \De^* \phi_i, \nu \rangle \rho^{-2\la-4}\rho^{-7} dV  &= \int_{M_R} \langle \sg_i^{-1} \psi_i, \nu \rangle \rho^{-2\la-4}\rho^{-7} dV \\
        &=\sg_i^{-1} \langle \psi_i, \nu \rangle_{\la + 2}. 
    \end{align*}
    Thus, the left-hand side of \eqref{equation integration by parts with bdry} vanishes for any such choice of $\nu$.  Putting everything together, for any smooth $\eta \in \Om^{k+1}(\dd M_R)$, 
    \[
    0 = \int_{\dd M_R} \bt \de (\rho^{-2\la- 7} \phi_i) \wedge \eta. 
    \]
    Therefore, $\bt \de (\rho^{-2\la- 7} \phi_i) = 0$.

    Next, consider the term
    \begin{equation}\label{equation adjoint dirichlet boundary integral}
    \int_{\dd M_R} \bt (\rho^{-2\la- 7} \phi_i) \wedge \star \bn d \nu.
    \end{equation}
    Take an arbitrary 3-form $\eta$ defined on the boundary $\dd M_R$, and consider an arbitrary extension $\hat \eta$ into the interior of $M_R$. Consider $\star \hat \eta$ and invoke \cite[Lemma 3.3.2]{HodgeDecompSchwarz} to obtain a form $\nu$ such that 
    \[
    \nu \big|_{\dd M} = 0, \; \bn (d \nu) = \bn (\star \hat \eta), \; \text{and}\; \bt(\de \nu) = 0.  
    \]
    The first and last of these identities ensure that all boundary terms except \eqref{equation adjoint dirichlet boundary integral} vanish, and that $\nu$ satisfies the homogeneous boundary conditions. Thus, by the same logic as in the first part, we obtain the identity
    \begin{align*}
        0 &= \int_{\dd M_R} \bt (\rho^{-2\la- 7} \phi_i) \wedge \star \bn d \nu \\
        &= \int_{\dd M_R} \bt (\rho^{-2\la- 7} \phi_i) \wedge \star \bn (\star \hat \eta) \\
        &=\int_{\dd M_R} \bt (\rho^{-2\la- 7} \phi_i) \wedge \bt (\hat \eta) \\
        &=\int_{\dd M_R} \bt (\rho^{-2\la- 7} \phi_i) \wedge \eta.
    \end{align*}
    As $\eta$ is an arbitrarily chosen form on $\dd M_R$, it follows that $\bt (\rho^{-2\la- 7} \phi_i) = 0$.
\end{proof}

\begin{prop}\label{proposition uniform boundedness of projection} 
    Let the inverse $(\De^*\De)^{-1}$ indicate the inverse of the operator $\De^*\De$ on the noncompact manifold $M$ with domain restricted to $\ker(\De_M)^\perp_{\la+2}$. Let $U_R$ be the  span of $\{\phi_i\}_{i= N}^\infty$, where $N$ is the smallest number such that for $i \ge N$,  $\sg_i^2 < C \|(\De^*\De)^{-1}\|$, where $C>0$ is a fixed positive constant. 
    
    Let the projection $\mathrm{Proj}_{U}: (\sH^k_D(M_R))^\perp \rightarrow U_R$ be the $L^2_{\la}$-orthogonal projection of $(\sH^3_D(M_R))^\perp$ onto $U_R$. Then, $\mathrm{Proj}_{U}$ is uniformly bounded in the $H^{s-3}_{\la}$-norm, i.e.
    \[
    \|\mathrm{Proj}_{U}(\eta)\|_{H^{s-3}_\la} \le C_M \|\eta\|_{H^{s-3}_\la},
    \]
    for all $\eta \in H^{s-3}_{\la}\Om^k(M_R) \cap (\sH^k_D(M_R))^\perp$, and where $C_M > 0$ does not depend on $R$.
\end{prop}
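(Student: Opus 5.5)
The plan is to avoid estimating the projection onto the infinite-dimensional space $U_R$ directly. Instead I will estimate the complementary projection onto the finite-dimensional ``approximate kernel''
\[
V_R := \mathrm{span}\{\phi_1, \ldots, \phi_{N-1}\}.
\]
Since $\{\phi_i\}_i$ is an $L^2_\la$-orthonormal basis of $\sH^k_D(M_R)^\perp$ (Proposition \ref{proposition singular value decomp for weighted}), we have
\[
\mathrm{Proj}_U = \id - \mathrm{Proj}_V \quad \text{on } \sH^k_D(M_R)^\perp,
\]
where $\mathrm{Proj}_V$ is $L^2_\la$-orthogonal projection onto $V_R$. It therefore suffices to prove a reverse Sobolev inequality on $V_R$:
\[
\|\eta\|_{H^{s-3}_\la(M_R)} \le C_M \|\eta\|_{L^2_\la(M_R)}, \qquad \eta \in V_R,
\]
with $C_M$ independent of $R$. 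Given this, for any $\zeta$ we get $\|\mathrm{Proj}_V \zeta\|_{H^{s-3}_\la} \le C_M \|\mathrm{Proj}_V\zeta\|_{L^2_\la} \le C_M\|\zeta\|_{H^{s-3}_\la}$, and hence $\|\mathrm{Proj}_U\zeta\|_{H^{s-3}_\la} \le (1+C_M)\|\zeta\|_{H^{s-3}_\la}$. This is exactly the same structure as the argument in Lemma \ref{lemma projection to harmonic dirichlet fields}.

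To prove the reverse inequality, write $\eta = \sum_{i<N} a_i \phi_i$. By Lemma \ref{lemma SVD for hodge laplacian} and Corollary \ref{corollary equality of adjoints}, $\De\De^*\phi_i = \sg_i^{-2}\phi_i$, so
\[
\De\De^*\eta = \sum_{i<N} \sg_i^{-2} a_i \phi_i \in V_R.
\]
The key point is that for $i<N$ the definition of $N$ gives $\sg_i^{2} \ge C\|(\De^*\De)^{-1}\|$. Hence $\sg_i^{-2} \le K := (C\|(\De^*\De)^{-1}\|)^{-1}$, a constant depending only on the noncompact manifold $(M,g_0)$ and not on $R$. Orthonormality then gives
\[
\|(\De\De^*)^m\eta\|_{L^2_\la} \le K^m\|\eta\|_{L^2_\la} \quad \text{for every } m.
\]
For boundary conditions, Corollary \ref{corollary vanishing of boundary terms} shows that each $\phi_i$, and hence every element of $V_R$ (including each $(\De\De^*)^m\eta$), satisfies $\bt(\rho^{-2\la-7}\cdot) = 0$ and $\bt\de(\rho^{-2\la-7}\cdot)=0$. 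Up to the equivalences noted in the remark after Proposition \ref{proposition lopatinski shapiro condition}, these are the adjoint boundary conditions of \eqref{equation fourth order bvp with LS adjoint}.

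I would then bootstrap using Proposition \ref{proposition lopatinski shapiro condition}, or directly Corollary \ref{corollary lopatinski shapiro higher order} for $(L^*)^m$ with $4m \ge s-3$. Set $\eta_j := (L^*)^j\eta$; each $\eta_j$ lies in $V_R$ and satisfies the adjoint boundary conditions. Applying \eqref{equation schauder estimates forms with lopatinski shapiro} successively gives
\[
\|\eta_j\|_{H^{\ell}_\la} \le c\big(\|\eta_{j+1}\|_{H^{\ell-4}_\la} + \|\eta_j\|_{L^2_\la}\big).
\]
Descending from $\ell = s-3$ to $L^2$ uses only finitely many steps, and each step has a constant independent of $R$. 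Combined with the $L^2_\la$ bounds above, this yields $\|\eta\|_{H^{s-3}_\la} \le C(c,K,s)\|\eta\|_{L^2_\la}$.

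The main obstacle I expect is matching weights. The operator $L^*$ of Proposition \ref{proposition lopatinski shapiro condition} is the weighted operator $\De\De^*$, which carries explicit powers of $\rho$ so that it acts from $L^2_\la$ to $L^2_\la$ with $O(1)$ coefficients after rescaling. I need to check that the eigen-relation $\De\De^*\phi_i = \sg_i^{-2}\phi_i$, taken in the $L^2_\la$ versus $L^2_{\la+2}$ conventions of Proposition \ref{proposition singular value decomp for weighted}, really has the form $L^*\eta = f$ with $f$ controlled in $H^{\ell-4}_\la$, and not $H^{\ell-4}_{\la-4}$. If a mismatch by a power $\rho^{\pm 4}$ appears, I would absorb it by rewriting the equation as $L^*\eta - \sg^{-2}\rho^{4}\eta$-type terms and treating them as lower order. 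This is legitimate only on each rescaled annulus $\Sg\times(\tfrac12,1)$, where $\rho/s$ is bounded, so the rescaling argument in the proof of Proposition \ref{proposition lopatinski shapiro condition} would have to be rerun with this extra zeroth-order term. A secondary point is regularity up to the boundary, so that the estimates apply to the $\phi_i$. This is supplied by Proposition \ref{proposition smoothness up to boundary of eigenfunctions}.
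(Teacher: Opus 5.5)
Your proposal is correct and is essentially the paper's proof. You split $\eta$ into $\mathrm{Proj}_U(\eta)$ and its finite-dimensional complement in $\mathrm{span}\{\phi_i\}_{i<N}$, then bound the complement in $H^{s-3}_\la$ by its $L^2_\la$-norm using three ingredients: the relation $\De\De^*\phi_i=\sg_i^{-2}\phi_i$ with $\sg_i^{-2}$ bounded in terms of $\|(\De^*\De)^{-1}\|$ independently of $R$, the adjoint boundary conditions of Corollary \ref{corollary vanishing of boundary terms}, and the $R$-uniform elliptic estimates; the triangle inequality then finishes.

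The weight mismatch you anticipate does not occur, since the $L^*$ of Proposition \ref{proposition lopatinski shapiro condition} is literally $\De\De^*$, with both sides of its estimate measured at weight $\la$. Your step-by-step use of the fourth-order estimate is, if anything, more careful than the paper's direct appeal to Corollary \ref{corollary lopatinski shapiro higher order}: each $(L^*)^j\eta$ stays in $\mathrm{span}\{\phi_i\}_{i<N}$ and so satisfies the boundary conditions, whereas that corollary imposes only two boundary conditions on an operator of order $4m$.
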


\begin{proof}
    Each $\eta \in H^{s-3}_{\la}\Om^k(M_R) \cap (\sH^k_D(M_R))^\perp$ can be uniquely decomposed into
    \[
    \eta = \mathrm{Proj}_{U}(\eta) + \big(\eta -\mathrm{Proj}_{U}(\eta) \big).
    \]
    The last term is the projection of $\eta$ onto the finite dimensional subspace spanned by $\{\phi_i\}_{i= 1}^N$. That is,
    \begin{align*}
        \nu := \eta -\mathrm{Proj}_{U}(\eta)  &= \sum_{i=1}^N \langle \eta, \phi_i \rangle_{\la} \phi_i.
    \end{align*}
    For any $k > 0$,
    \begin{equation*}
        \left\{
            \begin{aligned}
                (\De \De^*)^k \nu &= \sum_{i=1}^N \langle \eta, \phi_i \rangle \sg_i^{-2k} \phi_i,\\
                \bt (\rho^{-2\la - 7} \nu) = 0 \;&\textrm{ and }\; \bt \de (\rho^{-2\la - 7} \nu) = 0,
            \end{aligned}
        \right.
    \end{equation*}
    where the boundary conditions are an immediate consequence of the boundary conditions established in Corollary \ref{corollary vanishing of boundary terms}. The adjoint boundary operators can be seen to satisfy the Lopatinski-Shapiro condition by the same argument as employed in \cite[Lemma 1.6.5]{HodgeDecompSchwarz}. Thus, the elliptic Schauder estimates in Corollary \ref{corollary lopatinski shapiro higher order} imply that there exists a constant $c > 0$ which does not depend on $R$ such that
    \begin{align*}
    \|\nu\|_{H^{2k}_{\la}} &\le c\big(\sg_{N}^{-2k}\|\nu\|_{L^2_{\la}} + \|\nu\|_{L^2_{\la}}\big)  \\
    &\le c(\sg_N^{-2k} + 1)\|\eta\|_{L^2_\la} \\
    &\le c(\|(\De^*\De)^{-1}\|^{-2k} + 1 ) \|\eta\|_{L^{2}_{\la}}.
    \end{align*}
    Note that $\|(\De^*\De)^{-1}\|$ depends only on the \textit{fixed} background metric on $M$ (and, in particular, in the case of the Laplacian flow, the metric on the initial time-slice, $g_{\vp_0}$). Now evaluate the norm 
    \begin{align*}
         \|\mathrm{Proj}_{U}(\eta)\|_{H^{s-3}_{\la}} &= \big\|\eta - \big(\eta -\mathrm{Proj}_{U}(\eta) \big)\big\|_{H^{s-3}_{\la}}\\
         &\le \|\eta\|_{H^{s-3}_{\la}} + \big\|\eta -\mathrm{Proj}_{U}(\eta)\big\|_{H^{s-3}_{\la}} \\
         &\le \|\eta\|_{H^{s-3}_{\la}}  + c(\|(\De^{-1})(\De^{-1})^*\|^{-k} + 1 ) \|\eta\|_{L^{2}_{\la}}\\
         &\le c(\|(\De^*\De)^{-1}\|^{-k} + 2) \|\eta\|_{H^{s-3}_{\la}}.
    \end{align*}
    This completes the proof of the proposition.
\end{proof}

\begin{prop}\label{proposition boundedness of restricted forms}
As before, let $U_R$ be the span of $\{\phi_i\}_{i= N}^\infty$, where $N$ is the smallest number such that for $i \ge N$,  $\sg_i^2 < C \|(\De^*\De)^{-1}\|$ for $C>0$ a fixed positive constant. Let $s \ge 3$ and let
\[
H_{\la}^{r-1,s-3}\sU^3(M_R \times I)
\]
be the image of $\mathrm{Proj}_U$ along time-slices of $H_{0,\la}^{r-1,s-3}\Om^3(M_R \times I)$ to $U_R$. Then, let 
\[
H_{0,\la}^{r-1,s-3}\sU^3(M_R \times I)
\]
be the image of the projection $\mathrm{Proj}_U$ along time-slices of the subspace $H_{0,\la}^{r-1,s-3}\Om^3(M_R \times I)$ of forms which vanish at the initial time to order $r-2$ in $t$. Then, if $\eta$ is in $H_{0,\la}^{r-1,s-3}\sU^3(M_R \times I)$, there is a potential $\xi \in H_{0,\la+1}^{r-1,s-2}\Omega_D^2(M_R \times I)$ such that
\[
d\xi = \mathrm{Proj}_{\sE} (\eta) \in H_{0,\la}^{r-1,s-3}\sE^3(M_R \times I),
\]
where $\mathrm{Proj}_{\sE}$ is the projection to the exact part of the Hodge-Morrey decomposition. Furthermore, there is a potential $\om \in H_{0,\la+1}^{r-1,s-2}\Omega^4(M_R \times I)$ such that 
\[
\de \om = \mathrm{Proj}_{\sC \oplus \sH_{co}} (\eta) \in H_{0,\la}^{r-1,s-3}(\sC^3 \oplus \sH_{co}^3) (M_R \times I),
\]
where $\mathrm{Proj}_{\sC \oplus \sH_{co}}$ is the projection to the co-exact piece of the Helmholtz decomposition given in \eqref{equation helmholtz decomp}. These two potentials satisfy the following estimates for all $t_0 \in I$. 
\begin{align}\label{equation timeslice potential estimates}
\|\dd_t^j \om (t_0)\|_{H_{\la+1 - 2j}^{s-2-2j}\Omega^4(M_R)},\; \|\dd_t^j \xi (t_0)\|_{H_{\la+1 - 2j}^{s-2-2j}\Omega^2(M_R)} \le C_M \|\dd_t^j \eta (t_0)\|_{H_{\la-2j}^{s-3-2j}\Omega^3(M_R)},
\end{align}
where $j \in \{0, \ldots, r-1\}$ and $C_M > 0$ does not depend on $R$. Note in particular that this implies that the maps 
\[
\mathrm{Proj}_{\sE} \text{ and } \mathrm{Proj}_{\sC \oplus \sH_{co}}
\]
are uniformly bounded maps on $H_{0,\la}^{r-1,s-3}\sU^3(M_R \times I)$.
\end{prop}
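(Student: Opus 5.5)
We work slice by slice in time and then use linearity to recover the time-dependent statement. On a fixed slice $t_0$, set $\eta := \eta(t_0)$. Since $\eta \in U_R \subset \sH^3_D(M_R)^\perp$, we can expand it in the singular basis,
\[
\eta = \sum_{i \ge N} \langle \phi_i, \eta\rangle_\la \phi_i .
\]
We then define the Dirichlet potential by the Green's operator of Proposition \ref{proposition singular value decomp for weighted}:
\[
\phi_D := G(\eta) = \sum_{i\ge N} \sg_i \langle \phi_i,\eta\rangle_\la \psi_i \in H^2\Om^3_{hom}(M_R).
\]
This gives $\De\phi_D = \eta$ with $\bt\phi_D = 0$ and $\bt\de\phi_D = 0$. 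Set $\xi := \de\phi_D$ and $\om := d\phi_D$.

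\textbf{Algebraic part.} By the Hodge--Morrey decomposition \eqref{equation hodge morrey decomp} and \cite[Thms.~2.2.4--6]{HodgeDecompSchwarz}, the identity $\eta = d\de\phi_D + \de d\phi_D$ is precisely the splitting of $\eta$ into its exact part and its co-exact part in the sense of \eqref{equation helmholtz decomp}. The exact part is $d\xi$, and $\bt\xi = \bt\de\phi_D = 0$, so $\xi \in \Om^2_D$. The co-exact part is $\de\om$, which lies in $\sC^3\oplus\sH^3_{co}$ because $\eta$ has no $\sH_D$-component. Uniqueness of the Dirichlet potential on $\sH_D^\perp$ makes the maps $\eta\mapsto\xi$ and $\eta\mapsto\om$ linear. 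The estimate then reduces to showing
\[
\|\phi_D\|_{H^{s-1}_{\la+2}} \le C_M\|\eta\|_{H^{s-3}_\la}, \qquad C_M \text{ independent of } R,
\]
because $\xi$ and $\om$ are first derivatives of $\phi_D$ and each derivative costs one unit of weight.

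\textbf{$L^2$ bound.} By orthonormality of $\{\psi_i\}$ in $L^2_{\la+2}$,
\[
\|\phi_D\|_{L^2_{\la+2}}^2 = \sum_{i\ge N}\sg_i^2 |\langle\phi_i,\eta\rangle_\la|^2 \le C\|(\De^*\De)^{-1}\|\,\|\eta\|_{L^2_\la}^2,
\]
using the defining cutoff $\sg_i^2 < C\|(\De^*\De)^{-1}\|$ for $i\ge N$. The right-hand side depends only on $g_{\vp_0}$ on the whole of $M$, hence not on $R$.

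\textbf{Higher derivatives.} To upgrade to $H^{s-1}_{\la+2}$, note that $\phi_D$ solves the second-order problem $\De\phi_D=\eta$ with boundary conditions $(\bt,\bt\de)$, and these satisfy Lopatinski--Shapiro. We run the rescaling argument of Proposition \ref{proposition lopatinski shapiro condition}, in its second-order version as already used in Lemma \ref{lemma projection to harmonic dirichlet fields}: interior strips give uniform estimates, and the boundary layer is handled by \cite[Thm.~1.6.2]{HodgeDecompSchwarz} applied on $\Sg\times[\tfrac12,1]$ with the cone metric. This yields
\[
\|\phi_D\|_{H^{s-1}_{\la+2}}\le c\big(\|\eta\|_{H^{s-3}_\la}+\|\phi_D\|_{L^2_{\la+2}}\big)
\]
with $c$ independent of $R$. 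Combining with the $L^2$ bound gives \eqref{equation timeslice potential estimates} for $j=0$.

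\textbf{Time derivatives.} Because $\phi_i$, $\psi_i$ and $G$ are time-independent (the metric is fixed at $g_0(0)$), we have $\dd_t^j\xi = \de G(\dd_t^j\eta)$. Moreover $\dd_t^j\eta(t_0)$ still lies in $U_R$. Applying the $j=0$ argument with weight $\la - 2j$ and $s-2j$ derivatives gives the estimates for all $j\le r-1$. The vanishing at $t=0$ to the required order is inherited by linearity. Uniform boundedness of $\mathrm{Proj}_\sE$ and $\mathrm{Proj}_{\sC\oplus\sH_{co}}$ then follows from the bounds on $d\xi$ and $\de\om$.

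The main obstacle is the $L^2$ step: it must hold for every weight $\la-2j$ in play. If the cutoff $N$ is chosen for the single weight $\la$, one has to check that $U_R$ either is defined compatibly for the shifted weights or that the shifted-weight bound can be deduced from the weight-$\la$ one. If this fails, I would define $U_R$ as the intersection of the upper spectral spaces over the finitely many weights involved; each intersected complement is finite-dimensional, so $U_R$ remains finite-codimensional.
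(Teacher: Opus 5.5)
Your argument is the paper's own: $\phi_D=G(\eta)$ from the singular value decomposition, $\xi=\de G(\eta)$, $\om=dG(\eta)$, the $L^2_{\la+2}$ bound from the cutoff defining $U_R$, higher regularity from the rescaled Lopatinski--Shapiro estimate, and $\dd_t G=G\,\dd_t$. The case $j=0$ goes through as you describe. (Small correction: the co-exact part lies in $\sC^3\oplus\sH^3_{co}$ because it is $\de$ of the $H^1$ form $d\phi_D$, not because $\eta$ has no $\sH_D$-component.)

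The step you flag, however, is a genuine gap. For $j\ge 1$ the estimate \eqref{equation timeslice potential estimates} runs from weight $\la-2j$ to weight $\la+2-2j$. Membership $f\in U_R$ only gives $\|Gf\|_{L^2_{\la+2}}\le C\|f\|_{L^2_\la}$. Since $\rho\ge 1$ you have $\|f\|_{L^2_\la}\le\|f\|_{L^2_{\la-2j}}$, but also $\|Gf\|_{L^2_{\la+2}}\le\|Gf\|_{L^2_{\la+2-2j}}$. So the comparison goes the wrong way, and the missing factor $\rho^{2j}$ (as large as $R^{2j}$) cannot be moved across $G$.

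This is not a technicality. The shifted-weight problem has its own small singular values, coming from the cokernel that $\De_M: H^2_{\la+2-2j}\to L^2_{\la-2j}$ acquires as the rate crosses critical values, and the weight-$\la$ cutoff does not see them. The mechanism is already visible for functions on $\R^7$. For compactly supported $f$ with $\int f\neq 0$, the Dirichlet solution on $M_R$ is approximately $c(\rho^{-5}-R^{-5})$. This is uniformly bounded in $L^2_{\la+2}$ when $\la+2\in(-2,-1)$, but its $L^2_{\la+2-2j}$-norm tends to infinity with $R$ once $\la+2-2j<-5$. So the shifted-weight bound cannot be deduced from the weight-$\la$ one. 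The paper does not close this either: it asserts that ``identical calculations'' give the time-derivative estimates and writes down the weight-shifted chain of inequalities without further justification.

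Your fallback is to replace $U_R$ by the intersection of the upper spectral spaces for the weights $\la-2j$, $0\le j\le r-1$. This does make the $L^2$ step true by construction. But it changes $U_R$, and hence $\sU$, $\Pi_{\sU}$ and $\cP$, so it proves a different proposition. Everything that uses the specific form of $U_R$ would then have to be redone:
\begin{itemize}
\item The uniform $H^{s-3}_\la$-bound on the projection in Proposition \ref{proposition uniform boundedness of projection}. The $L^2_\la$-orthogonal complement of the weight-$(\la-2j)$ upper space is spanned by the forms $\rho^{4j}\phi_i^{(\la-2j)}$. These are not eigenforms of the weight-$\la$ operator $\De\De^*$ on which that proof rests, and near-cancellations between the different families would have to be controlled.
\item The limiting arguments of Lemma \ref{lemma remainder vanishes on intermediate spec} and Corollary \ref{corollary small eigenvalues become harmonic}, which would also need every $\la+2-2j$ to be non-critical.
\end{itemize}
As it stands, your argument proves \eqref{equation timeslice potential estimates} for $j=0$. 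For general $j$ it proves the estimate only with unshifted weights: $\la$ on $\dd_t^j\eta$, and $\la+1$ on $\dd_t^j\xi$ and $\dd_t^j\om$.
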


\begin{proof}
Let $\eta(t)$ be a family of 3-forms parametrized by $t$ in $H_{0,\la}^{r-1,s-3}\sU^3(M_R \times I)$. Fix $t \in I$. Then
\begin{align*}
    G(\eta(t)) &= \sum_i \sg_i \langle \phi_i, \eta(t) \rangle_{L^2_\la} \psi_i.
\end{align*}
This formula implies that the $L^2_{\la+2}$-norm of $G(\eta(t))$ is bounded by the $L^2_{\la}$-norm of $\eta(t)$ times a constant depending only on the operator norm $\|\De^{-1}(\De^{-1})^*\|$ on the whole space $M$ with respect to the initial metric $g_0$. This observation paired with the elliptic Schauder estimates in Proposition \ref{proposition lopatinski shapiro condition} gives higher order estimates in the spatial derivatives. Setting $\xi(t) := \de G( \eta(t))$ yields \eqref{equation timeslice potential estimates} when $j = 0$.

Since $\phi_i$ and $\psi_i$ are time-independent, differentiating the $\xi$ in time yields
\begin{align}\label{equation commute time and greens}
    \dd_t G(\eta) &= \dd_t \bigg(\sum_i \sg_i \langle \phi_i, \eta \rangle_{L^2_\la} \psi_i\bigg)\\
    &= \sum_i \sg_i \langle \phi_i, \dd_t \eta \rangle_{L^2_\la} \psi_i \nonumber\\
    &= G(\dd_t \eta) \nonumber
\end{align}
Identical calculations to those above yield the estimates in terms of the Sobolev norms of the time derivative $\dd_t \eta$. Performing these calculations for $j = 2,\ldots,r-1$ establishes that $G(\eta) \in H^{r-1, s-1}_{\la+2}\Omega^3_{hom} (M \times I)$. Since the metric is fixed at the initial time, the time derivative $\dd_t$ and the co-differential $\de$ commute. Set $\xi := \de G(\eta)$. Note that the homogeneous boundary condition on $G(\eta)$ implies that $\xi$ satisfies the Dirichlet boundary condition. Observe that for all $t_0 \in I$,
\begin{align*}
    \big\|\dd_t^j \xi \big|_{t_0} \big\|_{H_{\la+1 - 2j}^{s-2-2j}} &= \big\|\dd_t^j \de G(\eta) \big|_{t_0} \big\|_{H_{\la+1 - 2j}^{s-2-2j}} \\
    &= \big\|\de \dd_t^j G( \eta) \big|_{t_0} \big\|_{H_{\la+1 - 2j}^{s-2-2j}} \\
    &\le \big\|\dd_t^j G( \eta )\big|_{t_0} \big\|_{H_{\la+2 - 2j}^{s-1-2j}} \\
    &\le C_M \big\|\dd_t^j \eta \big|_{t_0} \big\|_{H_{\la-2j}^{s-3-2j}}.
\end{align*}
The last line follows from the commutativity of $G$ and $\dd_t$ shown in \eqref{equation commute time and greens}. In particular, this estimate implies that $\xi \in H^{r-1, s-2}_{\la+1}\Omega^2_{D} (M \times I)$. Furthermore, for $j = 0,\ldots, r-2$, this further implies that if $\eta \in H_{0,\la}^{r-1,s-3}\sU^3(M_R \times I)$
\[
\|\dd_t^j \xi (t_0)\|_{H_{\la+1 - 2j}^{s-2-2j}} \rightarrow 0, \; \text{ as } t_0 \rightarrow 0.
\]
Thus, $\xi \in H_{0,\la+1}^{r-1,s-2}\Omega_D^2(M_R \times I)$ and satisfies \eqref{equation timeslice potential estimates}. Setting $\om :=  dG(\eta)$ and repeating the proof given above with the natural modifications for the coexact case completes the proof of the theorem.
\end{proof}

\subsection{Estimating the approximate kernel} \label{section est approx ker}
We only solve the linearized equation up to a projection onto the exact part of the Hodge-Morrey decomposition, modulo an approximate kernel of ``small eigenvectors" and a co-exact component. Thus, the sequence of flows on the $M_R$'s that we obtain from the inverse function theorem will only solve \eqref{equation laplacian deturck boundary} up to these extra terms. In this section, we show that the contributions of the approximate kernel to the $H^k_{loc}$-limit of the inhomogeneous term must lie perpendicular to the image of the Dirac operator $d + \de$.

However, the singular values represented in the approximate kernel do not necessarily concentrate at zero. Thus, we must consider separately: the space of ``intermediate" kernel elements, $W_{\ga,R}$, with small singular values that are nonetheless greater than a fixed small parameter $\ga$; and the space of ``small" kernel elements $V_{\ga, R}$, which consists of the remaining small singular values. 

\subsubsection{The space of ``intermediate eigenforms"}

In this section, we assume that $\la + 2 \in (-2,-1)$ is a non-critical rate. Let $\{\psi_{R,i}\}$ and $\{\phi_{R,i}\}$ be the orthonormal bases of $L^2_{\la+2}\Om^k(M_R)$ and $L^2_{\la}\Om^k(M_R)$, respectively, which were defined in Proposition \ref{proposition singular value decomp for weighted}. Let $C_{\la} > 0$ be an upper bound on the norm of the operator $(\De^*\De)^{-1}$ on the noncompact manifold $(M,g_{\vp_0})$, where the inverse is of the operator $\De^*\De$ restricted to $\ker(\De_M)^\perp_{\la+2}$. For each $R$ and a positive parameter $\ga$, consider the set of eigenforms with intermediate singular values.
\[
W_{\ga,R} := \mathrm{span}\bigg\{\phi_{R,i}\;|\; 0 < \ga^2 \le \sg_{R,i}^{-2} \le \frac{C_{\la}^{-1}}{2}\bigg\}.
\]
For each $R$, this is a finite-dimensional subspace.

\begin{lem}\label{lemma remainder vanishes on intermediate spec}
     Let $\{\phi_j \in W_{\ga, R_j} \}_j$ be an $L^2_{\la}$-bounded sequence of vectors in $W_{\ga, R_j}$. For any given $\ga > 0$ and $k \ge 2$, the $H^k_{loc}$-limit of the $\phi_j$'s vanishes.
\end{lem}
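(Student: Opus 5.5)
Take an $L^2_\la$-bounded sequence $\phi_j\in W_{\ga,R_j}$. We will show that any $H^k_{loc}$-limit is a weighted $L^2_\la$ form on $M$ lying in an eigenspace of $\De\De^*$ with eigenvalue in $[\ga^2, C_\la^{-1}/2]$. Such a nonzero form would contradict the uniform invertibility of $\De^*\De$ on $M$. The argument has two parts, a compactness step and an identification of the limit.

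\textbf{Compactness.} Write $\phi_j=\sum_i a_{j,i}\phi_{R_j,i}$ with $\sum_i a_{j,i}^2\le C$ and all $\sg_{R_j,i}^{-2}\in[\ga^2,C_\la^{-1}/2]$. Then $(\De\De^*)^m\phi_j$ is again a combination of the same basis forms, and
\[
\|(\De\De^*)^m\phi_j\|_{L^2_\la}\le (C_\la^{-1}/2)^m\|\phi_j\|_{L^2_\la}.
\]
By Corollary \ref{corollary vanishing of boundary terms}, $\phi_j$ satisfies the adjoint boundary conditions $\bt(\rho^{-2\la-7}\phi_j)=0$ and $\bt\de(\rho^{-2\la-7}\phi_j)=0$. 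Corollary \ref{corollary lopatinski shapiro higher order} therefore gives $\|\phi_j\|_{H^{4m}_\la(M_{R_j})}\le C_m$ uniformly in $j$. Rellich on compact sets and a diagonal argument then give a subsequence with $\phi_j\to\phi$ in $H^k_{loc}$. By lower semicontinuity, $\phi\in H^k_\la(M)$ with $\|\phi\|_{L^2_\la}\le C$.

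\textbf{Passing the spectral information to the limit.} The difficulty is that the $\phi_j$ are combinations of several eigenforms, so no single eigen-equation passes to the limit. I would refine the argument in two ways.

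\begin{enumerate}
\item By linearity it suffices to treat sequences of normalized single eigenforms $\phi_{R_j,i_j}$. For these, $\sg_j^{-2}\to\mu\in[\ga^2,C_\la^{-1}/2]$ after a subsequence, and the limit satisfies $\De\De^*\phi=\mu\phi$ weakly on $M$, with $\De^*\phi=\lim \De^*\phi_j$ in $L^2_{\la+2,loc}$.
\item For general combinations, I would use the spectral projection structure: $\langle \De^*\phi_j,\De^*\phi_j\rangle_{\la+2}=\langle \De\De^*\phi_j,\phi_j\rangle_\la\le (C_\la^{-1}/2)\|\phi_j\|^2_{L^2_\la}$. This inequality persists in the limit by lower semicontinuity on exhausting compact sets.
\end{enumerate}

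In either case we get a form $\phi\in L^2_\la(M)$ with $\De^*\phi\in L^2_{\la+2}$ and
\[
\|\De^*\phi\|_{L^2_{\la+2}}^2\le \tfrac12 C_\la^{-1}\|\phi\|^2_{L^2_\la}.
\]

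\textbf{Contradiction with the Fredholm theory.} By Theorem \ref{theorem fredholmness of laplacian}, $\la+2$ is noncritical, so $\De^*$ is bounded below by $C_\la^{-1/2}$ on the orthogonal complement of its finite-dimensional kernel. This forces $\phi$ into $\ker\De^*$ on $M$. On the other hand, $\phi_j\perp\sH_D(M_{R_j})$ and every $\sg_{R_j,i}^{-1}\ge\ga$. From the identity $\De\De^*\phi_j$ paired with $\phi_j$ one expects $\|\De^*\phi_j\|\ge\ga\|\phi_j\|$. I would transfer this lower bound to the limit by pairing against compactly supported test forms in $\ker\De^*$. This should show $\langle\phi,\kappa\rangle_\la=0$ for all $\kappa\in\ker\De^*_M$, and hence $\phi=0$.

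\textbf{Main obstacle.} The hardest step is preventing the $L^2_\la$ mass and the spectral bound from escaping to infinity, so that the limit is both nonzero-detecting and correctly constrained. Uniform interior and boundary estimates control everything only locally, and the weighted norm is not compact on $M$. I would first try to obtain decay near $\dd M_{R_j}$ uniformly in $j$ via the rescaled boundary estimate used in Proposition \ref{proposition lopatinski shapiro condition}, combined with a cutoff and integration-by-parts argument. The boundary terms in that integration by parts vanish by Corollary \ref{corollary vanishing of boundary terms}.
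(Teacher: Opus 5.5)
Your compactness step is fine, and it already contains what you need. The argument breaks at the passage to the limit, and none of the remedies you propose works.

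\textbf{Where the limit step fails.} The reduction ``by linearity'' to single eigenforms is not available. $\dim W_{\ga,R_j}$ is not bounded in $j$, and the expansion $\phi_j=\sum_i a_{j,i}\phi_{R_j,i}$ changes with $j$, so the limit of $\phi_j$ is not a combination of limits of single eigenforms.

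Lower semicontinuity only gives $\|\De^*\phi\|^2_{L^2_{\la+2}}\le\tfrac12 C_\la^{-1}\liminf_j\|\phi_j\|^2_{L^2_\la}$, not the same bound with $\|\phi\|^2_{L^2_\la}$ on the right. Any $L^2_\la$-mass that runs off to infinity (for instance by concentrating near $\dd M_{R_j}$) destroys your Rayleigh-quotient bound. Trying to prevent this is the wrong target. An argument forcing normalized sequences in $W_{\ga,R_j}$ to keep their mass would, with the spectral gap on $M$, show $W_{\ga,R_j}=\{0\}$ for all large $j$. That is far more than the lemma asserts, and nothing in the setup provides it. The proof must be insensitive to mass loss.

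Even granting the Rayleigh bound, it does not put $\phi$ in $\ker\De^*$. Writing $\phi=\phi_0+\phi^\perp$ with $\phi_0\in\ker\De^*$, you only get $\|\phi^\perp\|\le\|\phi_0\|$. Your route to orthogonality also fails. $\ker\De^*$ contains no nonzero compactly supported forms (unique continuation). The lower bound $\|\De^*\phi_j\|\ge\ga\|\phi_j\|$ points the wrong way to survive a lower-semicontinuous limit.

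\textbf{The missing idea.} Use only upper bounds, which survive local limits regardless of mass loss, and use all powers at once.

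With $C:=\sup_j\|\phi_j\|_{L^2_\la}$, your own bounds give $\|(\De\De^*)^m\phi\|_{L^2_\la(M)}\le C\,(\tfrac12 C_\la^{-1})^m$ for every $m$. The fixed constant $C$ is then beaten by the factor $2^{-m}$.

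For orthogonality, the lower cutoff $\ga$ supplies a bounded preimage. The form $\theta_j:=\sum_i a_{j,i}\sg_{R_j,i}^2\phi_{R_j,i}$ satisfies $\De\De^*\theta_j=\phi_j$ and $\|\theta_j\|_{L^2_\la}\le\ga^{-2}C$. Its local limit $\theta\in L^2_\la(M)$ therefore solves $\De\De^*\theta=\phi$ on $M$.

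Weighted elliptic regularity on $M$ (the interior part of the scaling argument in Proposition \ref{proposition lopatinski shapiro condition}) puts $\theta$ in the domain of the self-adjoint operator $A=\De\De^*$ on $L^2_\la(M)$. It also puts $\phi$ in the domain of every $A^m$. Hence $\phi\in\mathrm{ran}\,A\perp\ker A$. Since $A\ge C_\la^{-1}$ on $(\ker A)^\perp$ (its nonzero spectrum is that of $\De^*\De$), you get $C_\la^{-m}\|\phi\|\le\|A^m\phi\|\le C\,2^{-m}C_\la^{-m}$ for all $m$. Hence $\phi=0$.

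\textbf{Comparison with the paper.} The paper works with the potentials $\psi_j$, where $\De\psi_j=\phi_j$ and the bound comes from $\ga$. It gets orthogonality of their limit to $\ker(\De)_{\la+2}$ by pairing with genuine kernel elements. By noncriticality these lie in $L^2_{\la+2-\ve}$, so the boundary terms on $\dd M_{R_j}$ are $O(R_j^{-\ve})$. Its closing step, however, asserts $\ga^2\|\psi_j\|_{L^2_{\la+2}(K)}\le\|\De^*\De\psi_j\|_{L^2_{\la+2}(K)}$ on compact sets $K$. This does not follow for combinations of eigenforms, so the paper does not resolve the mass-loss issue either.
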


\begin{proof}
    Let $\{\phi_j \in W_{\ga, R_j} \}_j$ be an $L^2_{\la}$-bounded sequence of vectors in $W_{\ga, R_j}$. Then, write
    \[\phi_j = \sum_{i=1}^k a_i \phi_{R_j, i}, \; \text{ where }\sum_{i=1}^k a_i^2 = \|\phi_j\|_{L^2_{\la}}^2.\]
    Take the associated forms
    \[\psi_j = \sum_{i=1}^k a_i \sg_{R_j,i}\psi_{R_j, i}, \; \text{ such that }\De \psi_j = \phi_j.\]
    The sequence $\{\psi_j\}_j$, is uniformly bounded in any Sobolev space $H^k\Omega^3(K)$, when the $\psi_j$ are restricted to a compact neighborhood $K \subset M$. This follows from the weighted Schauder estimates (Proposition \ref{proposition lopatinski shapiro condition}).
    \begin{align*}
        \|\psi_j\|_{H^{4k}_{\la+2}} &\le c\big(\|(\De^*\De)^k \psi_j\|_{L^2_{\la+2}} + \|\psi_j\|_{L^2_{\la+2}}\big) \\
        &\le c\big(\|\psi_j\|_{L^2_{\la+2}} + \|\psi_j\|_{L^2_{\la+2}}\big) \\
        &\le c \ga^{-1} \|\phi_j\|_{L^2_\la}\\
        &\le c
    \end{align*}
    Note that the formula for $\psi_j$ implies that $c$ depends on the choice of $\ga$ and $k$, as well as the fixed constant $C_\la$ and the upper bound on norms of the sequence $\phi_j$. Thus, restricted to any compact set $K \subset M$, the sequence of functions $\{\psi_j|_K\}_j$ is uniformly bounded in $H^k\Omega^3(K)$. 

    Furthermore, if we take the extension operators $E_{R_j}$ from Construction \ref{construction extension to collar neighborhood} and apply them to each of the $\psi_j$'s, we find that by Lemma \ref{lemma extension of functions} the sequence $\{E_{R_j}(\psi_j)\}_j$ is uniformly bounded in $L^2_{\la+2}(M)$. Choose a small $\ve > 0$ such that there are no critical rates of $\De$ in $[\la+2-\ve, \la+2+\ve]$. Upon taking a diagonal subsequence, $\{\psi_{j_\ell}\}_\ell$, we can assume that $\psi_{j_\ell} \xrightarrow{H^k_{loc}} \psi$, where $\psi \in H^k_{loc}(M)$, by the uniform boundedness of the $H^k$-norms on compact sets $K$. Furthermore, this implies that $E_{R_{j_\ell}}(\psi_{j_\ell}) \xrightarrow{H^k_{loc}} \psi$, and since $\max_{j_\ell} \|E_{R_{j_\ell}}(\psi_{j_\ell})\|_{L^2_{\la +2}} < C$, the weak lower semi-continuity of the weighted norms $\| \cdot \|_{L^2_{\la+2}(M)}$ implies that $\|\psi\|_{L^2_{\la + 2}} < \infty$.

    \begin{clm}\label{claim compact vanishing}
    Fix a compactly-supported test function $\mu \in \Om^3_0(M)$. For any uniformly $L^2_\la$-bounded sequence of unit vectors $\{\phi_j \in W_{\ga, R_j} \}_j$,
    \[
    \limsup_j |\langle \mu, \phi_j \rangle_{L^2_\la}| = 0.
    \]
    \end{clm}

    \begin{proof}[Proof of Claim \ref{claim compact vanishing}]
    Note that the compactly-supported test function $\mu \in \Om^3_0(M) \subset L^2_{\la'}\Omega^3(M)$, where $\la' < \la$, and suppose for the sake of contradiction that there exists a small number $\ka >0$ and $L^2_\la$-bounded sequence of unit vectors $\{\phi_j \in W_{\ga, R_j} \}_j$  such that 
    \[
    |\langle \mu, \phi_j \rangle_{L^2_\la}| > \ka >0,
    \]
    
    Assume first that $\psi$ vanishes. Then, given a domain $M_{R_0}$ and $\ve >0$, there exists $\psi_{j_\ell}$ such that $\|\psi_{j_\ell}\|_{H^2_{\la+2}(M_{R_0})} < \ve$. Since $\mu$ has order $\la' < \la$ decay, the following estimate must hold.
    \begin{align*}
        |\langle \mu, \phi_{j_\ell} \rangle_{L^2_\la} | &= |\langle \mu, \De \psi_{j_\ell} \rangle_{L^2_\la} | \\
        &\le \bigg| \int_{M_{R_0} } \langle \mu, \De \psi_{j_\ell} \rangle \rho^{-2\la - 7} dV \bigg| + \bigg| \int_{ M_{r_{j_\ell}} \setminus M_{R_0} } \langle \mu, \De \psi_{j_\ell} \rangle \rho^{-2\la - 7} dV \bigg| \\
        &\le \|\mu\|_{L^2_{\la}(M)} \|\psi_{j_\ell}\|_{H^2_{\la+2}(M_{R_0})} + R_0^{\la' -\la}\|\mu\|_{L^2_{\la'}(M)}\|\psi_{j_\ell}\|_{H^2_{\la + 2}(M_{R_{j_\ell}})}\\
        &\le C_{\mu}\ve + C_{\mu}R_0^{\la'-\la}\\
        &\le \ka,
    \end{align*}
    for $R_0 >0$ sufficiently large, and $\ve >0$ sufficiently small. This contradicts the assumption of a non-vanishing $H^2_{loc}$ limit.  
    
    If $\psi$ does not vanish, then we can prove that $\psi$ must be perpendicular to $\ker(\De)_{\la+2}$.  Since there are no critical rates between $\la + 2$ and $\la + 2 - \ve$, $\ker(\De)_{\la+2} = \ker(\De)_{\la+2 - \ve}$ by \cite[Theorem 4.10]{KarigiannisLotay} (see also \cite{Lockhart1987}, \cite{LockhartMcOwen1985}). Thus, if $\eta \in \ker (\De)_{\la + 2}$, then $\|\eta\|_{L^2_{\la + 2 -\ve}(M)} < \infty$. Then, given $R_0 > 0$, compute
    \begin{align*}
        &\langle \eta, \psi_{j_\ell} \rangle_{L^2_{\la + 2}(M_{R_{j_\ell}})}\\
        &\qquad = \int_{M_{R_{j_\ell}}} \langle \rho^{-\la - 2} \eta, \rho^{-\la - 2} \psi_{j_\ell} \rangle \rho^{-7} dV \\
        &\qquad = \int_{M_{R_0}}\langle \rho^{-\la - 2} \eta, \rho^{-\la - 2} \psi_{j_\ell} \rangle \rho^{-7} dV \\
        &\qquad \qquad+ \int_{M_{R_{j_\ell}} \setminus M_{R_0}}\langle \rho^{-\la - 2} \eta, \rho^{-\la - 2} \psi_{j_\ell} \rangle \rho^{-7} d \\
        \implies  & | \langle \eta, \psi_{j_\ell} \rangle_{L^2_{\la + 2}(M_{R_{j_\ell}})} - \langle \eta, \psi_{j_\ell} \rangle_{L^2_{\la + 2}(M_{R_0})}| \\
        &\qquad \le R_0^{-\ve}\|\eta\|_{L^2_{\la + 2 -\ve}(M)}\|\psi_{j_\ell}\|_{L^2_{\la + 2}(M_{R_{j_\ell}})}\\ 
        &\qquad = O(R_0^{-\ve}). 
    \end{align*}
    Compute $\langle \eta, \psi_{j_\ell} \rangle_{L^2_{\la + 2}(M_{R_{j_\ell}})}$
    \begin{align*}
        \langle \eta, \psi_{j_\ell} \rangle_{L^2_{\la + 2}(M_{R_{j_\ell}})} &= \bigg\langle \eta, \sum_{i=1}^k a_i \sg_{R_{j_\ell},i}\psi_{R_{j_\ell}, i} \bigg\rangle_{L^2_{\la + 2}(M_{R_{j_\ell}})}  \\
        &=  \bigg\langle \eta, \sum_{i=1}^k a_i \sg_{R_{j_\ell},i}^{-1} \De^*\De \psi_{R_{j_\ell}, i} \bigg\rangle_{L^2_{\la + 2}(M_{R_{j_\ell}})}  \\
         &=  \sum_{i=1}^k a_i \sg_{R_{j_\ell}, i}^{-1} \bigg\langle \eta, \De^*\De \psi_{R_{j_\ell}, i} \bigg\rangle_{L^2_{\la + 2}(M_{R_{j_\ell}})} 
    \end{align*}
    To compute the above inner products, notice that $\eta$ and the $\psi_{R,i}$'s have sufficient regularity to do integration by parts as in Proposition \ref{proposition formal adjoint of laplacian}.
    \begin{align*}
        & \int_{M_R} \langle \eta, \rho^{2\la + 4 + 7}\De(\rho^{-2\la - 7} \De \psi_{R_{j_\ell}, i}) \rangle \rho^{-2\la -4 - 7} dV \\
        &\qquad = \int_{M_R} \langle \De \eta, \De \psi_{R_{j_\ell}, i}\rangle \rho^{-2\la}\rho^{-7} dV\\
        &\qquad \qquad + \int_{\dd M_R} \bt \de (\rho^{-2\la- 7} \De \psi_{R_{j_\ell}, i}) \wedge \star \bn \eta + \int_{\dd M_R} \bt \eta \wedge \star \bn d (\rho^{-2\la- 7} \De \psi_{R_{j_\ell}, i}) \\
        &\qquad \qquad \qquad - \int_{\dd M_R} \bt \de \eta \wedge \star \bn \rho^{-2\la- 7} \De \psi_{R_{j_\ell}, i} - \int_{\dd M_R} \bt \rho^{-2\la- 7} \De \psi_{R_{j_\ell}, i} \wedge \star \bn d \eta 
    \end{align*}
    Since $\De \eta = 0$, the integral over $M_R$ vanishes, leaving only the boundary terms. Furthermore, as $\De \psi_{R,i} = \sg_{R,i}^{-1} \phi_{R,i}$, we obtain
    \begin{align*}
        &\qquad = \sg_{R_{j_\ell},i}^{-1}\bigg(\int_{\dd M_R} \bt \de (\rho^{-2\la- 7} \phi_{R_{j_\ell}, i}) \wedge \star \bn \eta + \int_{\dd M_R} \bt \eta \wedge \star \bn d (\rho^{-2\la- 7} \phi_{R_{j_\ell}, i}) \\
        &\qquad \qquad \qquad - \int_{\dd M_R} \bt \de \eta \wedge \star \bn \rho^{-2\la- 7} \phi_{R_{j_\ell}, i} - \int_{\dd M_R} \bt \rho^{-2\la- 7} \phi_{R_{j_\ell}, i} \wedge \star \bn d \eta \bigg)
    \end{align*}
    Plug back into the original expression and obtain the sum
    \begin{align*}
        &\langle \eta, \psi_{j_\ell} \rangle_{L^2_{\la + 2}(M_{R_{j_\ell}})}\\
        &\qquad = \sum_{i=1}^k a_i\sg_{R_{j_\ell},i}^{-2}\bigg(\int_{\dd M_R} \bt \de (\rho^{-2\la- 7} \phi_{R_{j_\ell}, i}) \wedge \star \bn \eta + \int_{\dd M_R} \bt \eta \wedge \star \bn d (\rho^{-2\la- 7} \phi_{R_{j_\ell}, i}) \\
        &\qquad \qquad \qquad - \int_{\dd M_R} \bt \de \eta \wedge \star \bn \rho^{-2\la- 7} \phi_{R_{j_\ell}, i} - \int_{\dd M_R} \bt \rho^{-2\la- 7} \phi_{R_{j_\ell}, i} \wedge \star \bn d \eta \bigg)\\
        &\qquad = \int_{\dd M_R} \bt \de (\rho^{-2\la- 7} (\De \De^* \phi_{j_\ell})) \wedge \star \bn \eta + \int_{\dd M_R} \bt \eta \wedge \star \bn d (\rho^{-2\la- 7} (\De \De^* \phi_{j_\ell})) \\
        &\qquad \qquad \qquad - \int_{\dd M_R} \bt \de \eta \wedge \star \bn \rho^{-2\la- 7} (\De \De^* \phi_{j_\ell}) - \int_{\dd M_R} \bt \rho^{-2\la- 7} (\De \De^* \phi_{j_\ell}) \wedge \star \bn d \eta 
    \end{align*}
    Next, compute the following $L^2_\la$-norm estimate for $k \in \bN$.  
    \begin{align*}
    \bigg\| \sum_{i=1}^k a_i\sg_{R_{j_\ell},i}^{-2k} \phi_{R_{j_\ell}, i} \bigg\|_{L^2_\la (M_{R_{j_\ell}})}^2 &= \sum_{i=1}^k a_i^2 \sg_{R_{j_\ell},i}^{-4k} \\
    &\le \big( \sup_i \sg_{R_{j_\ell},i}^{-4k} \big) \|\phi_{R_{j_\ell}}\|^2 \\
    &\le C,
    \end{align*}
    where $C>0$ depends on $k$, $C_\la$, and the uniform bound on the $\phi_j$'s, but does not depend on $R_{j_\ell}$. This establishes uniform $L^2_\la$-bounds for $(\De \De^*)^k \phi_{j_\ell}$, and we can appeal to the higher order Schauder estimates in Corollary \ref{corollary lopatinski shapiro higher order} (see Proposition \ref{proposition uniform boundedness of projection} for details of this argument) to obtain uniform weighted bounds up to the boundary in higher order Sobolev spaces. 
    
    The $H^k_{\la}(M_R)$-norms of the $\phi_{j_\ell}$ and the $H^k_{\la +2 - \ve}(M)$-norm of $\eta$ are uniformly bounded, independent of $R$. Thus, the weighted Sobolev embedding theorem (Theorem \ref{theorem weighted sobolev embedding theorem}) implies the following pointwise decay conditions.
    \[
    \eta = O(\rho^{\la + 2 - \ve }), \;\;d\eta, \de \eta = O(\rho^{\la + 1 - \ve }),
    \]
    \[
    \De \De^* \phi_{j_\ell} = O(\rho^{\la + \ve/2}), \;\;d(\De \De^* \phi_{j_\ell}), \de(\De \De^* \phi_{j_\ell}) = O(\rho^{\la -1 +\ve/2 }).
    \]
    Adding the orders of decay, the integrand of each of the boundary terms will be $O(R^{-6-\ve/2})$. Since $\mathrm{Vol}(\dd M_R)$ approaches $R^6\mathrm{Vol}(\Sg)$, where $\Sg$ is the link of the asymptotic cone, the boundary terms are $O(R^{-\ve})$. This means that
    \begin{align*}
        \langle \eta, \psi_{j_\ell} \rangle_{L^2_{\la + 2}(M_{R_{j_\ell}})} &= O(R_\ell^{-\ve/2}).
    \end{align*}
    
    Combining this with the estimate $| \langle \eta, \psi_{j_\ell} \rangle_{L^2_{\la + 2}(M_{R_{j_\ell}})} - \langle \eta, \psi_{j_\ell} \rangle_{L^2_{\la + 2}(M_{R_0})}| = O(R_0^{-\ve})$ implies that 
    \[
    \langle \eta, \psi \rangle_{L^2_{\la + 2}(M_{R_0})} \leftarrow \langle \eta, \psi_{R_\ell} \rangle_{L^2_{\la + 2}(M_{R_0})} = o(1) \text{ as } R_0 \rightarrow \infty. 
    \]
    Since $\|\psi\|_{L^2_{\la + 2}}, \|\eta\|_{L^2_{\la + 2 - \ve}} < \infty$,
    \[
    \langle \eta, \psi \rangle_{L^2_{\la + 2}(M\setminus M_{R_0})} \le \|\psi\|_{L^2_{\la + 2} (M \setminus M_{R_0})} \|\eta\|_{L^2_{\la + 2}(M \setminus M_{R_0})} = o(1) \text{ as } R_0 \rightarrow \infty
    \]
    Since $R_0$ may be taken to be arbitrarily large, conclude that $\langle \eta, \psi \rangle_{L^2_{\la + 2}(M)} = 0$, i.e. that a non-zero limit $\psi$ must be perpendicular to the kernel of $\De$. 

    Observe that, given a compact set $K$ and sufficiently large $R_{j_\ell}$,
    \[
    \ga^2 \|\psi_{j_\ell}\|_{L^2_{\la+2}(K)} \le  \|\De^*\De \psi_{j_\ell}\|_{L^2_{\la+2}(K)},
    \]
    since the singular values $\sg_i^{-2}$ are bounded below by $\ga$. Taking the loc-limit,
    \[
    \ga^2 \| \psi \|_{L^2_{\la+2}(K)} \le  \|\De^*\De \psi \|_{L^2_{\la+2}(K)}.
    \]
    The monotone convergence theorem implies that this inequality holds for the norms on the whole space.
    \[
    \ga^2 \| \psi \|_{L^2_{\la+2}(M)} \le  \|\De^*\De \psi \|_{L^2_{\la+2}(M)}.
    \]
    Since $\ga^2 \le \frac{C_\la^{-1}}{2}$ and $\psi$ is perpendicular to $\ker(\De)_{\la+2}$ (and thus $\ker(\De^*\De)_{\la+2}$), this implies that implies that the operator norm $\|(\De^*\De)^{-1}\| \ge  \ga^{-2} > C_{\la}$,  which is a contradiction. 

    This concludes the proof of the claim.    
    \end{proof}
    
    Thus, if $\phi$ is the $H^k_{loc}$-limit of the $\phi_j$'s, then for any test function $\mu$
    \[
   |\langle \mu, \phi \rangle_{L^2_{\la}}| \le \limsup_j |\langle \mu, \phi_j \rangle_{L^2_{\la}}| = 0,
    \]
    as $j \rightarrow \infty$. This implies that $\phi \equiv 0$.
\end{proof}

\subsubsection{The space of ``small eigenforms"}
In this section, we show that the forms $\phi$ which are linear combinations of ``eigenforms" with very small singular values have \textit{adjoint} exterior derivative $\rho^{-2\la + 2 -7}d\rho^{-2\la -7}\phi$ and \textit{adjoint} co-differential $\rho^{-2\la + 2 -7}\de\rho^{-2\la -7} \phi$ bounded by the max of these small singular values. Thus, the loc-limit of any sequence of these forms will vanish under the weighted $L^2$-adjoint of the Dirac operator, $\rho^{-2\la + 2 -7}d\rho^{-2\la -7} + \rho^{-2\la + 2 -7}\de\rho^{-2\la -7} $.

\begin{lem}\label{lemma smallness of dirac}
    Let $\ve := -2\la -7 \ge 0$ be sufficiently small. Let
    \[
    V_{\ga,R} := \mathrm{span}\bigg\{\phi_{R,i}\;|\; 0 <  \sg_{R,i}^{-1}  < \ga  \bigg\}.
    \]
    Each element $\phi \in V_{\ga,R}$ has 
    \[
    \|\de (\rho^\ve \phi)\|_{L^2(M_R)}^2 + \|d (\rho^\ve \phi)\|_{L^2(M_R)}^2 \le \ga \|\phi\|^2_{L^2_{\la}(M_R)}.
    \]
\end{lem}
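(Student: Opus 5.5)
The plan is to pair $\phi$ against $\De$ applied to a suitable test form and use the singular value decomposition of Lemma \ref{lemma SVD for hodge laplacian}. Since $\ve = -2\la - 7$, the weighted inner product satisfies $\langle \phi, \nu\rangle_{L^2_\la} = \int_{M_R} \langle \rho^{\ve}\phi, \nu\rangle\, dV$. This suggests computing
\[
\|d(\rho^\ve\phi)\|^2_{L^2(M_R)} + \|\de(\rho^\ve\phi)\|^2_{L^2(M_R)} = \langle \rho^\ve\phi, \De(\rho^\ve \phi)\rangle_{L^2(M_R)}
\]
by Green's formula (Proposition \ref{proposition greens theorem}) applied twice. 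The boundary terms should vanish: Corollary \ref{corollary vanishing of boundary terms} gives $\bt(\rho^\ve\phi_i)=0$ and $\bt\de(\rho^\ve\phi_i)=0$, and by linearity these hold for every $\phi\in V_{\ga,R}$. The two boundary integrals then vanish in the same way as in the proof of Proposition \ref{proposition formal adjoint of laplacian}.

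Next, I rewrite the right-hand side using the adjoint. We have $\int\langle \rho^\ve\phi, \De(\rho^\ve\phi)\rangle dV = \int \langle \phi, \rho^{\ve}\De(\rho^\ve\phi)\rangle \rho^{\ve}\,dV\cdot\rho^{-\ve}$. It is cleaner to write this as
\[
\int\langle\rho^\ve\phi,\De(\rho^\ve\phi)\rangle dV = \langle \rho^{-4}\rho^{-\ve}\cdot \rho^{2\ve}\cdots\rangle,
\]
but the simplest route is to observe that $\De^*\phi = \rho^{4-\ve}\De(\rho^\ve\phi)$ by Proposition \ref{proposition formal adjoint of laplacian}, since $2\la+4+7 = 4-\ve$. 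Hence
\[
\int\langle\rho^\ve\phi,\De(\rho^\ve\phi)\rangle dV = \int \langle \phi, \De^*\phi\rangle \rho^{2\ve-4}\,dV = \langle \phi,\De^*\phi\rangle_{L^2_{\la+2}}\cdot(\text{weight adjustment}).
\]
I would check carefully that the weights match: $\rho^{-2(\la+2)-7} = \rho^{\ve-4}$, so the integral equals $\langle \rho^{\ve}\phi, \De^*\phi\rangle_{L^2_{\la+2}}$. By Corollary \ref{corollary equality of adjoints}, $\De^*\phi = \sum a_i\sg_i^{-1}\psi_i$. Expanding then gives $\sum_{i,j} a_i a_j\sg_j^{-1}\langle \rho^\ve\phi_i,\psi_j\rangle_{\la+2}$.

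To control the cross terms I would use $\phi_i = \sg_i^{-1}\De\psi_i$ in place of the $\psi_i$-orthonormality. I expect this to be the main obstacle: the weight $\rho^\ve$ breaks exact orthogonality, and the bound has to come from smallness of $\ve$. Concretely, I would write $\rho^\ve\phi = \phi + (\rho^\ve-1)\phi$. For the principal part, $\langle\phi,\De^*\phi\rangle_{\la+2}$ is $\langle \De\phi, \phi\rangle_\la$-like, and by the SVD it is bounded by $\max\sg_i^{-1}\|\phi\|^2_{L^2_\la}<\ga\|\phi\|^2$. The error part, with $|\rho^\ve-1|$ and the comparison $\rho^{\ve}\le R^\ve$, I would absorb using the uniform elliptic estimates of Corollary \ref{corollary lopatinski shapiro higher order}, together with the hypothesis that $\ve$ is sufficiently small (possibly $\ve=0$, in which case the weights coincide and the identity is exact). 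If absorption fails, my fallback is to accept a constant $C\ga$ in place of $\ga$, which suffices for the subsequent limiting argument.
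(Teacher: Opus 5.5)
Your setup is correct, and tidier than the paper's. With $\mu=\rho^\ve\phi$, Corollary~\ref{corollary vanishing of boundary terms} gives $\bt\mu=0$ and $\bt\de\mu=0$, which kill both boundary terms in Green's formula. Corollary~\ref{corollary equality of adjoints} gives $\De\mu=\rho^{\ve-4}\De^*\phi$. Together these yield
\[
\|d\mu\|^2_{L^2(M_R)}+\|\de\mu\|^2_{L^2(M_R)}=\langle\rho^\ve\phi,\De^*\phi\rangle_{L^2_{\la+2}},\qquad \De^*\phi=\sum_i a_i\sg_i^{-1}\psi_i .
\]
The paper instead pairs $\rho^\ve\phi_i$ against each $\psi_j$ and expands $\rho^\ve\phi_i=\sum_j b_{i,j}\psi_j$.

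The gap is the final estimate, which you misdiagnose. It needs neither smallness of $\ve$ nor any absorption, only Cauchy--Schwarz in $L^2_{\la+2}$. By $L^2_{\la+2}$-orthonormality of the $\psi_i$, $\|\De^*\phi\|^2_{L^2_{\la+2}}=\sum_i a_i^2\sg_i^{-2}\le\ga^2\|\phi\|^2_{L^2_\la}$. Since $\rho\ge1$ and $\ve\le2$,
\[
\|\rho^\ve\phi\|^2_{L^2_{\la+2}}=\int_{M_R}\rho^{2\ve-4}\,\rho^{-2\la}|\phi|^2\rho^{-7}\,dV\le\|\phi\|^2_{L^2_\la}.
\]
This is the same comparison the paper uses to get $\sum_j b_{i,j}^2\le1$. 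The product gives exactly $\ga\|\phi\|^2_{L^2_\la}$, cross terms included. The paper's displayed computation writes $\sum_{i,j}a_i^2b_{i,j}(\cdots)$, so it does not track the cross terms $a_ia_k$, $i\ne k$, either. This Cauchy--Schwarz step is what actually justifies its conclusion.

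The route you propose would not close.
\begin{itemize}
\item Splitting $\rho^\ve\phi=\phi+(\rho^\ve-1)\phi$ and using $\rho^\ve\le R^\ve$ produces a factor $R^\ve$. That factor blows up as $R\to\infty$ for every fixed $\ve>0$, however small.
\item Corollary~\ref{corollary lopatinski shapiro higher order} bounds higher Sobolev norms by $L^2_\la$ norms, so it offers no mechanism for absorbing a weight factor.
\item Your fallback constant $C\ga$ is useful in Corollary~\ref{corollary small eigenvalues become harmonic} only if $C$ is independent of $R$, which this route does not deliver. Using $|\rho^\ve-1|\le\rho^\ve$ instead would give $2\ga$ by the same Cauchy--Schwarz.
\item Treating the principal term as ``$\langle\De\phi,\phi\rangle_\la$-like'' would require $\phi\in\sD(\De)$. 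For $\ve>0$, Corollary~\ref{corollary vanishing of boundary terms} gives $\bt\de(\rho^\ve\phi)=0$, not $\bt\de\phi=0$; the two differ by a term involving the normal part of $\phi$ on $\dd M_R$.
\end{itemize}
A minor error: $\De\psi_i=\sg_i^{-1}\phi_i$, so $\phi_i=\sg_i\De\psi_i$, not $\sg_i^{-1}\De\psi_i$.
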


\begin{proof}
Let $\phi_i$ be one of the orthonormal basis vectors in $V_{\ga,R}$. Then
    \begin{align*}
        \langle \psi_j, \De^* \phi_i \rangle_{\la+2}  &= \sg_{R,j}^{-1} \de_{ij}. 
    \end{align*}
    By the integration-by-parts calculation in Proposition \ref{proposition formal adjoint of laplacian}, we have
    \begin{align*}
    \langle \psi_j, \De^* \phi_i \rangle_{\la} &= \langle \de \psi_j , \de (\rho^{-2\la - 7} \phi_i) \rangle_{L^2(M_R)} + \langle d \psi_j , d (\rho^{-2\la - 7} \phi_i) \rangle_{L^2(M_R)} \\
    &\qquad + \int_{\dd M_R} \bt \de(\rho^{-2\la - 7} \phi_i) \wedge \star \bn \psi_j.
    \end{align*}
    Corollary \ref{corollary vanishing of boundary terms} implies that $\bt \de (\rho^{-2\la - 7} \phi_i) = 0$. Therefore,
    \[
    \langle \de \psi_j , \de (\rho^{-2\la - 7} \phi_i) \rangle_{L^2(M_R)} + \langle d \psi_j , d (\rho^{-2\la - 7} \phi_i) \rangle_{L^2(M_R)} = \sg^{-1}_{R,j}\de_{ij}
    \]
    
    Let $\ve = -2\la - 7$ be such that $0 \le \ve \ll 1$. Then, the $\psi_j$'s span a larger space which includes each form $\rho^{\ve} \phi_i$. Thus, there is a linear combination 
    \[
    \rho^\ve \phi_i = \sum_{j=1}^\infty b_{i,j} \psi_j.
    \]
    Since $\rho^{\ve} \phi_i$ has unit norm in a space with harsher decay conditions, i.e. $L^2_{\la + \ve}$, it has a smaller norm with respect to the more lenient decay conditions in $L^2_{\la + 2}$.
    \begin{align*}
        1 = \|\rho^{\ve} \phi_i\|_{\la + \ve}^2 \ge \|\rho^\ve \phi_i\|_{\la + 2}^2
        &= \bigg\|\sum_{j=1}^\infty b_{i,j} \psi_j\bigg\|_{\la+2}^2 \\
        &= \sum_{j=1}^\infty b_{i,j}^2 \|\psi_j\|_{\la+2}^2 \\
        &= \sum_{j=1}^\infty b_{i,j}^2.
    \end{align*}
    Thus the coefficients $b_{i,j}$ of the decomposition cannot be larger than 1. Now consider a unit vector $\phi$ in $V_{\ga,R}$ with the $L^2_\la$-norm such that
    \[
    \phi = \sum_{i=1}^n a_i \phi_i, \;\text{ and } \sum_{i=1}^n a_i^2 = 1
    \]
    Now consider
    \[
    \rho^\ve \phi = \sum_{i=1}^n a_i \rho^\ve \phi_i.
    \]
    We calculate
    \begin{align*}
        \|\de (\rho^\ve \phi)\|_{L^2}^2 + \|d (\rho^\ve \phi)\|_{L^2}^2 &= \langle \de (a_i \rho^\ve \phi_i) , \de (a_i \rho^\ve \phi_i) \rangle_{L^2} + \langle d (a_i \rho^\ve \phi_i) , d (a_i \rho^\ve \phi_i) \rangle_{L^2} \\
        &= \langle a_i \de (b_{i,j} \psi_j) , \de (a_i \rho^\ve \phi_i) \rangle_{L^2} + \langle a_i d (b_{i,j} \psi_j) , d (a_i \rho^\ve \phi_i) \rangle_{L^2} \\
        &= \sum_{i,j} a_i^2 b_{i,j} \big( \langle  \de \psi_j , \de (\rho^\ve \phi_i) \rangle_{L^2}  + \langle  d \psi_j , d (\rho^\ve \phi_i) \rangle_{L^2}\big) \\
        &= \sum_{i,j} a_i^2 b_{i,j} \sg_{R,j}^{-1}\de_{ij} \\
        &= \sum_{i} a_i^2 b_{i,i} \sg_{R,i}^{-1} \\
        &\le \ga \sum_{i} a_i^2  \\
        &\le \ga
    \end{align*}
    Multiplying the coefficients $a_i$ by $\|\phi\|_{L^2_{\la}}$ for a non-unit $\phi$ completes the proof of the lemma.
\end{proof}

\subsubsection{Limits of sequences in the approximate kernels}

\begin{cor}\label{corollary small eigenvalues become harmonic}
    Let $\phi_j$ be an $H^{k+1}_{\la}$-bounded sequence of of vectors in the $L^2_{\la}$-orthogonal complements of $U_{R_j}$, the ``upper spectrum" of $\De^*\De$ on the domains $M_{R_j}$ (defined in Proposition \ref{proposition uniform boundedness of projection}). Let $\phi \in L^2_{\la}(M)$ be the $H^k_{loc}$-limit of the sequence $\{\phi_j\}$. Then, $\phi$ can be written as the sum of an exact and a co-exact form if and only if $\phi \equiv 0$.
\end{cor}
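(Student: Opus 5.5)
Since the ``if'' direction is trivial, the content is: if the limit $\phi$ equals $d\al + \de\beta$ with $\al,\beta$ in the appropriate weighted spaces, then $\phi \equiv 0$. The plan is to show that $\phi$ is $L^2_\la$-orthogonal to $\im(d+\de)$; a form that lies in $\im(d+\de)$ and is orthogonal to it must vanish. The orthogonal complement of $U_{R_j}$ in $(\sH^3_D(M_{R_j}))^\perp$ is spanned by $\phi_{R_j,i}$ with $\sg_{R_j,i}^2 \ge C\|(\De^*\De)^{-1}\|$. Adding the Dirichlet harmonic fields (whose $L^2_\la$-dual directions are $\rho^{2\la+7}\sg_i$, by Lemma \ref{lemma projection to harmonic dirichlet fields}), I split each $\phi_j = h_j + w_j + v_j$:
\begin{itemize}
\item[] $h_j$ lies in $\mathrm{span}\{\rho^{2\la+7}\sg_i\}$;
\item[] $w_j \in W_{\ga,R_j}$ (intermediate singular values);
\item[] $v_j \in V_{\ga,R_j}$ (small singular values, $\sg^{-1}<\ga$).
\end{itemize}
Here $\ga>0$ is arbitrary and is sent to zero at the end. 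All three pieces are $L^2_\la$-bounded because the splitting is $L^2_\la$-orthogonal. Passing to a diagonal subsequence, each piece converges in $H^k_{loc}$.

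The pieces are handled in turn. By Lemma \ref{lemma remainder vanishes on intermediate spec}, $w_j \to 0$ in $H^k_{loc}$ for each fixed $\ga$. By Claim \ref{claim adjoint harmonic field}, $d(\rho^{\ve}h_j) = \de(\rho^{\ve}h_j) = 0$, where $\ve = -2\la-7$; this property passes to the limit $h$. By Lemma \ref{lemma smallness of dirac}, $\|d(\rho^\ve v_j)\|_{L^2}^2 + \|\de(\rho^\ve v_j)\|_{L^2}^2 \le \ga \|v_j\|^2_{L^2_\la} \le C\ga$. By lower semicontinuity on compacts, the local limit $v^{(\ga)}$ therefore satisfies $\|(d+\de)(\rho^\ve v^{(\ga)})\|_{L^2(M)}^2 \le C\ga$. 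Since $\phi = h + v^{(\ga)}$ for every $\ga$ (the $w$-part vanishes), letting $\ga \to 0$ gives $(d+\de)(\rho^\ve \phi) = 0$ weakly on $M$. This is exactly the statement that $\phi$ lies in the kernel of the formal $L^2_\la$-adjoint of $d+\de$. Consequently, for compactly supported $\mu$, $\langle \phi, (d+\de)\mu\rangle_{L^2_\la} = 0$.

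To finish, suppose $\phi = d\al + \de\beta$ with $\al,\beta \in H^1_{\la+1}$. Approximate $\al$ and $\beta$ by compactly supported forms, using cutoffs $g_R$ as in Construction \ref{construction extension to collar neighborhood}. The error terms $g_R'\,\al$ are controlled in $L^2_\la$ because $g_R' = O(R^{-1})$ matches the weight shift. It follows that $\|\phi\|^2_{L^2_\la} = \langle \phi, d\al + \de\beta\rangle_{L^2_\la} = 0$.

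The main obstacle is this last density step and the uniformity in $\ga$. The decomposition must be chosen consistently as $\ga \to 0$, say by extracting a further diagonal subsequence over $\ga_m \to 0$. One must also check that $\im(d+\de)$ of $H^1_{\la+1}$ forms is the $L^2_\la$-closure of $(d+\de)\Om_0$. The latter density, via the Fredholm/Hodge theory of \cite{KarigiannisLotay} at a non-critical rate, is where I would expect to need care.
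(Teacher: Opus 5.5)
Your proposal is correct and follows the paper's proof. You use the same $L^2_\la$-orthogonal splitting of $\phi_j$ into Dirichlet-harmonic directions, $W_{\ga,R_j}$ and $V_{\ga,R_j}$, the same three inputs (Claim \ref{claim adjoint harmonic field}, Lemma \ref{lemma remainder vanishes on intermediate spec}, Lemma \ref{lemma smallness of dirac}), and reach the same conclusion $(d+\de)(\rho^{\ve}\phi)=0$, i.e.\ $\phi$ is killed by the weighted adjoint of $d+\de$ and is therefore $L^2_\la$-orthogonal to its range. The points you flagged are lighter than you feared: for each fixed $\ga$ you already get $\|(d+\de)(\rho^\ve\phi)\|^2_{L^2(K)}\le C\ga$, so no consistent choice across $\ga$ is needed; weak $L^2_{loc}$ subsequential limits of the three pieces suffice, and that is all their $L^2_\la$ bounds give by themselves, not $H^k_{loc}$ convergence; and the final density step is automatic, because the weighted spaces are by definition completions of compactly supported forms and $d+\de\colon H^1_{\la+1}\to L^2_\la$ is bounded, which is exactly what the paper's appeal to $\ker(d+\de)^*=(\mathrm{ran}(d+\de))^\perp$ amounts to.
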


\begin{proof}
    The $H^{k+1}_{\la}$-boundedness of the $\phi_j$'s and Rellich-Kondrashov implies that they have a limit $\phi \in H^k_{loc}(M)$ (possibly up to a subsequence). As before, the extension lemma (Lemma \ref{lemma extension of functions}) and the weak lower semi-continuity of the $L^2_{\la}$-norm implies that $\phi \in L^2_{\la} \Om^3 (M)$. 

    Given $\ga$, each $\phi_j$ decomposes ($L^2_{\la}$-orthogonally) into 
    \[
    \phi_j = \phi_{j,\ga}^- + \phi_{j, \ga}^+ + \phi_{j,\ga}^D \in V_{\ga,R_j} \oplus W_{\ga, R_j} \oplus (\sH^3_D(M_R)^\perp)^{\perp_\la},
    \]
    where $(\sH^3_D(M_R)^\perp)^{\perp_\la}$ be the $L^2_\la$-orthogonal complement of $\sH^3_D(M_R)^\perp$.
    
    On a compact domain $M_{R}$, for any fixed $R > R_0$, the restriction $\phi |_{M_R}$ will be equal to the sum of the $H^k$-limits of $\phi_{j,\ga}^D$, $\phi_{j,\ga}^-$, and $\phi_{j, \ga}^+$ restricted to $M_R$. By Lemma \ref{lemma remainder vanishes on intermediate spec}, $\phi_{j, \ga}^+|_{M_R} \xrightarrow{H^k} 0$. By Claim \ref{claim adjoint harmonic field}, 
    \[
    \rho^{-\ve + 2} \de \rho^\ve \phi_{j,\ga}^D,\; \rho^{-\ve + 2} d \rho^\ve \phi_{j,\ga}^D = 0,
    \]
    Thus, by Lemma \ref{lemma smallness of dirac}, 
    \begin{align*}
        &\|\rho^{-\ve + 2} \de \rho^\ve \phi\|_{L^2(M_R)}^2 + \|\rho^{-\ve + 2} d \rho^\ve \phi\|_{L^2(M_R)}^2 \\ &\qquad \le \sup_j \big(\|\rho^{-\ve + 2} \de \rho^\ve \phi_{j,\ga}^-\|_{L^2(M_R)}^2 + \|\rho^{-\ve + 2} d \rho^\ve \phi_{j,\ga}^-\|_{L^2(M_R)}^2\big) \\
        &\qquad \le \ga \big(\sup_j \|\phi_{j,\ga}^-\|_{L^2_\la(M_R)}^2 \big)\\
        &\qquad \le \ga \big(\sup_j \|\phi_j \|_{L^2_\la(M_R)}^2 \big)\\
        &\qquad \le C \ga,
    \end{align*}
    since the sequence $\{ \phi_j\}_j$ is uniformly bounded in the $L^2_\la$-norm. Fix a radius $R>R_0$: this inequality is true for any choice of $\ga$ small, so
    \[
    \rho^{-\ve + 2} \de \rho^\ve \phi + \rho^{-\ve + 2} d \rho^\ve \phi = 0,
    \]
    We claim that $\rho^{-\ve + 2} \de \rho^\ve \phi + \rho^{-\ve + 2} d \rho^\ve \phi$ is the adjoint of the Dirac operator 
    \[
    d + \de : H^1_{\la + 1}\Om^2(M) \oplus H^1_{\la + 1}\Om^4(M) \rightarrow L^2_{\la + 1}\Om^3(M).
    \]
    The various properties of this operator (ellipticity, Fredholm, and so on) may be found in the comprehensive description in \cite[\S 4.2]{KarigiannisLotay}. The fact that the adjoint of $d + \de$ is $\rho^{-\ve + 2} \de \rho^\ve + \rho^{-\ve + 2} d \rho^\ve$ can be seen by a straightforward application of Green's theorem (c.f. Proposition \ref{proposition greens theorem}). Thus,
    \[
    \phi \in \ker (d + \de)^* = \big(\mathrm{ran}(d + \de)\big)^\perp.
    \]
    The range of $d + \de$ is precisely the closed subspace in $L^2_{\la}\Om^3(M)$ of forms with only exact and co-exact parts (see \cite[Lemma 4.30]{KarigiannisLotay}). Therefore, if $\phi$ has only exact and co-exact parts, it must vanish.
\end{proof}

\bibliographystyle{amsplain}
\bibliography{bibliography2}

\end{document}